   \newtheorem{thm}{Theorem}
   \newtheorem{prop}{Proposition}
   \newtheorem{lem}{Lemma}[section]
   \newtheorem{cor}[lem]{Corollary}
   \newtheorem{rem}{Remark}[section]
\newcommand{\N}{\mathbb{N}}
\newcommand{\R}{\mathbb{R}}
\newcommand{\C}{\mathbb{C}}
\newcommand{\E}{\mathbb{E}}
\newcommand{\cH}{\mathcal{H}}
\newcommand{\prob}{\mathbb{P}}
\newcommand{\re}{\mathop{\mathrm{Re}}\nolimits}
\newcommand{\im}{\mathop{\mathrm{Im}}\nolimits}
\newcommand{\eps}{\varepsilon}
\newcommand{\ds}{\displaystyle}
\newcommand{\vertiii}[1]{{\left\vert\kern-0.25ex\left\vert\kern-0.25ex\left\vert #1 
    \right\vert\kern-0.25ex\right\vert\kern-0.25ex\right\vert}}
\begin{document}

\title[Two dimensional Gross Pitaevskii equation with space-time white noise]
{Two dimensional Gross-Pitaevskii equation with space-time white noise}
\author[Anne DE BOUARD]{Anne DE BOUARD$^{\scriptsize 1}$}
\author[Arnaud DEBUSSCHE]{Arnaud DEBUSSCHE$^{\scriptsize 2,3}$}
\author[Reika FUKUIZUMI]{Reika FUKUIZUMI$^{\scriptsize 4}$}

\keywords{complex Ginzburg-Landau equation, stochastic partial differential
equations, space-time white noise, harmonic potential, Gibbs measure}

\subjclass{
35Q55, 60H15 
}

\maketitle

\begin{center} \small
$^1$ CMAP, CNRS, Ecole polytechnique, I.P. Paris\\
91128 Palaiseau, France; \\
\email{anne.debouard@polytechnique.edu}
\end{center}

\begin{center} \small
$^2$ Univ Rennes, CNRS, IRMAR - UMR 6625, F-35000 Rennes, France;\\
%IRMAR, ENS Rennes, CNRS, UBL. \\ 
%av Robert Schuman, F-35170 Bruz, France; \\
%\email{arnaud.debussche@ens-rennes.fr}
\end{center}

\begin{center} \small
$^3$ Institut Universitaire de France (IUF);\\
\email{arnaud.debussche@ens-rennes.fr}
\end{center}

\begin{center} \small
$^4$ Research Center for Pure and Applied Mathematics, \\
Graduate School of Information Sciences, Tohoku University,\\
Sendai 980-8579, Japan; \\
\email{fukuizumi@math.is.tohoku.ac.jp}
\end{center}

\vskip 0.1 in
\noindent
{\small 
{\bf Abstract}. In this paper we consider the two-dimensional stochastic Gross-Pitaevskii equation, 
which is a model to describe Bose-Einstein condensation at positive temperature. The equation 
is a complex Ginzburg Landau equation with a harmonic potential and 
an additive space-time white noise. We study the well-posedness of the model using an inhomogeneous Wick 
renormalization due to the potential, and prove the existence of an invariant measure and of stationary martingale solutions.}
\vskip 0.1 in

%%%%%%%%%%%%%%%%%%%%%%% Introduction %%%%%%%%%%%%%%%%%%%%%%%%%%%%%%%%%%%
\section{Introduction} 

The paper is concerned with the mathematical analysis of the two-dimensional Gross-Pitaevskii equation, 
which is a model for Bose-Einstein condensates in the presence of stochastic effects, e.g. temperature
effects arising around the critical temperature of condensation.
Interactions of the condensate with the ``thermal cloud'' formed by non-condensed atoms need to be taken into account
in this situation.
Those interactions should preserve the principles of the fluctuation-dissipation theorem, which ensures
formally the relaxation of the system to the expected physical equilibrium (see \cite{bbdbg, ds,gd}), leading to the so-called
Projected Gross-Pitaevskii equation.

Neglecting the projection operator on the lowest energy modes, and setting the chemical potential to zero, the equation 
for the macroscopic wave function $\psi$ may be written in its simplest dimensionless form :
\begin{equation} \label{eq:SGPE_Phys}
\partial_t \psi=(i+\gamma) \big(\Delta \psi -V(x) \psi -g|\psi|^2 \psi \big) + \dot W_\gamma(t,x),
\end{equation}
where $V(x)$ is a confining, generally harmonic, potential, and $\dot W_\gamma$ is a two-dimensional space-time white noise
that is a Gaussian field with  delta correlations in time and space :
$$\langle \dot W_\gamma^*(s,y), \dot W_\gamma(t,x)\rangle= 2\gamma \delta_{t-s} \delta_{x-y}.$$
The constant $g$ is a positive physical constant.
When $\gamma=0$, one recovers the standard Gross-Pitaevskii equation for the wave function $\psi$,
and in this case the hamiltonian
$$\mathcal{H}(\psi) = \frac12 \int_{\R^2}  |\nabla \psi(x)|^2 dx + \frac12\int_{\R^2}  V(x)|\psi(x) |^2 dx +\frac{g}{4} \int_{\R^2} |\psi(x)|^4 dx$$
is conserved. 
%Note that a symmetry breaking occurs for the ground state of $H$ at $\nu = \hbar \omega$, since for small values
%of $\nu$ the minimum of $H$ is equal to zero. 
It follows that for $\gamma>0$, a formal Gibbs measure for \eqref{eq:SGPE_Phys} is given by 
$$ \rho(d\psi) =\Gamma \exp\left[-\mathcal{H}(\psi)\right]d\psi,$$
for some normalizing constant $\Gamma$. 
%This model has been used to observe the spontaneous nucleation of vortices during the condensation process
%(see \cite{w}). %Indeed, the appearance of topological defects during a phase transition is predicted by the 
%Kibble-Zurek mechanism, and can be modeled by Equation (\ref{eq:SGPE_Phys}) in the case of Bose-Einstein condensation. 
%In this model, the phase transition is initiated by a positive value of the chemical potential $\nu$. Thus, starting from a thermalized
%state with low chemical potential and high temperature, a sudden quench to a higher chemical potential and
%a lower temperature will lead to the formation of vortices during the growth process of the condensate. This
%dynamics is shown in \cite{p}. We can observe a fast appearance of vortices after the instantaneous quench,
%followed by a slow decay of the vortices. The numerical scheme used to solve equation (\ref{eq:SGPE_Phys}) 
%is an implicit/explicit scheme for the time discretization,
%and a Hermite spectral discretization in space, which is natural to take precisely into account the cut-off operator $\mathcal{P}_c$.

In this article we consider the harmonic potential $V(x)=|x|^2$.
Equation \eqref{eq:SGPE_Phys} has been studied in space dimension one in \cite{dbdf}, and the existence of
global solutions for all initial data was proved. The convergence to equilibrium was also obtained in \cite{dbdf}, thanks to
a Poincar\'{e} inequality, and to the properties of the invariant measure previously proved in \cite{btt}, where its support was
in particular shown to contain $L^p(\R)$ for any $p>2$.

The aim of the present paper is to extend part of those results to the two-dimensional case. Note that nothing was known in this
case about the Gibbs measure, which has to be built. A first remark to be done is that the Gaussian measure generated by
the linear equation is only supported in $\mathcal{W}^{-s,q}$, with $s>0$, $q \ge 2$, and $sq>2$ (here, $\mathcal{W}^{-s,q}$
is a Sobolev space based on the operator $-\Delta +V$, see Section 2 below). Hence, as is the case for the stochastic quantization equations,
the use of renormalization is necessary in order
to give a meaning to the solutions of \eqref{eq:SGPE_Phys} in the support of the latter Gaussian measure.

Renormalization procedures, using Wick products, have been by now widely used in the context of stochastic partial differential
equations (see e.g. for the case of dimension $2$ considered here \cite{dpd, DPD, dpt, mw, tw} and references therein), 
in particular for parabolic equations based on gradient flows. The complex Ginzburg-Landau equation
driven by space-time white noise, i.e. \eqref{eq:SGPE_Phys} without the confining potential, posed on the three-dimensional torus,
was studied in \cite{h} and for the two-dimensional torus in \cite{m, trenberth}. The main difference in our case is the presence of the harmonic potential $V$. 
It is thus natural to use
functional spaces based on the operator $-H=-\Delta +V$, rather than on standard Sobolev or Besov spaces;
we chose to work on Sobolev spaces based on $-H$ since it is enough for our analysis, 
but we need to prove all the necessary product rules in these spaces. 

Several difficulties arise when trying to adapt the previous methods to the present two-dimensional case. First, the diverging constant
in the definition of the Wick product is no more a constant, but rather a function of the space variable $x$; This is already the 
case for SPDEs on manifolds for instance and does not imply many difficulties. However,  up to our knowledge, 
Wick products corresponding to the Gaussian measure associated to the operator $-H$ considered here have never been constructed. 
An essential tool in the definition of the Wick products is the kernel $K(x,y)$ of the operator $(-H)^{-1}$, and in particular its integrability properties.
It appears (\cite{r}) that $K$ is never in $L^p(\R_x\times \R_y)$, for any $p\ge 1$, but we only have $K\in L^r(\R_x;L^p(\R_y))$
for $r>p\ge 2$ (see Proposition \ref{prop:Kernel} below).

Using these properties of the kernel $K$, we construct the Wick products with respect to the Gaussian measure with covariance $(-H)^{-1}$ and 
use the method of \cite{DPD} to construct local solutions. Then using ideas from \cite{mw}, we are able to prove that the solutions are global 
when $\gamma$ is sufficiently large. 
Moreover, we prove that \eqref{eq:SGPE_Phys} has an invariant measure which is the limit of Gibbs measures 
corresponding to finite dimensional approximations of this equation. This can be seen as a construction of the infinite dimensional Gibbs measure $\rho$. 

Since the kernel $K$ is not in $L^4(\R_x\times \R_y)$, we strongly believe that, contrary to the space dimension one, 
this Gibbs measure is singular with respect to the equilibrium Gaussian 
measure of the linear equation.

It is expected that \eqref{eq:SGPE_Phys} has a unique invariant measure. Strong Feller property of the associated transition
semigroup can be proved using similar arguments as in \cite{tw} or \cite{dpd2}. Unfortunately, irreducibility seems to be 
much more difficult. This question will be the object of a future work.  

Another problem we encounter is that we are not able to prove global existence for any $\gamma$. We cannot use the same argument as 
in \cite{dbdf}. This would need a better understanding of the Gibbs measure $\rho$. Instead, we construct martingale stationary solutions when 
$\gamma$ is small.

\section{Preliminaries and main results.} 
Writing equation (\ref{eq:SGPE_Phys}) in a more mathematical form, 
we will consider in what follows the infinite dimensional, stochastic complex Ginzburg-Landau equation, with a harmonic potential:
\begin{equation} \label{eq:SGPE_Math} 
dX=(\gamma_1 + i\gamma_2)(HX -|X|^{2} X)dt +\sqrt{2\gamma_1}dW, \quad t>0, \quad x \in \R^2,
\end{equation}
where $H=\Delta-|x|^2,$ $x\in \R^2$. We consider a more general equation with parameters 
$\gamma_1 >0$, and $\gamma_2 \in \R$, in order  to clarify 
the effects of the dissipation induced by $\gamma_1$. Let $\{h_k\}_{k \in \N^2}$ be the orthonormal basis 
of $L^2(\R^2, \R)$, consisting of eigenfunctions of $-H$ 
with corresponding eigenvalues $\{\lambda_k^2\}_{k\in \N^2}$, i.e. $-H h_k=\lambda_k^2 h_k$. 
It is known that $\lambda_k^2=2|k|+2$ with $k=(k_1,k_2) \in \N^2$, and the functions $h_k(x)$ are the Hermite functions. 
The unknown function $X$ is a complex valued random field on a probability space $(\Omega, \mathcal{F}, \prob)$ endowed 
with a standard filtration $(\mathcal{F}_t)_{t\ge 0}$. 

We take $\{h_k , i h_k \}_{k \ge 0}$ as a complete orthonormal
system in $L^2(\R^2, \C)$, and we may write the cylindrical Wiener process as  
\begin{equation} \label{WienerProcess}
W (t, x)= \sum_{k\in \N^2} (\beta_{k,R} (t) + i\beta_{k,I}(t)) h_k (x).
\end{equation}
Here, $(\beta_{k,R} (t))_{t \ge 0}$ and $(\beta_{k,I} (t))_{t \ge 0}$ are sequences of independent real-valued Brownian motions, 
on the stochastic basis $(\Omega, \mathcal{F}, \prob, (\mathcal{F}_t)_{t\ge 0}).$ In all what follows, the notation $\E$ stands for
the expectation with respect to $\prob$.
\vspace{3mm}

For $1 \le p \le +\infty,$ and $s\in \R,$ we define the Sobolev space associated to the operator $H$:
\begin{equation*}
\mathcal{W}^{s,p}(\R^2)=
\{v \in \mathcal{S}'(\R^2), \; |v|_{\mathcal{W}^{s,p}(\R^2)}:=|(-H)^{s/2} v|_{L^p(\R^2)} <+\infty \},
\end{equation*}
where $\mathcal{S}$ and $\mathcal{S}'$ denote the Schwartz space and its dual space, respectively.
%For $f\in \mathcal{S}' \cap L^1_{\mathrm{loc}}$, 
%$\langle f, \cdot\rangle$ stands for the action $f$ on $\mathcal{S}$, 
%and $\langle f, g\rangle=\int_{\R^2} f(x) \overline{g(x)} dx.$
If $I$ is an interval of $\R$, $E$ is a Banach space,
and $1\le r\le \infty$, then $L^r(I,E)$ is the space of
strongly Lebesgue measurable functions $v$ from $I$ into $E$
such that the function $t\to |v(t)|_E$ is in $L^r(I)$.
We define similarly the spaces
$C(I,E)$, $C^{\alpha}(I, E)$ or $L^r(\Omega, E)$. 
%Sometimes, emphasis will be put in the notation on the measure we consider ; for example if we write 
%$L^q ((L^p, d\rho), \R)$, this means the space of real-valued, measurable functions 
%on the measure space $(L^p, d\rho)$, with integrable $q$-th power.
For a complex Hilbert space $E$, 
the inner product will be understood as taking the real part, i.e., for $u=u^R+iu^I \in E$ and 
$v=v^R+iv^I \in E$, then we set $(u, v)_{E}:=(u^R, v^R)_{E} + (u^I, v^I)_{E}$    
that is we use the identification $\C \simeq \R^2$. 
The bracket notation is used for the meaning $\langle f \rangle := (1+|f|^2)^{1/2}.$ 
We denote by $E_N^{\C}$ the complex vector space spanned by the Hermite functions, $E_N^\C=\mathrm{span}\{h_0, h_1,..., h_N\}$. 
If we write $E_N^{\R}$ this means the real vector space instead. 
\vspace{3mm}

In the course of the proofs, we will frequently use an approximation by finite dimensional objects.
%When we write $u$ as a decomposition into the sum
In order to clarify the convergence properties of a function series of the form 
\linebreak
$u=\sum_{k\in \N^2} c_k h_k,$
%we need to know the convergence properties of this infinite sum. 
we define, for any $N \in \N$ fixed, for any $p \in [1,\infty]$, and $s\in \R$, 
a smooth projection operator $S_N: L^2(\R^2,\C)  \to E_N^{\C}$ by 
\begin{equation}
\label{def:SN}
S_N \Big[\sum_{k \in \N^2} c_k h_k \Big]:= \sum_{k \in \N^2} \chi \Big[\frac{\lambda_k^2}{\lambda_N^2}\Big] c_k h_k 
=\chi\Big[\frac{-H}{\lambda_N^2}\Big] \Big[\sum_{k \in \N^2} c_k h_k\Big], 
\end{equation}
where $\chi \ge 0$ is a cut-off function such that $\chi \in C_0^{\infty}(-1,1)$, $\chi=1$ on $[-\frac{1}{2}, \frac{1}{2}].$ 
Note that here and in what follows, we denote by $\lambda_N$ the value $\lambda_{(N,0)}$, for simplicity.
The operator $S_N$, which is self-adjoint and commutes with $H$, may easily be extended by duality to any Sobolev space $\mathcal{W}^{s,2}(\R^2; \C)$,
with $s\in \R$, and thus by Sobolev embeddings, to any space $\mathcal{W}^{s,p}(\R^2; \C)$, with $p\ge 1$.
We will use of the following lemma, whose proof is similar to Theorem 1.1 of \cite{jn}.
\begin{lem} \label{prop:specmap} For any $\psi \in \mathcal{S}(\R)$ and any $p \in [1,\infty]$, there exists
a constant $C = C(\psi) > 0$ such that
$$|\psi(-\theta H)|_{\mathcal{L}(L^p, L^p)} \le C,$$
for any $\theta \in (0,1)$. 
\end{lem}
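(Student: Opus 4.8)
The plan is to realize $\psi(-\theta H)=\psi(\theta L)$, where $L:=-H=-\Delta+|x|^2$ is nonnegative and self-adjoint on $L^2(\R^2)$ with spectrum $\{\lambda_k^2\}\subset[2,\infty)$, as a contour integral of the resolvent, and to estimate the latter on $L^p$ uniformly in $\theta$. First I would record the scaling identity $(w-\theta L)^{-1}=\theta^{-1}(w/\theta-L)^{-1}$, which shows that any resolvent bound of the form $|(w-L)^{-1}|_{\mathcal{L}(L^p,L^p)}\le C/|\im w|$ (for $w\notin[0,\infty)$) transfers verbatim, with the same constant, to $\theta L$ for every $\theta\in(0,1)$. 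Thus the entire difficulty is to prove a $\theta$-independent resolvent estimate for $L$ itself, after which the dependence on $\theta$ disappears for free.

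To obtain the resolvent bounds I would start from the heat semigroup $e^{-tL}$, whose Mehler kernel is nonnegative and, by the Feynman--Kac formula applied to the positive potential $|x|^2$, is pointwise dominated by the free Gaussian heat kernel of $e^{t\Delta}$. Since the latter integrates to one and the Mehler kernel is symmetric, $e^{-tL}$ is a contraction on every $L^p$, $1\le p\le\infty$, uniformly in $t>0$; in particular the Laplace representation $(\mu+L)^{-1}=\int_0^\infty e^{-\mu t}e^{-tL}\,dt$ gives $|(w-L)^{-1}|_{\mathcal{L}(L^p,L^p)}\le 1/|\re w|$ for $\re w<0$. For $w$ near the positive real axis one needs analyticity: the Gaussian bound upgrades to a Davies complex-time bound $|e^{-zL}(x,y)|\le C(\re z)^{-1}\exp(-c|x-y|^2/\re z)$ on a sector, from which the resolvent kernel estimate, and hence $|(w-\theta L)^{-1}|_{\mathcal{L}(L^p,L^p)}\le C/|\im w|$ off $[0,\infty)$, follows uniformly in $\theta$.

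With this in hand I would construct an almost-analytic (Helffer--Sjöstrand) extension $\tilde\psi$ of the Schwartz function $\psi$, satisfying $|\bar\partial\tilde\psi(w)|\le C_N|\im w|^N\langle\re w\rangle^{-M}$ for arbitrarily large $N,M$, and use the representation $\psi(\theta L)=\frac{1}{\pi}\int_{\C}\bar\partial\tilde\psi(w)\,(w-\theta L)^{-1}\,dL(w)$, where $dL$ is area measure. Combining the decay of $\bar\partial\tilde\psi$ (with $N\ge 1$) with the resolvent bound yields $|\psi(\theta L)|_{\mathcal{L}(L^p,L^p)}\le \frac{C}{\pi}\int_{\C}|\bar\partial\tilde\psi(w)|\,|\im w|^{-1}\,dL(w)<\infty$, a finite constant independent of $\theta$, which is exactly the claim.

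The main obstacle is the resolvent estimate on $L^p$ near the positive real axis, and especially its validity at the endpoints $p=1,\infty$, where the abstract analytic-semigroup machinery is delicate. I expect to handle this, as in Theorem 1.1 of \cite{jn}, at the level of kernels: the complex-time Gaussian bounds give pointwise control of the kernel $K_\theta(x,y)$ of $\psi(\theta L)$, and Schur's test with $\sup_x\int|K_\theta(x,y)|\,dy$ and $\sup_y\int|K_\theta(x,y)|\,dx$ bounded uniformly in $\theta$ covers $p=1$ and $p=\infty$ simultaneously; the intermediate $p$ then follow by interpolation. Verifying that these kernel integrals are genuinely uniform in $\theta$ --- in particular controlling the region of small $\im w$ together with the $\theta$-scaling --- is where the real work lies.
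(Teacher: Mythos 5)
Your overall architecture --- Helffer--Sj\"ostrand formula, resolvent estimates, the scaling identity $(w-\theta L)^{-1}=\theta^{-1}(w/\theta-L)^{-1}$, and a Schur test on kernels for the endpoint exponents --- is the right family of ideas and is close in spirit to the proof of Theorem 1.1 of \cite{jn} that the paper invokes. However, the estimate on which your argument actually rests, namely $|(w-L)^{-1}|_{\mathcal{L}(L^p,L^p)}\le C/|\im w|$ for all $w\notin[0,\infty)$ and all $p\in[1,\infty]$, is false for $p$ away from $2$. Indeed, if it held, then taking $w=\lambda_k^2+i\eps$ and letting $\eps\to 0$ in $i\eps(w-L)^{-1}$ (which converges, on finite linear combinations of Hermite functions, to the eigenprojection $P_{\lambda_k^2}$) would give $|P_{\lambda_k^2}|_{\mathcal{L}(L^p)}\le C$ uniformly in $k$. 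This fails: already in one dimension $|P_k|_{\mathcal{L}(L^1)}=|h_k|_{L^\infty}|h_k|_{L^1}\sim k^{1/4-1/12}=k^{1/6}\to\infty$ by the Plancherel--Rotach asymptotics, and the two-dimensional eigenprojections inherit this growth (test them on tensor functions $g\otimes h_0$). This is the same phenomenon the paper points out when it remarks that the sharp spectral cut-off $\Pi_N$ is not uniformly bounded on $L^p$. Relatedly, the route you propose for proving the bound cannot work: the Hermite semigroup $e^{-zL}$ is bounded on $L^p$, $p\ne 2$, only for $z$ in a region strictly smaller than the half-plane $\re z>0$ (Epperson's theorem), so $L$ is not sectorial of angle $\pi/2$ on $L^p$ and no complex-time Gaussian bound valid up to $|\arg z|=\pi/2$ is available; your Laplace-transform representation only controls the resolvent for $\re w$ below the bottom of the spectrum, not near $[2,\infty)$, which is exactly where the difficulty sits.

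The repair --- and this is what \cite{jn} actually does --- is to accept a resolvent bound with a polynomial loss, of the form $|(w-\theta L)^{-1}|_{\mathcal{L}(L^p,L^p)}\lesssim |\im w|^{-1-M}\langle w\rangle^{M'}$ for some finite $M,M'$ depending on $p$ and the dimension, obtained from Combes--Thomas exponential-weight estimates on $L^2$ localized to a partition of unity into unit cubes, followed by a Schur test over the cubes for $p=1,\infty$ and interpolation. The extra negative powers of $|\im w|$ are then harmless in the Helffer--Sj\"ostrand formula precisely because $\bar\partial\tilde\psi$ vanishes to arbitrarily high order at the real axis. Note also that once the correct bound carries factors of $\langle w\rangle$, your scaling identity no longer yields uniformity in $\theta$ \emph{for free}: $(w-\theta L)^{-1}=\theta^{-1}(w/\theta-L)^{-1}$ produces $\langle w/\theta\rangle$, and one must check that the powers of $\theta$ gained from $|\im(w/\theta)|^{-1-M}$ dominate those lost from $\langle w/\theta\rangle^{M'}$; this bookkeeping is where the semiclassical uniformity genuinely lives, and it is absent from your sketch. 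As written, the proposal has a gap at its central estimate.
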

This lemma implies that  
$S_N$ is a bounded operator from $L^p$ to $L^p$, uniformly in $N$,  
for any $p \in [1,\infty]$. Note that the usual spectral projector, 
$$\Pi_{N}\Big[\sum_{k \in \N^2} c_k h_k \Big]:= \sum_{k \in \N^2, |k|\le N} c_k h_k,$$ 
does not satisfy this property. 
\vspace{3mm} 

As was pointed out in the introduction, due to the space-time white noise, the solution of (\ref{eq:SGPE_Math}) is expected to 
have negative space regularity, and thus the nonlinear term $-|X|^2 X$ is ill-defined.  
In order to make sense of this term, we use a renormalization procedure based on Wick products. %which is by now classically used
%for parabolic equations (see for example \cite{dpd, DPD, dpt, mw,tw}). 
This amounts to ``subtract an infinite constant" from the nonlinear term
in \eqref{eq:SGPE_Math}. 
%\begin{equation} 
%dX=(\gamma_1+i\gamma_2)(HX-|X|^{2} X - \infty \cdot X )dt +\sqrt{2\gamma_1}dW.  
%\end{equation}
More precisely, writing the solution $X=u+Z_{\infty}^{\gamma_1, \gamma_2}$ with  
\begin{equation} \label{eq:Zinfini}
Z_{\infty}^{\gamma_1, \gamma_2}(t)=\sqrt{2\gamma_1}\int_{-\infty}^t e^{(t-\tau) (\gamma_1+i\gamma_2) H} dW(\tau),  
\end{equation}
which is the stationary solution for the linear stochastic equation
\begin{equation} \label{eq:Z}
dZ=(\gamma_1+i\gamma_2)HZdt +\sqrt{2\gamma_1} dW,  
\end{equation}
we find out the following random partial differential equation for $u$:
\begin{equation} \label{eq:u}
\partial_t u = (\gamma_1+i\gamma_2) (Hu-|u+Z_{\infty}^{\gamma_1, \gamma_2}|^2(u+Z_{\infty}^{\gamma_1, \gamma_2})), \quad u(0)=u_0:=X(0)-Z_{\infty}^{\gamma_1, \gamma_2}(0). 
\end{equation}
We are therefore required to solve this random partial differential equation. However, using standard arguments, it is not difficult to find that 
the best regularity we may expect for $Z_\infty^{\gamma_1, \gamma_2}$ is almost surely:
$Z_{\infty}^{\gamma_1, \gamma_2} \in \mathcal{W}^{-s,q}(\R^2)$
for $s>0$, $q \ge 2,$ $sq>2$, see Lemma \ref{lem:Kolmogorov} below.
Developing 
\begin{eqnarray}
\label{develop}
|u+Z_{\infty}^{\gamma_1, \gamma_2}|^2(u+Z_{\infty}^{\gamma_1, \gamma_2})&=&|u|^2 u +2|u|^2 Z_{\infty}^{\gamma_1, \gamma_2}
+ \bar{u} ({Z_{\infty}^{\gamma_1, \gamma_2}})^2 +u^2 \overline{Z_{\infty}^{\gamma_1, \gamma_2}} \nonumber \\[0.2cm]
&  & +\, 2u|Z_{\infty}^{\gamma_1, \gamma_2}|^2 
+|Z_{\infty}^{\gamma_1, \gamma_2}|^2 {Z_{\infty}^{\gamma_1, \gamma_2}},
\end{eqnarray}
we are led to multiply functions having both negative Sobolev regularity, which cannot be defined in the usual distribution sense.  
\vspace{3mm}

We need some preliminaries before we introduce the renormalization procedure.  
Let us recall a few facts about Hermite polynomials $H_n(x)$, $n \in \N$,
These are defined through the generating functions
$$ e^{-\frac{t^2}{2}+t x} =\sum_{n=0}^{\infty} \frac{t^n}{\sqrt{n !}} H_n(x), \quad x, t \in \R,$$ 
where 
\begin{equation} \label{eq:HermitePoly}
H_n(x)=\frac{(-1)^n}{\sqrt{n!}} e^{\frac{x^2}{2}} \frac{d^n}{dx^n}(e^{-\frac{x^2}{2}}), \quad n\ge 1
\end{equation}
and $H_0(x)=1$. 
%We note the scaling property
%\begin{equation} \label{eq:Hermite1}
%H_n(x; \lambda) = \lambda^{\frac{n}{2}} H_n(\lambda^{-\frac{1}{2}}x).
%\end{equation}
\vspace{3mm}
The Wick products of $Z^{\gamma_1, \gamma_2}_{\infty}$ are defined as follows. We write $Z_{R,\infty}=\mathrm{Re} Z^{\gamma_1, \gamma_2}_{\infty}$ and 
$Z_{I, \infty}=\mathrm{Im} Z^{\gamma_1, \gamma_2}_{\infty}$. 
For any $k,l\in \N$, we define 
\begin{eqnarray*}
:(Z_{R,\infty})^{k} (Z_{I,\infty})^{l}:~ &:=& \lim_{N\to\infty} :(S_N Z_{R,\infty})^{k}: :(S_N Z_{I,\infty})^{l}:, \quad \mbox{in} \quad L^q(\Omega, \mathcal{W}^{-s,q}(\R^2)),
\end{eqnarray*}
where $s>0, q\ge 4$ and $sq>8$. In the right hand side,  the notation $:(S_N z)^n:(x)$ for $n \in \N$, $N\in \N$, $x\in \R^2$, and for a real-valued 
centered Gaussian white noise $z$, means 
$$:(S_N z)^n :(x) = \rho_N(x)^n \sqrt{n !} H_n\left[\frac{1}{\rho_N(x)} S_N z(x)\right], \quad x \in \R^2$$ 
with 
$$\rho_N(x)=\left[\sum_{k \in \N^2} \chi^2\left(\frac{\lambda_k^2}{\lambda_N^2}\right) \frac{1}{\lambda_k^2} (h_k(x))^2\right]^{\frac12}.$$
More details about the Wick products will be given in Section 3.1, where the above convergence will be proved, and 
we will see that the Wick product is indeed well-defined in $\mathcal{W}^{-s,q}$, as soon as $s>0$, $q\ge4$ and $qs>8$ (see Proposition 
\ref{prop:ReImCauchy}).
%We have also the following reglarity results of the Wick products of $Z_{\infty}^{\gamma_1, \gamma_2}$
%(see Section 3.1). 
\vspace{3mm}

%\begin{prop} \label{prop:Z}
%Let $s>0$ and $q \ge 4$ be such that $qs>8$. 
%For any $k,l \in \N$, $:(\mathrm{Re}Z_{\infty}^{\gamma_1, \gamma_2})^k (\mathrm{Im}Z_{\infty}^{\gamma_1, \gamma_2})^l:$ is in $\mathcal{W}^{-s,q}$ a.s.
%Moreover 
%\begin{equation} \label{bound:Z}
%\E\left(|:(\mathrm{Re}Z_{\infty}^{\gamma_1, \gamma_2})^k (\mathrm{Im}Z_{\infty}^{\gamma_1, \gamma_2})^l:|_{\mathcal{W}^{-s,q}}^q\right) \le M_{s,q,k,l}.
%\end{equation}
%\end{prop}
%\vspace{3mm}

We thus consider the following renormalized equation in the space $\mathcal{W}^{-s,q}(\R^2)$ (in the weak sense): 
\begin{eqnarray} 
\label{eq:SGL}
dX &=&(\gamma_1+i\gamma_2)(HX-:|X|^2X:)dt +\sqrt{2\gamma_1}dW, \quad t>0, \quad x\in \R^2, \\ \nonumber 
X(0) &=&X_0, 
\end{eqnarray} 
in the sense that we solve the shifted equation (\ref{eq:u}) with $|u+Z_{\infty}^{\gamma_1, \gamma_2}|^2(u+Z_{\infty}^{\gamma_1, \gamma_2})$ 
replaced by 
$:|u+Z_{\infty}^{\gamma_1, \gamma_2}|^2(u+Z_{\infty}^{\gamma_1, \gamma_2}):$ defined by
replacing in \eqref{develop} all the terms involving $Z_{\infty}^{\gamma_1, \gamma_2}$ by the corresponding wick products (see
\eqref{def:F} for a more precise definition).

Hence we consider
\begin{equation} \label{eq:u_wick}
\partial_t u = (\gamma_1+i\gamma_2) \left[Hu-:|u+Z_{\infty}^{\gamma_1, \gamma_2}|^2(u+Z_{\infty}^{\gamma_1, \gamma_2}):\right], \end{equation}
supplemented with the initial condition
$$
 u(0)=u_0=X_0-Z_{\infty}^{\gamma_1, \gamma_2}(0). 
$$

\vspace{3mm}

%We define here for $n \ge 1$,
%$$\|:Z_{\infty}^n:\|_{\mathcal{W}^{-s,q}}(T) = \max_{0 \le k,l \le n, 
%k+l=n} \sup_{0\le t\le T}|:(\mathrm{Re}Z_{\infty}^{\gamma_1, \gamma_2})^k::(\mathrm{Im}Z_{\infty}^{\gamma_1, \gamma_2})^l:|_{\mathcal{W}^{-s,q}}.$$
%\vspace{3mm}
We will first prove the following local well-posedness result of equation \eqref{eq:u_wick}.
\begin{thm} \label{thm:localexistence} Fix any $T>0$. Let $\gamma_1>0$, $\gamma_2 \in \R$ and  $q>p>3r$, $r>6$. Assume 
$0<s<\beta<2/p$, $qs>8$, 
$ \beta-s>\frac{2}{p}-\beta$
and 
$s+2 \big(\frac{2}{p} -\beta\big) < 2\big(1-\frac{1}{q}\big).$
Let $u_0 \in \mathcal{W}^{-s,q}(\R^2)$. 
Then there exists a random stopping time $T_0^*(\omega)>0$, 
which depends on $u_0$ and $(:(\mathrm{Re}Z_{\infty}^{\gamma_1, \gamma_2})^k 
(\mathrm{Im}Z_{\infty}^{\gamma_1, \gamma_2})^l:)_{0\le k+l\le 3}$,
and a unique solution $u$ of (\ref{eq:u_wick}) such that 
$u \in C([0,T_0^*), W^{-s,q}(\R^2))\cap L^r(0,T_0^*, W^{\beta,p}(\R^2))$ a.s. We have moreover 
almost surely $T^*_0=T$ or $\lim_{t \uparrow T^*_0} |u(t)|_{\mathcal{W}^{-s,q}}=+\infty.$
%If we furthermore assume that $\|:Z_{\infty}^3 :\|_{\mathcal{W}^{-s,q}}(T) \le 1$, and  
%then there exist $\theta>0$ and a constant $C''>0$  
%which are independent of $|u_0|_{\mathcal{W}^{-s,q}}$ and $\|:Z_{\infty}^3:\|_{\mathcal{W}^{-s,q}}(T)$ such that 
%$T_0\le \left(\frac{1}{C''(R+1)}\right)^{\frac{1}{\theta}},$ with 
%$R=|u_0|_{\mathcal{W}^{-s,q}} +\|:Z_{\infty}^3:\|_{\mathcal{W}^{-s,q}}(T)$.  
\end{thm}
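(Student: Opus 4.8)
The plan is to recast \eqref{eq:u_wick} in its mild (Duhamel) form and to solve it by a Banach fixed point argument in a space adapted to the regularization of the analytic semigroup generated by $(\gamma_1+i\gamma_2)H$. Writing $Z=Z_{\infty}^{\gamma_1,\gamma_2}$ for brevity and denoting by $\mathcal{F}(u)$ the renormalized nonlinearity obtained from \eqref{develop} by replacing every monomial in $Z$ by the corresponding Wick product, the equation reads
$$u(t)=e^{t(\gamma_1+i\gamma_2)H}u_0-(\gamma_1+i\gamma_2)\int_0^t e^{(t-\tau)(\gamma_1+i\gamma_2)H}\,\mathcal{F}(u)(\tau)\,d\tau.$$
I would look for a solution in the space $X_T=C([0,T],\mathcal{W}^{-s,q})\cap L^r(0,T;\mathcal{W}^{\beta,p})$ and define $\Phi(u)$ to be the right-hand side above; a fixed point of $\Phi$ on a ball of $X_T$ is the desired local solution. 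The strategy is the Da Prato--Debussche scheme already referenced through \cite{DPD}: the pathwise equation for $u$ is subcritical because the only genuinely distributional objects are the Wick products of $Z$, which by Proposition~\ref{prop:ReImCauchy} are fixed data living almost surely in $\mathcal{W}^{-s,q}$, while the unknown $u$ gains positive regularity.

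The two analytic ingredients are the smoothing of the semigroup and the product rules in the $H$-based Sobolev spaces. Since $-H\ge 0$ behaves like a second order elliptic operator, $e^{t(\gamma_1+i\gamma_2)H}$ obeys heat-type bounds: it maps $\mathcal{W}^{\sigma_1,p_1}$ into $\mathcal{W}^{\sigma_2,p_2}$ for $\sigma_1\le \sigma_2$, $p_1\le p_2$, with operator norm controlled by $t^{-(\sigma_2-\sigma_1)/2-(1/p_1-1/p_2)}$ (up to a harmless exponential factor absorbed on $[0,T]$). I would establish these bounds from the spectral calculus and Lemma~\ref{prop:specmap}. For the nonlinearity I would split $\mathcal{F}(u)$ according to \eqref{develop}: the purely stochastic term $:\!|Z|^2Z\!:$ is a forcing lying in $L^q(\Omega,\mathcal{W}^{-s,q})$; the cubic term $|u|^2u$ is handled by the algebra/product estimate for $\mathcal{W}^{\beta,p}$ with $\beta>0$; and the mixed terms, products of a quadratic expression in $u\in\mathcal{W}^{\beta,p}$ with a Wick product in $\mathcal{W}^{-s,q}$, require a Sobolev multiplication theorem allowing one factor of negative regularity. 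Here the hypotheses enter decisively: the condition $\beta-s>\frac{2}{p}-\beta$ guarantees that a function in $\mathcal{W}^{\beta,p}$ is a legitimate multiplier on $\mathcal{W}^{-s,q}$, while the budget inequality $s+2(\frac{2}{p}-\beta)<2(1-\frac1q)$, together with $q>p>3r$ and $r>6$, keeps the accumulated demand in regularity and integrability below the threshold at which the Duhamel convolution kernel ceases to be time-integrable against the $L^{r/3}$-in-time nonlinearity; Young's convolution inequality then bounds the integral in both components of $X_T$ with a positive power of $T$ in front.

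With these estimates the contraction is routine: I would show $|\Phi(u)|_{X_T}\le C|u_0|_{\mathcal{W}^{-s,q}}+CT^{\kappa}(1+\|Z\|)^3$ together with a Lipschitz bound $|\Phi(u)-\Phi(v)|_{X_T}\le CT^{\kappa}(1+|u|_{X_T}+|v|_{X_T}+\|Z\|)^2\,|u-v|_{X_T}$ for some $\kappa>0$, where $\|Z\|$ collects the $\mathcal{W}^{-s,q}$ norms over $[0,T]$ of the Wick products $(:\!Z_{R,\infty}^kZ_{I,\infty}^l\!:)_{k+l\le 3}$. Choosing the radius of the ball of the order of the data and a random but strictly positive $T_0^*$, small enough that $CT^{\kappa}(\cdots)<\frac12$, yields a unique fixed point; membership in $C([0,T],\mathcal{W}^{-s,q})$ follows from the strong continuity of the semigroup and the gained integrability, and uniqueness in $X_T$ is inherited from the contraction. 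The blow-up alternative is then the standard maximality argument: one restarts the construction from any time at which the $\mathcal{W}^{-s,q}$ norm is finite, and since the local existence time depends only on that norm and on $\|Z\|$, either $T_0^*=T$ or $|u(t)|_{\mathcal{W}^{-s,q}}\to+\infty$ as $t\uparrow T_0^*$.

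I expect the main obstacle to be the product rules themselves. Unlike the flat case, where Bony paraproducts in Besov spaces are available off the shelf, the multiplication estimates in the Sobolev spaces $\mathcal{W}^{\sigma,p}$ built on $-H=-\Delta+|x|^2$ must be proved from scratch, carefully tracking how the harmonic weight interacts with products of distributions of opposite-sign regularity; balancing the three independent scales---regularity $(s,\beta)$, spatial integrability $(p,q)$, and time integrability $r$---against the semigroup smoothing so that all six monomials in \eqref{develop} close simultaneously with a common positive power of $T$ is the delicate bookkeeping at the heart of the proof.
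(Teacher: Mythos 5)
Your proposal follows essentially the same route as the paper: the Da Prato--Debussche mild formulation, a Banach fixed point in exactly the space $C([0,T_0];\mathcal{W}^{-s,q})\cap L^r(0,T_0;\mathcal{W}^{\beta,p})$, semigroup smoothing estimates for $e^{t(\gamma_1+i\gamma_2)H}$, a product lemma permitting one negative-regularity factor (the paper's Lemma~\ref{lem:bilinear}), the term-by-term treatment of $F_0,\dots,F_3$ from \eqref{def:F}, and the standard restart argument for the blow-up alternative. You also correctly identify the genuinely delicate point, namely proving the multiplication estimates in the $H$-based Sobolev spaces and balancing the parameters so that all monomials close with a common positive power of $T_0$; this matches the paper's proof in both structure and substance.
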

\vspace{3mm}

When the dissipation coefficient $\gamma_1$ is sufficiently large, an energy estimate allows to get a bound on
the $L^q$  norm of the solution, and to deduce a global existence result, as is stated in the next Propositions and
Theorem. 
%Remark that this way of globalization with the use of an energy bound has been often 
%used for  the complex Ginzburg-Landau equations (see for ex. \cite{bs,dgl,h}). 
This method of globalization has been widely used for the complex Ginzburg-Landau equation (see \cite{bs,dgl}) and has been 
adapted in the renormalized case (\cite{h,mw}).
\vspace{3mm}

\begin{prop} \label{prop:LpLWP}
Let $\gamma_1>0$, $\gamma_2 \in \R$ and $q>p>3r$, $r>6$. Assume 
$0<s<\beta<2/p$ %$qs>8$, 
%$$ \left(\beta-s\right)>\left(\frac{2}{p}-\beta\right), s+2 \left(\frac{2}{p} -\beta\right) < 2\left(1-\frac{1}{q}\right), 
%\quad \mathrm{and} \quad \frac{\beta+s}{2}> \frac{1}{p}-\frac{1}{q}.$$
satisfy the assumptions of Theorem \ref{thm:localexistence} and that we have in addition
$ s\le \frac{2}{p} -\beta <\frac{1}{12}$, with $2(\frac{2}{p}-\beta)<\beta $ and $3(\frac{2}{p}-\beta)<2(1-\frac{1}{q})$.
Let $u_0 \in L^q(\R^2)$. 
%Then there exist a random time $T_1^*=T_1^*(|u_0|_{L^q}, \|:{Z}_{\infty}^3:\|_{\mathcal{W}^{-s,q}}(T))>0$, 
%and a unique solution $u$ of (\ref{eq:u}) such that 
%$u\in C([0,T_1^*), W^{-s,q}(\R^2))\cap L^r(0,T_1^*, W^{\beta,p}(\R^2))$, a.s. 
%We moreover have, almost surely,  $T_1^*=T$ or $ \lim_{t \uparrow T_1^*} |u(t)|_{L^q} =+\infty$
%if $T_1^* < +\infty$.
Then the solution $u$ of \eqref{eq:u_wick} given by Theorem \ref{thm:localexistence} satisfies : $u\in C([0,T_0^*), L^q(\R^2))$.
\end{prop}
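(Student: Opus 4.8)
The plan is to re-run a fixed-point argument for the mild form of \eqref{eq:u_wick} in an $L^q$-based space, and then to propagate this regularity up to the blow-up time. Writing $Z:=Z_\infty^{\gamma_1,\gamma_2}$ and $F(u):=\,:|u+Z|^2(u+Z):$, the mild formulation reads
\[
u(t)=e^{(\gamma_1+i\gamma_2)tH}u_0-(\gamma_1+i\gamma_2)\int_0^t e^{(\gamma_1+i\gamma_2)(t-\tau)H}F(u)(\tau)\,d\tau .
\]
Since $\gamma_1>0$ the semigroup $(e^{(\gamma_1+i\gamma_2)tH})_{t\ge0}$ is analytic and strongly continuous on $L^q(\R^2)$ (here $q>18$), so the linear term lies in $C([0,\infty);L^q)$ as soon as $u_0\in L^q$. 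Because $L^q(\R^2)\hookrightarrow\mathcal{W}^{-s,q}(\R^2)$ for $s>0$, a contraction in $X_T:=C([0,T];L^q)\cap L^r(0,T;\mathcal{W}^{\beta,p})$ on a short interval produces a solution sitting inside the solution class of Theorem \ref{thm:localexistence}, which by the uniqueness stated there must coincide with $u$. The whole matter therefore reduces to showing that the Duhamel term sends $X_T$ into $C([0,T];L^q)$.

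For this I would estimate the six contributions in the expansion \eqref{develop} one by one, each time combining a product rule on the $\mathcal{W}^{\cdot,\cdot}$-scale (as already employed in the proof of Theorem \ref{thm:localexistence}) with the smoothing bound $|e^{(\gamma_1+i\gamma_2)\tau H}g|_{L^q}\lesssim\tau^{-a}|g|_{\mathcal{W}^{-\sigma,\rho}}$, $a=\tfrac\sigma2+\tfrac1\rho-\tfrac1q$, valid for $\tau\in(0,1]$ and $\rho\le q$, followed by a H\"older splitting in time against $u\in L^r(0,T;\mathcal{W}^{\beta,p})$. The purely Gaussian term $:|Z|^2Z:$ is independent of $u$, belongs to $\mathcal{W}^{-s,q}$ by Proposition \ref{prop:ReImCauchy}, and is smoothed into $L^q$ with the integrable weight $\tau^{-s/2}$. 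For the genuine cubic term $|u|^2u$ I would use the Sobolev embedding $\mathcal{W}^{\beta,p}\hookrightarrow L^{2/\theta}$ with $\theta:=\tfrac2p-\beta$, so that $|u|^2u\in L^{2/(3\theta)}$ and the smoothing exponent is $\tfrac{3\theta}2-\tfrac1q$; the hypothesis $3(\tfrac2p-\beta)<2(1-\tfrac1q)$ is exactly what forces this to be $<1$ with room for the time-H\"older step (legitimate since $r>6>3$). The mixed terms $2|u|^2Z$, $u^2\overline Z$, $\overline uZ^2$, $2u|Z|^2$ pair $u$, of positive regularity $\beta$, against the order-one and order-two Wick powers of $Z$, of regularity $-s$ with $s\le\theta$; here the remaining assumptions $2(\tfrac2p-\beta)<\beta$ and $s\le\tfrac2p-\beta$ ensure $\beta>s$, so that these products are well defined and land in a space from which the semigroup reaches $L^q$ with an integrable time singularity. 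Continuity in $t$ in each case follows from the integrable singularities together with strong continuity of the semigroup.

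It then remains to carry the $L^q$-regularity up to $T_0^*$. Let $T^{**}$ be the supremum of times $t<T_0^*$ with $u\in C([0,t];L^q)$. The decisive point is an a priori bound on $\sup_{t<T^{**}}|u(t)|_{L^q}$ with no superlinear feedback: in the Duhamel estimate the cubic term $|u|^2u$ is controlled by $\|u\|_{L^r(0,T^{**};\mathcal{W}^{\beta,p})}^3$ rather than by $\sup_t|u(t)|_{L^q}$, and this quantity is finite because $T^{**}<T_0^*$ and $u$ lies in the class of Theorem \ref{thm:localexistence}; the mixed and Gaussian contributions are likewise bounded by fixed powers of $\|u\|_{L^r\mathcal{W}^{\beta,p}}$ and of the Wick norms of Proposition \ref{prop:ReImCauchy}. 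Hence $|u(t)|_{L^q}$ stays bounded as $t\uparrow T^{**}$, $u(T^{**})\in L^q$, and restarting the local construction at $T^{**}$ would extend the interval of $L^q$-continuity, forcing $T^{**}=T_0^*$. I expect the main obstacle to be precisely this bookkeeping: checking that every product rule arising from \eqref{develop} closes under the stated arithmetic constraints on $(s,\beta,p,q,r)$ while keeping all space and time exponents subcritical, and in particular arranging the cubic term so that the $L^q$ bound is driven only by the already-controlled norm $L^r(0,T_0^*;\mathcal{W}^{\beta,p})$.
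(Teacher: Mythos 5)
Your proof is correct in substance, but it is organized differently from the paper's, and part of your scaffolding is superfluous. The paper does \emph{not} run a new fixed point in an $L^q$-based space, nor a continuation argument up to $T_0^*$: it takes the solution $u$ already furnished by Theorem \ref{thm:localexistence}, plugs it into the Duhamel formula \eqref{eq:umild}, and shows directly that the Duhamel term lies in $C([0,T];L^q)$ for every $T<T_0^*$, with a bound depending only on $|u|_{L^\infty(0,T;\mathcal{W}^{-s,q})}$, $|u|_{L^r(0,T;\mathcal{W}^{\beta,p})}$ and the Wick norms --- all finite below the maximal time. Since the $L^q$ norm of $u$ itself never enters the right-hand side, there is nothing to close and nothing to continue; this is precisely the ``no superlinear feedback'' observation you make in your last paragraph, which, once made, renders both your fixed point in $X_T$ and your $T^{**}$-argument unnecessary. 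The technical route also differs slightly: the paper stays entirely on the $\mathcal{W}^{-\cdot,q}$ scale, writing $|\cdot|_{L^q}=|(-H)^{s/2}\cdot|_{\mathcal{W}^{-s,q}}$ and re-applying Lemmas \ref{lem:nonlinear} and \ref{lem:bilinear} with the parameter $s$ replaced by $\tfrac2p-\beta$ --- this is exactly where the hypothesis $s\le\tfrac2p-\beta$ is used --- whereas you pass through genuine Lebesgue spaces via $\mathcal{W}^{\beta,p}\hookrightarrow L^{2/\theta}$ and then smooth back into $L^q$. Your route works but forces you to track whether $2/(3\theta)\le q$ or not (on $\R^2$ there is no inclusion between Lebesgue spaces, so the two cases call for Lemma \ref{lem:heat_kernel} and Lemma \ref{lem:arnaud2} respectively); the paper's choice of keeping the integrability index equal to $q$ throughout avoids this case distinction. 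Your identification of the roles of the extra hypotheses ($2(\tfrac2p-\beta)<\beta$ for the product lemmas on the mixed terms, $3(\tfrac2p-\beta)<2(1-\tfrac1q)$ for the cubic term's time-integrable singularity) matches the paper's.
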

\vspace{3mm}

In the next Proposition we give the $L^q$ a priori bound.

\begin{prop} \label{thm:Lpbound}[$L^q$ a priori estimate]. Let $\gamma_1>0$ and $q>p>3r$, $r>6$. Assume 
$0<s<\beta<2/p$
%$qs>8$, 
%$\beta-s>\frac{2}{p}-\beta$, $s+2 (\frac{2}{p} -\beta) < 2(1-\frac{1}{q})$, 
satisfy the assumptions of  Theorem \ref{thm:localexistence} and we have in addition 
$ s\le \frac{2}{p} -\beta <\frac{1}{12}$, with $2(\frac{2}{p}-\beta)<\beta $ and $3(\frac{2}{p}-\beta)<2(1-\frac{1}{q})$. 
Moreover assume $\gamma_2=0$, or $q< 2+2(\kappa^2+\kappa \sqrt{1+\kappa^2})$ with $\kappa=\gamma_1/\gamma_2$ if $\gamma_2 \ne 0$.
%and assume that $s$ is very small, $r$ is very large and $\beta$ is very close to $\frac{2}{p}$.
Let $u_0 \in L^q(\R^2)$, and let $u$ be the unique solution 
constructed in Theorem \ref{thm:localexistence}.  
Then, there exists a constant $C>0$ depending on $\gamma_1, \gamma_2, q$ and
$(:(\mathrm{Re}Z_{\infty}^{\gamma_1, \gamma_2})^k 
(\mathrm{Im}Z_{\infty}^{\gamma_1, \gamma_2})^l:)_{0\le k+l\le 3}$, such that  for any $t$ with $0<t<T_0^*$,
$$ |u(t)|_{L^q}^q \le e^{-\frac{\gamma_1 t \delta}{4}}|u_0|_{L^q}^q +C,$$
where $T_0^*$ is the maximal existence time given in Theorem \ref{thm:localexistence}. The coefficient $\delta$ is given by $ \delta = 1$ if $\gamma_2=0$, 
and $\delta = 1-\frac{q-2}{2(\kappa^2+\kappa\sqrt{1+\kappa^2})}$ if $\gamma_2\not =0$.
\end{prop}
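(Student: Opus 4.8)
The plan is to derive a differential inequality for $t\mapsto |u(t)|_{L^q}^q$ by testing the renormalized equation \eqref{eq:u_wick} against $|u|^{q-2}\bar u$ and taking the real part, so that
\[
\frac{1}{q}\frac{d}{dt}|u(t)|_{L^q}^q = \mathrm{Re}\int_{\R^2}\partial_t u\,|u|^{q-2}\bar u\,dx .
\]
Since $u$ only enjoys the regularity furnished by Theorem \ref{thm:localexistence} and Proposition \ref{prop:LpLWP}, I would first carry out this computation on the smooth, finite-dimensional approximation obtained by inserting the projector $S_N$, where every manipulation (differentiation in time, integration by parts against $H$) is licit, and pass to the limit $N\to\infty$ only at the end, using the convergence of the $u_N$ in $C([0,T],L^q)$ and of the Wick products in $L^q(\Omega,\mathcal{W}^{-s,q})$. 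Substituting $\partial_t u$ splits the right-hand side into the contribution of the linear operator $(\gamma_1+i\gamma_2)Hu$, that of the genuine cubic term $-(\gamma_1+i\gamma_2)|u|^2u$, and those of the five remaining lower-order terms of the expansion \eqref{develop} in which the powers of $Z_\infty^{\gamma_1,\gamma_2}$ are replaced by Wick products.

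The three ``diagonal'' terms are handled directly. The cubic term contributes exactly $-\gamma_1|u|_{L^{q+2}}^{q+2}$, a clean top-order coercive term. The potential part of $Hu$ gives $-\gamma_1\int_{\R^2}|x|^2|u|^q\,dx\le 0$, with no coupling to $\gamma_2$ since this integral is already real. The delicate one is the Laplacian part: after integration by parts one is left with a quadratic form in $\mathrm{Re}(\bar u\nabla u)$ and $\mathrm{Im}(\bar u\nabla u)$ with coefficients built from $\gamma_1$ and $\gamma_2(q-2)$; this is the standard complex Ginzburg--Landau computation (cf.\ \cite{bs,dgl}). Its negative-definiteness is exactly the threshold $q<2+2(\kappa^2+\kappa\sqrt{1+\kappa^2})$, and tracking the best constant produces the factor $\delta=1-\frac{q-2}{2(\kappa^2+\kappa\sqrt{1+\kappa^2})}$ (with $\delta=1$ when $\gamma_2=0$), so that this term is bounded above by $-\gamma_1\delta\int_{\R^2}|u|^{q-2}|\nabla u|^2\,dx$.

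The main work, and the main obstacle, is the control of the five lower-order Wick terms. Each pairs a polynomial in $u$ of degree $j\in\{0,1,2\}$ with a Wick product $:(\mathrm{Re}Z_\infty^{\gamma_1,\gamma_2})^k(\mathrm{Im}Z_\infty^{\gamma_1,\gamma_2})^l:$ of degree $3-j$, tested against $|u|^{q-2}\bar u$. As the Wick products only live in the negative-regularity space $\mathcal{W}^{-s,q}$, each such term must be estimated by duality: I would put the Wick factor in $\mathcal{W}^{-s,q}$ and the remaining factor of order $|u|^{q-2+j}$ in $\mathcal{W}^{s,q'}$, using the product rules in the $H$-based Sobolev scale together with interpolation between $L^q$ and the gain $\mathcal{W}^{\beta,p}$ provided by Proposition \ref{prop:LpLWP}. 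This is precisely where the arithmetic constraints $0<s<\beta<2/p$, $s\le\frac{2}{p}-\beta<\frac{1}{12}$, $2(\frac{2}{p}-\beta)<\beta$ and $3(\frac{2}{p}-\beta)<2(1-\frac1q)$ are consumed. Each bound is then split by Young's inequality into a small multiple $\eps|u|_{L^{q+2}}^{q+2}$, absorbed into the cubic coercive term, plus a constant times a power of the Wick norms, whose finiteness and finite moments are guaranteed by Proposition \ref{prop:ReImCauchy}; one arranges the $\eps$'s so that at least a fixed fraction of the dissipation survives.

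Collecting everything yields
\[
\frac{1}{q}\frac{d}{dt}|u|_{L^q}^q \le -\gamma_1\delta\int_{\R^2}|u|^{q-2}|\nabla u|^2\,dx -\gamma_1\int_{\R^2}|x|^2|u|^q\,dx -\tfrac{\gamma_1}{2}|u|_{L^{q+2}}^{q+2}+C,
\]
with $C$ depending only on $\gamma_1,\gamma_2,q$ and the Wick norms. The final step is a coercivity argument based on the ground-state bound $\langle -Hv,v\rangle_{L^2}\ge 2|v|_{L^2}^2$ (recall $\lambda_{(0,0)}^2=2$) applied to $v=|u|^{q/2}$: since $\int_{\R^2}|u|^{q-2}|\nabla u|^2\,dx\ge\frac{4}{q^2}|\nabla|u|^{q/2}|_{L^2}^2$ and $\int_{\R^2}|x|^2|u|^q\,dx=\big|\,|x|\,|u|^{q/2}\big|_{L^2}^2$, the gradient and potential dissipations together dominate a fixed positive multiple of $|u|^{q/2}|_{L^2}^2=|u|_{L^q}^q$, the $\gamma_1\delta$-weighted gradient term forcing the rate. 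After retaining the slack left by the Young absorptions this gives $\frac{d}{dt}|u|_{L^q}^q\le -\frac{\gamma_1\delta}{4}|u|_{L^q}^q+C$, and Gronwall's lemma yields the stated bound. I expect the genuine difficulty to lie in the duality estimates for the Wick terms and in verifying that the exponent constraints really leave enough room to absorb all of them into the $L^{q+2}$ term; the Laplacian quadratic form and the Gronwall step are comparatively routine.
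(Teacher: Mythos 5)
Your energy estimate itself --- testing against $|u|^{q-2}\bar u$, the quadratic form in $\re(\bar u\nabla u)$ and $\im(\bar u\nabla u)$ whose sign condition produces the threshold on $q$ and the constant $\delta$, and the duality/interpolation/Young treatment of the Wick terms with absorption into $\int|u|^{q+2}$, $\int|x|^2|u|^q$ and $\int|u|^{q-2}|\nabla u|^2$ --- is exactly the paper's argument. The one genuine gap is in how you propose to make the formal computation rigorous. You want to insert the projector $S_N$ into the equation, prove the bound for the finite-dimensional system, and pass to the limit. This fails at the first step: once the nonlinearity is replaced by $S_N\big(|S_Nu|^2S_Nu\big)$, testing against $|u|^{q-2}\bar u$ produces $\big(|S_Nu|^2S_Nu,\,S_N(|u|^{q-2}u)\big)_{L^2}$, which is no longer the coercive quantity $|u|_{L^{q+2}}^{q+2}$, nor even signed; the very term you rely on to absorb all the Wick contributions disappears. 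The paper is explicit that the $L^q$ bound does not a priori hold for the finite-dimensional approximations precisely because of the cut-off in front of the nonlinearity, and this is why Section 4.2 must prove a separate, weaker bound (Proposition \ref{prop:tightness}) to obtain tightness of $(\tilde\rho_N)_N$. The justification actually used (Remark \ref{rem:mw}) works on the genuine solution of \eqref{eq:u_wick}: for sufficiently regular $u_0$ one shows $u\in C^{1/2+}(0,\tau;L^q)$ and performs the computation directly, then removes the regularity assumption by continuous dependence on the initial state coming from the fixed-point construction.

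A second, minor point: your closing step applies the Poincar\'e inequality for $-H$ to $v=|u|^{q/2}$, which yields a dissipation of order $\gamma_1\delta/q^2$ at the level of $\frac1q\frac{d}{dt}|u|_{L^q}^q$, hence a Gronwall rate of order $\gamma_1\delta/q$ that drops below the stated $\gamma_1\delta/4$ for large $q$. The paper instead bounds $|u|_{L^q}^q\le \frac32\int|u|^{q+2}+\int|x|^2|u|^q+2\pi$ by splitting the integral at $|x|=1$, extracting a decay rate proportional to $\gamma_1 q$ from the zeroth-order dissipative terms, which comfortably covers the claimed constant.
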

\vspace{3mm}

Gathering the previous results and using the smoothing properties of the heat semi-group, 
we finally obtain the global existence of solutions in the case of large dissipation.
\vspace{3mm}

\begin{thm} \label{thm:global} Let $\gamma_1>0$ and $q>p>3r$, $r>6$. Assume 
$0<s<\beta<2/p$ 
%$qs>8$, 
%$\big(\beta-s\big)>\big(\frac{2}{p}-\beta\big)$, with $s+2 \big(\frac{2}{p} -\beta\big) < 2\big(1-\frac{1}{q}\big)$,
%and $ \frac{\beta+s}{2}> \frac{1}{p}-\frac{1}{q}$. 
satisfy the assumptions of  Theorem \ref{thm:localexistence} and we have in addition 
$ s\le \frac{2}{p} -\beta <\frac{1}{12}$, with $2(\frac{2}{p}-\beta)<\beta $ and $3(\frac{2}{p}-\beta)<2(1-\frac{1}{q})$. 
Moreover assume $\gamma_2=0$, or $q< 2+2(\kappa^2+\kappa \sqrt{1+\kappa^2})$ with $\kappa=\frac{\gamma_1}{\gamma_2}$ if $\gamma_2 \ne 0$.
%We moreover assume that $s$ is very small, 
%$r$ is very large and $\beta$ is very close to $\frac{2}{p}$. 
Let $u_0 \in \mathcal{W}^{-s,q}(\R^2)$.
Then there exists a unique global solution $u$ of (\ref{eq:u_wick}) in $C([0,T], \mathcal{W}^{-s,q}) \cap L^r(0,T; \mathcal{W}^{\beta,p})$ a.s. for any $T>0$. 
\end{thm}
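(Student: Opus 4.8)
The plan is to combine the three preceding results—local well-posedness (Theorem \ref{thm:localexistence}), propagation of $L^q$ regularity (Proposition \ref{prop:LpLWP}), and the $L^q$ a priori bound (Proposition \ref{thm:Lpbound})—into a globalization argument. The hypotheses imposed here are exactly the union of those three statements' hypotheses, so all three apply directly. First I would take $u_0 \in \mathcal{W}^{-s,q}(\R^2)$ and invoke Theorem \ref{thm:localexistence} to obtain a unique local solution $u \in C([0,T_0^*), \mathcal{W}^{-s,q}) \cap L^r(0,T_0^*; \mathcal{W}^{\beta,p})$ with the blow-up alternative: either $T_0^* = T$ or $\lim_{t\uparrow T_0^*} |u(t)|_{\mathcal{W}^{-s,q}} = +\infty$. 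The goal is to rule out the second alternative for every $T$, which forces $T_0^* = T$ and hence, $T$ being arbitrary, yields a global solution.

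The obstruction is that the a priori bound from Proposition \ref{thm:Lpbound} controls the $L^q$ norm, not the $\mathcal{W}^{-s,q}$ norm in which blow-up is measured. The bridge is a two-step regularization. For data merely in $\mathcal{W}^{-s,q}$ I would first use the smoothing of the linear semigroup $e^{t(\gamma_1+i\gamma_2)H}$: since the Gaussian data/noise terms and the Wick nonlinearity live in negative-regularity spaces, the Duhamel formula for \eqref{eq:u_wick} gains regularity instantaneously, so that for any $t_0 \in (0, T_0^*)$ one has $u(t_0) \in L^q(\R^2)$. This is the analogue of the standard parabolic smoothing step for Ginzburg–Landau-type equations cited in the text (\cite{bs,dgl} and the renormalized versions \cite{h,mw}). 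Once $u(t_0) \in L^q$, Proposition \ref{prop:LpLWP} guarantees $u \in C([t_0, T_0^*), L^q)$, so the solution stays in $L^q$ up to the maximal time, and Proposition \ref{thm:Lpbound} then gives the uniform dissipative estimate
$$ |u(t)|_{L^q}^q \le e^{-\frac{\gamma_1(t-t_0)\delta}{4}} |u(t_0)|_{L^q}^q + C, \qquad t_0 \le t < T_0^*, $$
with $\delta>0$ guaranteed precisely by the condition $q < 2 + 2(\kappa^2 + \kappa\sqrt{1+\kappa^2})$ when $\gamma_2 \neq 0$ (and $\delta = 1$ when $\gamma_2 = 0$). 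Hence $\sup_{t_0 \le t < T_0^*} |u(t)|_{L^q} < +\infty$.

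It remains to convert the uniform $L^q$ bound into a uniform $\mathcal{W}^{-s,q}$ bound, which is immediate since $L^q \hookrightarrow \mathcal{W}^{-s,q}$ continuously for $s>0$ (the operator $(-H)^{-s/2}$ is bounded on $L^q$, a consequence of Lemma \ref{prop:specmap}-type multiplier estimates for functions of $-H$). Therefore $\sup_{t_0 \le t < T_0^*} |u(t)|_{\mathcal{W}^{-s,q}} < +\infty$, which contradicts the blow-up alternative unless $T_0^* = T$. Since $T>0$ was arbitrary, the solution extends to $C([0,T], \mathcal{W}^{-s,q}) \cap L^r(0,T; \mathcal{W}^{\beta,p})$ for all $T$, and uniqueness on each interval (from Theorem \ref{thm:localexistence}) patches consistently to give a unique global solution. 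I expect the main technical obstacle to be the smoothing step that promotes $\mathcal{W}^{-s,q}$ data to $L^q$ in positive time: one must check that each term in the expansion \eqref{develop}, with its Wick-renormalized factors in $\mathcal{W}^{-s,q}$, when convolved against the semigroup and combined with the gain from $u \in L^r(0,T_0^*; \mathcal{W}^{\beta,p})$, lands in $L^q$, using the product rules in the $H$-based Sobolev spaces and the semigroup bounds $|e^{t(\gamma_1+i\gamma_2)H}|_{\mathcal{L}(\mathcal{W}^{-s,q}, L^q)} \lesssim t^{-s/2}$ with an integrable singularity at $t=0$.
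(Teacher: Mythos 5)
Your proposal is correct and follows essentially the same route as the paper: use the Duhamel formula and the semigroup smoothing (Lemma \ref{lem:regularization} together with the estimates from the proof of Proposition \ref{prop:LpLWP}) to show $u(\tilde T_0)\in L^q$ for some small $\tilde T_0>0$, restart from there with Proposition \ref{prop:LpLWP}, apply the a priori bound of Proposition \ref{thm:Lpbound}, and conclude via the blow-up alternative and the embedding $L^q\subset \mathcal{W}^{-s,q}$. You even correctly identify the one step the paper leaves partly implicit, namely verifying that each Wick term convolved against the semigroup lands in $L^q$.
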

\vspace{3mm}

%The $L^q$ a priori estimate in Proposition \ref{thm:Lpbound} also allows us to construct the Gibbs measure.
%Actually the next object in this paper is to construct the Gibbs measure as an invariant measure for (\ref{eq:SGL}), and 
%prove an almost sure global existence result for (\ref{eq:SGL}), without restriction on the dissipation coefficient.
Now that we have at hand a global flow, for $\gamma_1$ large enough, the next step is the construction of a Gibbs measure.
The Gibbs measure is formally written as an infinite-dimensional measure of the following form: 
$$ \rho(du)= \Gamma e^{-\mathcal{H}(u)} du,$$
where 
$$\mathcal{H}(u)= \frac{1}{2} \int_{\R^2} |\nabla u (x)|^2 dx + \frac{1}{2}\int_{\R^2} |xu(x)|^2 dx +\frac{1}{4} \int_{\R^2} |u(x)|^4 dx,$$
and $\Gamma$ is a normalizing constant. We will make sense of this infinite dimensional measure as follows.  
Using (\ref{WienerProcess}), it may be easily seen that (\ref{eq:Zinfini}) can be written as 
\begin{equation} \label{Zseries}
Z_{\infty}^{\gamma_1, \gamma_2}(t)=\sum_{k \in \N^2} \frac{\sqrt{2}}{\lambda_k}g_k(\omega,t)h_k(x),
\end{equation}
where $\{g_k (\omega,t)\}_{k \in \N^2}$ is a system of independent, complex-valued random variables with
law $\mathcal{N}_{\C}(0, 1)$.
Thus, the projection onto $E_N^{\C}$ of the stationary solution $Z_{\infty}^{\gamma_1, \gamma_2}(t)$ has the same law as the   
Gaussian measure $\mu_N$ induced by a random series 
$$\varphi_N(\omega,x):= \sum_{k \in \N^2, |k| \le N} \frac{\sqrt{2}}{\lambda_{k}} g_k(\omega) h_k(x)$$
defined on $(\Omega, \mathcal{F}, \prob)$ 
where $\{g_k (\omega)\}_{k \in \N^2}$ is a system of independent, complex-valued random variables with
the law $\mathcal{N}_{\C}(0, 1)$. 
Thanks to the same argument as Lemma 2.1 in \cite{dbdf}, combined with Proposition \ref{prop:Kernel} below, 
the series converges in $L^q(\Omega,\mathcal{W}^{-s,q})$ if $s>0$, $q \ge 2$ and $s q > 2$ (see Lemma \ref{lem:Kolmogorov} below),
and the limit defines the infinite-dimensional Gaussian measure $\mu$ on $\mathcal{W}^{-s,q}$. 
However, although the above Gibbs expression may be formally written as 
$$ \rho(du)= \Gamma e^{-\frac{1}{4} \int_{\R^2} |u(x)|^4 dx} \mu(du),$$
we cannot make sense of it, since $L^4(\R^2)$ is not in the support of $\mu$. For that reason, we should also renormalize
the $L^4$ norm in the Gibbs measure. However, it is not clear how this can be done in a compatible way.
This fact leads us to define the Gibbs measure for (\ref{eq:SGL}) as a limit of 
\begin{equation*}
\tilde{\rho}_N (dy) = \Gamma_N \exp\left\{-\int_{\R^2}\left(\frac{1}{4}|S_Ny(x)|^4 -2\rho_N^2(x)|S_Ny(x)|^2+2\rho_N^4(x)\right)dx\right\} \mu_N (dy), \quad y \in E_N^{\C}, 
\end{equation*}
where $\Gamma_N$ is the normalizing constant. 
We can indeed prove the tightness of the family of  measures $(\tilde{\rho}_N)_{N\in \N}$. 

\begin{thm} 
\label{thm:invmeasure}
Let $\gamma_1$, $\gamma_{2}$, $q,s$ be as in Theorem \ref{thm:global}. Then, there exists an invariant measure $\rho$ supported in
$\mathcal{W}^{-s,q}(\R^2)$, for the transition semi-group associated with equation \eqref{eq:SGL}, which is well-defined according
to Theorem \ref{thm:global}. Moreover, $\rho$ is the weak limit of a subsequence of the family $(\tilde \rho_N)_N$ defined above.
\end{thm}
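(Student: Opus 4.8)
The plan is to realise $\rho$ as a weak limit of the finite-dimensional Gibbs measures $\tilde\rho_N$ and to transfer their invariance to the limiting transition semigroup $P_t$ associated with \eqref{eq:SGL}, $P_t\phi(x)=\E[\phi(X(t;x))]$, which is well defined on $\mathcal{W}^{-s,q}(\R^2)$ by Theorem \ref{thm:global}. I would organise the argument in four steps. First (finite-dimensional invariance), I would observe that $\tilde\rho_N$ is exactly the invariant measure of the finite-dimensional Galerkin dynamics on $E_N^{\C}$,
\[
dX_N=(\gamma_1+i\gamma_2)\big(HX_N-S_N:|S_N X_N|^2 S_N X_N:\big)\,dt+\sqrt{2\gamma_1}\,S_N dW.
\]
Writing $\mu_N\propto \exp\{-\tfrac12\int(|\nabla y|^2+|xy|^2)dx\}\,dy$ on $E_N^{\C}$, the density of $\tilde\rho_N$ is $\exp\{-\cH_N(y)\}$ with $\cH_N$ the renormalised energy, the counterterms $-2\rho_N^2|S_Ny|^2+2\rho_N^4$ being precisely those turning $|S_Ny|^4$ into its Wick-ordered version. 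The drift above equals $-(\gamma_1+i\gamma_2)\nabla\cH_N$, so the dynamics splits into a dissipative gradient part $-\gamma_1\nabla\cH_N$ with noise $\sqrt{2\gamma_1}$, for which $e^{-\cH_N}dy$ is invariant by the fluctuation--dissipation balance, and a conservative part $-i\gamma_2\nabla\cH_N$, which is divergence free (multiplication by $i$ being a rotation) and preserves $\cH_N$. Hence $\tilde\rho_N$ is invariant; this is a standard finite-dimensional Fokker--Planck computation.

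Second (tightness), rather than trying to bound the densities of $\tilde\rho_N$ uniformly — which would require delicate uniform control of the partition functions $\Gamma_N$ — I would exploit the dynamical $L^q$ a priori estimate of Proposition \ref{thm:Lpbound} together with stationarity. Decomposing $X_N=u^N+S_N Z_{\infty}^{\gamma_1,\gamma_2}$ and taking expectation of the bound $|u^N(t)|_{L^q}^q\le e^{-\gamma_1 t\delta/4}|u^N(0)|_{L^q}^q+C$ under the stationary law, stationarity gives $\E|u^N|_{L^q}^q\le C/(1-e^{-\gamma_1 t\delta/4})$ uniformly in $N$, the constant $C$ being uniform because it depends only on moments of the Wick products, which are bounded uniformly in $N$ by Section 3.1 (Proposition \ref{prop:ReImCauchy}). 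Adding the uniform Gaussian bound for $|S_N Z_{\infty}^{\gamma_1,\gamma_2}|_{\mathcal{W}^{-s',q}}$ from Lemma \ref{lem:Kolmogorov}, and choosing $s'\in(2/q,s)$ (possible since $qs>8$), yields $\sup_N\E_{\tilde\rho_N}[|y|_{\mathcal{W}^{-s',q}}^q]\le C$. Since $-H$ has compact resolvent, $\mathcal{W}^{-s',q}\hookrightarrow\mathcal{W}^{-s,q}$ compactly, so by Markov's inequality $(\tilde\rho_N)_N$ is tight on $\mathcal{W}^{-s,q}$, and by Prokhorov I extract a subsequence $\tilde\rho_{N_k}\rightharpoonup\rho$.

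Third (passage to the limit), for $\phi\in C_b(\mathcal{W}^{-s,q})$ I would start from $\int P_t^{N_k}\phi\,d\tilde\rho_{N_k}=\int\phi\,d\tilde\rho_{N_k}$ and pass to the limit in both sides. The right-hand side converges to $\int\phi\,d\rho$ by weak convergence. For the left-hand side I split
\[
\int P_t^{N_k}\phi\,d\tilde\rho_{N_k}-\int P_t\phi\,d\rho=\int(P_t^{N_k}\phi-P_t\phi)\,d\tilde\rho_{N_k}+\Big(\int P_t\phi\,d\tilde\rho_{N_k}-\int P_t\phi\,d\rho\Big).
\]
The second term tends to $0$ once I know the Feller property $P_t\phi\in C_b(\mathcal{W}^{-s,q})$, which follows from the continuous dependence of the solution of \eqref{eq:u_wick} on its initial datum (the contraction estimates underlying Theorem \ref{thm:localexistence}) and dominated convergence. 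For the first term, I would use that for each fixed $x$ the Galerkin solutions converge, $X^{N_k}(t;x)\to X(t;x)$ in $\mathcal{W}^{-s,q}$ in probability — a consequence of the same fixed-point estimates combined with $S_N\to\mathrm{Id}$ and with the convergence of the Wick products of Section 3.1 — and that, with the a priori bound, this convergence is uniform over compact sets of initial data. Given $\varepsilon>0$, tightness provides a compact $K$ with $\sup_k\tilde\rho_{N_k}(K^c)<\varepsilon$; the uniform-on-$K$ convergence controls the integral over $K$, while outside $K$ the integrand is bounded by $2\|\phi\|_\infty\varepsilon$. Letting $k\to\infty$ and then $\varepsilon\to0$ shows the first term vanishes, whence $\int P_t\phi\,d\rho=\int\phi\,d\rho$ for all $\phi\in C_b$ and all $t>0$, i.e. $\rho$ is invariant and supported in $\mathcal{W}^{-s,q}$.

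The main obstacle is the first term above: because the measures $\tilde\rho_{N_k}$ themselves vary with $k$, I cannot simply invoke dominated convergence for a fixed reference measure, and must instead establish convergence of the approximate flows $P_t^{N_k}\phi\to P_t\phi$ uniformly on compact sets of initial data, up to the fixed time $t$ and for the globally defined renormalised dynamics. This hinges on propagating the local stability estimates of Theorem \ref{thm:localexistence} over $[0,t]$ with constants controlled uniformly in $x\in K$ and in $k$ — using the global $L^q$ control of Proposition \ref{thm:Lpbound} to prevent blow-up — and on the $L^q(\Omega)$ convergence of the renormalised Wick products from Section 3.1, which is what makes the nonlinear term of the Galerkin scheme converge to that of \eqref{eq:u_wick}. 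The finite-dimensional invariance and the tightness, by contrast, are comparatively routine given the Gibbsian structure and the a priori estimate already in hand.
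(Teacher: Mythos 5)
Your overall architecture (finite-dimensional Gibbsian invariance, tightness, passage to the limit via convergence of the Galerkin flows) matches the paper's, and your first and third steps are essentially what the paper does (the paper's limit-passage is terser, resting on Proposition \ref{prop:convGalerkin}). However, your tightness step contains a genuine gap, and it is precisely the point the paper singles out as the difficulty. You propose to take expectations, under the stationary law, of the bound $|u^N(t)|_{L^q}^q\le e^{-\gamma_1 t\delta/4}|u^N(0)|_{L^q}^q+C$ from Proposition \ref{thm:Lpbound}. But that estimate is proved for the continuum equation \eqref{eq:u_wick}; it is \emph{not} available for the Galerkin system \eqref{eq:u_finite}, because the nonlinearity there is $S_N\big(:|S_N(u+Z)|^2S_N(u+Z):\big)$. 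When you test against $|u|^{q-2}\bar u$ with $q>2$, the projection $S_N$ does not commute with pointwise multiplication by $|u|^{q-2}\bar u$, so the cubic term no longer produces the signed quantity $-\gamma_1\int|u|^{q+2}$ that drives the whole $L^q$ estimate. The paper states this explicitly (``the tightness \dots is not induced by a $L^q$ bound as in Proposition \ref{thm:Lpbound}, which does not a priori hold for the finite dimensional approximations'') and remarks that this obstruction is also why global strong solutions for small dissipation remain out of reach.

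The paper's substitute is Proposition \ref{prop:tightness}: for a \emph{stationary} solution $(u_N,Z_N)$ of the coupled system \eqref{eq:coupled}, one tests against $\bar u_N$ itself (an $L^2$ pairing), so that by self-adjointness of $S_N$ the cubic term becomes $+\gamma_1|S_Nu_N|_{L^4}^4\ge 0$, and one obtains uniform moments $\E\big(|(-H)^{\frac{1}{2m}}u_N|_{L^2}^{2m}\big)\le C_m$ after absorbing the $F_1,F_2,F_3$ contributions. Tightness of $(\tilde\rho_N)_N$ in $\mathcal{W}^{-s,q}$ then follows via the representation $\int\varphi\,d\tilde\rho_N=\int\!\!\int\varphi(u+z)\,d\nu_N$ for the invariant measure $\nu_N$ of the coupled system, the embedding $\mathcal{W}^{1,2}\subset L^q$, and the compact embedding $\mathcal{W}^{-s',q}\subset\mathcal{W}^{-s,q}$. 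Note also that this bound holds for \emph{all} $\gamma_1>0,\gamma_2\in\R$, which is what later yields the stationary martingale solutions of Theorem \ref{thm:sol_martingale}; your route, even if it could be repaired, would be confined to the large-dissipation regime. You should replace your second step by an $L^2$-based stationary energy estimate of this type.
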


%In the proof of Theorem \ref{thm:invmeasure}, we use the conditions on $\gamma_1$ and $\gamma_2$ 
%as in Theorem \ref{thm:global} to prove the tightness of the family of measures $(\tilde{\rho}_N)_{N\in \N}$ via 
%the $L^q$ bound satisfied by the solution of the case of large dissipation, but 
%the tightness itself depends only on $q,s$ being $q>72$, $qs>8$ (see Proposition \ref{prop:tightness} below). 
Note that the measure $\tilde{\rho}_N$ does not depend on $\gamma_1$ or $\gamma_2$, but the tightness
$(\tilde{\rho}_N)_{N\in \N}$ is not induced by a $L^q$ bound as in Proposition \ref{thm:Lpbound}, which does not a priori hold
for the finite dimensional approximations of equation \eqref{eq:u_wick}. We thus have to prove an alternative bound (see Proposition \ref{prop:tightness} in Section  4)
and, unfortunately, this latter bound does not provide higher moment bounds on the measures $\tilde \rho_N$, 
 preventing us to obtain global strong solutions in the small dissipation case, as would be expected. Nevertheless, the bound in Proposition \ref{prop:tightness} allows us 
to construct a stationary martingale solution for any dissipation coefficients:  

\begin{thm} \label{thm:sol_martingale} 
Let $\gamma_1>0$ and $\gamma_2 \in \R$, and let $0<s<1$, $q>8$, $sq>8$. Then, there exists a stationary martingale solution $X$ of \eqref{eq:SGL} having trajectories  
in $C(\R_+, \mathcal{W}^{-s,q})$  and such that for any $t\ge 0$, $\mathcal{L}(X(t))= \rho$, the measure constructed in Theorem \ref{thm:invmeasure}.
\end{thm}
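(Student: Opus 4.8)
The plan is to obtain the stationary martingale solution as a limit of stationary solutions of the finite-dimensional Galerkin approximations, exploiting that the renormalized Gibbs measures $\tilde\rho_N$ are their invariant measures and that the family $(\tilde\rho_N)_N$ is already known to be tight from Proposition \ref{prop:tightness}. For each $N$ I would consider the $E_N^\C$-valued stochastic differential equation
\begin{equation*}
dX_N=(\gamma_1+i\gamma_2)\big(HX_N-S_N F_N(X_N)\big)\,dt+\sqrt{2\gamma_1}\,S_N\,dW,
\end{equation*}
where $F_N$ denotes the truncated cubic nonlinearity together with the Wick counterterms appearing in $\tilde\rho_N$. This is a finite-dimensional SDE with locally Lipschitz coefficients; the fluctuation-dissipation structure of the drift (the $\gamma_1$ part being the gradient of the renormalized energy and the $\gamma_2$ part preserving it) shows that $\tilde\rho_N$ is invariant. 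Choosing $X_N(0)\sim\tilde\rho_N$ therefore produces a stationary process $X_N$ with one-time marginal $\tilde\rho_N$, globally defined for every $\gamma_1>0$. Note that this construction does not require the global well-posedness of \eqref{eq:SGL} itself, which is exactly why it reaches the small-dissipation regime not covered by Theorem \ref{thm:global}.

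The next step is path-space tightness of $(\mathcal{L}(X_N))_N$ on $C(\R_+,\mathcal{W}^{-s,q})$. I would split $X_N=S_NZ_\infty^{\gamma_1,\gamma_2}+Y_N$, so that the Gaussian part $S_NZ_\infty^{\gamma_1,\gamma_2}$ is controlled, uniformly in $N$ and jointly with its Wick powers, by Proposition \ref{prop:ReImCauchy}, while the remainder $Y_N$ solves the shifted equation and, via the mild formulation and the smoothing of $e^{tH}$, gains positive spatial regularity and H\"older regularity in time, with the initial datum $Y_N(0)$ controlled uniformly through the bound of Proposition \ref{prop:tightness}. Combining a uniform bound in $C([0,T];\mathcal{B})$ for some $\mathcal{B}\hookrightarrow\hookrightarrow\mathcal{W}^{-s,q}$ with a uniform time-H\"older bound in a weaker space, an Aubin-Lions-Simon type argument yields tightness on each $C([0,T];\mathcal{W}^{-s,q})$ and hence on $C(\R_+,\mathcal{W}^{-s,q})$. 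By Prokhorov's theorem a subsequence converges weakly, and by the Skorokhod representation theorem I may realize it on a common probability space, obtaining $\tilde{X}_{N_k}\to\tilde{X}$ almost surely in $C(\R_+,\mathcal{W}^{-s,q})$, with matching laws.

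It then remains to identify $\tilde{X}$ as a martingale solution of \eqref{eq:SGL} and to verify its stationarity. For the martingale identification I would write the finite-dimensional martingale identity against test functions and pass to the limit term by term in the expansion \eqref{develop}: the almost sure convergence on the Skorokhod space, the convergence of the Wick products from Proposition \ref{prop:ReImCauchy}, and the multilinear product estimates of Section 3 allow one to identify the limit of $S_{N_k}F_{N_k}(X_{N_k})$ with $:|X|^2X:$, while uniform moment bounds furnish the uniform integrability needed to pass to the limit in expectation. Stationarity is inherited from that of each $X_{N_k}$, and since $\tilde\rho_{N_k}\rightharpoonup\rho$ as in Theorem \ref{thm:invmeasure}, the one-time marginal of $\tilde{X}$ is $\rho$. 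The main obstacle is twofold and essentially concentrated in these last two steps: first, extracting uniform-in-$N$ time regularity for path-space tightness when the only available static control, Proposition \ref{prop:tightness}, carries no higher moments, which forces the delicate split above and a careful use of parabolic smoothing; and second, passing to the limit in the renormalized nonlinearity $:|X_N|^2X_N:$, which is not continuous in $\mathcal{W}^{-s,q}$ and must be handled componentwise through the Wick-power convergence and the multilinear estimates — this is precisely where the hypotheses $q>8$ and $sq>8$ enter.
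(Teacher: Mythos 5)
Your proposal follows essentially the same route as the paper: stationary solutions of the Galerkin system with marginal law $\tilde\rho_N$, decomposition into the Gaussian part plus a remainder solving the shifted equation, uniform bounds from Proposition \ref{prop:tightness}, compactness/Prokhorov/Skorokhod, and term-by-term identification of the renormalized nonlinearity via the decomposition \eqref{def:F}, the convergence of the Wick products and the multilinear estimates. The only inessential divergence is in how time-equicontinuity is obtained: the paper bounds $\partial_t u_N$ directly from the equation in $L^{4/3}(0,T;\mathcal{W}^{-2,p})$ and invokes an Aubin--Lions--Simon argument, obtaining tightness of $u_N$ only in $C([0,T];\mathcal{W}^{-3,p})$ and recovering continuity with values in $\mathcal{W}^{-s,q}$ a posteriori, rather than extracting time regularity from the mild formulation and parabolic smoothing as you suggest.
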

 
%\begin{thm}
%\label{thm:aeglobalexistence} Fix any $T>0$. Let $\gamma_1>0$, $\gamma_2 \in \R$, and let $s_0$ and $q$ be as in Theorem \ref{thm:localexistence}.
%%and  $q>p>3r$, $r>6$. Assume 
%%$0<s_0<\beta<2/p$, $q s_0>8$, $\beta-s_0>\frac{2}{p}-\beta$ and 
%%$s_0+2 \big(\frac{2}{p} -\beta\big) < 2\big(1-\frac{1}{q}\big).$ Assume in addition 
%%$ s_0\le \frac{2}{p} -\beta <\frac29$, with $2(\frac{2}{p}-\beta)<\beta $ and $3(\frac{2}{p}-\beta)<2(1-\frac{1}{q})$.
%There exists a $\rho$-measurable set $\mathcal{O} \subset \mathcal{W}^{-{s_0},q}(\R^2)$ such that $\rho(\mathcal{O})=1$ 
%and such that for $X_0 \in \mathcal{O}$ there exists a unique
%solution of (\ref{eq:SGL}), $X(\cdot)\in C([0,T],\mathcal{W}^{-{s_0},q}(\R^2))$ a.s. 
%Moreover, the measure $\rho$ is invariant for the transition semigroup associated with (\ref{eq:SGL}).
%\end{thm}
\vspace{3mm}

The paper is organized as follows. Section 3 consists of three parts. 
In Section 3.1, we begin with the definition of Wick products suitable to our equation, 
and study the regularity of the Wick products. In Section 3.2, we list up the properties of the heat semigroup $(e^{t(\gamma_1+i\gamma_2)H})_{t\ge 0}$. 
Section 3.3 is devoted to the proof of Theorem \ref{thm:localexistence} introducing a polynomial estimate to deal with the nonlinear terms. 
In Section 4.1, we pay attention to the global existence of solutions in the case of large dissipation. 
We derive the $L^q$ a priori bound and prove that the solution exists globally in time by the smoothing properties of the heat semigroup. 
Furthermore, in Section 4.2, we consider a finite-dimensional approximation to (\ref{eq:SGL}), and the associated finite-dimensional Gibbs measure. 
The tightness of this finite-dimensional family of measures is proved and the limit is obtained 
to be an invariant measure for the case of large dissipation concluding Theorem \ref{thm:invmeasure}. 
Finally, using moment bounds and a compactness argument,
we prove in Section 5 the existence of a stationary martingale solution without restriction on the dissipation parameter. 
%This tightness of the finite-dimensional measure is in fact also valid for the case of  small dissipations, thus in Section 5 we prove Theorem \ref{thm:aeglobalexistence}, i.e., the almost sure global existence for any dissipations making use of the limit measure.      

%%%%%%%%%%%%%%%%%%%%%%%%%%%%%%%%%%%%%%%%%%%%%
\section{Local existence}
\subsection{Preliminary results on the regularity of Wick products} 

We begin with introducing some useful tools in the space $\mathcal{W}^{s,p}(\R^2)$.    
\begin{prop}
\begin{itemize}
\item[(1)] \label{prop:dg}  
For any $p\in (1,\infty)$ and $s\ge 0$, there exists $C>0$ such that  
\begin{equation*}
\frac{1}{C}|f|_{\mathcal{W}^{s,p}(\R^2)} \le |\langle D_x \rangle^s f|_{L^p(\R^2)} +|\langle x \rangle^s f|_{L^p(\R^2)} \le C|f|_{\mathcal{W}^{s,p}(\R^2)}.
\end{equation*}
\item[(2)] Let $\alpha\ge 0$. Then the following estimates hold.
$$|fg|_{\mathcal{W}^{\alpha,q}} \le C(|f|_{L^{q_1}} |g|_{\mathcal{W}^{\alpha, \bar{q_1}}} + |f|_{\mathcal{W}^{\alpha, q_2}} |g|_{L^{\bar{q_2}}}),$$
where $1<q<\infty$,  $q_1, q_2 \in (1, \infty]$, $\bar{q_1}, \bar{q_2} \in [1, \infty)$ with 
$\frac{1}{q}=\frac{1}{q_1}+\frac{1}{\bar{q_1}}=\frac{1}{q_2}+\frac{1}{\bar{q_2}}.$
\end{itemize}
\end{prop}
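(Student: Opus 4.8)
The plan is to treat the two parts in order, deducing the product rule (2) from the norm equivalence (1).

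For (1), the assertion is that the graph norm of $(-H)^{s/2}$ on $L^p$ is equivalent to the sum of the graph norms of the Fourier multiplier $\langle D_x\rangle^s$ and of the multiplication operator $\langle x\rangle^s$. The natural route is to show that the three operators
$$A_1=\langle D_x\rangle^s(-H)^{-s/2},\quad A_2=\langle x\rangle^s(-H)^{-s/2},\quad B=(-H)^{s/2}\big(\langle D_x\rangle^s+\langle x\rangle^s\big)^{-1}$$
are bounded on $L^p(\R^2)$ for $1<p<\infty$: boundedness of $A_1,A_2$ yields the right inequality, while boundedness of $B$ (the positive operator $\langle D_x\rangle^s+\langle x\rangle^s$ being invertible on $L^p$) yields the left one after writing $(-H)^{s/2}f=B(\langle D_x\rangle^s+\langle x\rangle^s)f$. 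I would first establish the case $s=1$ from the $L^p$-boundedness of the Hermite--Riesz transforms: with $a_j=\p_{x_j}+x_j$ and $a_j^*=-\p_{x_j}+x_j$ one has $-H=\sum_j a_j^*a_j+2$, and the transforms $a_j(-H)^{-1/2}$, $a_j^*(-H)^{-1/2}$, hence $\p_{x_j}(-H)^{-1/2}$ and $x_j(-H)^{-1/2}$, are bounded on $L^p$, which is classical for the harmonic oscillator. Integer $s$ then follows by composing these first-order estimates, the commutators $[\p_{x_j},x_k]=\delta_{jk}$ producing only lower-order terms already controlled.

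For non-integer $s$ I would pass to the general case by complex (Stein) interpolation between consecutive integers, the crucial input being that the imaginary powers $(-H)^{\ii\tau}$ are bounded on $L^p$ with at most polynomial growth in $\tau$; this is a Mihlin--Hörmander type spectral multiplier statement for $-H$, of the same nature as Lemma \ref{prop:specmap}. Alternatively, since this equivalence for harmonic-oscillator Sobolev spaces is available in the literature (Bongioanni--Torrea, Thangavelu), one may simply invoke it. I expect this non-integer step, rather than the underlying algebra, to be the \emph{main obstacle}, precisely because it forces control of fractional and imaginary powers of $-H$ on $L^p$ rather than on $L^2$, where the spectral theorem would suffice.

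For (2), I would use (1) to reduce to the two model operators. By the right inequality in (1),
$$|fg|_{\mathcal{W}^{\alpha,q}}\le C\big(|\langle D_x\rangle^\alpha(fg)|_{L^q}+|\langle x\rangle^\alpha(fg)|_{L^q}\big).$$
The multiplication term is elementary: since $\langle x\rangle^\alpha$ acts pointwise, Hölder's inequality with $\tfrac1q=\tfrac1{q_2}+\tfrac1{\bar{q_2}}$ gives $|\langle x\rangle^\alpha(fg)|_{L^q}\le|\langle x\rangle^\alpha f|_{L^{q_2}}|g|_{L^{\bar{q_2}}}\le C|f|_{\mathcal{W}^{\alpha,q_2}}|g|_{L^{\bar{q_2}}}$, the last step being again the upper bound in part (1). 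The derivative term is handled by the classical fractional Leibniz (Kato--Ponce) rule on $\R^2$, in the form valid for the stated exponent ranges including the endpoints $q_1,q_2=\infty$ (Grafakos--Oh), namely $|\langle D_x\rangle^\alpha(fg)|_{L^q}\le C\big(|f|_{L^{q_1}}|\langle D_x\rangle^\alpha g|_{L^{\bar{q_1}}}+|\langle D_x\rangle^\alpha f|_{L^{q_2}}|g|_{L^{\bar{q_2}}}\big)$. Converting the $\langle D_x\rangle^\alpha$ norms back to $\mathcal{W}^{\alpha,\cdot}$ norms through part (1) and combining the two contributions yields the claimed estimate; the only point requiring care is to choose the two Hölder splittings consistently so that the right-hand side collapses to exactly the two advertised terms.
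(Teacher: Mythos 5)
The paper offers no proof of its own here: it simply cites \cite{dg} for the norm equivalence (1) and Proposition 1.1 of \cite{taylor} for the product estimate (2), and your sketch is a faithful reconstruction of precisely the arguments behind those citations (Hermite--Riesz transforms plus spectral-multiplier/interpolation control of fractional and imaginary powers of $-H$ for (1); reduction via (1) to the flat fractional Leibniz rule together with H\"older for the weight $\langle x\rangle^\alpha$ for (2)), so the approach is essentially the same. One cosmetic caveat: the lower bound in (1) is not usually obtained by inverting $\langle D_x\rangle^s+\langle x\rangle^s$ on $L^p$ --- the $L^p$-boundedness of that inverse is itself not obvious and is essentially equivalent to what you want to prove --- but rather by directly bounding $|\partial^\alpha x^\beta f|_{L^p}$, $|\alpha|+|\beta|\le s$, by the right-hand side in the integer case and then interpolating; this reframing does not affect the substance of your argument.
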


\begin{proof} The norm equivalence (1) was proved in \cite{dg}, and for the interpolation estimate (2), see Proposition 1.1 of \cite{taylor}.
\end{proof}
\vspace{3mm}

Let us now give some details on the construction of the Wick products.
Let $\xi(x,\omega)$ be a mean-zero (real-valued) Gaussian white noise on $\R^2$ defined by 
$$\xi(x,\omega)=\sum_{n \in \N^2} g_n(\omega) h_n(x),$$
where $\{g_n(\omega)\}_n$ is a system of independent, real-valued random variables with $\mathcal{N}(0,1)$ law.
For $f \in L^2(\R^2, \R)$, we define the mapping $W_{(\cdot)}: L^2(\R^2) \to L^2(\Omega)$, $f \mapsto W_f$, 
by  
$$ W_f(\omega)=(f, \xi(\omega))_{L^2}= \sum_{n\in \N^2} (f, h_n)_{L^2} g_n(\omega).$$
Then, $W_f$ is a mean-zero Gaussian random variable with variance $|f|_{L^2(\R^2)}^2$, 
and 
$$\E(W_f W_g)=(f,g)_{L^2}, \quad f,g \in L^2(\R^2, \R).$$  This implies that $W_{(\cdot)}$ is an isometry 
from $L^2(\R^2, \R)$ to $L^2(\Omega)$. Moreover we have the relation 
\begin{equation} \label{eq:Wf}
\E(H_k(W_f) H_l(W_g))=\delta_{k,l} k! (f,g)_{L^2}^k, 
\end{equation}
for any $f,g \in L^2(\R^2)$ with $|f|_{L^2}=|g|_{L^2}=1$, with the use of the Hermite polynomials (\ref{eq:HermitePoly}). 
%These statements can be interpreted in terms of the law of $\xi$. If we denote $\mu$ the law of $\xi$, we have 
%$$\int_{\mathrm{supp} \mu} (f,z)_{L^2} (g,z)_{L^2} d\mu(z) =(f,g)_{L^2}, \quad f,g \in L^2(\R^2).$$
%and 
%$$ \int_{\mathrm{supp} \mu} H_k((f,z)_{L^2}) H_l((g,z)_{L^2}) d\mu(z) =\delta_{k,l} k! (f,g)_{L^2}^k, $$
%for any $f,g \in L^2(\R^2)$ with $|f|_{L^2}=|g|_{L^2}=1$. 
\vspace{3mm}

Let us define for $n\ge 0$,  the Wiener chaos of order $n$ as
$$\mathcal{H}_n=\overline{\{H_n(W_f): f \in L^2(\R^2), |f|_{L^2}=1\}},$$
where the closure is taken in $L^2(\Omega)$.
It is known that 
$$L^2(\Omega, \mathcal{G}, \prob)=\bigoplus_{n=0}^{\infty} \mathcal{H}_n$$
where $\mathcal{G}$ is the $\sigma$-algebra generated by $\{W_f, f \in L^2(\R^2)\}$ 
(see Theorem 1.1 of \cite{n}). We shall denote by $P_n$ the orthogonal projection of $L^2(\Omega, \mathcal{G}, \prob)$  
on $\mathcal{H}_n$. Then $(W_f)^n \in L^2(\Omega, \mathcal{G}, \prob)$ for any $f\in L^2(\R^2)$ with $|f|_{L^2}=1$, and the following holds.
\begin{lem} \label{lem:WienerChaos} 
Let $f \in L^2(\R^2)$ satisfying $|f|_{L^2}=1$. Then we have, 
$$ P_n ((W_f)^n)= \sqrt{n !} H_n(W_f).$$
\end{lem}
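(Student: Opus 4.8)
The plan is to expand the monomial $(W_f)^n$ in the basis of Hermite polynomials evaluated at $W_f$ and to read off the top-degree coefficient, which is the only contribution surviving the projection $P_n$. First I would record the structural fact that $H_k(W_f)\in\mathcal{H}_k$ for every $k$: this is immediate from the definition $\mathcal{H}_k=\overline{\{H_k(W_g):|g|_{L^2}=1\}}$ together with the hypothesis $|f|_{L^2}=1$. Since $H_k$ is a polynomial of degree exactly $k$, the family $\{H_0,\dots,H_n\}$ is a basis of the space of polynomials of degree at most $n$, so one may write
\[
(W_f)^n=\sum_{k=0}^{n} c_{n,k}\,H_k(W_f)
\]
for suitable real constants $c_{n,k}$; in particular $(W_f)^n\in\bigoplus_{k=0}^n\mathcal{H}_k$.

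Next I would apply $P_n$ to this identity. Because $H_k(W_f)\in\mathcal{H}_k$, the spaces $\mathcal{H}_k$ are mutually orthogonal, and $P_n$ is the orthogonal projection onto $\mathcal{H}_n$, every term with $k\neq n$ is annihilated, leaving $P_n((W_f)^n)=c_{n,n}\,H_n(W_f)$. It then remains only to identify the leading coefficient $c_{n,n}$, which is a purely algebraic matter. From the Rodrigues-type formula \eqref{eq:HermitePoly} the polynomial $\sqrt{n!}\,H_n(x)$ is monic of degree $n$, so the degree-$n$ part of the right-hand side is $c_{n,n}\,(n!)^{-1/2}x^n$, which must equal the degree-$n$ part of the left-hand side, namely $x^n$. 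Hence $c_{n,n}=\sqrt{n!}$ and $P_n((W_f)^n)=\sqrt{n!}\,H_n(W_f)$, as claimed. Note that this argument uses only the orthogonality of distinct chaoses and the leading coefficient of $H_n$, so it is insensitive to the precise normalization of the $L^2(\Omega)$-norms of the $H_n(W_f)$.

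An equivalent and slightly slicker route uses the generating function directly. Applying $P_n$ to $e^{tW_f-t^2/2}=\sum_{k}\frac{t^k}{\sqrt{k!}}H_k(W_f)$ gives $P_n\big(e^{tW_f-t^2/2}\big)=\frac{t^n}{\sqrt{n!}}H_n(W_f)$, while writing $e^{tW_f-t^2/2}=e^{-t^2/2}\sum_{m}\frac{t^m}{m!}(W_f)^m$ and using $P_n((W_f)^m)=0$ for $m<n$ shows that the lowest-order term in $t$ on this second expression is $\frac{t^n}{n!}P_n((W_f)^n)$. Matching the coefficient of $t^n$ again yields $P_n((W_f)^n)=\sqrt{n!}\,H_n(W_f)$.

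I do not anticipate a serious obstacle here: the content of the statement is just that $(W_f)^n$ and $\sqrt{n!}\,H_n(W_f)$ differ by a polynomial of strictly lower degree in $W_f$, which lives in the lower chaoses and is therefore killed by $P_n$. The only point requiring a little care is the justification that every polynomial of degree at most $n$ in $W_f$ belongs to $\bigoplus_{k=0}^n\mathcal{H}_k$, so that $P_n$ acts as described; this follows at once from the fact that $\{H_k(W_f)\}_{k=0}^n$ spans that space together with $H_k(W_f)\in\mathcal{H}_k$.
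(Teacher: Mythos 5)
Your proof is correct. The paper in fact states Lemma \ref{lem:WienerChaos} without proof, treating it as a classical fact from the Wiener chaos theory (cf.\ \cite{n}), and your argument — expanding $(W_f)^n$ in the basis $\{H_k(W_f)\}_{k\le n}$, using that $H_k(W_f)\in\mathcal{H}_k$ with the chaoses mutually orthogonal so that $P_n$ retains only the top term, and reading off the leading coefficient $c_{n,n}=\sqrt{n!}$ from the fact that $\sqrt{n!}\,H_n$ is monic under the normalization \eqref{eq:HermitePoly} — is exactly the standard justification one would supply.
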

\begin{lem} 
For any $F \in \mathcal{H}_n$, and $p\ge 2$,
\begin{equation} \label{ineq:nelson}
\E(|F|^p) \le (p-1)^{\frac{n}{2} p} \E(|F|^2)^{\frac p2}. 
\end{equation}
\end{lem}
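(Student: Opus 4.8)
The plan is to deduce \eqref{ineq:nelson} from Nelson's hypercontractivity theorem via the Ornstein--Uhlenbeck semigroup, which is the standard route for establishing the equivalence of all $L^p$ norms on a fixed Wiener chaos. First I would recall the Ornstein--Uhlenbeck (number) semigroup $(T_t)_{t\ge 0}$ acting on $L^2(\Omega,\mathcal{G},\prob)$: on each chaos $\mathcal{H}_n$ it acts as the scalar $e^{-nt}$, that is $T_t F = e^{-nt}F$ for every $F\in\mathcal{H}_n$. This is immediate from the spectral decomposition $L^2(\Omega,\mathcal{G},\prob)=\bigoplus_n \mathcal{H}_n$ recorded above, together with the defining action of $T_t$ on the Hermite generators $H_n(W_f)$.

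The second ingredient is Nelson's theorem: for $1<q\le p<\infty$, the operator $T_t$ is a contraction from $L^q(\Omega)$ to $L^p(\Omega)$ as soon as $e^{2t}\ge (p-1)/(q-1)$. I would apply this with $q=2$ and $t$ chosen so that equality holds, namely $e^{2t}=p-1$, equivalently $e^{-t}=(p-1)^{-1/2}$. For $F\in\mathcal{H}_n$ this yields
$$e^{-nt}\,|F|_{L^p(\Omega)} = |T_t F|_{L^p(\Omega)} \le |F|_{L^2(\Omega)},$$
so that $|F|_{L^p(\Omega)} \le e^{nt}\,|F|_{L^2(\Omega)} = (p-1)^{n/2}\,|F|_{L^2(\Omega)}$. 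Raising this to the power $p$ gives precisely
$$\E(|F|^p) \le (p-1)^{\frac{n}{2}p}\,\E(|F|^2)^{\frac p2},$$
which is \eqref{ineq:nelson}.

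The only genuinely substantial input is Nelson's hypercontractivity estimate itself, which I would simply quote from the literature (e.g. the monograph \cite{n}) rather than reprove, since its proof---via the two-point inequality and tensorization, or via the Gaussian logarithmic Sobolev inequality---is classical and lengthy; this is where the real work sits, but it is entirely off-the-shelf. A minor point I would verify is that the white-noise framework of the present section fits the abstract Wiener space setting in which Nelson's theorem is phrased: this holds because $W_{(\cdot)}$ is an isometry from $L^2(\R^2,\R)$ into $L^2(\Omega)$ and $\mathcal{G}$ is generated by the Gaussian family $\{W_f\}$, so the whole construction is an instance of the isonormal Gaussian process $f\mapsto W_f$ and its associated Ornstein--Uhlenbeck structure. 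Finally I note that the elements of $\mathcal{H}_n$ are real-valued, so no complexification is needed and the absolute value in \eqref{ineq:nelson} is just that of a real random variable.
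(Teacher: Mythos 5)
Your proof is correct: the paper states this lemma without proof, quoting it as the classical Nelson hypercontractivity estimate (cf.\ \cite{n}), and your derivation via the Ornstein--Uhlenbeck semigroup acting as $e^{-nt}$ on $\mathcal{H}_n$, combined with the hypercontractive bound $e^{2t}\ge (p-1)/(q-1)$ with $q=2$, is precisely the standard argument that the paper implicitly relies on. The exponents check out, so there is nothing to add.
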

\vspace{3mm}

Let us now define 
\begin{equation}
\label{def:etaN}
\eta_N(x)(\cdot)= \frac{1}{\rho_N(x)} \sum_{k \in \N^2} \chi\left(\frac{\lambda_k^2}{\lambda_N^2}\right) \frac{h_k(x)}{\lambda_k} h_k(\cdot),
\end{equation}
for a fixed $x\in \R^2$, with 
\begin{equation}
\label{def:rhoN}
 \rho^2_N=\rho^2_N(x)=\sum_{k \in \N^2} \chi^2\left(\frac{\lambda_k^2}{\lambda_N^2}\right) \frac{1}{\lambda_k^2} (h_k(x))^2,
 \end{equation}
so that $|\eta_N(x)|_{L^2(\R^2)}=1.$ 
Then if $z(\omega, x)$ is of the form
\begin{equation}
\label{zseries}
z(\omega,x)=\sum_{k\in \N^2} \frac{\sqrt{2}}{\lambda_k} g_k(\omega) h_k(x), \quad \omega \in \Omega, x\in \R^2,
\end{equation}
with a real-valued Gaussian system $\{g_k(\omega)\}_k$ with law $\mathcal{N}(0,1/2)$, we note that  
$S_N z(x)$ makes sense in $L^2(\Omega)$ for any $x\in \R^2$ and 
we may write 
$$ S_N z(x) = \sum_{k\in \N^2} \sqrt{2} g_k (\rho_N(x) \eta_N(x), h_k)_{L^2}= \rho_N(x) {W_{\eta_N(x)}}.$$
We can thus, by Lemma \ref{lem:WienerChaos}, define the Wick product of $(S_N z)^n$ as follows. 
$$:(S_N z(x))^n : =P_n((S_N z(x))^n)= \rho_N(x)^n \sqrt{n !} H_n(W_{\eta_N(x)}), \quad \prob-a.s.$$
%=\sqrt{n !} H_n (S_N z ; \rho_N^2(x)), \quad \prob-a.s.~ \omega,$$
%where the last equality follows from (\ref{eq:Hermite1}) with $\lambda=\rho_N^2$.
For the reader's convenience, we give hereafter the expressions of the first three Wick products.
%$\rho_N(x)^n \sqrt{n !} H_n(W_{\eta_N(x)})=\rho_N(x)^n \sqrt{n !} H_n(\frac{1}{\rho_N} S_N Z_{\infty}(x))=\sqrt{n!} H_n(S_N Z_{\infty};\rho_N^2)$.
\begin{eqnarray*}
&& :(S_N z)^1:(x)= S_N z (x), \\
&& :(S_N z)^2:(x)= (S_N z)^2(x) - \rho_N(x)^2, \\
&& :(S_N z)^3:(x)= (S_N z)^3(x) -3 \rho_N(x)^2 S_N z(x). 
\end{eqnarray*}
\vspace{3mm}

Note that the renormalization factor $\rho_N(x)$ depends on $x \in \R^2$, this is different from the torus case (see for ex., \cite{dpd,mw,tw}). 
Since it is known from \cite{kt} that for any $k \in \N^2$,
\begin{equation} \label{est:kt}
|h_k|_{L^p(\R^2)} \lesssim \lambda_k^{-\theta(p)} 
\end{equation}
with 
\begin{equation*}
\theta(p)=\left\{
\begin{array}{ll}
\frac{1}{2}-\frac{1}{p},& \quad \mbox{if} \quad 2 \le p <\frac{10}{3}, \\[0.2cm]
\frac{2}{3p} &  \quad \mbox{if} \quad \frac{10}{3} < p \le +\infty, 
\end{array}
\right.
\end{equation*}
we have for any $p \in [2,+\infty]$,
$$|\rho_N^2|_{L^p(\R^2)} \lesssim N^{1-\theta(2p)} \quad \mathrm{as} \;N \to \infty.$$ 
Note that $\theta(p)$ is at most $\frac15$ so that $\rho_N^2$ may diverge a priori faster than in the torus case.
\vspace{3mm}

Recall that $Z_{\infty}^{\gamma_1, \gamma_2}$ can be written as (\ref{Zseries}). %i.e. $Z_{\infty}^{\gamma_1, \gamma_2}$ is a complex-valued Gaussian white noise on $\R^2$. 
In order to consider a renormalization for the nonlinear term in (\ref{eq:SGL}), we decompose $Z_{\infty}^{\gamma_1, \gamma_2}=Z_{R,\infty}+iZ_{I, \infty}$ 
where $Z_{R,\infty}=\mathrm{Re}(Z_{\infty}^{\gamma_1, \gamma_2})$ and $Z_{I,\infty}=\mathrm{Im}(Z_{\infty}^{\gamma_1, \gamma_2})$ are independent,
each of the form \eqref{zseries} and thus equal in law.
Note that all the terms in \eqref{develop} are products of powers of $\mathrm{Re} u, \mathrm{Im} u, Z_{R,\infty}$ and $Z_{I,\infty}$. 
Applying the above construction to $Z_{R,\infty}$ and $Z_{I,\infty}$, we may then define, for any integers $k,l\in \N$, the real valued Gaussian
random variables $:(S_N Z_{R,\infty})^{k}:$ and $:(S_N Z_{I,\infty})^{l}:$, which are still independent.
%$$ |Z_{\infty}^{\gamma_1, \gamma_2}|^2 Z_{\infty}^{\gamma_1, \gamma_2} = \sum_{l=0}^{m}~  \binom{m}{l}  Z_{R,\infty}^{2l}~ Z_{I, \infty}^{2(m-l)}~ Z_{\infty}^{\gamma_1, \gamma_2}.$$
It is then natural to define, for all integers $k,l \in \N$, the Wick product $:(Z_{R,\infty})^k(Z_{I,\infty})^l:$ as the limit in $N$ of 
the product $:(S_NZ_{R,\infty})^k::(S_NZ_{I,\infty})^l:$. The next proposition indeed shows that this limit is well defined in $L^q(\Omega;W^{-s,q}(\R^2))$
for $q\ge 4$, $s>0$ and $sq>8$. It would not be difficult to prove that this definition coincides with the Wick product that would be obtained
with the use of an $\R^2$-valued white noise measure.
%$$ :|Z_{\infty}^{\gamma_1, \gamma_2}|^{2m} Z_{\infty}^{\gamma_1, \gamma_2}: \quad := \quad =\sum_{l=0}^{m} \binom{m}{l} :(Z_{R,\infty})^{2l}:  :(Z_{I, \infty})^{2(m-l)}:(Z_{R,\infty}+iZ_{I,\infty})$$
%using the Wick product for the case of real-valued Gaussian white noise. 
%\begin{rem} \label{rem:ReImlaw}
%We note that when $Z_{\infty}^{\gamma_1, \gamma_2}(t,x)$ is written as (\ref{Zseries}), 
%$$Z_{R,\infty}(t,x,\omega)=\sum_{k\in \N^2} \frac{\sqrt{2}}{\lambda_k} (\mathrm{Re} g_k)(t,\omega) h_k(x), 
%\quad Z_{I,\infty}(t,x,\omega)=\sum_{k\in \N^2} \frac{\sqrt{2}}{\lambda_k} (\mathrm{Im} g_k)(t,\omega) h_k(x),$$
%where $\mathrm{Re} g_k(t,\omega), \mathrm{Im} g_k(t,\omega) \sim \mathcal{N}(0,\frac{1}{2})$. From these forms, 
%it can be seen that 
%$$ S_N Z_{R,\infty}(t,x) = S_N Z_{I, \infty} (t,x) =\rho_N(x) {W_{\eta_N(x)}}, \quad x \in \R^2$$
%in the sense of $L^2(\Omega).$ 
%\end{rem}

\begin{prop} \label{prop:ReImCauchy}
For any $k,l \in \N$, 
the sequence $\{:(S_N Z_{R,\infty})^{k}: :(S_N Z_{I,\infty})^{l}:\}_{N\in \N}$ is a Cauchy sequence in $L^q(\Omega, \mathcal{W}^{-s,q}(\R^2))$, for $q \ge 4$, $s>0$ with $qs >8$.
\vspace{3mm}

Moreover, defining then, for any $k,l\in \N$,
\begin{eqnarray*}
:(Z_{R,\infty})^{k} (Z_{I,\infty})^{l}:~ &:=& \lim_{N\to\infty} :(S_N Z_{R,\infty})^{k}: :(S_N Z_{I,\infty})^{l}:, \quad \mbox{in} \quad L^q(\Omega, \mathcal{W}^{-s,q}(\R^2)),
\end{eqnarray*}
where $s>0, q\ge 4$ and $sq>8$, there exists a constant $M_{s,q,k,l}$ such that 
\begin{equation} \label{bound:Z}
\E\left[|:(\mathrm{Re}Z_{\infty}^{\gamma_1, \gamma_2})^k (\mathrm{Im}Z_{\infty}^{\gamma_1, \gamma_2})^l:|_{\mathcal{W}^{-s,q}}^q\right] \le M_{s,q,k,l}.
\end{equation}
\end{prop}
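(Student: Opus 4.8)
The plan is to reduce everything to a pointwise-in-$x$ second moment computation, exploiting that all the relevant random fields live in a single Wiener chaos. Write
\[
G_N(x):=\,:(S_NZ_{R,\infty})^{k}:(x)\;:(S_NZ_{I,\infty})^{l}:(x).
\]
Since $Z_{R,\infty}$ and $Z_{I,\infty}$ are independent and $:(S_NZ_{R,\infty})^{k}:$, $:(S_NZ_{I,\infty})^{l}:$ belong to the $k$-th and $l$-th chaos of these two independent Gaussian families, the product $G_N(x)$ lies in the total chaos $\mathcal{H}_{k+l}$ for every $x$; the same is true of each difference $G_N-G_M$, and this membership is preserved by the linear operator $(-H)^{-s/2}$. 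Hence, by Fubini and Nelson's hypercontractivity estimate \eqref{ineq:nelson}, for $q\ge 2$,
\[
\E\,|G_N|_{\mathcal{W}^{-s,q}}^q=\int_{\R^2}\E\,|(-H)^{-s/2}G_N(x)|^q\,dx\le C_{q,k,l}\int_{\R^2}\big(\E\,|(-H)^{-s/2}G_N(x)|^2\big)^{q/2}\,dx,
\]
and likewise with $G_N$ replaced by $G_N-G_M$. It thus suffices to control the second moment pointwise, then its $L^{q/2}_x$ norm uniformly in $N$, and to show the analogous quantity for the differences vanishes.

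Next I would compute the covariance explicitly. Using $S_Nz(x)=\rho_N(x)\,W_{\eta_N(x)}$ together with $:(S_Nz(x))^n:=\rho_N(x)^n\sqrt{n!}\,H_n(W_{\eta_N(x)})$ and the orthogonality relation \eqref{eq:Wf}, one gets for the real part
\[
\E\big[:(S_NZ_{R,\infty})^{k}:(x)\,:(S_NZ_{R,\infty})^{k}:(y)\big]=c_k\,K_N(x,y)^{k},\quad K_N(x,y):=\sum_{j\in\N^2}\chi^2\Big(\tfrac{\lambda_j^2}{\lambda_N^2}\Big)\frac{h_j(x)h_j(y)}{\lambda_j^2},
\]
since $\rho_N(x)\rho_N(y)(\eta_N(x),\eta_N(y))_{L^2}=K_N(x,y)$ and $K_N(x,x)=\rho_N^2(x)$. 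Here $K_N$ is a smoothed version of the kernel $K(x,y)=\sum_j\lambda_j^{-2}h_j(x)h_j(y)$ of $(-H)^{-1}$, which is nonnegative by Mehler's formula and enjoys the mixed integrability of Proposition \ref{prop:Kernel}. By independence of the real and imaginary parts the full covariance factorizes, $\E[G_N(x)G_N(y)]=c_{k,l}\,K_N(x,y)^{k+l}$.

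The third, and hardest, step is the uniform bound. Writing $b_s\ge 0$ for the kernel of $(-H)^{-s/2}$ (nonnegative, by subordination to the positivity-preserving semigroup $e^{tH}$), the second moment becomes
\[
\E\,|(-H)^{-s/2}G_N(x)|^2=c_{k,l}\iint_{\R^2\times\R^2}b_s(x,z)\,b_s(x,z')\,K_N(z,z')^{k+l}\,dz\,dz'.
\]
The heart of the matter is to bound the $L^{q/2}_x$ norm of this quantity uniformly in $N$. Because $K\notin L^p(\R_x\times\R_y)$ for any $p$, one cannot treat the two spatial variables symmetrically; instead one must exploit the asymmetric bound $K\in L^r(\R_x;L^p(\R_y))$ with $r>p\ge 2$ from Proposition \ref{prop:Kernel}, the smoothing action of $(-H)^{-s/2}$, and the $L^p$ bounds on $\rho_N^2$ coming from the Hermite estimates \eqref{est:kt}. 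Distributing Hölder and Young exponents across the three integrations so as to absorb the diagonal singularity of $K_N^{k+l}$, whose diagonal $\rho_N^{2(k+l)}$ may diverge, is precisely what forces the hypotheses $q\ge 4$, $s>0$ and $qs>8$. I expect this exponent bookkeeping, controlling powers of the singular, non-jointly-integrable kernel $K$ weighted by the fractional operator, to be the main technical obstacle.

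Finally, the Cauchy property follows by repeating the computation for $G_N-G_M$: its covariance expands into a combination of powers of the mixed kernels $K_{N,M}(x,y)=\sum_j\chi(\lambda_j^2/\lambda_N^2)\chi(\lambda_j^2/\lambda_M^2)\lambda_j^{-2}h_j(x)h_j(y)$, each of which converges to $K$ pointwise and in the relevant mixed norms as $N,M\to\infty$. The uniform estimate of step three supplies an integrable dominating function, so dominated convergence gives $\E\,|G_N-G_M|_{\mathcal{W}^{-s,q}}^q\to 0$; completeness of $L^q(\Omega,\mathcal{W}^{-s,q})$ then yields the limit defining $:(\mathrm{Re}Z_{\infty}^{\gamma_1,\gamma_2})^k(\mathrm{Im}Z_{\infty}^{\gamma_1,\gamma_2})^l:$, and passing to the limit $N\to\infty$ in the uniform bound produces the moment estimate \eqref{bound:Z}.
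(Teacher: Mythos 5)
Your reduction is exactly the paper's: split off the pointwise second moment via Fubini and Nelson's hypercontractivity (the product and its image under $(-H)^{-s/2}$ live in $\mathcal{H}_{k+l}$), compute the covariance through \eqref{eq:Wf} to get $\E[G_N(x)G_N(y)]=c_{k,l}K_N(x,y)^kK_N(x,y)^l$, and reduce the whole statement to controlling powers of the (smoothed) Green kernel. But the step you explicitly defer --- ``I expect this exponent bookkeeping \dots to be the main technical obstacle'' --- is where essentially all of the proof lives, and your proposed route through the kernel $b_s$ of $(-H)^{-s/2}$ with a three-fold H\"older/Young distribution is not how it can be made to work; the paper never touches $b_s$. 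What is actually needed is: (i) the mixed-norm regularity $K^n\in L^r_x\mathcal{W}^{\alpha,2}_y$ for every $r\ge 2$ and $\alpha<1-\tfrac2r$ (Proposition \ref{prop:Kernel}, itself resting on the Koch--Tataru bounds \eqref{est:kt}, a dual mapping argument for $(-H)^{-1}$, and interpolation); (ii) a product rule in the spaces $L^r_x\mathcal{W}^{\alpha,2}_y$ (Lemma \ref{lem:Kernel} and Corollary \ref{cor:Kernel}) to handle $K_N^kK_M^l$; and (iii) the observation that the diagonal trace $\tilde A(x,x)$ is controlled by first applying $(-H_\xi)^{-s/2}$ and then the Sobolev embedding $\mathcal{W}^{1+\frac{s}{2},2}(\R^2)\subset L^\infty(\R^2)$ in the second variable only, which converts $\int_{\R^2}A(x)^{q/2}dx$ into the tractable quantity $|(K_N^k-2K_{N,M}^k+K_M^k)K_M^l|^{q/2}_{L^{q/2}_\eta\mathcal{W}^{1-\frac{s}{2},2}_\xi}$. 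Without (i)--(iii) the uniform bound, and hence \eqref{bound:Z}, is unproved, and the constraint $qs>8$ has not actually been derived.

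The Cauchy property also needs more than dominated convergence. Convergence of $K_{N,M}^k$ to $K^k$ ``in the relevant mixed norms'' is precisely the assertion requiring proof: the paper obtains it quantitatively by factoring $K_N^k-K_{N,M}^k=(K_N-K_{N,M})\sum_j K_N^jK_{N,M}^{k-1-j}$, noting that $K_N-K_{N,M}$ is spectrally supported in modes $|j|\ge M/2$, and trading an amount $\delta<\tfrac{s}{4}$ of regularity for a factor $\lambda_M^{-\delta q/2}\to 0$; the requirement $1-\tfrac{s}{4}+\delta<1-\tfrac2q$ from Proposition \ref{prop:Kernel} is exactly where $qs>8$ enters. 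A minor further caveat: your claim that $K_N\ge 0$ by Mehler's formula is doubtful, since $K_N$ carries the cut-off $\chi^2(\lambda_j^2/\lambda_N^2)$ rather than a subordinated heat kernel; fortunately no positivity of $K_N$ is needed anywhere.
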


\vspace{3mm}

Higher order moments may also be estimated thanks to Nelson formula. The following corollary will be useful later.   

\begin{cor} \label{cor:m-moment} Let $s>0$, $m > q\ge 4$ and $sq>8$.  Then there is a constant $M_{s,q,k,l,m}$ such that
\begin{equation} 
\E\left[|:(\mathrm{Re}Z_{\infty}^{\gamma_1, \gamma_2})^k (\mathrm{Im}Z_{\infty}^{\gamma_1, \gamma_2})^l:|_{\mathcal{W}^{-s,q}}^m\right] \le M_{s,q,k,l,m}.
\end{equation}
\end{cor}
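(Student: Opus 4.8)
The plan is to obtain the $m$-th moment bound directly from the $q$-th moment bound of Proposition~\ref{prop:ReImCauchy}, using Gaussian hypercontractivity (Nelson's inequality~\eqref{ineq:nelson}) to trade powers of $\Omega$, together with Minkowski's integral inequality to interchange the spatial integration and the expectation. The structural fact that makes this work is that the Wick product sits in a \emph{fixed} Wiener chaos. Indeed, set $n=k+l$, write $F=\,:(\mathrm{Re}Z_{\infty}^{\gamma_1,\gamma_2})^{k}(\mathrm{Im}Z_{\infty}^{\gamma_1,\gamma_2})^{l}:$ and $G(x)=(-H)^{-s/2}F(x)$, so that by the definition of the norm $|F|_{\mathcal{W}^{-s,q}}^{q}=\int_{\R^2}|G(x)|^{q}\,dx$. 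For each fixed $x$, the variable $F(x)$ is the $L^2(\Omega)$-limit of $:(S_NZ_{R,\infty})^{k}:(x)\,:(S_NZ_{I,\infty})^{l}:(x)$, a product of an element of the $k$-th chaos by an independent element of the $l$-th chaos (recall $Z_{R,\infty}$ and $Z_{I,\infty}$ come from independent Gaussian systems); hence each approximant lies in the chaos $\mathcal{H}_{n}$ of the combined field, and since $\mathcal{H}_{n}$ is closed in $L^2(\Omega)$ we get $F(x)\in\mathcal{H}_{n}$. Because $(-H)^{-s/2}$ acts diagonally in the Hermite basis and only on the $x$-variable, it preserves chaos order, so $G(x)\in\mathcal{H}_{n}$ as well.

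The estimate is then a short chain. Since $m>q\ge 4$ we have $m/q>1$, and Minkowski's integral inequality applied to $\phi(x,\cdot)=|G(x)|^{q}\ge 0$ gives
$$\E\big[|F|_{\mathcal{W}^{-s,q}}^{m}\big]^{q/m}=\left\|\int_{\R^2}|G(x)|^{q}\,dx\right\|_{L^{m/q}(\Omega)}\le \int_{\R^2}\E\big[|G(x)|^{m}\big]^{q/m}\,dx.$$
Applying Nelson's inequality~\eqref{ineq:nelson} with $p=m$ to $G(x)\in\mathcal{H}_{n}$ yields $\E[|G(x)|^{m}]\le(m-1)^{nm/2}\E[|G(x)|^{2}]^{m/2}$, while Jensen's inequality (using $q\ge2$) gives $\E[|G(x)|^{2}]^{q/2}\le\E[|G(x)|^{q}]$. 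Combining these with Fubini (all integrands nonnegative),
$$\int_{\R^2}\E\big[|G(x)|^{m}\big]^{q/m}\,dx\le (m-1)^{nq/2}\int_{\R^2}\E\big[|G(x)|^{q}\big]\,dx=(m-1)^{nq/2}\,\E\big[|F|_{\mathcal{W}^{-s,q}}^{q}\big].$$
Proposition~\ref{prop:ReImCauchy} bounds the last expectation by $M_{s,q,k,l}$; raising to the power $m/q$ delivers the claim with $M_{s,q,k,l,m}=(m-1)^{(k+l)m/2}M_{s,q,k,l}^{m/q}$.

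The only point that requires genuine care, and which I regard as the main (if modest) obstacle, is the chaos-membership assertion $G(x)\in\mathcal{H}_{n}$: one must check that applying the negative fractional power $(-H)^{-s/2}$ in space does not spread the Wiener chaos order. I would establish this cleanly at the level of the approximations $S_N$, where each object is a genuine finite Wiener sum and $(-H)^{-s/2}$ is a deterministic $x$-only operator diagonal in the $h_k$, hence commutes with the chaos projection $P_{n}$ and sends $\mathcal{H}_{n}$-valued fields to $\mathcal{H}_{n}$-valued fields. Passing to the limit is then justified by the $L^q(\Omega;\mathcal{W}^{-s,q})$ convergence already proved in Proposition~\ref{prop:ReImCauchy}, which gives (along a subsequence) $G_N(x)\to G(x)$ in $L^2(\Omega)$ for almost every $x$, so that closedness of $\mathcal{H}_{n}$ transfers membership to the limit. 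Everything else is a direct application of hypercontractivity, Jensen and Minkowski, so no further difficulty is expected.
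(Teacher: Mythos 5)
Your argument is correct and follows essentially the same route as the paper: Minkowski's integral inequality to swap $\E$ and $\int_{\R^2}$, Nelson's hypercontractive estimate on the fixed chaos $\mathcal{H}_{k+l}$ pointwise in $x$, and then a Jensen/H\"older step to return to the $q$-th moment bounded by Proposition \ref{prop:ReImCauchy} (the paper merely applies Jensen after a second Minkowski in $\Omega$ rather than pointwise in $x$, a trivial reordering). Your explicit verification that $(-H)^{-s/2}$ preserves the chaos order is a welcome piece of care that the paper leaves implicit.
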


\begin{proof} First, for $m>q$ we apply Minkowski inequality to obtain
\begin{eqnarray*}
& & \E\left[|:(\mathrm{Re}Z_{\infty}^{\gamma_1, \gamma_2})^k (\mathrm{Im}Z_{\infty}^{\gamma_1, \gamma_2})^l:|_{\mathcal{W}^{-s,q}}^m\right]
=  \int_{\Omega} |(-H)^{-s/2} :(\mathrm{Re}Z_{\infty}^{\gamma_1, \gamma_2})^k (\mathrm{Im}Z_{\infty}^{\gamma_1, \gamma_2})^l :|_{L^q}^m d \prob \\
& & \quad \le \Big(\int_{\R^2} \big(\int_{\Omega} |(-H)^{-s/2} :(\mathrm{Re}Z_{\infty}^{\gamma_1, \gamma_2} )^k 
(\mathrm{Im}Z_{\infty}^{\gamma_1, \gamma_2})^l :|^m d\prob\big)^{\frac{q}{m}}dx \Big)^{\frac{m}{q}}. 
\end{eqnarray*}
Here we use Nelson estimate (\ref{ineq:nelson}), and the right hand side is bounded by 
$$(m-1)^{\frac{3}{2}} \big(\int_{\R^2} \E(|(-H)^{-s/2} :(\mathrm{Re}Z_{\infty}^{\gamma_1, \gamma_2})^k (\mathrm{Im}Z_{\infty}^{\gamma_1, \gamma_2})^l:|^2)^{\frac{q}{2}} dx\big)^{\frac{m}{q}}.$$
Again, we apply Minkowski inequality to obtain 
\begin{eqnarray*}
\E\left[|:(\mathrm{Re}Z_{\infty}^{\gamma_1, \gamma_2})^k (\mathrm{Im}Z_{\infty}^{\gamma_1, \gamma_2})^l:|_{\mathcal{W}^{-s,q}}^m\right]
&\le & C_m \E(|:(\mathrm{Re}Z_{\infty}^{\gamma_1, \gamma_2})^k (\mathrm{Im}Z_{\infty}^{\gamma_1, \gamma_2})^l:|_{\mathcal{W}^{-s,q}}^2)^{\frac{m}{2}}\\
 \le C_m \E(|:(\mathrm{Re}Z_{\infty}^{\gamma_1, \gamma_2})^k (\mathrm{Im}Z_{\infty}^{\gamma_1, \gamma_2})^l:|_{\mathcal{W}^{-s,q}}^q)^{\frac{m}{q}}
&\le & C_m M_{s,q,k,l}^{\frac{m}{q}}, 
\end{eqnarray*}
thanks to H\"{o}lder inequality  and (\ref{bound:Z}). 
\end{proof}
\vspace{3mm}

For the proof of Proposition \ref{prop:ReImCauchy}, the key ingredients are estimates on the kernel $K$ of the operator $(-H)^{-1}$, which is defined 
by $ (-H)^{-1} f(x) = \int_{\R^2} K(x,y) f(y) dy$, and may be written as  
\begin{equation}
\label{def:kernel}
K(x, y)=\sum_{k \in \N^2} \frac{h_k(x) h_k(y)}{\lambda_k^2}, \quad x, y \in \R^2. 
\end{equation}

The kernel $K$ has the following regularity properties.
\begin{prop} \label{prop:Kernel} For any $n\in \N^*$, we have 
$K^n \in L^r_{x} \mathcal{W}^{\alpha,2}_{y}$ for any $r \ge 2$ and $\alpha<1-\frac{2}{r}$.
\end{prop}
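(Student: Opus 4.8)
The plan is to reduce the fractional Sobolev norm in the $y$ variable to a product of single-factor estimates, and to obtain those from the kernel of a fractional power of $-H$ together with the Hermite decay bounds \eqref{est:kt} (equivalently, from the Mehler representation of $K$). Throughout, $K$ is read as the function $K^n(x,\cdot)$ of $y$ for each fixed $x$, and one wants $\int_{\R^2}\|K^n(x,\cdot)\|_{\mathcal{W}^{\alpha,2}_y}^r\,dx<\infty$.

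First I would peel off the power. Writing $K^n(x,\cdot)=K(x,\cdot)\cdot K^{n-1}(x,\cdot)$ and iterating the product estimate of Proposition~\ref{prop:dg} (part (2)) in the $y$ variable, one obtains, for fixed $x$ and for exponents linked by the relevant H\"older relations,
$$\|K^n(x,\cdot)\|_{\mathcal{W}^{\alpha,2}_y}\lesssim \|K(x,\cdot)\|_{\mathcal{W}^{\alpha,p_0}_y}\prod_{j=1}^{n-1}\|K(x,\cdot)\|_{L^{p_j}_y},\qquad \tfrac12=\tfrac1{p_0}+\sum_{j=1}^{n-1}\tfrac1{p_j}.$$
This reduces matters to two inputs: (i) the scalar Lebesgue bounds $K\in L^{r_j}_x L^{p_j}_y$, available for $r_j>p_j\ge 2$ as recalled in the introduction (from \cite{r}); and (ii) a single fractional-Sobolev-in-$y$ bound on one factor $K(x,\cdot)$, together with its integrability in $x$.

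For (ii) I would use that, as a function of $y$, $K(x,\cdot)=(-H)^{-1}\delta_x$, so that $(-H_y)^{\alpha/2}K(x,\cdot)$ is the $y$-section of the kernel $K_{2-\alpha}(x,y)=\sum_{k}\lambda_k^{\alpha-2}h_k(x)h_k(y)$ of $(-H)^{-1+\alpha/2}$. Estimating $\|K_{2-\alpha}(x,\cdot)\|_{L^{p_0}_y}$ and then its $L^{r_0}_x$ norm is entirely analogous to the estimate for $K=K_2$, but with the smoothing order lowered from $2$ to $2-\alpha$; the Hermite bounds \eqref{est:kt} (or, equivalently, the short-time $t^{-1}$ singularity and the Gaussian tails of the Mehler kernel $e^{tH}$ in $K=\int_0^\infty e^{tH}\,dt$) yield $K_{2-\alpha}\in L^{r_0}_x L^{p_0}_y$ precisely when the loss of $\alpha$ derivatives is compensated, i.e. under a condition of the form $\alpha<2/p_0-(\text{spectral gap term})$. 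One then combines (i) and (ii) by H\"older in $x$ with $\frac1r=\frac1{r_0}+\sum_j\frac1{r_j}$ and optimizes the free exponents $(p_0,p_j,r_0,r_j)$, subject to $r_j>p_j\ge2$ and the single-factor constraint, so as to cover the whole admissible range $r\ge2$, $\alpha<1-2/r$.

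The main obstacle is step (ii): quantifying the single-factor fractional bound sharply enough. Because $K$ itself fails to lie in any $L^p(\R_x\times\R_y)$, the estimate is genuinely anisotropic, and one cannot simply interpolate against the endpoint $\mathcal{W}^{1,2}_y$ — indeed $K^n(x,\cdot)\notin\mathcal{W}^{1,2}_y$, since in dimension two the $y$-gradient of $K$ is not square-integrable across the diagonal. One must therefore work at the fractional level directly, tracking simultaneously the logarithmic diagonal singularity raised to the power $n$, the Gaussian spatial tails, and the $\alpha$ derivatives, and verifying that the exponents can always be chosen so that the product lands in $L^r_x$ exactly up to the threshold $\alpha<1-2/r$.
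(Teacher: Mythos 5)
Your overall architecture matches the paper's: peel off the powers of $K$ with the fractional Leibniz rule (this is exactly what the paper's Corollary \ref{cor:Kernel} does, by induction on $n$ using Proposition \ref{prop:dg}(2) and Sobolev embeddings), reduce to mixed-norm bounds on a single factor of $K$, and combine by H\"older in $x$. You also correctly identify the real difficulty — the endpoint $\mathcal{W}^{1,2}_y$ fails because of the logarithmic diagonal singularity, so one cannot reach the fractional bound by naive interpolation in the $y$-regularity alone. The problem is that you then stop exactly at that difficulty: your step (ii), the single-factor bound $K\in L^{r_0}_x\mathcal{W}^{\alpha,p_0}_y$ (equivalently $K_{2-\alpha}\in L^{r_0}_xL^{p_0}_y$), is asserted to follow from the Hermite bounds \eqref{est:kt} or the Mehler kernel "under a condition of the form $\alpha<2/p_0-(\text{spectral gap term})$", but this is never made precise, and the direct route you indicate does not obviously reach the claimed range. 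Concretely: the Minkowski/Hermite computation applied to $K_{2-\alpha}=\sum_k\lambda_k^{\alpha-2}h_k(x)h_k(y)$ gives $\sum_k\lambda_k^{2\alpha-4-2\theta(q)}$, which in dimension two (eigenvalue multiplicity $\sim|k|$) converges only for $\alpha<\theta(q)\le\frac15$ — far short of $\alpha<1-\frac2r$. So the finite-$r_0$ member of the interpolation family carries almost no fractional regularity, and it is not clear how your optimization over $(p_0,p_j,r_0,r_j)$ closes.

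The paper resolves precisely this point by a duality argument that you do not have: for every $\sigma<1$,
$$\sup_{x}|K(x,\cdot)|_{\mathcal{W}^{\sigma,2}_y}=\sup_{|g|_{\mathcal{W}^{-\sigma,2}}\le1}|(-H)^{-1}g|_{L^\infty}\lesssim 1,$$
since $(-H)^{-1}$ maps $\mathcal{W}^{-\sigma,2}$ into $\mathcal{W}^{2-\sigma,2}\subset L^\infty$. This gives the full regularity $\sigma<1$ at the price of $L^\infty$ in $x$; interpolating against the zero-regularity mixed-norm bounds $K\in L^{\alpha}_{x}L^{\beta}_{y}$ ($\alpha>\beta\ge2$, themselves obtained by the same duality trick plus the Hermite/Minkowski computation) then trades $x$-integrability against $y$-regularity and yields exactly $K\in L^{r}_x\mathcal{W}^{\sigma,2}_y$ for $\sigma<1-\frac2r$, after which Corollary \ref{cor:Kernel} finishes the proof for $K^n$. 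Without this duality step (or a genuine substitute for it), your argument does not establish the single-factor estimate on which everything else rests, so as written the proof has a gap at its central point.
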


For the proof of Proposition \ref{prop:Kernel} we will need the following lemmas.

\begin{lem} \label{lem:Kernel}
Let $r,r_1,r_2 \ge 1$, $p\ge 2$, and $0<\alpha<\sigma$ with 
$\frac{1}{r}=\frac{1}{r_1}+\frac{1}{r_2}$ and $\frac{\sigma-\alpha}{2}=\frac{1}{p}$. 
If the functions $f(x,y)$ and $g(x,y)$ $(x,y \in \R^2)$ 
satisfy $f,g \in L^{r_1}_x L^p_y \cap L^{r_2}_{x} \mathcal{W}^{\sigma,2}_{y}$, then $fg \in L^r_x \mathcal{W}^{\alpha,2}_y.$ 
Moreover, 
$$ |fg|_{L^r_x \mathcal{W}^{\alpha,2}_y} 
\lesssim |f|_{L^{r_1}_x L^p_y}|g|_{L^{r_2}_x \mathcal{W}_y^{\sigma,2}} +|f|_{L^{r_2}_x \mathcal{W}_y^{\sigma,2}} |g|_{L^{r_1}_x L^p_y}.$$
\end{lem}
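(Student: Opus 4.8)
The plan is to freeze the variable $x$ and read the asserted inequality as a pointwise-in-$x$ product estimate in the $H$-based Sobolev space $\mathcal{W}^{\alpha,2}_y$, and then to combine that bound with H\"older's inequality in $x$. Concretely, for almost every fixed $x\in\R^2$ I would view $f(x,\cdot)$ and $g(x,\cdot)$ as functions of $y$ alone and establish
$$|f(x,\cdot)g(x,\cdot)|_{\mathcal{W}^{\alpha,2}_y}\lesssim |f(x,\cdot)|_{L^p_y}|g(x,\cdot)|_{\mathcal{W}^{\sigma,2}_y}+|f(x,\cdot)|_{\mathcal{W}^{\sigma,2}_y}|g(x,\cdot)|_{L^p_y}.$$
Once this is available, raising it to the power $r$, integrating over $x$, and applying H\"older with $\frac1r=\frac1{r_1}+\frac1{r_2}$ to each product—placing the $L^p_y$ factor in $L^{r_1}_x$ and the $\mathcal{W}^{\sigma,2}_y$ factor in $L^{r_2}_x$ in the first term, and the reverse in the second—yields precisely the claimed bound in $L^r_x\mathcal{W}^{\alpha,2}_y$.

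The pointwise estimate in $y$ is where the hypotheses on the exponents enter. I would apply the product rule of Proposition~\ref{prop:dg}(2) in the $y$ variable with target integrability $q=2$, choosing the conjugate pairs so that $q_1=\bar q_2=p$ and $\bar q_1=q_2=\bar q$ with $\frac1{\bar q}=\frac12-\frac1p$. This produces
$$|fg|_{\mathcal{W}^{\alpha,2}_y}\lesssim |f|_{L^p_y}|g|_{\mathcal{W}^{\alpha,\bar q}_y}+|f|_{\mathcal{W}^{\alpha,\bar q}_y}|g|_{L^p_y}.$$
It then remains to absorb the loss of integrability by a Sobolev embedding $\mathcal{W}^{\sigma,2}(\R^2)\hookrightarrow\mathcal{W}^{\alpha,\bar q}(\R^2)$. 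The assumption $\frac{\sigma-\alpha}{2}=\frac1p$ is exactly the statement $\frac1{\bar q}=\frac12-\frac{\sigma-\alpha}{2}$, which is the critical scaling for this embedding in dimension two; combined with $\alpha<\sigma$ and $p\ge 2$ it guarantees $\bar q\in(2,\infty)$ and the gain of $\sigma-\alpha$ derivatives needed to pass from $\mathcal{W}^{\alpha,\bar q}_y$ to $\mathcal{W}^{\sigma,2}_y$.

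The one point requiring genuine care is this Sobolev embedding for the $H$-based spaces, since only the norm equivalence and the product rule of Proposition~\ref{prop:dg} are at hand. Using Proposition~\ref{prop:dg}(1) I would split $|g|_{\mathcal{W}^{\alpha,\bar q}}$ into the two pieces $|\langle D_y\rangle^\alpha g|_{L^{\bar q}}$ and $|\langle y\rangle^\alpha g|_{L^{\bar q}}$. The first is controlled by $|\langle D_y\rangle^\sigma g|_{L^2}$ through the classical fractional Sobolev embedding with the exponent relation above. The second, carrying the potential weight $\langle y\rangle^\alpha$, is the delicate term: one must reconcile the spatial weight with the frequency gain and reduce it back to $|g|_{\mathcal{W}^{\sigma,2}}$, again via Proposition~\ref{prop:dg}(1). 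I expect this potential-weighted embedding—rather than the product rule or the H\"older bookkeeping—to be the main obstacle. The endpoint $p=2$ (where $\bar q=\infty$) would have to be excluded or treated separately, but the later applications of the lemma only invoke it with $p>2$, so this causes no loss.
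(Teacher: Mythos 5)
Your proposal follows essentially the same route as the paper's proof: the same application of Proposition~\ref{prop:dg}(2) in the $y$ variable with $q_1=\bar q_2=p$ and $\bar q_1=q_2=\frac{2}{1-2\varepsilon}$ where $\varepsilon=\frac{\sigma-\alpha}{2}$, the same Sobolev embedding $\mathcal{W}^{\sigma,2}\subset\mathcal{W}^{\alpha,\frac{2}{1-2\varepsilon}}$, and the same final H\"older step in $x$. Your remarks on justifying the embedding for the $H$-based spaces and on the endpoint $p=2$ (where $\bar q_1=\infty$ falls outside the range allowed in Proposition~\ref{prop:dg}(2)) are reasonable refinements that the paper passes over silently, but they do not change the argument.
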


\proof Set $\varepsilon:=\frac{\sigma-\alpha}{2}$, and choose $q_1=p=\bar{q_2}$, $\bar{q_1}=q_2=\frac{2}{1-2\varepsilon}$ in Proposition \ref{prop:dg} (2).
Then, we obtain
\begin{equation} \label{ineq:fg}
|fg (x,\cdot)|_{\mathcal{W}^{\alpha,2}_y} \lesssim |f(x,\cdot)|_{L^p_y}|g(x,\cdot)|_{\mathcal{W}^{\alpha, \frac{2}{1-2\varepsilon}}}
+|f(x,\cdot)|_{\mathcal{W}^{\alpha, \frac{2}{1-2\varepsilon}}}|g(x,\cdot)|_{L^p_y}.
\end{equation}
By Sobolev embedding, $\mathcal{W}^{\sigma,2}(\R^2) \subset \mathcal{W}^{\alpha,\frac{2}{1-2\varepsilon}}(\R^2)$. 
Thus, the right hand side above is majorized by 
$$C(|f(x,\cdot)|_{L^p_y}|g(x,\cdot)|_{\mathcal{W}_y^{\sigma,2}} +|f(x,\cdot)|_{\mathcal{W}_y^{\sigma,2}} |g(x,\cdot)|_{L^p_y}).$$
Now we take the $L^r_x$ norm of  both sides of the inequality, and use H\"older inequality with $\frac{1}{r}=\frac{1}{r_1} +\frac{1}{r_2},$ to get the result.
%$$ |fg|_{L^r_x \mathcal{W}^{\alpha,2}_y} \lesssim |f|_{L^{r_1}_x L^p_y}|g|_{L^{r_2}_x\mathcal{W}_y^{\sigma,2}} +|f|_{L^{r_2}_x\mathcal{W}_y^{\sigma,2}} |g|_{L^{r_1}_xL^p_y}.$$
\hfill\qed
\vspace{3mm}

\begin{cor} \label{cor:Kernel}
Let $n\in \N$, $r,r_1,r_2 \ge 1$, $p\ge 2$, and $0<\tilde{\alpha}<\sigma$ with 
$\frac{1}{r}=\frac{n}{r_1}+\frac{1}{r_2}$ and $\frac{\sigma-\tilde{\alpha}}{2}=\frac{n}{p}$. 
If the function $f(x,y)$ $(x,y \in \R^2)$ 
satisfies $f \in L^{r_1}_x L^p_y \cap L^{r_2}_{x} \mathcal{W}^{\sigma,2}_{y}$, then $f^{n+1} \in L^r_x \mathcal{W}^{\tilde{\alpha},2}_y.$ 
Moreover, 
\begin{equation*} \label{ineq:fpower}
 |f^{n+1}|_{L^r_x \mathcal{W}^{\tilde{\alpha},2}_y} 
\lesssim |f|^n_{L^{r_1}_x L^p_y} |f|_{L^{r_2}_x \mathcal{W}_y^{\sigma,2}}.
\end{equation*}
\end{cor}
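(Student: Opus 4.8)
The plan is to prove the estimate by induction on the power, strengthening the statement so that it propagates cleanly through the intermediate smoothness and integrability indices. For $j=0,1,\dots,n$ set $\alpha_j:=\sigma-\frac{2j}{p}$ (so that $\alpha_0=\sigma$ and $\alpha_n=\tilde\alpha$) and define $\rho_j$ by $\frac{1}{\rho_j}:=\frac{j}{r_1}+\frac{1}{r_2}$ (so that $\rho_n=r$). I would establish, for every $1\le j\le n$, the bound
\begin{equation*}
|f^{j+1}|_{L^{\rho_j}_x \mathcal{W}^{\alpha_j,2}_y} \lesssim |f|^{j}_{L^{r_1}_x L^p_y}\,|f|_{L^{r_2}_x \mathcal{W}^{\sigma,2}_y},
\end{equation*}
the case $j=n$ being exactly the claim. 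The base case $j=1$ is precisely Lemma \ref{lem:Kernel} applied with $g=f$ and $\alpha=\alpha_1=\sigma-\frac{2}{p}$, whose symmetric right-hand side reduces to $2|f|_{L^{r_1}_x L^p_y}|f|_{L^{r_2}_x \mathcal{W}^{\sigma,2}_y}$. Note that one cannot simply iterate Lemma \ref{lem:Kernel} verbatim, since that lemma requires both factors to live in the \emph{same} pair of spaces, which fails for $f^{j}$ and $f$ once $j\ge 2$; the remedy is to reproduce the pointwise core of its proof with asymmetric exponents.

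For the inductive step $j-1\to j$, I would write $f^{j+1}=f\cdot f^{j}$ and, for fixed $x$, apply Proposition \ref{prop:dg} (2) in the $y$ variable at target regularity $\alpha_j$ and integrability $2$, followed by the two-dimensional Sobolev embedding, to obtain
\begin{equation*}
|f^{j+1}(x,\cdot)|_{\mathcal{W}^{\alpha_j,2}_y} \lesssim |f(x,\cdot)|_{L^p_y}\,|f^{j}(x,\cdot)|_{\mathcal{W}^{\alpha_{j-1},2}_y} + |f(x,\cdot)|_{\mathcal{W}^{\sigma,2}_y}\,|f(x,\cdot)|^{j}_{L^p_y}.
\end{equation*}
The two embeddings used are $\mathcal{W}^{\alpha_{j-1},2}\hookrightarrow \mathcal{W}^{\alpha_j,\,2p/(p-2)}$ for the first term (valid since $\alpha_{j-1}-\alpha_j=\frac{2}{p}$) and $\mathcal{W}^{\sigma,2}\hookrightarrow \mathcal{W}^{\alpha_j,\,2p/(p-2j)}$ for the second (valid since $\sigma-\alpha_j=\frac{2j}{p}$), together with the elementary identity $|f^{j}(x,\cdot)|_{L^{p/j}_y}=|f(x,\cdot)|^{j}_{L^p_y}$ coming from the exponent $\bar q_2=p/j$ dual to $2p/(p-2j)$.

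Taking the $L^{\rho_j}_x$ norm and applying H\"older's inequality in $x$ then closes the induction. For the first term I would split $\frac{1}{\rho_j}=\frac{1}{r_1}+\frac{1}{\rho_{j-1}}$ and invoke the inductive hypothesis applied to $|f^{j}|_{L^{\rho_{j-1}}_x \mathcal{W}^{\alpha_{j-1},2}_y}$; for the second I would split $\frac{1}{\rho_j}=\frac{j}{r_1}+\frac{1}{r_2}$, using $\||f|^{j}_{L^p_y}\|_{L^{r_1/j}_x}=|f|^{j}_{L^{r_1}_x L^p_y}$. Both splittings are consistent with the definition of $\rho_j$, and summing the two contributions gives the bound at level $j$. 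The routine part is this arithmetic; the main thing to watch is the exponent bookkeeping — checking that every intermediate embedding is admissible, in particular that $2p/(p-2j)$ remains finite and $\ge 2$ for all $j\le n$ (i.e. $p>2n$, which should be verified under the hypotheses and certainly holds in the intended application to the kernel $K$), and that the H\"older exponents telescope exactly as the defining relations for $\rho_j$ and $\alpha_j$ require. Once these indices are seen to lie in the admissible ranges, the induction goes through with a constant depending only on $n$, $p$, and the exponents.
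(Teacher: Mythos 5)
Your proof is correct and follows essentially the same route as the paper's: induction on the power, writing $f^{j+1}=f\cdot f^{j}$, applying Proposition \ref{prop:dg}(2) with one factor measured in $L^p_y$, using Sobolev embeddings calibrated exactly by the relation $\sigma-\alpha_j=\frac{2j}{p}$, and finishing with H\"older in $x$. The only (cosmetic) difference is that the paper runs the induction on the pointwise-in-$x$ estimate with a flexible pair $(\sigma,\tilde\sigma)$ and applies H\"older in $x$ once at the end, whereas you thread the intermediate exponents $\rho_j$ through the induction; the implicit constraint $p>2j$ that you flag is likewise present (silently) in the paper's choice $\frac{1}{q_2}=\frac12-\frac{k+1}{p}$.
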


\proof We prove the result by induction on $n$. The case $n=1$ is straightforward from the inequality (\ref{ineq:fg}) setting $f=g$, 
combining with H\"older inequality. 
We now assume that the inequality 
$$ |f^{k+1}|_{\mathcal{W}^{\sigma,2}_y} 
\lesssim |f|^k_{L^p_y} |f|_{\mathcal{W}_y^{\tilde{\sigma},2}}. $$
holds for any $\sigma, \tilde{\sigma}$ satisfying $\frac{\tilde{\sigma}-\sigma}{2}=\frac{k}{p}$. 
Applying again Proposition \ref{prop:dg} (2) with $\alpha=\tilde{\alpha}$, $p=2$, $q_1=p$ and $(k+1)\bar{q_2}=p$, we have
\begin{equation} \label{ineq:fk}
|f^{k+1} f|_{\mathcal{W}^{\tilde{\alpha},2}_y} \lesssim |f|_{L^p_y} |f^{k+1}|_{\mathcal{W}^{\tilde{\alpha},\bar{q_1}}_y}+|f|_{\mathcal{W}^{\tilde{\alpha},q_2}_y} |f|_{L^p_y}^{k+1}.
\end{equation}
Note that this choice of parameters implies $\frac{1}{\bar{q_1}}=\frac{1}{2}-\frac{1}{p}$ and $\frac{1}{q_2}=\frac12-\frac{k+1}{p}$. Now
choose $\sigma$ such that $\sigma=1+\tilde{\alpha}-\frac{2}{\bar{q_1}}=\tilde \alpha +\frac{2}{p}$, and $\frac{\tilde{\sigma}-\sigma}{2}=\frac{k}{p}$. 
By Sobolev embedding $\mathcal{W}^{\sigma,2} \subset \mathcal{W}^{\tilde{\alpha},\bar{q_1}}$, 
and by the induction assumption,  
$$ |f^{k+1}|_{\mathcal{W}^{\tilde{\alpha},\bar{q_1}}_y} \lesssim  |f^{k+1}|_{\mathcal{W}^{\sigma,2}_y} \lesssim |f|^k_{L^p_y} |f|_{\mathcal{W}^{\tilde{\sigma},2}_y}.$$
Next, the relation $\frac{\tilde{\sigma}-\tilde{\alpha}}{2}=\frac{k+1}{p}$ implies $\tilde{\sigma}-1=\tilde{\alpha}-\frac{2}{q_2}$, thus 
by Sobolev embedding, 
$|f|_{\mathcal{W}^{\tilde{\alpha},q_2}_y} \lesssim |f|_{\mathcal{W}_y^{\tilde{\sigma},2}}.$
In summary, we estimate the right hand side of (\ref{ineq:fk}) by 
$ C|f|_{L^p_y}^{k+1}|f|_{\mathcal{W}_y^{\tilde{\sigma},2}}$
with $\frac{\tilde{\sigma}-\tilde{\alpha}}{2}=\frac{k+1}{p}$, this proves the $n=k+1$ case. Finally, as in Lemma \ref{lem:Kernel}, 
we use H\"older inequality to conclude the statement.  
\hfill \qed
\vspace{3mm}

\noindent
{\em Proof of Proposition \ref{prop:Kernel}.} 
First of all, we consider the $L^2$ norm in $x_2$ of the kernel $K(x_1,x_2)$, $x_1, x_2 \in \R^2$, on which we apply the cut-off $S_N$; denoting
$\chi_{k,N}=\chi \left[\frac{\lambda_k^2}{\lambda_N^2}\right]$, we may write
$$ S_N K(x_1, x_2) =\sum_{k\in \N^2} \chi_{k,N} \frac{h_k(x_1) h_k(x_2)}{\lambda_k^2}.$$
We have
\begin{eqnarray*}
\left[\int_{\R^2} |S_N K(x_1, x_2)|^2 d x_2 \right]^{\frac12} =\left[\sum_{k\in  \N^2} \chi_{k,N}^2\frac{|h_k(x_1)|^2}{\lambda_k^4} \right]^{\frac12}.  
\end{eqnarray*}
Next we take, for $q>2$, the $L^q$ norm in $x_1$,
$$
|S_N K|_{L^q_{x_1} L^2_{x_2}}^2 = \left[\int_{\R^2} \left[\sum_{k \in \N^2} \chi_{k,N}^2
\frac{|h_k(x_1)|^2}{\lambda_k^4} \right]^{\frac{q}{2}} d x_1 \right]^{\frac{2}{q}} 
\le  \sum_{k \in \N^2} \chi_{k,N}^2 \frac{|h_k|_{L^q}^2}{\lambda_k^4},
$$
where we have used Minkowski inequality.
It follows from (\ref{est:kt}) that $|h_k|_{L^q(\R^2)} \le \lambda_k^{-\theta(q)}$ for some $\theta(q)>0$ if $q>2$, thus 
$$\sum_{k \in \N^2} \chi_{k,N}^2 \frac{|h_k|_{L^q}^2}{\lambda_k^4} 
\le \sum_{k \in \N^2, |k| \le N} \frac{1}{\lambda_k^{4+2\theta(q)}},$$
which converges as $N \to \infty$. Namely $K \in L^q_{x_1} L^2_{x_2}$ for any $q \in (2, +\infty]$.
On the other hand, for $2 \le q <+\infty$, by Sobolev embedding $\mathcal{W}^{1-\frac{2}{q},2}(\R^2) \subset L^q(\R^2)$,  and by duality 
$L^{q*}(\R^2) \subset \mathcal{W}^{1-\frac{2}{q^*},2}(\R^2)$, where $\frac{1}{q}+\frac{1}{q^*}=1$.
We also know that the operator $(-H)^{-1}=(-\Delta+|x|^2)^{-1}$ maps $\mathcal{W}^{1-\frac{2}{q^*},2}(\R^2)$ into 
$\mathcal{W}^{3-\frac{2}{q^*},2}(\R^2) \subset L^{\infty}(\R^2)$. 
Therefore, we have  
$$ \sup_{x_1 \in \R^2} \left|\int_{\R^2} K(x_1, x_2) f(x_2) d x_2 \right|=|(-H)^{-1} f|_{L^{\infty}} \lesssim |(-H)^{-1} f|_{\mathcal{W}^{3-\frac{2}{q^*},2}} 
\lesssim |f|_{\mathcal{W}^{1-\frac{2}{q^*},2}} \lesssim |f|_{L^{q^*}}.$$
This means that there exists $C>0$ which is independent of $x_1$ such that $|K(x_1, \cdot)|_{L^q} \le C$ for any $x_1\in \R^2$, i.e. $K \in L^{\infty}_{x_1} L^q_{x_2}.$ 
Thus, by interpolation between $L^q_{x_1} L^2_{x_2}$, for $q \in (2, +\infty]$, and $L^{\infty}_{x_1} L^q_{x_2}$, for $q \in [2, +\infty)$, we have 
\begin{equation} \label{eq:K1}
K\in L^{\alpha}_{x_1} L^{\beta}_{x_2}, \quad \mathrm{for \; any\;}\alpha >\beta \ge 2.
\end{equation}
The same idea as above can be used to prove that
\begin{eqnarray} \label{eq:K2}
K \in L^{\infty}_{x_1} \mathcal{W}^{\sigma,2}_{x_2}, \quad \mathrm{for \; any\;} \sigma<1.
\end{eqnarray}
Indeed, for any $0<\varepsilon <1$, 
$$ \sup_{x_1 \in \R^2} \left|\int_{\R^2} K(x_1, x_2) f(x_2) d x_2 \right|=|(-H)^{-1} f|_{L^{\infty}} \le C|f|_{\mathcal{W}^{-1+\varepsilon,2}}.$$
By interpolation between (\ref{eq:K1}) and (\ref{eq:K2}), we have 
for all $\sigma<1-\frac{2}{q},$ and $q \ge 2$, 
\begin{equation*}
K \in L^{q}_{x_1} \mathcal{W}^{\sigma,2}_{x_2}. 
\end{equation*}
Now, the use of \eqref{eq:K1} and Corollary \ref{cor:Kernel} allows to obtain the statement of Proposition \ref{prop:Kernel}.
\hfill\qed 
\vspace{3mm}

We note that, as a simple consequence of Proposition \ref{prop:Kernel}, we obtain the following regularity results for $Z_{\infty}^{\gamma_1, \gamma_2}$.
\begin{lem} \label{lem:Kolmogorov} 
Fix any $T>0$. Let $\gamma_1>0, \gamma_2 \in \R, s>0, q \ge 2, sq>2$ and $0<\alpha <\frac 12(s-\frac2q )$. 
The stationary solution $Z_{\infty}^{\gamma_1, \gamma_2}$ of  (\ref{eq:Z}) has a modification in $C^{\alpha}([0,T], \mathcal{W}^{-s,q})$. 
Moreover, there exists a positive constant $C_{T}$ such that 
$$ \E\left[\sup_{t\in [0,T]} |Z^{\gamma_1,\gamma_2}_{\infty}(t) |_{\mathcal{W}^{-s,q}}\right]  \le C_T.$$
\end{lem}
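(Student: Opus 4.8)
The plan is to derive both assertions from the Kolmogorov--Chentsov continuity theorem combined with Gaussian hypercontractivity, the only genuinely new ingredient being the control of the spatial $\mathcal{W}^{-s,q}$-norm through the kernel estimate of Proposition \ref{prop:Kernel}. Throughout I would use that $Z_\infty^{\gamma_1,\gamma_2}$ is a stationary, mean-zero, complex Gaussian process. Writing its components as in \eqref{Zseries}, for each fixed $t$ and $x$ the variable $(-H)^{-s/2}Z_\infty^{\gamma_1,\gamma_2}(t,x)=\sum_k \lambda_k^{-s}\tfrac{\sqrt2}{\lambda_k}g_k(\omega,t)h_k(x)$ is a centered complex Gaussian whose variance
$$V_s(x)=2\sum_{k\in\N^2}\frac{h_k(x)^2}{\lambda_k^{2+2s}}$$
does not depend on $t$ by stationarity.

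First I would prove the spatial moment bound. By Minkowski's inequality followed by the hypercontractivity estimate \eqref{ineq:nelson} applied pointwise in $x$ to the first Wiener chaos (exactly as in the proof of Corollary \ref{cor:m-moment}), any moment $\E[|Z_\infty^{\gamma_1,\gamma_2}(t)|^m_{\mathcal{W}^{-s,q}}]$ is reduced to a power of $\int_{\R^2}V_s(x)^{q/2}\,dx$. The crucial observation is that $V_s(x)/2=\int_{\R^2}|K^{(s)}(x,y)|^2\,dy$, where $K^{(s)}(x,y)=\sum_k \lambda_k^{-(1+s)}h_k(x)h_k(y)$ is the kernel of $(-H)^{-(1+s)/2}$, so that $\int V_s^{q/2}\,dx=2^{q/2}|K^{(s)}|^q_{L^q_xL^2_y}$. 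Since $(-H)^{(1-s)/2}_yK=K^{(s)}$, this norm equals $|K|_{L^q_x\mathcal{W}^{1-s,2}_y}^q$, which is finite by Proposition \ref{prop:Kernel} (with $n=1$, $r=q$) precisely when $1-s<1-\tfrac2q$, i.e. $sq>2$. This gives $\sup_t\E[|Z_\infty^{\gamma_1,\gamma_2}(t)|^m_{\mathcal{W}^{-s,q}}]<\infty$ for every $m\ge q$.

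Next I would estimate the time increments. Each component $\zeta_k(t)=\tfrac{\sqrt2}{\lambda_k}g_k(\omega,t)$ is a stationary complex Ornstein--Uhlenbeck process solving $d\zeta_k=-(\gamma_1+i\gamma_2)\lambda_k^2\zeta_k\,dt+\sqrt{2\gamma_1}\,d\beta_k$, so its stationary autocovariance yields
$$\E|\zeta_k(t)-\zeta_k(\tau)|^2=\frac{4}{\lambda_k^2}\,\re\big(1-e^{-(\gamma_1+i\gamma_2)\lambda_k^2|t-\tau|}\big)\le \frac{C}{\lambda_k^2}\min\big(1,\lambda_k^2|t-\tau|\big).$$
Using $\min(1,a)\le a^{2\alpha'}$ for any $2\alpha'\in[0,1]$, the variance function of the increment is bounded by $C|t-\tau|^{2\alpha'}V_{s-2\alpha'}(x)$. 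Feeding this into the same hypercontractivity argument gives, for every $m\ge q$,
$$\E\big[|Z_\infty^{\gamma_1,\gamma_2}(t)-Z_\infty^{\gamma_1,\gamma_2}(\tau)|^m_{\mathcal{W}^{-s,q}}\big]\le C_m|t-\tau|^{\alpha' m}\Big(\int_{\R^2}V_{s-2\alpha'}(x)^{q/2}\,dx\Big)^{m/q},$$
and the last integral is again finite, by Proposition \ref{prop:Kernel}, as soon as $(s-2\alpha')q>2$. Since $sq>2$ is strict, any $\alpha'$ with $2\alpha'<s-\tfrac2q$ is admissible. Kolmogorov's theorem then produces a modification in $C^{\alpha}([0,T],\mathcal{W}^{-s,q})$ for every $\alpha<\alpha'-\tfrac1m$; letting $m\to\infty$ and $2\alpha'\uparrow s-\tfrac2q$ covers the whole range $0<\alpha<\tfrac12(s-\tfrac2q)$. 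Finally, the quantitative (Garsia--Rodemich--Rumsey) form of the theorem bounds $\E[\|Z_\infty^{\gamma_1,\gamma_2}\|_{C^\alpha([0,T];\mathcal{W}^{-s,q})}]$ by these increment moments together with the stationary bound at $t=0$, and since $\sup_{[0,T]}|Z_\infty^{\gamma_1,\gamma_2}(t)|_{\mathcal{W}^{-s,q}}\le |Z_\infty^{\gamma_1,\gamma_2}(0)|_{\mathcal{W}^{-s,q}}+T^\alpha\|Z_\infty^{\gamma_1,\gamma_2}\|_{C^\alpha}$, this yields the asserted bound on $\E[\sup_{[0,T]}|Z_\infty^{\gamma_1,\gamma_2}(t)|_{\mathcal{W}^{-s,q}}]$.

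The main obstacle is the spatial estimate: a naive triangle/Minkowski bound of $\sum_k \lambda_k^{-(2+2s)}h_k^2$ in $L^{q/2}_x$ discards the orthogonality of the Hermite functions and would force $s$ close to $1$ rather than merely $sq>2$. The whole point is therefore to recognize $V_s$ as the squared $L^2_y$-norm of the fractional kernel $K^{(s)}$ and to route the estimate through Proposition \ref{prop:Kernel} via the identity $K\in L^q_x\mathcal{W}^{1-s,2}_y$; once this identification is made, the time-regularity half reduces to the routine Ornstein--Uhlenbeck computation above.
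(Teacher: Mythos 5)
Your proposal is correct and follows essentially the same route as the paper: pointwise-in-$x$ Gaussian moment bounds plus Minkowski reduce everything to $L^q_x$-norms of the square-rooted variance sums, which you identify (as the paper does) with $|K|_{L^q_{x}\mathcal{W}^{1-s+\cdot,2}_{y}}$ and control via Proposition \ref{prop:Kernel}, before concluding with the Kolmogorov test. The only cosmetic difference is that you bound the time increments through the stationary Ornstein--Uhlenbeck autocovariance and $\min(1,a)\le a^{2\alpha'}$, whereas the paper splits the stochastic integral into the pieces $f_{1,t,\tau}$ and $f_{2,t,\tau}$; both yield the same increment estimate and the same admissible range of $\alpha$.
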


\begin{proof} Since the computation is almost the same as in Lemma 2.1 in \cite{dbdf}, we note only the points in the arguments. 
In order to apply the Kolmogorov test, consider, for $t,\tau$ with $\tau<t \le T$,
\begin{eqnarray*}
Z_{\infty}^{\gamma_1, \gamma_2}(t,x)-Z_{\infty}^{\gamma_1, \gamma_2}(\tau,x)
&=& \sqrt{2\gamma_1} \sum_{k\in \N^2} \Big[\int_{\tau}^t e^{-\lambda_k^2 (\gamma_1+i\gamma_2)(t-\sigma)} d\beta_k(\sigma) h_k(x) \\
             && + \int_{-\infty}^{\tau} (e^{-\lambda_k^2(\gamma_1+i \gamma_2)(t-\sigma)}-e^{-\lambda_k^2(\gamma_1+i\gamma_2)(\tau-\sigma)}) 
             d\beta_k(\sigma) h_k(x) \Big].
\end{eqnarray*}
with $\beta_k=\beta_{k,R} +i \beta_{k,I}$.
%Set 
%\begin{eqnarray*}
%f_{1,t,\tau}(\omega,x)&:=& \int_{\tau}^t e^{-\lambda_k^2 (\gamma_1+i\gamma_2)(t-\sigma)} (d\beta_{k,R} (\sigma) h_k(x)+id \beta_{k,I}(\sigma) h_k(x)),\\ 
%f_{2,t,\tau}(\omega,x)&:=& \int_{-\infty}^{\tau} (e^{-\lambda_k^2(\gamma_1+i\gamma_2)(t-\sigma)}-e^{-\lambda_k^2(\gamma_1+i\gamma_2)(\tau-\sigma)}) 
%(d\beta_{k,R} (\sigma) h_k(x)+id \beta_{k,I}(\sigma) h_k(x)).
%\end{eqnarray*} 
Denote by $f_{1,t,\tau}$ the first term in the right hand side above, and by $f_{2,t,\tau}$ the second term.
For any $\alpha\in [0,1]$, and $m \in \N \setminus \{0\}$, since $f_{1,t,\tau}$ is Gaussian, we have the moment estimate:
\begin{eqnarray*}
\E\left[ \Big|(-H)^{-\frac{s}{2}} f_{1,t,\tau}(\cdot,x) \Big|^{2m}\right]^{\frac{1}{2m}} \lesssim (t-\tau)^{\frac{\alpha}{2}} 
\left[\sum_{k\in \N^2} \lambda_k^{-2s+2(\alpha-1)} |h_k(x)|^2 \right]^{\frac12}.
\end{eqnarray*}
We take the $L^q_x$ norm and write the right hand side using the kernel $K(x_1, x_2)$ (see (\ref{def:kernel})):
\begin{eqnarray*}
\left|\E\left[\Big|(-H)^{-\frac{s}{2}}f_{1,t,\tau} \Big|^{2m}\right]^{\frac{1}{2m}} \right|_{L^q_x}
%&\le& (t-\tau)^{\frac{\alpha}{2}} \left| (\sum_{k\in \N^2} \lambda_k^{2(-s+(\alpha-1))}  |h_k(x)|^2)^{\frac12}  \right|_{L^q_x} \\
&\le& (t-\tau)^{\frac{\alpha}{2}}  |K|_{L^q_{x_1} \mathcal{W}^{2-s+ (\alpha-1),2}_{x_2}}. 
\end{eqnarray*}
By Proposition \ref{prop:Kernel}, the above norm is finite if $q\ge 2$ and $1-s+\alpha < 1-2/q$. Finally, using Minkowskii inequality
for $2m \ge q$, we obtain 
$$ \E\left( | f_{1,t,\tau} |^{2m}_{\mathcal{W}^{-s,q}} \right) \lesssim (t-\tau)^{m\alpha}, $$
 for $2m \ge q \ge 2$, $0<\alpha< s-2/q.$ The second term can be estimated in a similar way and we obtain  the same estimate
 for $f_{2,t,\tau}$. Applying the Kolmogorov test then gives the result.
%  $$ \E\left( \Big|\sum_{k\in \N^2} f_{2,t,\tau} \Big|^{2m}_{\mathcal{W}^{-s,q}} \right)^{\frac{1}{2m}} \lesssim (t-\tau)^{{\alpha}}, $$
% for $2m \ge q \ge 2$, $0<\alpha< \frac{1}{2}(s-\frac2q)$, and we apply the Kolmogorov test.
\end{proof}

We now use Proposition \ref{prop:Kernel}  to prove the convergence of the Wick products.
\vspace{3mm}

\noindent
{\em Proof of Proposition \ref{prop:ReImCauchy}.} 
In order to lighten the notations, we write in all what follows $Z_{1,N}$ for $S_N Z_{R,\infty}$ and $Z_{2,N}$ for $S_N Z_{I,\infty}$.
We show below that
the sequence $\{: Z_{1,N}^{k}:: Z_{2,N}^{l}:\}_{N\in \N}$ is Cauchy in $L^q(\Omega, \mathcal{W}^{-s,q}(\R^2))$ for any $k,l \in \N$,
and any $q\ge 4$, $s>0$ with $qs>8$.
Let $M, N \in \N$ with $M<N$. Then
\begin{eqnarray*}
\E \left(|: Z_{1,N}^{k}:: Z_{2,N}^{l}:-: Z_{1,M}^{k}:: Z_{2,M}^{l}:|_{\mathcal{W}^{-s,q}}^q \right)
&\le& C\;  \E \left( |(:Z_{1,N}^{k}:-: Z_{1,M}^{k}:):Z_{2,M}^{l}:|_{\mathcal{W}^{-s,q}}^q \right)\\ 
&& \hspace{3mm}+\; C\; \E \left( |:Z_{1,N}^{k}:(:Z_{2,N}^{l}:-:Z_{2,M}^{l}:)|_{\mathcal{W}^{-s,q}}^q \right) \\
&=& (\mathrm{I}) +(\mathrm{II})
\end{eqnarray*}
We only show the estimates for (I) since (II) can be treated similarly. Using Fubini Theorem we may write
\begin{eqnarray} \nonumber 
(\mathrm{I})&=& \int_{\Omega} |(-H)^{-\frac{s}{2}}((:Z_{1,N}^{k}:-: Z_{1,M}^{k}:):Z_{2,M}^{l}:|^q_{L^q} d\prob
%\\ \nonumber
%&=& \int_{\Omega} \int_{\R^2} \big|\sum_{m\in \N^2}  ((:Z_{1,N}^{k}:-: Z_{1,M}^{k}:):Z_{2,M}^{l}:, h_m)_{L^2} \lambda_m^{-s} h_m(x)\big|^q dx d\prob 
\\ \label{set:A}
&=& \int_{\R^2}  \int_{\Omega} \big|\sum_{m\in \N^2}  ((:Z_{1,N}^{k}:-: Z_{1,M}^{k}:):Z_{2,M}^{l}:, h_m)_{L^2} \lambda_m^{-s} h_m(x)\big|^q  d\prob dx.
\end{eqnarray}
We set for $x \in \R^2$,
$$A(x):=\int_{\Omega} \big|\sum_{m\in \N^2}  ((:Z_{1,N}^{k}:-: Z_{1,M}^{k}:):Z_{2,M}^{l}:, h_m)_{L^2} \lambda_m^{-s} h_m(x)\big|^2  d\prob,$$
and for any fixed $N, M\in \N$, 
\begin{equation}
\label{def:kernel_N} K_{N,M} (x_1, x_2) :=\sum_{k\in \N^2}\chi\left(\frac{\lambda_k^2}{\lambda_N^2}\right)
\chi\left(\frac{\lambda_k^2}{\lambda_M^2}\right) \frac{h_k(x_1)h_k(x_2)}{\lambda_k^2} , \quad x_1, x_2 \in \R^2,
\end{equation}
and $K_N(x_1,x_2):=K_{N,N}(x_1,x_2)$.
Then, by the Nelson estimate (\ref{ineq:nelson}), and since
$$ \sum_{m\in \N^2}  ((:Z_{1,N}^{k}:-: Z_{1,M}^{k}:):Z_{2,M}^{l}:, h_m)_{L^2} \lambda_m^{-s} h_m(x) \in \mathcal{H}_{k+l},$$
the right hand side of \eqref{set:A} is bounded above by
$ C_{q, k+l} |A|_{L^{\frac{q}{2}}(\R^2)}^{\frac{q}{2}}$.
Let us first check that $A(x)$ can be written as 
\begin{eqnarray} \label{eq:A}
A(x)=k! \,  l! \, (-H_{x_1})^{-s/2}(-H_{x_2})^{-s/2}[(K_N^k -2K_{N,M}^k+K_M^k)K_M^l](x,x), \quad x \in \R^2.
\end{eqnarray}
%thus we obtain 
%$$(\ref{set:A}) \le C_q k!~ l!~ |(-H_{x_1,x_2})^{-\frac{s}{2}}[(K_N^k -K_M^k)K_M^l]|_{L_x^{\frac{q}{2}}}^{\frac{q}{2}},$$
%therefore it suffices to prove the convergence of this right hand side, but before proving it, we briefly explain how to 
%obtain (\ref{eq:A}). 
Indeed, 
\begin{eqnarray} \nonumber
A(x)&=& \int_{\Omega} \sum_{m_1 \in \N^2} \sum_{m_2 \in \N^2} 
((:Z_{1,N}^{k}:-: Z_{1,M}^{k}:)):Z_{2,M}^{l}:, h_{m_1})_{L^2} \lambda_{m_1}^{-s} h_{m_1}(x) \\ \nonumber
&& \hspace{10mm}\times ((:Z_{1,N}^{k}:-: Z_{1,M}^{k}:)):Z_{2,M}^{l}:, h_{m_2})_{L^2} \lambda_{m_2}^{-s} h_{m_2}(x) d\prob \\ \nonumber
&=& \int_{\Omega} \sum_{m_1,m_2 \in \N^2} \int_{\R^2_{x_1}} \int_{\R^2_{x_2}}
(:Z_{1,N}^{k}:-: Z_{1,M}^{k}:)(x_1) :Z_{2,M}^{l}:(x_1) h_{m_1}(x_1) \lambda_{m_1}^{-s} h_{m_1}(x) \\ \label{ineq1:A}
&& \hspace{10mm}\times (:Z_{1,N}^{k}:-: Z_{1,M}^{k}:)(x_2) :Z_{2,M}^{l}:(x_2) h_{m_2}(x_2) \lambda_{m_2}^{-s} h_{m_2}(x) d x_1 d x_2 d\prob.  
\end{eqnarray}
The fact that $Z_{1,M}$ and $Z_{2,M}$ are independent allows us to compute the integrals on $\Omega$ separately. 
Recalling the definition, 
$$ :Z_{j,N}^{k}:(x_1) = \rho_N (x_1)^k \sqrt{k !} H_k(W^j_{\eta_N(x_1)}), \quad j=1,2,$$
where we use the (obvious) notation $W_f^1$ (resp. $W^2_f$) to denote the Gaussian random variables associated with 
the white noise corresponding to
$Z_1=\mathrm{Re}Z_{\infty}$ (resp. $Z_2=\mathrm{Im} Z_\infty$), we apply (\ref{eq:Wf}): 
\begin{equation} \label{wick1}
\E(H_k(W^j_{\eta_N(x_1)}) H_k(W^j_{\eta_{M}(x_2)}))= k! (\eta_N(x_1),\eta_{M}(x_2))_{L^2}^k, \quad j=1,2.
\end{equation}
Then we have,  by \eqref{ineq1:A},
\begin{eqnarray} \nonumber 
A(x) &=&k! \, l! \sum_{m_1,m_2 \in \N^2} \int_{\R^2_{x_1}} \int_{\R^2_{x_2}}
\Big\{ \rho_N^k(x_1) \rho_N^k(x_2) (\eta_N(x_1),\eta_{N}(x_2))_{L^2}^k 
 - \rho_N^k(x_1) \rho_M^k(x_2) (\eta_N(x_1),\eta_{M}(x_2))_{L^2}^k \\ \nonumber 
&& \hspace{2.5cm}-\rho_M^k(x_1) \rho_N^k (x_2) (\eta_M(x_1),\eta_{N}(x_2))_{L^2}^k 
+  \rho_M^k(x_1) \rho_M^k(x_2) (\eta_M(x_1),\eta_{M}(x_2))_{L^2}^k \Big\} \\ \label{ineq2:A}
&& \hspace{.5cm}\times  \rho_M^l(x_1) \rho_M^l(x_2) (\eta_M(x_1),\eta_{M}(x_2))_{L^2}^l 
h_{m_1}(x_1) h_{m_2}(x_2) \lambda_{m_1}^{-s} \lambda_{m_2}^{-s} h_{m_1}(x) h_{m_2}(x) d x_1 d x_2.
\end{eqnarray}
Again by the definitions \eqref{def:etaN} and \eqref{def:kernel_N},
\begin{eqnarray} \label{wick2}
(\eta_N(x_1),\eta_{N}(x_2))_{L^2}^k &=& \frac{1}{(\rho_N(x_1) \rho_N(x_2))^k} K^k_N(x_1,x_2) \\ \nonumber 
(\eta_N(x_1),\eta_{M}(x_2))_{L^2}^k &=& \frac{1}{(\rho_N(x_1) \rho_M(x_2))^k} K^k_{N,M} (x_1,x_2).
\end{eqnarray}
We thus have by \eqref{ineq2:A}
\begin{eqnarray*}
&&A(x)
= k!\, l!\,\sum_{m_1 \in \N^2} \lambda_{m_1}^{-s} h_{m_1}(x) \\ 
&& \times \int_{\R^2_{x_1}} \left[\sum_{m_2 \in \N^2} \int_{\R^2_{x_2}} (K_N^k(x_1,x_2)-K_M^k(x_1,x_2)) K_M^l(x_1,x_2) h_{m_2}(x_2) d x_2 \lambda_{m_2}^{-s} h_{m_2}(x) \right]
h_{m_1}(x_1) d x_1;  
\end{eqnarray*}
note that the term in square brackets is equal to 
$$(-H_{x_2})^{-\frac{s}{2}}\big(K_N^k(x_1,x)-2K_{N,M}^k(x_1,x) 
+K_M^k(x_1,x)\big)K_M^l(x_1,x).$$
Using the same computations for the $x_1$ variable leads to \eqref{eq:A}.

%Now we go back to (\ref{eq:A}), but we consider first the following form of $A$ with two variables: 
Now, let
$$
\tilde{A}(\eta,\xi):= k! \,l! \,(-H_{\eta})^{-\frac{s}{2}}(-H_{\xi})^{-\frac{s}{2}}[(K_N^k -2K_{N,M}^k+K_M^k)K_M^l](\eta,\xi), \quad \eta, \xi \in \R^2.\\
%&=& k! l! \sum_{m_1\in N^2} \lambda_{m_1}^{-s} h_{m_1}(\eta) \int_{\R^2_{\xi}} (-H_{\xi})^{-\frac{s}{2}}[(K_N^k-K_M^k)K_M^l(\eta)](\xi) h_{m_1}(\eta) d\eta.
$$
By the Sobolev embedding $\mathcal{W}^{1+\frac{s}{2},2}(\R^2) \subset L^{\infty}(\R^2)$, we have
$$|\tilde{A}(\eta, \xi)| \lesssim |(-H_\eta)^{-\frac{s}{2}}[(K_N^k-2K_{N,M}^k+K_M^k)K_M^l](\eta,\cdot)|_{\mathcal{W}_{\xi}^{1-\frac{s}{2},2}}.$$
Thus  
\begin{eqnarray}
\label{ineq:Aq}
\int_{\R^2_x} (A(x))^{\frac{q}{2}} dx &=& \int_{\R^2_x} (\tilde{A}(x,x))^{\frac{q}{2}} dx \nonumber\\ 
 &\le& \int_{\R^2_x} |(-H_x)^{-\frac{s}{2}}[(K_N^k-2K_{N,M}^k+K_M^k)K_M^l](x,\cdot)|^{\frac{q}{2}}_{\mathcal{W}_{\xi}^{1-\frac{s}{2},2}} dx \nonumber\\
 &\le&  |(K_N^k-2K_{N,M}^k+K_M^k)K_M^l|^{\frac{q}{2}}_{L_{\eta}^{\frac{q}{2}} \mathcal{W}_{\xi}^{1-\frac{s}{2},2}} \nonumber\\
 & \lesssim & |(K_N^k-K_{N,M}^k)K_M^l|^{\frac{q}{2}}_{L_{\eta}^{\frac{q}{2}} \mathcal{W}_{\xi}^{1-\frac{s}{2},2}}
+ |(K_{N,M}^k-K_M^k)K_M^l|^{\frac{q}{2}}_{L_{\eta}^{\frac{q}{2}} \mathcal{W}_{\xi}^{1-\frac{s}{2},2}},
\end{eqnarray}
by the embedding $L^{\frac{q}{2}}(\R^2) \subset \mathcal{W}^{-s, \frac{q}{2}}(\R^2)$. Since 
$$K_N^k-K_{N,M}^k=(K_N-K_{N,M}) \sum_{j=0}^{k-1} K_N^j K_{N,M}^{k-1-j},$$
%\begin{eqnarray*}
%|(K_N^k-K_{N,M}^k)K_M^l|^{\frac{q}{2}}_{L_{\eta}^{\frac{q}{2}} \mathcal{W}_{\xi}^{1-\frac{s}{2},2}}
%&\le& \Big| (K_N-K_{N,M}) \sum_{j=0}^{k-1} K_N^{j} K_{N,M}^{k-1-j} K_M^l \Big|^{\frac{q}{2}}_{L_{\eta}^{\frac{q}{2}} \mathcal{W}_{\xi}^{1-\frac{s}{2},2}}.  
%\end{eqnarray*}
applying three times Lemma \ref{lem:Kernel}, and thanks to the Sobolev embeddings $\mathcal{W}^{1-\frac{s}{4},2}\subset L^p$, 
$\mathcal{W}^{1-\frac{s}{8},2}\subset L^{2p}$, and $\mathcal{W}^{1-\frac{s}{16},2}\subset L^{4p}$, with $p=\frac{8}{s}$, we may estimate the first term in the right hand side of \eqref{ineq:Aq} 
by
\begin{eqnarray*}
&&\Big[|K_N-K_{N,M}|_{L_{\eta}^{q} \mathcal{W}_{\xi}^{1-\frac{s}{4},2}} \sum_{j=0}^{k-1}
|K_N^j|_{L_{\eta}^{2q} \mathcal{W}_{\xi}^{1-\frac{s}{8},2}} |K_{N,M}^{k-1-j}|_{L_{\eta}^{4q} \mathcal{W}_{\xi}^{1-\frac{s}{16},2}}
|K_M^{l}|_{L_{\eta}^{4q} \mathcal{W}_{\xi}^{1-\frac{s}{16},2}} \Big]^{\frac{q}{2}}\\
&\le & \left[\int_{\R^2} \Big[ \sum_{|j|=\frac{M}{2}}^N \lambda_j^{-2+2(1-\frac{s}{4})} h_j^2(x) \Big]^{\frac{q}{2}} dx \right]^{\frac{1}{2}}
\Big[\sum_{j=0}^{k-1}
|K^j|_{L_{\eta}^{2q} \mathcal{W}_{\xi}^{1-\frac{s}{8},2}} |K^{k-1-j}|_{L_{\eta}^{4q} \mathcal{W}_{\xi}^{1-\frac{s}{16},2}}
|K^{l}|_{L_{\eta}^{4q} \mathcal{W}_{\xi}^{1-\frac{s}{16},2}} \Big]^{\frac{q}{2}}.
\end{eqnarray*}
Choose $\delta>0$ such that $\delta <\frac{s}{4}$, and we have 
$$
\left[\int_{\R^2} \Big( \sum_{|j|=\frac{M}{2}}^N \lambda_j^{-\frac{s}{2}} h_j^2(x) \Big)^{\frac{q}{2}} dx \right]^{\frac{1}{2}} 
\le \lambda_M^{-\frac{\delta q}{2}} |K|^q_{L_{\eta}^{q} \mathcal{W}_{\xi}^{1-\frac{s}{4}+\delta,2}}.
$$
Thus, recalling $\lambda_M=O(\sqrt{M})$, using Proposition \ref{prop:Kernel}, and estimating the second term in the right hand side 
of \eqref{ineq:Aq} in the same way, we conclude to the convergence of $(I)$ as $N,M$ tend to infinity,
provided $\delta$ may be chosen such that  $1-\frac{s}{4}+\delta < 1-\frac{2}{q}$,   
i.e. $qs >8$, $q > 2$, $s >0$. This shows that the sequence $\{: Z_{1,N}^{k}:: Z_{2,N}^{l}:\}_{N\in \N}$ is a Cauchy sequence in 
$L^q(\Omega, \mathcal{W}^{-s,q}(\R^2))$ for any $k,l \in \N$. The bound \eqref{bound:Z} is obtained by estimating
$\E [|:(S_N Z_{R,\infty})^{k}: :(S_N Z_{I,\infty})^{l}:|^q_{\mathcal{W}^{-s,q}}]$ uniformly in $N$, using the same arguments as above.
\hfill\qed
\vspace{3mm}

\subsection{Properties of the semigroup}
The purpose of this section is to give some lemmas listing useful properties of the semigroup $(e^{t(\gamma_1+i\gamma_2)H})_{t\ge 0}$, that will later be used in the course of the proofs of the theorems.

\begin{lem} \label{lem:heat_kernel}
Let $\gamma_1>0$ and $\gamma_2 \in \R$. There exists a constant $C_0=C_0(\gamma_1, \gamma_2)>0$ such that for $t>0$ satisfying $\gamma_1 t$, $|\gamma_2 t|<<1$,  
\begin{equation*}
|e^{t (\gamma_1+i\gamma_2) H} f|_{L^r}\le C_0 t^{-\frac{1}{l}}|f|_{L^s},
\end{equation*}
for any $f\in L^s(\R^2)$, with 
$$ 
0 \le \frac{1}{r} \le \frac{1}{r} +\frac{1}{l}=\frac{1}{s} \le 1.
$$
\end{lem}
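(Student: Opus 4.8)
The plan is to obtain an explicit integral kernel for the complex-time semigroup $e^{\tau H}$, with $\tau = (\gamma_1 + i\gamma_2)t$, via Mehler's formula, to reduce the desired $L^s \to L^r$ bound to a pointwise Gaussian estimate on the modulus of this kernel, and then to close the argument with Young's convolution inequality. Since $-H = -\Delta + |x|^2$ is the harmonic oscillator on $\R^2$, the semigroup admits the Mehler representation
\begin{equation*}
e^{\tau H} f(x) = \int_{\R^2} G_\tau(x,y)\, f(y)\, dy, \qquad G_\tau(x,y) = \frac{1}{2\pi \sinh(2\tau)} \exp\left(- \frac{(|x|^2 + |y|^2)\cosh(2\tau) - 2\, x\cdot y}{2\sinh(2\tau)}\right),
\end{equation*}
valid by analytic continuation from real $\tau$ to the right half-plane $\re\tau>0$, which is ensured here by $\gamma_1>0$. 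For $\tau\to 0$ the prefactor is $\sim \frac{1}{4\pi\tau}$ and the exponent degenerates to $-|x-y|^2/(4\tau)$, i.e. $G_\tau$ reduces to the complex-time free heat kernel, which is the mechanism behind the $t^{-1/l}$ smoothing.

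The heart of the proof is a pointwise bound on $|G_\tau(x,y)|$. Writing $a = \coth(2\tau)$ and $b = 1/\sinh(2\tau)$, the modulus of the exponential is $\exp\bigl(-\tfrac12[\re(a)(|x|^2+|y|^2) - 2\re(b)\, x\cdot y]\bigr)$, and I would recast the quadratic form, using the identity $a-b = \tanh\tau$, as $\re(b)\,|x-y|^2 + \re(\tanh\tau)\,(|x|^2 + |y|^2)$. Invoking the smallness hypothesis $\gamma_1 t,\ |\gamma_2 t|\ll 1$, one has for small $\tau$ with $\re\tau>0$ that $\re(b) = \re(1/\sinh 2\tau) \sim \re(1/(2\tau)) = \frac{\gamma_1}{2(\gamma_1^2+\gamma_2^2)\,t} > 0$ and $\re(\tanh\tau) \sim \gamma_1 t > 0$, while $|\sinh 2\tau| \sim 2|\gamma_1 + i\gamma_2|\,t$. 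Discarding the nonnegative term $\re(\tanh\tau)(|x|^2+|y|^2)$ then yields
\begin{equation*}
|G_\tau(x,y)| \le \frac{C}{t}\,\exp\left(-\frac{c\,|x-y|^2}{t}\right),
\end{equation*}
for constants $c,C$ depending only on $\gamma_1,\gamma_2$. Establishing this Gaussian bound — in particular verifying the coercivity of the quadratic form (positivity of $\re(b)$ and $\re\tanh\tau$) and the $t^{-1}$ size of the prefactor, precisely in the regime where the smallness assumption holds — is the main obstacle; the rest is soft analysis.

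Finally, the right-hand side above is a convolution kernel $g_t(x-y)$ with $g_t(z) = C t^{-1} e^{-c|z|^2/t}$, so $|e^{\tau H} f|(x) \le (g_t * |f|)(x)$ pointwise. Young's inequality gives $|e^{\tau H}f|_{L^r} \le |g_t|_{L^m}\,|f|_{L^s}$ with $1 + \frac1r = \frac1m + \frac1s$, that is $1 - \frac1m = \frac1s - \frac1r = \frac1l$; and an elementary Gaussian integration gives $|g_t|_{L^m} = C\, t^{-(1-1/m)} = C\, t^{-1/l}$, which is exactly the claimed estimate. Equivalently, one may read off from the Gaussian bound the three endpoint estimates $L^1\to L^1$, $L^\infty\to L^\infty$ (both with a bounded constant, via the uniform bounds $\sup_x\int|G_\tau(x,y)|\,dy$ and $\sup_y\int|G_\tau(x,y)|\,dx$, i.e. a Schur test) and $L^1\to L^\infty$ (with constant $t^{-1}$, via $\sup_{x,y}|G_\tau(x,y)|$), and then interpolate by Riesz--Thorin to recover the full admissible range $1\le s\le r\le\infty$ with the factor $t^{-1/l}$.
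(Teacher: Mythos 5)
Your proof is correct and follows essentially the same route as the paper: the paper defers to Proposition 3.1 of \cite{dbdf}, and its own Appendix 1 writes the Mehler kernel in exactly your factored form $\frac{\delta}{\pi}e^{-(\beta-\delta)|x|^2}e^{-\delta|x-y|^2}e^{-(\beta-\delta)|y|^2}$ with $\beta-\delta=\tfrac12\tanh\tau$ and $\re\delta>0$, $\re(\beta-\delta)>0$ in the stated regime, which is precisely your coercivity decomposition. The concluding step via Young's inequality (or the Schur test plus Riesz--Thorin) on the resulting Gaussian majorant $Ct^{-1}e^{-c|x-y|^2/t}$ is the standard and intended argument.
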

\begin{proof}
The proof of Proposition 3.1 of \cite{dbdf} can be applied with minor modifications.
\end{proof}
\vspace{3mm}

The proof of the next lemmas will be given in Appendix 1.

\begin{lem} \label{lem:regularization} 
Let $\gamma_1>0$, $\gamma_2 \in \R$, $1< p <\infty$ and $s \in (0,2]$. 
Then there exists a constant $C_1=C_1(\gamma_1, \gamma_2)>0$ such that , 
\begin{equation} \label{ineq:regularization} 
|e^{t(\gamma_1+i\gamma_2) H} f |_{\mathcal{W}^{s,p}} \le C_1 t^{-\frac{s}{2}} |f|_{L^p},  
\end{equation}
for $t>0$ satisfying $\gamma_1 t$, $|\gamma_2 t|<<1$, and for any $f\in L^p(\R^2)$.
\end{lem}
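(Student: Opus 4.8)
The plan is to read the left-hand side of \eqref{ineq:regularization} through the functional calculus of $-H$ and reduce it to a uniform-in-$t$ spectral multiplier bound covered by Lemma~\ref{prop:specmap}. Since $|e^{t(\gamma_1+i\gamma_2)H}f|_{\mathcal{W}^{s,p}}=|(-H)^{s/2}e^{t(\gamma_1+i\gamma_2)H}f|_{L^p}$ and $-Hh_k=\lambda_k^2h_k$, the operator $(-H)^{s/2}e^{t(\gamma_1+i\gamma_2)H}$ acts on $h_k$ by the multiplier $\lambda_k^s e^{-t(\gamma_1+i\gamma_2)\lambda_k^2}$, i.e. by $\mu^{s/2}e^{-t(\gamma_1+i\gamma_2)\mu}$ at the eigenvalue $\mu=\lambda_k^2$. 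Writing this as $t^{-s/2}g(t\mu)$ with
$$g(\nu):=\nu^{s/2}e^{-(\gamma_1+i\gamma_2)\nu},$$
I obtain the operator identity $(-H)^{s/2}e^{t(\gamma_1+i\gamma_2)H}=t^{-s/2}g(-tH)$, where $-tH=t(-H)$. Thus it suffices to prove $|g(-tH)|_{\mathcal{L}(L^p,L^p)}\le C$ uniformly for $t$ in the stated small range.

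One cannot feed $g$ directly into Lemma~\ref{prop:specmap}: it grows as $\nu\to-\infty$ and, more seriously, $\nu^{s/2}$ is not smooth at $\nu=0$ when $s<2$. The growth at $-\infty$ is irrelevant because $\operatorname{spec}(-tH)=\{t\lambda_k^2\}\subset[2t,+\infty)$ is positive, so $g$ may be altered freely on $(-\infty,0)$. To handle the singularity at $0$, I would decompose $g$ dyadically: fix $\varphi\in C_0^\infty(1/2,2)$ with $\sum_{j\in\Z}\varphi(2^{-j}\nu)=1$ for $\nu>0$, set $g_j(\nu)=g(\nu)\varphi(2^{-j}\nu)$, so that $g(-tH)=\sum_j g_j(-tH)$ on the positive spectrum. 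Each piece rescales to a single fixed-scale function: with
$$\psi_j(\nu):=g(2^j\nu)\varphi(\nu)=2^{js/2}\nu^{s/2}e^{-(\gamma_1+i\gamma_2)2^j\nu}\varphi(\nu),$$
which lies in $C_0^\infty(1/2,2)\subset\mathcal{S}(\R)$ precisely because $\varphi$ is supported away from the origin, one checks $g_j(-tH)=\psi_j\big(-(2^{-j}t)H\big)$. Moreover $g_j(-tH)\ne0$ forces some eigenvalue $t\lambda_k^2$ into $(2^{j-1},2^{j+1})$, whence $2t<2^{j+1}$ and therefore $\theta:=2^{-j}t\in(0,1)$; so Lemma~\ref{prop:specmap} applies and gives $|g_j(-tH)|_{\mathcal{L}(L^p,L^p)}\le C\,|\psi_j|_{\mathcal{S}}$, where $|\psi_j|_{\mathcal{S}}$ denotes the finite Schwartz seminorm controlling the constant in that lemma.

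The crux is to sum these bounds uniformly in $t$, and this is where $s>0$ and the spectral gap of $H$ are decisive. On the one hand, the gap $\lambda_k^2\ge2$ makes $g_j(-tH)$ vanish for every $j$ with $2^{j+1}\le2t$, so only indices $j\gtrsim\log_2 t$ contribute. On the other hand, on $\operatorname{supp}\varphi$ the factor $\nu^{s/2}$ and its derivatives are $O(1)$, each $\nu$-derivative of $e^{-(\gamma_1+i\gamma_2)2^j\nu}$ costs a factor $2^j$, and the exponential contributes $e^{-\gamma_1 2^{j-1}}$, so that $|\psi_j|_{\mathcal{S}}\lesssim 2^{js/2}$ for $j\le0$ and $|\psi_j|_{\mathcal{S}}\lesssim 2^{j(s/2+m)}e^{-\gamma_1 2^{j-1}}$ for $j\ge0$, with $m$ the number of derivatives involved. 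Because $s>0$ the series $\sum_{j\le0}2^{js/2}$ converges to $(1-2^{-s/2})^{-1}$, independently of $t$, while the rapid exponential decay controls $\sum_{j\ge0}$; both bounds are $t$-free. Summing yields $|g(-tH)|_{\mathcal{L}(L^p,L^p)}\le C$ uniformly, hence \eqref{ineq:regularization}. The main obstacle is exactly this low-frequency accumulation — the non-smoothness of $\nu^{s/2}$ at $0$ together with the density of the rescaled eigenvalues near $0$ as $t\to0$ — which the dyadic splitting plus the spectral gap resolve; the baseline case $s=0$ (mere $L^p$-boundedness of the semigroup) is in any event furnished by Lemma~\ref{lem:heat_kernel}.
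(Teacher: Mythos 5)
Your argument is correct, but it is a genuinely different route from the paper's. The paper proves the lemma in Appendix~1 by first reducing to the endpoint $s=2$ via the interpolation inequality $|f|_{\mathcal{W}^{s,p}}\le |f|_{\mathcal{W}^{0,p}}^{1-\theta}|f|_{\mathcal{W}^{2,p}}^{\theta}$ (the case $s=0$ being Lemma~\ref{lem:heat_kernel}), and then treating $s=2$ by an explicit computation with the Mehler kernel: the kernel of $e^{t(\gamma_1+i\gamma_2)H}$ is factored as a Gaussian weight times a complex heat convolution times a Gaussian weight, and $|\langle D\rangle^2\cdot|_{L^p}$ and $|\langle x\rangle^2\cdot|_{L^p}$ are estimated separately using the norm equivalence of Proposition~1(1); this is why the hypothesis is $s\in(0,2]$, and why the remark after the lemma singles out $\gamma_2=0$ for larger $s$. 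Your spectral-multiplier proof via a dyadic decomposition of $\nu^{s/2}e^{-(\gamma_1+i\gamma_2)\nu}$, combined with the spectral gap $\lambda_k^2\ge 2$ to keep $\theta=2^{-j}t$ in $(0,1)$, avoids the Mehler formula entirely and in fact yields the estimate for every $s>0$ regardless of $\gamma_2$, which is stronger than what the paper states. The one point you should make explicit is that you use more than the black-box statement of Lemma~\ref{prop:specmap}: you need the constant $C(\psi)$ there to be controlled by finitely many Schwartz seminorms of $\psi$, uniformly in $\theta\in(0,1)$, so that your bounds $|\psi_j|_{\mathcal{S}}\lesssim 2^{js/2}$ (for $j\le 0$) and $\lesssim 2^{j(s/2+m)}e^{-\gamma_1 2^{j-1}}$ (for $j\ge 0$) can actually be summed. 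This quantitative dependence is indeed what the Jensen--Nakamura proof cited in the paper provides, but it is not literally asserted in the lemma as stated, so it deserves a sentence; with that supplied, your argument is complete, and one should also note that the identity $g(-tH)=\sum_j g_j(-tH)$ on $L^p$ follows from the $L^2$ spectral calculus together with the absolute convergence of the operator-norm bounds.
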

\begin{rem}
In the purely dissipative case, i.e. the case $\gamma_2=0$, the above estimate holds for all $s> 0$.
\end{rem}

\begin{lem} \label{lem:arnaud} 
Let $\gamma_1>0$, $\gamma_2 \in \R$, $p> 2$ and $\beta > \frac12 -\frac1p $. For $t>0$ satisfying $\gamma_1 t$, $|\gamma_2 t|<<1$, 
$$
|e^{t (\gamma_1+i\gamma_2) H} f|_{L^2} \le C_{\gamma_1, \gamma_2} t^{-\beta}  |f|_{L^p},
$$
for any $f \in L^p(\R^2)$.
\end{lem}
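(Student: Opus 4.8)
The plan is to combine the smoothing estimate of Lemma \ref{lem:regularization} with a Sobolev-type embedding $\mathcal{W}^{s,p}(\R^2)\hookrightarrow L^2(\R^2)$ that is valid for $p>2$ \emph{precisely because of the confining potential}. The point is that passing from $L^p$ with $p>2$ to the smaller exponent $2$ is false on $\R^2$ in the absence of spatial decay; here the decay is furnished by the weight $\langle x\rangle^s$ built into the norm of $\mathcal{W}^{s,p}$ through Proposition \ref{prop:dg} (1).

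First I would establish the embedding. Given $s>1-\frac2p$ and $g\in\mathcal{W}^{s,p}$, I write $|g|_{L^2}^2=\int_{\R^2}|\langle x\rangle^s g|^2\langle x\rangle^{-2s}\,dx$ and apply H\"older with exponents $\frac p2$ and $\frac{p}{p-2}$, which yields
$$|g|_{L^2}^2\le |\langle x\rangle^s g|_{L^p}^2\Big(\int_{\R^2}\langle x\rangle^{-\frac{2sp}{p-2}}\,dx\Big)^{\frac{p-2}{p}}.$$
The last integral is finite exactly when $\frac{2sp}{p-2}>2$, i.e. when $s>1-\frac2p$, and by the right-hand inequality of Proposition \ref{prop:dg} (1) the first factor is controlled by $|g|_{\mathcal{W}^{s,p}}$. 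Hence $|g|_{L^2}\le C|g|_{\mathcal{W}^{s,p}}$ for every $s>1-\frac2p$.

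Next I would apply this with $g=e^{t(\gamma_1+i\gamma_2)H}f$. For $\beta\in(\frac12-\frac1p,1]$ I set $s=2\beta$, which satisfies $1-\frac2p<s\le2$, so Lemma \ref{lem:regularization} gives $|e^{t(\gamma_1+i\gamma_2)H}f|_{\mathcal{W}^{s,p}}\le C_1 t^{-s/2}|f|_{L^p}=C_1 t^{-\beta}|f|_{L^p}$; combined with the embedding this is the claim. For $\beta>1$ it suffices to take $s=2$: since the estimate is asserted only for $t$ in a range $(0,T_0]$ with $\gamma_1 t,|\gamma_2 t|\ll1$, we may assume $T_0\le1$, and then $t^{-1}\le t^{-\beta}$, so the case $\beta>1$ follows from the case $s=2$.

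I expect the only delicate point to be the embedding step, and specifically the verification that $\langle x\rangle^{-s}$ is integrable to the right power; this is where the dimension $2$ and the hypothesis $\beta>\frac12-\frac1p$ enter, and where the harmonic potential is essential, the rest being a direct combination of results already proved. One should also keep track that Lemma \ref{lem:regularization} requires $s\in(0,2]$, which is why the reduction to $\beta\le1$ is carried out before choosing $s=2\beta$.
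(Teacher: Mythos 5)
Your proof is correct and follows essentially the same route as the paper's: both arguments combine the smoothing estimate of Lemma \ref{lem:regularization} with a weighted H\"older inequality based on Proposition \ref{prop:dg}~(1), which converts the decay $\langle x\rangle^{s}$ supplied by the harmonic potential into the gain of integrability from $L^p$ down to $L^2$, under exactly the condition $s>1-\frac2p$, i.e.\ $\beta>\frac12-\frac1p$. The only (cosmetic) difference is that you apply the embedding $\mathcal{W}^{s,p}\hookrightarrow L^2$ \emph{after} the semigroup, whereas the paper uses the dual embedding $L^{p}\hookrightarrow \mathcal{W}^{-\delta,2}$ \emph{before} it and then smooths from $L^2$ into $\mathcal{W}^{\delta,2}$; your handling of the range $\beta>1$ via $t^{-1}\le t^{-\beta}$ for small $t$ is also fine.
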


The following lemma is a consequence of Lemmas \ref{lem:heat_kernel} and \ref{lem:arnaud}, and an interpolation argument.
\begin{lem} \label{lem:arnaud2}
Let $\gamma_1>0$, $\gamma_2 \in \R$, $q>p> 2$ and $\sigma >\frac1p-\frac1q$. Then there exists a constant $C_2=C_2(\gamma_1, \gamma_2)>0$ such that  
for $t>0$ satisfying $\gamma_1 t$, $|\gamma_2 t| <<1$,
$$
|e^{t(\gamma_1+ i\gamma_2) H} f|_{L^p} \le C_2 t^{-\sigma}  |f|_{L^q},
$$
for any $f\in L^q(\R^2)$.
%where 
%$$
%\theta=\frac{\frac1p-\frac1q}{\frac12-\frac1q} \in (0,1]. 
%$$
\end{lem}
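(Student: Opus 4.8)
The plan is to obtain the estimate by interpolating, for a fixed $t$, two bounds on the single linear operator $e^{t(\gamma_1+i\gamma_2)H}$: an $L^q \to L^q$ bound carrying no time decay, and an $L^q \to L^2$ bound with time decay supplied by Lemma \ref{lem:arnaud}. Since $q>p>2$, the exponent $p$ lies strictly between $2$ and $q$, which is precisely the range reachable by interpolating the target space while holding the domain fixed at $L^q$. Note that one must interpolate \emph{toward} $L^2$ (decreasing integrability), which is why Lemma \ref{lem:arnaud} is the relevant endpoint rather than the smoothing estimate of Lemma \ref{lem:heat_kernel}, which increases integrability.

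First I would record the two endpoint estimates. Taking $r=s=q$ (so that $1/l=0$) in Lemma \ref{lem:heat_kernel} gives, uniformly for small $t$,
$$|e^{t(\gamma_1+i\gamma_2)H}f|_{L^q} \le C_0 |f|_{L^q}.$$
Next, since $q>2$, Lemma \ref{lem:arnaud} applied with the exponent $q$ in place of $p$ yields, for any $\beta>\tfrac12-\tfrac1q$,
$$|e^{t(\gamma_1+i\gamma_2)H}f|_{L^2} \le C_{\gamma_1,\gamma_2}\, t^{-\beta} |f|_{L^q}.$$

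Then I would interpolate these two bounds. Viewing $T=e^{t(\gamma_1+i\gamma_2)H}$ as a fixed linear operator bounded from $L^q$ into $L^q$ and from $L^q$ into $L^2$, Riesz--Thorin interpolation (with the domain exponent held at $q$ and the range exponent interpolated between $q$ and $2$) gives, for the $\theta\in(0,1)$ determined by $\frac1p=\frac{1-\theta}{q}+\frac{\theta}{2}$,
$$|e^{t(\gamma_1+i\gamma_2)H}f|_{L^p} \le C_0^{1-\theta}\big(C_{\gamma_1,\gamma_2}\, t^{-\beta}\big)^{\theta} |f|_{L^q} = C_2\, t^{-\beta\theta}|f|_{L^q}.$$
A direct computation gives $\theta=\frac{1/p-1/q}{1/2-1/q}$, hence $\big(\tfrac12-\tfrac1q\big)\theta=\tfrac1p-\tfrac1q$. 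Because $\beta$ may be chosen to be any number larger than $\tfrac12-\tfrac1q$, the resulting decay exponent $\beta\theta$ ranges over all values strictly larger than $\tfrac1p-\tfrac1q$; choosing $\beta=\sigma/\theta$ produces exactly the exponent $\sigma$, for any prescribed $\sigma>\tfrac1p-\tfrac1q$.

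There is no genuine obstacle here; the only points requiring care are selecting the endpoints so that the interpolation runs in the correct direction (toward $L^2$, via Lemma \ref{lem:arnaud}), and checking that exhausting the open range $\sigma>\tfrac1p-\tfrac1q$ uses precisely the freedom in the parameter $\beta$ afforded by that lemma. Finally, all three estimates hold for the same smallness regime $\gamma_1 t,\ |\gamma_2 t|\ll 1$, so the interpolated bound holds there as well, with a constant $C_2$ depending only on $\gamma_1,\gamma_2$ (and on the fixed exponents $p,q,\sigma$).
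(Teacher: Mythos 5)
Your proof is correct and follows exactly the route the paper indicates: the paper only states that Lemma \ref{lem:arnaud2} "is a consequence of Lemmas \ref{lem:heat_kernel} and \ref{lem:arnaud}, and an interpolation argument," and your write-up supplies precisely that argument, interpolating the $L^q\to L^q$ bound against the $L^q\to L^2$ decay estimate and checking that the admissible exponents $\beta>\tfrac12-\tfrac1q$ yield every $\sigma>\tfrac1p-\tfrac1q$.
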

\vspace{3mm}

We still need some preliminaries before proving Theorem \ref{thm:localexistence}. 
Let $T>0$ be fixed so that the above semigroup estimates are valid in the time interval $[0,T]$. 
Let $s,\beta>0$ and $r,p,q \ge 1$.  
We consider the function space 
\begin{equation*}
\mathcal{E}_T:=C([0,T]; \mathcal{W}^{-s,q})\cap L^r(0,T; \mathcal{W}^{\beta,p}). 
\end{equation*}

\begin{lem} \label{lem:data} Let $\gamma_1, \beta, s>0$, $\gamma_2 \in \R$, $q>p>2$ and $\sigma>\frac1p-\frac1q$. 
Assume $\varepsilon:=-\sigma-\frac{\beta+s}{2}+\frac{1}{r}>0.$ 
Then, there exists a constant $C_3=C_3(\gamma_1,\gamma_2,T)$ such that for any $f \in \mathcal{W}^{-s,q}$,  
\begin{equation*}
t \mapsto e^{t (\gamma_1+i\gamma_2) H} f  \in \mathcal{E}_T, 
\end{equation*}
and 
$$
|e^{\cdot (\gamma_1+i\gamma_2) H} f |_{\mathcal{E}_T} \le C_3 |f|_{ \mathcal{W}^{-s,q}}.
$$
\end{lem}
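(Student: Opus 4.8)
The plan is to estimate the two norms comprising $|\cdot|_{\mathcal{E}_T}$ separately, namely the $C([0,T];\mathcal{W}^{-s,q})$ part and the $L^r(0,T;\mathcal{W}^{\beta,p})$ part, and then to sum the two bounds. Throughout I would freely use the semigroup estimates from Lemmas \ref{lem:regularization} and \ref{lem:arnaud2}, together with the commutation property that $(-H)^{-s/2}$ commutes with the semigroup $e^{t(\gamma_1+i\gamma_2)H}$, which lets me move fractional powers of $-H$ through the semigroup at will. This commutation is the key structural fact: to control $\mathcal{W}^{-s,q}$ and $\mathcal{W}^{\beta,p}$ norms of $e^{t(\gamma_1+i\gamma_2)H}f$ in terms of the $\mathcal{W}^{-s,q}$ norm of $f$, I write $f = (-H)^{s/2}g$ with $|g|_{L^q}=|f|_{\mathcal{W}^{-s,q}}$ and reduce everything to $L^p$--$L^q$ smoothing estimates applied to $g$.

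For the $C([0,T];\mathcal{W}^{-s,q})$ bound, I would simply note that $|e^{t(\gamma_1+i\gamma_2)H}f|_{\mathcal{W}^{-s,q}} = |e^{t(\gamma_1+i\gamma_2)H}g|_{L^q}$ after commuting $(-H)^{-s/2}$ through. Since $g\in L^q$, the uniform boundedness of the semigroup on $L^q$ (the $l=\infty$, $r=s=q$ endpoint of Lemma \ref{lem:heat_kernel}, or the analyticity of the semigroup) gives $|e^{t(\gamma_1+i\gamma_2)H}g|_{L^q}\le C|g|_{L^q}$ uniformly in $t\in[0,T]$, hence $\sup_{t\in[0,T]}|e^{t(\gamma_1+i\gamma_2)H}f|_{\mathcal{W}^{-s,q}}\le C|f|_{\mathcal{W}^{-s,q}}$. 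Strong continuity in $t$ follows from the strong continuity of the semigroup applied to $g$, so the map indeed lands in $C([0,T];\mathcal{W}^{-s,q})$.

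The substantive part is the $L^r(0,T;\mathcal{W}^{\beta,p})$ estimate, and this is where the hypothesis $\varepsilon = -\sigma - \frac{\beta+s}{2} + \frac1r > 0$ is consumed. Here I would estimate, after commuting $(-H)^{-s/2}$ out and writing $f=(-H)^{s/2}g$,
\begin{equation*}
|e^{t(\gamma_1+i\gamma_2)H}f|_{\mathcal{W}^{\beta,p}} = |(-H)^{\beta/2}e^{t(\gamma_1+i\gamma_2)H}(-H)^{s/2}g|_{L^p}.
\end{equation*}
I would split the semigroup as $e^{t(\gamma_1+i\gamma_2)H}=e^{(t/2)(\gamma_1+i\gamma_2)H}e^{(t/2)(\gamma_1+i\gamma_2)H}$ and apply the two smoothing lemmas in sequence: first Lemma \ref{lem:regularization} to gain $\beta+s$ derivatives at the cost of $t^{-(\beta+s)/2}$ (working in $L^p$), and then Lemma \ref{lem:arnaud2} to pass from $L^q$ to $L^p$ at the cost of $t^{-\sigma}$. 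Combining these yields the pointwise-in-time bound
\begin{equation*}
|e^{t(\gamma_1+i\gamma_2)H}f|_{\mathcal{W}^{\beta,p}} \le C\, t^{-\sigma-\frac{\beta+s}{2}}|f|_{\mathcal{W}^{-s,q}}.
\end{equation*}
Taking the $L^r(0,T)$ norm in time reduces the whole matter to the integrability of $t\mapsto t^{-\sigma-(\beta+s)/2}$ on $(0,T)$, which holds precisely when $r\big(\sigma+\frac{\beta+s}{2}\big)<1$, i.e. exactly the assumption $\varepsilon>0$; the resulting time integral contributes a factor depending on $T$ and $\varepsilon$.

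The main obstacle — and the step requiring the most care — is arranging the exponents in the two-step smoothing so that both Lemmas \ref{lem:regularization} and \ref{lem:arnaud2} are genuinely applicable: Lemma \ref{lem:regularization} requires the total order of differentiation gained, $\beta+s$, to lie in $(0,2]$, and Lemma \ref{lem:arnaud2} requires $q>p>2$ with $\sigma>\frac1p-\frac1q$, conditions that are built into the statement. One must also verify that the singularity $t^{-\sigma-(\beta+s)/2}$ produced at $t\to 0^+$ is the only obstruction and that it is integrable in the $r$-th power, which is exactly what $\varepsilon>0$ guarantees; near $t=T$ there is no difficulty since $T$ is fixed small enough for the semigroup estimates to be valid. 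Once these exponent bookkeeping checks are in place, summing the two contributions gives the stated bound with $C_3=C_3(\gamma_1,\gamma_2,T)$.
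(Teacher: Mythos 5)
Your proposal is correct and follows essentially the same route as the paper: the sup-in-time bound comes from the uniform $L^q$ boundedness of the semigroup (after commuting $(-H)^{-s/2}$ through) plus strong continuity, and the $L^r(0,T;\mathcal{W}^{\beta,p})$ bound comes from composing Lemma \ref{lem:regularization} (gain of $\beta+s$ derivatives at cost $t^{-(\beta+s)/2}$) with Lemma \ref{lem:arnaud2} (passage from $L^q$ to $L^p$ at cost $t^{-\sigma}$), the condition $\varepsilon>0$ being exactly what makes $t^{-\sigma-(\beta+s)/2}$ lie in $L^r(0,T)$. Your exponent bookkeeping, including the role of $\beta+s\le 2$ for Lemma \ref{lem:regularization}, matches the paper's (more terse) argument.
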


\begin{proof}
First, by Lemma \ref{lem:heat_kernel}, for any $t \in [0,T]$,
\begin{eqnarray*}
|e^{t(\gamma_1+i\gamma_2) H} f|_{\mathcal{W}^{-s,q}} \le C_0 |f|_{\mathcal{W}^{-s,q}}. 
\end{eqnarray*}
Moreover, the strong continuity of the semi-group $t \mapsto e^{t(\gamma_1+i\gamma_2) H}$ in $L^2(\R^2)$ and the above bound easily
shows that $t \mapsto e^{t(\gamma_1+i\gamma_2) H} f$ is continuous with values in $\mathcal{W}^{-s,q}$.
Using Lemmas \ref{lem:arnaud2} and \ref{lem:regularization}, we have
\begin{eqnarray*}
  |e^{t(\gamma_1+ i\gamma_2) H} f|_{L^r(0,T;\mathcal{W}^{\beta,p})} 
  &\le& C_1 C_2 \left[\int_0^T  \Big[t^{-\sigma-\frac{\beta+s}{2}} |f|_{\mathcal{W}^{-s,q}} \Big]^r  dt \right]^{1/r}\\
  &\le & C_1 C_2  T^{-\sigma-\frac{\beta+s}{2}+\frac{1}{r}} |f|_{\mathcal{W}^{-s,q}},
\end{eqnarray*}
which ends the proof.
\end{proof}

\begin{lem} \label{lem:nonlinear} Let $\gamma_1,s>0$, $\gamma_2 \in \R$, $l\in \N$, $q>p > 2$, $0<\beta<2/p$, $\sigma>\frac1p-\frac1q$, and $r\ge l+1$
be such that $ \varepsilon :=-\sigma-\frac{\beta+s}{2}+\frac{1}{r}>0$ and 
$\delta:= \frac{l}{2} (\beta-\frac{2}{p})-\frac{l+1}{r}+1>0$.
Let $f\in L^{\frac{r}{l+1}}(0,T; \mathcal{W}^{-\alpha,q})$, where 
$\alpha=s+l(\frac2p-\beta).$ 
Then,
\begin{equation*}
t \mapsto \int_0^t e^{(t-\tau) (\gamma_1+i\gamma_2) H} f(\tau) d\tau \in \mathcal{E}_T 
\end{equation*}
with 
\begin{equation*}
\sup_{t\in [0,T]}\Big|\int_0^t e^{(t-\tau)(\gamma_1+i\gamma_2) H} f(\tau) d\tau\Big|_{\mathcal{W}^{-s,q}}
\le C_1 T^{\delta} |f|_{L^{\frac{r}{l+1}}(0,T; \mathcal{W}^{-\alpha,q})},
\end{equation*}
and 
\begin{equation*}
\Big|\int_0^\cdot e^{(\cdot-\tau)(\gamma_1+ i\gamma_2) H} f(\tau) d\tau\Big|_{L^r(0,T; \mathcal{W}^{\beta,p})} 
\le \tilde C_l T^{\varepsilon+\delta}
|f|_{L^{\frac{r}{l+1}}(0,T; \mathcal{W}^{-\alpha,q})}.
\end{equation*}
\end{lem}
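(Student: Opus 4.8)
The plan is to treat the Duhamel operator with the two families of smoothing estimates already collected in Lemmas \ref{lem:regularization} and \ref{lem:arnaud2}, reducing everything to a scalar convolution in the time variable and then invoking the H\"older and Young inequalities. The first observation is that, since $e^{t(\gamma_1+i\gamma_2)H}$ commutes with $(-H)^{\theta/2}$, the regularization estimate \eqref{ineq:regularization} upgrades to a gain of derivatives within a fixed Lebesgue scale: writing $g=(-H)^{-\alpha/2}f(\tau)\in L^q$ one gets, for $\tau<t$,
\[
|e^{(t-\tau)(\gamma_1+i\gamma_2)H}f(\tau)|_{\mathcal{W}^{-s,q}}\le C(t-\tau)^{-\frac{\alpha-s}{2}}|f(\tau)|_{\mathcal{W}^{-\alpha,q}},
\]
where $\alpha-s=l(\frac2p-\beta)$ (should this gain exceed $2$, one splits the semigroup into finitely many factors, distributing the gain in pieces of size at most $2$ and summing the time exponents). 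In the same spirit, combining Lemma \ref{lem:arnaud2} (to pass from $L^q$ to $L^p$, at cost $(t-\tau)^{-\sigma}$) with \eqref{ineq:regularization} (to gain $\beta+\alpha$ derivatives) exactly as in the proof of Lemma \ref{lem:data}, one obtains
\[
|e^{(t-\tau)(\gamma_1+i\gamma_2)H}f(\tau)|_{\mathcal{W}^{\beta,p}}\le C(t-\tau)^{-\sigma-\frac{\beta+\alpha}{2}}|f(\tau)|_{\mathcal{W}^{-\alpha,q}}.
\]

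For the first (supremum) bound I would integrate the first displayed inequality and apply H\"older in $\tau$ with the conjugate pair $(m,m')$, $m=\frac{r}{l+1}$. The resulting time factor is $\big(\int_0^t\tau^{-\frac{\alpha-s}{2}m'}d\tau\big)^{1/m'}$, which is finite and of order $t^{\,1/m'-\frac{\alpha-s}{2}}$ precisely when $\frac{\alpha-s}{2}m'<1$; a direct computation using $\frac1{m'}=1-\frac{l+1}{r}$ and $\alpha-s=l(\frac2p-\beta)$ shows that this exponent equals $\delta$ and that the integrability condition is exactly $\delta>0$. Taking the supremum over $t\in[0,T]$ (and using $\delta>0$) yields the claimed $T^\delta$ bound.

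For the second ($L^r$-in-time) bound the key is to read the map as a convolution: the second displayed inequality gives the pointwise majorization by $(\,|\cdot|^{-\kappa}\ast|f|_{\mathcal{W}^{-\alpha,q}})(t)$ with $\kappa=\sigma+\frac{\beta+\alpha}{2}$. Young's convolution inequality with exponents satisfying $\frac1a+\frac1m=1+\frac1r$, $m=\frac{r}{l+1}$, forces $\frac1a=1-\frac lr$ (so $a=\frac{r}{r-l}$, admissible since $r\ge l+1$), while $|\cdot|^{-\kappa}\in L^a(0,T)$ requires $\kappa a<1$. Unwinding the arithmetic with $\sigma+\frac{\beta+s}{2}=\frac1r-\varepsilon$ and $\frac l2(\frac2p-\beta)=1-\frac{l+1}{r}-\delta$ gives $\kappa=1-\frac lr-\varepsilon-\delta$, so that $\kappa a<1\iff\varepsilon+\delta>0$ (which holds), and $\||\cdot|^{-\kappa}\|_{L^a(0,T)}$ is of order $T^{\,1/a-\kappa}=T^{\varepsilon+\delta}$, the asserted bound.

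Finally, membership in $C([0,T];\mathcal{W}^{-s,q})$ (rather than merely $L^\infty$) follows by the same approximation argument as in Lemma \ref{lem:data}: the uniform bound just obtained lets one approximate $f$ by regular functions, for which the Duhamel integral is plainly continuous in $\mathcal{W}^{-s,q}$ by strong continuity of the semigroup, and then pass to the limit. The main obstacle I anticipate is essentially bookkeeping: keeping the various smoothing exponents coherent so that the two integrability thresholds come out to be exactly $\delta>0$ and $\varepsilon+\delta>0$, together with handling the case where the total regularity gain $\beta+\alpha$ (or $\alpha-s$) leaves the range $(0,2]$ of Lemma \ref{lem:regularization} by iterating the semigroup — a point that is harmless when $\gamma_2=0$ (cf.\ the Remark following Lemma \ref{lem:regularization}) but must be handled with care for $\gamma_2\neq0$.
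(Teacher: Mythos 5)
Your proposal follows essentially the same route as the paper: apply the smoothing estimates of Lemmas \ref{lem:regularization} and \ref{lem:arnaud2} (splitting the semigroup into two halves for the $\mathcal{W}^{\beta,p}$ bound) to reduce to a scalar time integral, then use H\"older for the supremum bound and Young's convolution inequality for the $L^r$ bound, and your exponent bookkeeping ($\delta$ and $\varepsilon+\delta$) matches the paper's. The only addition is your (correct and harmless) caveat about iterating the semigroup when the regularity gain exceeds $2$, which the paper does not need in its range of parameters.
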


\begin{proof} By Lemma \ref{lem:regularization}, if $t \in [0,T]$,
\begin{eqnarray*}
\Big|\int_0^t  e^{(t-\tau)(\gamma_1+i\gamma_2) H} f(\tau) d\tau\Big|_{\mathcal{W}^{-s,q}}
&\le& C_1 \int_0^t (t-\tau)^{-\frac{\alpha-s}{2}}|f(\tau)|_{\mathcal{W}^{-\alpha,q}} d\tau \\
%&\le & C_1 (\int_0^t \theta^{\frac{s-\alpha}{2} r'})^{1/r'} (\int_0^t |f(\tau)|_{\mathcal{W}^{-\alpha,q}}^{r/(l+1)} d\tau)^{(l+1)/r} \\
&\le& C_1 T^{\delta} |f|_{L^{\frac{r}{l+1}}(0,T; \mathcal{W}^{-\alpha,q})},
\end{eqnarray*} 
where we have used the H\"older inequality with $\frac{1}{r'}+\frac{l+1}{r}=1$ in the second inequality. The continuity in time with values in
$\mathcal{W}^{-s,q}$ follows from the strong continuity of the semi-group and the above estimate.
Also, applying Lemma \ref{lem:regularization} and Lemma \ref{lem:arnaud2},
\begin{eqnarray*} 
\Big|\int_0^t e^{(t-\tau) (\gamma_1+i\gamma_2) H} f(\tau) d\tau\Big|_{\mathcal{W}^{\beta,p}} 
&\le& \tilde C_l \int_0^t (t-\tau)^{-\frac{\beta+\alpha}{2}} |e^{\frac12 {(t-\tau)(\gamma_1+i\gamma_2) H}} f(\tau)|_{\mathcal{W}^{-\alpha,p}} d\tau \\
&\le & \tilde C_l \int_0^t (t-\tau)^{-\frac{\beta+\alpha}{2}} (t-\tau)^{-\sigma} |f(\tau)|_{\mathcal{W}^{-\alpha,q}} d\tau. 
\end{eqnarray*}
Haussdorf-Young inequality with $1+\frac{1}{r}=\frac{1}{\tilde{\gamma}}+\frac{l+1}{r}$ implies
\begin{eqnarray*}
 \Big|\int_0^t e^{(t-\tau)(\gamma_1+i\gamma_2) H} f(\tau) d\tau\Big|_{L^r(0,T; \mathcal{W}^{\beta,p})} 
 &\le& \tilde C_l \,T^{-\frac{\alpha+\beta}{2}-\sigma +\frac{1}{\tilde{\gamma}}}
 |f|_{L^{\frac{r}{l+1}}(0,T; \mathcal{W}^{-\alpha,q})},
\end{eqnarray*}
where we note that $-\frac{\alpha+\beta}{2}-\sigma +\frac{1}{\tilde{\gamma}} = \varepsilon+\delta >0$. 
\end{proof}

\subsection{Local existence}
In order to handle the nonlinear term, we will need the following estimate on $\mathcal{W}^{s,p}$ norms of products of functions.

\begin{lem} \label{lem:bilinear} Let $q>p>2$, $0<s<\beta<2/p$, and $m \in \N^*$. 
Suppose 
$\beta-s-(m-1)(\frac{2}{p}-\beta) > 0$, and  $s+m(\frac{2}{p}-\beta)<2(1-\frac{1}{q})$.
Then, there is a constant $C>0$ such that 
\begin{equation*}
|h f^{m}|_{\mathcal{W}^{-(s+m(\frac{2}{p}-\beta)), q}} \le C|h|_{\mathcal{W}^{-s,q}} |f|_{\mathcal{W}^{\beta,p}}^{m}. 
\end{equation*}
\end{lem}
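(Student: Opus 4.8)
The plan is to reduce the negative-order estimate to a positive-order product estimate by duality, and then to prove the latter by peeling off the factors $f$ one at a time, using the fractional Leibniz rule of Proposition \ref{prop:dg}(2) together with Sobolev embeddings in the spaces $\mathcal{W}^{\cdot,\cdot}$. Write $\gamma_{m}:=s+m(\tfrac2p-\beta)$, so that the claimed inequality reads $|hf^m|_{\mathcal{W}^{-\gamma_m,q}}\lesssim |h|_{\mathcal{W}^{-s,q}}|f|_{\mathcal{W}^{\beta,p}}^m$. Since $\tfrac2p-\beta>0$, the indices $\gamma_0=s<\gamma_1<\cdots<\gamma_m$ are increasing, and the two hypotheses $\beta>\gamma_{m-1}$ (that is, $\beta-s-(m-1)(\tfrac2p-\beta)>0$) and $\gamma_m<2(1-\tfrac1q)$ automatically imply the analogous inequalities $\beta>\gamma_{m'-1}$ and $\gamma_{m'}<2(1-\tfrac1q)$ for every $1\le m'\le m$. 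This monotonicity is what lets me run an induction on the number of factors without extra assumptions.

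The inductive step is the single multiplication estimate
\begin{equation}
|G f|_{\mathcal{W}^{-\gamma_{m'},q}} \lesssim |G|_{\mathcal{W}^{-\gamma_{m'-1},q}}\,|f|_{\mathcal{W}^{\beta,p}},
\end{equation}
valid whenever $\beta>\gamma_{m'-1}\ge 0$ and $\gamma_{m'}=\gamma_{m'-1}+(\tfrac2p-\beta)<2(1-\tfrac1q)$; applying it successively with $G=h,\,hf,\dots,hf^{m-1}$ yields the lemma. To prove it I would dualize: with $\tfrac1q+\tfrac1{q'}=1$,
$$|Gf|_{\mathcal{W}^{-\gamma_{m'},q}}=\sup\{|\langle G, f\psi\rangle| : |\psi|_{\mathcal{W}^{\gamma_{m'},q'}}\le 1\}\le |G|_{\mathcal{W}^{-\gamma_{m'-1},q}}\,\sup_{\psi}|f\psi|_{\mathcal{W}^{\gamma_{m'-1},q'}},$$
so everything reduces to the positive-order bound $|f\psi|_{\mathcal{W}^{\gamma_{m'-1},q'}}\lesssim|f|_{\mathcal{W}^{\beta,p}}|\psi|_{\mathcal{W}^{\gamma_{m'},q'}}$. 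Here $q'\in(1,2)$ and $\gamma_{m'-1}\ge 0$, so Proposition \ref{prop:dg}(2) applies with $\alpha=\gamma_{m'-1}$ and produces the two terms $|f|_{L^{a_1}}|\psi|_{\mathcal{W}^{\gamma_{m'-1},\bar a_1}}$ and $|f|_{\mathcal{W}^{\gamma_{m'-1},a_2}}|\psi|_{L^{\bar a_2}}$.

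Each term is then closed by a critical Sobolev embedding. For the first term one uses $\mathcal{W}^{\beta,p}\hookrightarrow L^{a_1}$ with $\tfrac1{a_1}=\tfrac1p-\tfrac\beta2$ (legitimate since $\beta<\tfrac2p$), together with $\mathcal{W}^{\gamma_{m'},q'}\hookrightarrow\mathcal{W}^{\gamma_{m'-1},\bar a_1}$, which holds because the loss $\tfrac2p-\beta$ of derivatives is exactly the gain $2(\tfrac1{q'}-\tfrac1{\bar a_1})$ of integrability dictated by the Hölder relation $\tfrac1{a_1}+\tfrac1{\bar a_1}=\tfrac1{q'}$. For the second term, $\mathcal{W}^{\beta,p}\hookrightarrow\mathcal{W}^{\gamma_{m'-1},a_2}$ requires precisely $\beta\ge\gamma_{m'-1}$ --- this is where the first hypothesis $\beta-s-(m-1)(\tfrac2p-\beta)>0$ enters --- while $\mathcal{W}^{\gamma_{m'},q'}\hookrightarrow L^{\bar a_2}$ with $\tfrac1{\bar a_2}=\tfrac1{q'}-\tfrac{\gamma_{m'}}2$ needs $\gamma_{m'}<\tfrac2{q'}=2(1-\tfrac1q)$, which is the second hypothesis.

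The step I expect to be the main obstacle is exactly this exponent bookkeeping: checking that the derivative budget $\gamma_{m'}-\gamma_{m'-1}=\tfrac2p-\beta$ and the integrability shifts forced by the embeddings are simultaneously compatible with the Hölder constraints $\tfrac1{a_i}+\tfrac1{\bar a_i}=\tfrac1{q'}$, and that the two hypotheses are precisely what keep every reciprocal exponent inside the admissible range $(0,1)$ required by Proposition \ref{prop:dg}(2) --- in particular $\bar a_2<\infty$, which degenerates exactly at the endpoint $\gamma_{m'}=2(1-\tfrac1q)$. The monotonicity of $(\gamma_{m'})_{m'}$ noted above guarantees that no further conditions are needed to carry the induction from the single factor down to $f^m$.
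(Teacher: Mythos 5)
Your proposal is correct and follows essentially the same route as the paper: the paper's Appendix~2 proves the lemma by iterating a single-factor estimate (equation \eqref{eq:one_step}), each step being obtained by duality (Lemma \ref{lem:bilinear_dual}) from a positive-order product estimate (Lemma \ref{lem:bilinear_positive}) that combines Proposition \ref{prop:dg}(2) with exactly the Sobolev embeddings you describe. Your exponent bookkeeping and the identification of where each hypothesis enters ($\beta\ge\gamma_{m'-1}$ for the embedding of $f$, $\gamma_{m'}<2(1-\tfrac1q)$ for the embedding of the dual test function) match the paper's argument.
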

\vspace{3mm}

The proof of Lemma \ref{lem:bilinear} will be found in Appendix 2.
\begin{rem} \label{rem:bilinear}
The conditions on the parameters are typically satisfied for $q$ very large, $s$ small and $\beta$ close to $\frac{2}{p}$. 
By H\"older inequality, we infer from Lemma \ref{lem:bilinear} that
\begin{equation*}
|h f^{m}|_{L^{\frac{r}{m+1}}(0,T;\mathcal{W}^{-(s+m(\frac{2}{p}-\beta)), q})} \le C|h|_{L^r(0,T;\mathcal{W}^{-s,q})} |f|^m_{L^r(0,T;\mathcal{W}^{\beta,p})}. 
\end{equation*}
We will use this estimate to control the nonlinearity, the cubic nonlinear power corresponds to the case $m\le2$ in Lemma 
\ref{lem:bilinear}.  
\end{rem}

\begin{rem}
\label{rem:parameters}
Let us show that we can find parameters satisfying the conditions in Lemmas \ref{lem:data} to \ref{lem:bilinear}
when we consider more general polynomial nonlinearities with an even integer $m\in \N^*$:
\begin{equation*} 
dX=(\gamma_1+i\gamma_2)(HX-:|X|^{m} X:)dt +\sqrt{2\gamma_1}dW.  
\end{equation*}
Let $p>3r,$ with $r>2(m+1).$ Then, 
$\frac{2}{p}- \frac{1}{m} <0<\frac{2}{p}< \frac{1}{r}-\frac{1}{p}$.
Since 
$\frac{2}{p}- \frac{1}{m}<\frac{2}{r}-\frac{1}{m}<\frac{1}{m+1}-\frac{1}{m}<0$,
we can then take $s$ and $\beta$ such that 
$\frac{2}{p}- \frac{1}{m} <0 <s< \beta<\frac{2}{p} < \frac{1}{r}-\frac{1}{p}$.
Thus, $\frac{2}{p}-\beta<\frac{1}{m}$, so that
\begin{eqnarray*}
\delta = \frac{m}{2}\big(\beta-\frac{2}{p}\big)-\frac{m+1}{r}+1 
= 1-\frac{m}{2}\big(\frac{2}{p}-\beta\big)-\frac{m+1}{r} 
> \frac{1}{2}-\frac{m+1}{r}>0.
\end{eqnarray*}
Next, take $\sigma=\frac{1}{p}$. Then we have 
$\varepsilon = \frac{1}{r}-\frac{1}{p} -\frac{\beta+s}{2}
> \frac{1}{r}-\frac{1}{p}-\beta >0$.
In addition, $\beta$ and $s$ should satisfy the conditions in Lemma \ref{lem:bilinear}, and $q$ should satisfy $qs>8$, but
this is possible if we take $s$ very small, $\beta$ close to $\frac{2}{p}$ and $q>p$ very large. Note that
the assumptions in Theorem \ref{thm:localexistence} allow us to use Lemmas \ref{lem:heat_kernel} to \ref{lem:bilinear} when $m=2$.  
\end{rem}
\vspace{3mm}

We now have all the estimates in hand to solve equation (\ref{eq:u}) in the mild form: for any fixed $T>0$, we consider for $t\in [0,T]$ :
\begin{eqnarray}
\label{eq:umild}
u(t)= e^{t (\gamma_1+i\gamma_2) H}u_0-(\gamma_1+i\gamma_2)\int_0^t e^{(t-\tau)(\gamma_1+ i\gamma_2) H} :|u+Z_{\infty}^{\gamma_1, \gamma_2}|^2 (u+Z_{\infty}^{\gamma_1, \gamma_2})(\tau): d\tau.   
\end{eqnarray}
By the definition of the Wick product, we may write, using again the notation $Z_{R, \infty}=\mathrm{Re}\,(Z_{\infty}^{\gamma_1, \gamma_2})$, 
$Z_{I, \infty}=\mathrm{Im}\,(Z_{\infty}^{\gamma_1, \gamma_2})$, $u_R=\mathrm{Re} \,u$, and $u_I=\mathrm{Im} \,u$, 
\begin{equation}
\label{def:F}
:|u+Z_{\infty}^{\gamma_1, \gamma_2}|^2 (u+Z_{\infty}^{\gamma_1, \gamma_2}): 
= F\left(u, (:Z_\infty^l:)_{1\le l \le 3}\right) =F_0 +F_1+F_2+F_3
\end{equation}
with $F_0 =|u|^2u$, and
\begin{eqnarray*}
F_1 &= &Z_\infty |u|^2 +2 Z_{R,\infty} u_R u+2 Z_{I,\infty} u_I u,\\
F_2 &= &: Z_{R,\infty}^2: (3u_R+iu_I) \, + :Z_{I,\infty}^2: (u_R + 3iu_I) +2:Z_{R,\infty}Z_{I,\infty}:(u_I+iu_R),\\
F_3& = & :Z_{R,\infty}^3: +\, i:Z_{I,\infty}^3: + :Z_{R,\infty} Z_{I,\infty}^2: + \,i:Z_{R,\infty}^2 Z_{I,\infty}: .
\end{eqnarray*}
\vspace{3mm}

\noindent
{\em Proof of Theorem \ref{thm:localexistence}.} We fix all the parameters appearing in lemmas 3.9 and 3.10, as in Remark~\ref{rem:parameters}
(with $m=2$).
Let us fix any $T>0$, and take $T_0 \le T \wedge 1$ small enough so that 
the estimates in Lemma \ref{lem:data} and Lemma \ref{lem:nonlinear} hold. 
We consider the closed ball in $\mathcal{E}_{T_0}$ with radius $R>0$:
$$B_R(T_0) := \{u \in \mathcal{E}_{T_0}, |u|_{L^{\infty}(0,T_0, \mathcal{W}^{-s,q}) \cap L^r(0,T_0, \mathcal{W}^{\beta,p})}  \le R\},$$
where $R$ is such that
$$
C_3|u_0|_{\mathcal{W}^{-s,q}}
 +(\gamma_1 +|\gamma_2|)(C_0 +\tilde C_0) |F_3|_{L^r(0,T;\mathcal{W}^{-s,q})}=\frac{R}{2}.
 $$
 Here, the constants $C_0$ and $\tilde C_0$ are the constants appearing in Lemmas \ref{lem:heat_kernel} and \ref{lem:nonlinear}.  Note that $R$ is random, but 
a.s. finite, as follows from Proposition \ref{prop:ReImCauchy} and H\"older's inequality.
Consider the mapping, for $t \le T_0$, and $u \in B_R(T_0)$, 
$$(\mathcal{T}u) (t) = e^{t(\gamma_1+ i\gamma_2) H}u_0 -(\gamma_1+i\gamma_2) 
\int_0^t e^{(t-\tau)(\gamma_1+i \gamma_2) H} :|u+Z_{\infty}^{\gamma_1, \gamma_2}|^2 (u+Z_{\infty}^{\gamma_1, \gamma_2}) (\tau): d\tau.$$
First of all,  we estimate the linear part. By Lemma \ref{lem:data}, we have 
\begin{equation*}
 |e^{t(\gamma_1+i\gamma_2)H} u_0|_{\mathcal{E}_T} \le C_3 |u_0|_{\mathcal{W}^{-s,q}}.
\end{equation*}
%also, 
%\begin{eqnarray*}
%|e^{t(\gamma_1+i\gamma_2)H} u_0|_{L^r(0,T_0; \mathcal{W}^{\beta,p})} \le C_1 C_2 e^{\frac{\gamma_1 \theta T_0}{2}} T_0^{\varepsilon} |u_0|_{\mathcal{W}^{-s,q}}.
%\end{eqnarray*}
Next, we estimate the nonlinear part, using the formulation \eqref{def:F}, and we start with the term involving $F_3$.
%but we will not present how to treat with all the terms in (\ref{eq:Wick_decomp}), only with 
%some representative terms of four types: 
%\begin{equation} \label{4types}
%\left\{
%\begin{array}{ll}
%(a) &= :Z_{R,\infty}^3:\\
%(b )&= \sum_{l=1}^2 \binom 3l u_R^l :Z_{R,\infty}^{3-l}: \\
%(c) &= u_R^3 \\
%(d) &= (u_R +Z_{R,\infty}) \sum_{l=0}^2 \binom 2l u_I^l :Z_{I,\infty}^{2-l}:
%\end{array}
%\right.
%\end{equation}
%We start with (a). 
It follows from Lemma \ref{lem:heat_kernel} that, for all $t\in [0,T_0]$,
\begin{eqnarray*}
\Big|(\gamma_1+i\gamma_2)\int_0^t e^{(t-\tau)(i+ \gamma) H} F_3(\tau) d\tau\Big|_{\mathcal{W}^{-s,q}} 
&\le& (\gamma_1+|\gamma_2|)C_0 \int_0^t |F_3(\tau)|_{\mathcal{W}^{-s,q}} d\tau \\
&\le& (\gamma_1+|\gamma_2|) C_0 T_0^{1-\frac{1}{r}} |F_3|_{L^r(0,T;\mathcal{W}^{-s,q})}.
\end{eqnarray*}
We next apply Lemma \ref{lem:nonlinear} with $l=0$, then with $\delta_0 := -\frac{s+\beta}{2} +1-\frac{1}{r} >0$, we have,  
\begin{eqnarray*}
\Big|(\gamma_1+i\gamma_2)\int_0^t e^{(t-\tau)(\gamma_1 + i\gamma_2) H} F_3(\tau) d\tau\Big|_{L^r(0,T_0;\mathcal{W}^{\beta,p})}
&\le& (\gamma_1+|\gamma_2|) \tilde C_0 T_0^{\delta_0} |F_3|_{L^r(0,T;\mathcal{W}^{-s,q})}. \\
%&\le& (\gamma_1 +|\gamma_2|) C_1C_2 T_0^{\delta_0+\frac{1}{r}} \sup_{t\in [0,T]} |:Z_{R,\infty}^{3}(t):|_{\mathcal{W}^{-s,q}}.
\end{eqnarray*}
Let us  now turn to the estimate of the term containing $F_2$. Applying Lemma \ref{lem:nonlinear} with $l=1$, we have
\begin{eqnarray*}
&&\Big|(\gamma_1+i\gamma_2)\int_0^t e^{(t-\tau)(\gamma_1+ i\gamma_2) H} F_2(\tau) d\tau\Big|_{L^{\infty}(0,T_0; \mathcal{W}^{-s,q}) \cap L^r(0,T_0;\mathcal{W}^{\beta,p})} \\
&\le & (\gamma_1+|\gamma_2|) (C_1+\tilde C_1)  T_0^{\delta_1} |F_2|_{L^{\frac{r}{2}}(0,T_0;\mathcal{W}^{-(s+\frac{2}{p}-\beta),q})},
\end{eqnarray*}
with $\delta_1:= 1-\frac{2}{r}-\frac 12 (\frac 2p -\beta)>0$; 
the use of formula \eqref{def:F} and Lemma \ref{lem:bilinear} with $m=1$ allows then to bound the above term by
\begin{eqnarray*}
& &3 C(\gamma_1+|\gamma_2|) \,(C_1+\tilde C_1) T_0^{\delta_1} \sum_{l=0}^2  |u|_{L^r(0,T_0; \mathcal{W}^{\beta,p})} |:Z_{R,\infty}^{l}
Z_{I,\infty}^{2-l}:|_{L^r(0,T_0;\mathcal{W}^{-s,q})}\\
&\le &3C (\gamma_1+|\gamma_2|) \, (C_1+\tilde C_1) T_0^{\delta_1} R \sum_{l=0}^2 |:Z_{R,\infty}^{l}
Z_{I,\infty}^{2-l}:|_{L^r(0,T_0;\mathcal{W}^{-s,q})}.
\end{eqnarray*}
In the same way, the term involving $F_1$ is estimated thanks to Lemma \ref{lem:nonlinear} and Lemma \ref{lem:bilinear}, respectively with
$l=2$ and $m=2$; we obtain
\begin{eqnarray*}
&&\Big|(\gamma_1+i\gamma_2)\int_0^t e^{(t-\tau)(\gamma_1 + i\gamma_2) H} F_1(\tau) d\tau\Big|_{L^{\infty}(0,T_0; \mathcal{W}^{-s,q}) \cap L^r(0,T_0;\mathcal{W}^{\beta,p})} \\
&\le& (\gamma_1+|\gamma_2|)\,(C_1+ \tilde C_2) T_0^{\delta_2} |F_1(\tau)|_{L^{\frac r3}(0,T_0;\mathcal{W}^{-(s+2(\frac{2}{p}-\beta)),q})} \\
&\le&3 C (\gamma_1+|\gamma_2|)\,  (C_1+\tilde C_2) \,T_0^{\delta_2} (|Z_{R,\infty}|_{L^r(0,T_0;\mathcal{W}^{-s,q})} 
+|Z_{I,\infty}|_{L^r(0,T_0;\mathcal{W}^{-s,q})} ) |u|^2_{L^r(0,T_0; \mathcal{W}^{\beta,p})}\\ 
&\le& 3C (\gamma_1+|\gamma_2|) \,(C_1+\tilde C_2) \,R^2 \,T_0^{\delta_2} |Z_\infty|_{L^r(0,T;\mathcal{W}^{-s,q})},
\end{eqnarray*}
with $\delta_2:=1-\frac 3r -(\frac 2p -\beta) >0$.
Finally, using again Lemma \ref{lem:nonlinear} and Lemma \ref{lem:bilinear} with $l=m=2$ leads to
\begin{eqnarray*}
&&\left|(\gamma_1+i\gamma_2) \int_0^t e^{(t-\tau)(\gamma_1+i\gamma_2) H} (|u|^2u)(\tau) d\tau \right|_{L^{\infty}(0,T_0; \mathcal{W}^{-s,q}) \cap L^r(0,T_0;\mathcal{W}^{\beta,p})} \\
&\le& (\gamma_1+|\gamma_2|) C(C_1+\tilde C_2) \,T_0^{\delta_2} |u|_ {L^r(0,T_0;\mathcal{W}^{-s,q})}
|u|^2_{L^r(0,T_0; \mathcal{W}^{\beta,p})}\\ 
&\le& (\gamma_1+|\gamma_2|) C(C_1+\tilde C_2) \,T_0^{\delta_2} \, R^3.
\end{eqnarray*}

Gathering all the above estimates, and since $T_0 \le 1$, we have for some constants $C>0$,
$$
|\mathcal{T}u|_{\mathcal{E}_{T_0}} \le
\frac{R}{2} + C T_0^{\delta_2}R(1+R^2) \sum_{0\le l+k\le 3} |:Z_{R,\infty}^l Z_{I,\infty}^k:|_{L^r(0,T;\mathcal{W}^{-s,q})}.
$$
By Proposition \ref{prop:ReImCauchy} and H\"older's inequality, it follows that the right hand side is smaller than $R$ provided
$T_0$ is chosen such that
\begin{equation}
\label{eq:extime}
C T_0^{\delta_2}(1+R^2) \sum_{0\le l+k\le 3} |:Z_{R,\infty}^l Z_{I,\infty}^k:|_{L^r(0,T;\mathcal{W}^{-s,q})}\le \frac 12,
\end{equation}
and $\mathcal{T}$ maps the ball $B_R(T_0)$ into itself. Estimating in the same way as above the difference $|\mathcal{T}u_1 - \mathcal{T}u_2|_{\mathcal{E}_{T_0}}$, it can easily be checked that $\mathcal{T}$ is a contraction on $B_R(T_0)$ under the condition \eqref{eq:extime}.
%In particular, if we assume $\|:Z_{\infty}^3:\|_{\mathcal{W}^{-s,q}}(T) \le 1$, we may take 
%\begin{equation*}
%T_0 = \left(\frac{1}{C''(R+1)}\right)^{\frac{1}{\delta}}
%\end{equation*}
%where $C''$ and $\delta$ are independent of $R$.

By classical PDE arguments, the continuity of the solution $u$ with respect to the initial data $u_0$, and the following alternative also hold:
Let $T^*_0$ be the maximal time $T^*_0(\omega):=\sup\{T_0 \ge 0, \exists~ \mbox{unique solution}~  u\in \mathcal{E}_{T_0} \}$. 
Then, almost surely, $T^*_0=+\infty$ or $T^*_0< +\infty$ and 
$$ \lim_{t \uparrow T^*_0} |u(t)|_{\mathcal{W}^{-s,q}}=+\infty.$$
\hfill \qed

\section{Long time behaviour in the large dissipation case} 
When the dissipation parameter $\gamma_1$ is sufficiently large compared to $\gamma_2$,
an energy bound allows us to globalize the solution. This is the object of the first part of the section.
In a second part, we use another energy bound to prove the existence of an invariant measure, still in the large dissipation
case.

\subsection{Global existence}
Before proving the global existence result, we start with the proof of Proposition~\ref{prop:LpLWP}, which gives another version of local existence
result for initial data in $L^q(\R^2)$, under slightly more restrictive conditions on the parameters.
\vspace{3mm}

\noindent
{\em Proof of Proposition \ref{prop:LpLWP}}.
Let $u_0 \in L^q(\R^2)$, and $u\in C([0,T_0^*);{\mathcal{W}}^{-s,q})\cap L^r(0,T_0^{*-};{\mathcal{W}}^{\beta,p})$ be the solution of \eqref{eq:umild} given by Theorem \ref{thm:localexistence}. First, it is clear from Lemma
\ref{lem:data} that $t\mapsto e^{t(\gamma_1 +i\gamma_2)H} u_0$ is in $C([0,T_0^*); L^q(\R^2))$. On the other hand, applying Lemma \ref{lem:nonlinear}
with $s=\frac 2p -\beta$, an integer $l$ such that $1\le l \le 3$, and $\alpha = (l+1)( \frac 2p -\beta)$, and noticing moreover that the condition
$\frac l2 ( \beta-\frac 2p ) - \frac{l+1}{r} +1>0$ is satisfied for $r>6$ since $\frac 2p -\beta <\frac 29$, we obtain, for all
$f \in L^{\frac{r}{l+1}} (0,T;{\mathcal{W}}^{-l(\frac 2p -\beta),q})$, with $T<T_0^*$ and all $t\in [0,T]$,
\begin{eqnarray}
\label{eq:loclq}
\nonumber
\left| \int_0^t e^{(t-\tau) (\gamma_1+i\gamma_2)H} f(\tau) d\tau\right|_{L^q} &
=& \left| \int_0^t (-H)^{\frac s2} e^{(t-\tau) (\gamma_1+i\gamma_2)H} f(\tau) d\tau \right|_{{\mathcal{W}}^{-s,q}} \\
& \le & C_1 T^{\delta} \left| (-H)^{\frac s2}f \right|_{L^{\frac{r}{l+1}} (0,T;{\mathcal{W}}^{-\alpha,q})}  \\
& \le & C_1 T^{\delta} |f|_{L^{\frac{r}{l+1}} (0,T;{\mathcal{W}}^{-l(\frac 2p -\beta),q})}. \nonumber
\end{eqnarray}

It remains to estimate the right hand side above taking $f= :|u+Z_{\infty}^{\gamma_1, \gamma_2}|^2 (u+Z_{\infty}^{\gamma_1, \gamma_2}):
= F_0 +F_1+F_2+F_3$ as in \eqref{def:F}. Applying Lemma \ref{lem:bilinear} with $m=2$, and $s=\frac 2p -\beta$ (it is easily checked that the 
conditions are satisfied under the assumptions of Proposition \ref{prop:LpLWP}), we have
$$
|F_0|_{{\mathcal{W}}^{-3(\frac 2p-\beta),q} }\le C |u|_{{\mathcal{W}}^{-s,q}} |u|^2_{{\mathcal{W}}^{\beta,p}},
$$
then H\"older's inequality implies
\begin{equation}
\label{eq:F0}
|F_0|_{L^{\frac r4}(0,T;{\mathcal{W}}^{-3(\frac 2p -\beta),q})} \le CT^{\frac 2r} |u|_{L^\infty(0,T;{\mathcal{W}}^{-s,q})} |u|^2_{L^r(0,T;{\mathcal{W}}^{\beta,p})}.
\end{equation}
In the same way, applying, for $k=1,2,3$ Lemma \ref{lem:bilinear} with $m=3-k$ leads to
\begin{eqnarray*}
& & |F_k|_{L^{\frac{r}{m+2}}(0,T;{\mathcal{W}}^{-(m+1)(\frac 2p-\beta),q})}\\
& & \le CT^\frac 2r \sum_{0\le j\le k} |:Z_{R,\infty}^j Z_{I,\infty}^{k-j}:|_{{\mathcal{W}}^{-s,q}} |u|_{L^r(0,T;{\mathcal{W}}^{\beta,p})}^{3-k}.
\end{eqnarray*}
Combining the above estimate with \eqref{eq:F0} and \eqref{eq:loclq}, we deduce that the second term in the right hand side
of equation \eqref{eq:umild} is in $L^{\infty}(0,T;L^q(\R^2))$ for any $T<T_0^*$. Again, the continuity in time with values in
$L^q(\R^2)$ follows from the strong continuity of the semi-group and the above estimate, and this concludes the proof.
\hfill\qed
\vspace{3mm}

%We go back to the original equation (\ref{eq:SGL}), we have assumed that the initial data $X_0 \in \mathcal{W}^{-s,q}$. 
%Since we decompose $u_0=X_0-{Z}^{\gamma_1, \gamma_2}_{\infty}(0)$, and so with Proposition \ref{prop:Z}, $u_0 \in \mathcal{W}^{-s,q}$. At this point,  
%$u_0$ does not satisfy the hypothesis in the above Proposition \ref{prop:LpLWP}. 
%However, if we consider a small time $\tilde{T}_0 <T_0$ where $T_0$ was appeared in the proof of Theorem \ref{thm:localexistence}, 
%then we see in fact $v(\tilde{T}_0) \in \mathcal{W}^{\beta,q}$ with $\beta \in (1,2)$ (under more regularity conditions via the parameters). 
%Thus we can start solving the equation (\ref{eq:u}) from the time $\tilde{T}_0$ (we will explain this later again). For this reason, we assume 
%a stronger regularity for $u_0$ in Proposition \ref{prop:LpLWP} and also in Proposition \ref{thm:Lpbound}. Now we give a proof for Proposition \ref{thm:Lpbound}.
%\vspace{3mm}

\noindent
{\em Proof of Proposition \ref{thm:Lpbound}.}  
Here, we assume that $\gamma_1$, $\gamma_2$ and $q$ satisfy the assumptions of Proposition \ref{thm:Lpbound}, that is the dissipation is sufficiently large.
All the  formal computations below to estimate the $L^q$ norm of the solution may be justified as in \cite{mw} (see also Remark \ref{rem:mw} below).
Let $u$ be solution of 
\begin{equation}
\label{eq:lq}
 \partial_t u -(\gamma_1+i\gamma_2) Hu= -(\gamma_1+i\gamma_2) |u|^2 u +\Theta(u,Z),  \quad t \in [0, T_1^* \wedge T],
 \end{equation}
where we have denoted simply $Z=Z_{\infty}^{\gamma_1, \gamma_2}$ and  
$$\Theta(u,Z)= -\left(\gamma_1+i\gamma_2) (F_1(u,Z)+F_2(u,Z)+F_3(u,Z)\right),$$ with $F_j$, $j=1,2,3$ defined as in \eqref{def:F}.
Multiplying equation \eqref{eq:lq} by $|u|^{q-2}\bar u$ and integrating the real part of the result over $\R^2$, we obtain:
\begin{eqnarray*}
\frac{1}{q} \frac{d}{dt} |u|_{L^q}^q &=& ((\gamma_1+i\gamma_2) Hu- (\gamma_1+\gamma_2) |u|^2u+\Theta(u,Z), |u|^{q-2}u)_{L^2} \\
&=& ((\gamma_1+i\gamma_2) \Delta u, |u|^{q-2}u)_{L^2}-\gamma_1( |x|^2 u, |u|^{q-2}u)_{L^2} \\
&&-\gamma_1 ( |u|^2 u, |u|^{q-2}u)_{L^2} -((\gamma_1+i\gamma_2)(F_1+F_2+F_3), |u|^{q-2}u)_{L^2}. 
\end{eqnarray*}
The first term in the right hand side can be written by integration by parts as follows.
\begin{eqnarray} \nonumber 
 ((\gamma_1+i\gamma_2) \Delta u, |u|^{q-2}u)_{L^2} &=&-((\gamma_1+i\gamma_2) \nabla u, \nabla(|u|^{q-2}u))_{L^2} \\ \nonumber 
% &=&- \frac{q}{2} \re \int_{\R^2} (\gamma_1+i\gamma_2) |u|^{q-2} |\nabla u|^2 dx 
%-\frac{(q-2)}{2} \re \int_{\R^2} (\gamma_1+i\gamma_2) (\bar{u} \nabla u)^2 |u|^{q-4} dx \\ \nonumber 
&=& -\gamma_1 \int_{\R^2} |u|^{q-2} |\nabla u|^2 dx  
 -\gamma_1 (q-2) \int_{\R^2} (\re(\bar{u} \nabla u))^2 |u|^{q-4} dx \\ 
& &+\gamma_2 (q-2) \int_{\R^2} \im (\bar{u} \nabla u) \re (\bar{u} \nabla u) |u|^{q-4} dx.\label{est:1}
%&=& (-\gamma_1(1-\delta)-\gamma_1(q-2)) \int_{\R^2} (\re (\bar{u} \nabla u))^2 |u|^{q-4} dx\\ 
%&&-\gamma_1(1-\delta) \int_{\R^2} (\im (\bar{u} \nabla u))^2 |u|^{q-4} dx 
%-\gamma_1 \delta \int_{\R^2} (\im (\bar{u} \nabla u))^2 |u|^{q-4} dx \\
%&&-\gamma_1(1-\delta) \int_{\R^2} (\im (\bar{u} \nabla u))^2 |u|^{q-4} dx \\
%&&+\gamma_2 (q-2) \int_{\R^2}  \im (\bar{u} \nabla u) \re (\bar{u} \nabla u) |u|^{q-4} dx \\ 
%&\le& (-\gamma_1(1-\delta)-\gamma_1(q-2)) \int_{\R^2} (\re (\bar{u} \nabla u))^2 |u|^{q-4} dx\\ \nonumber  
%&& -\gamma_1(1-\delta) \int_{\R^2} (\im (\bar{u} \nabla u))^2 |u|^{q-4} dx \\ \nonumber 
%&& -\gamma_1 \delta \int_{\R^2} |\nabla u|^2 |u|^{q-2} dx \\ \label{est:1}
%&& +\gamma_2 (q-2) \int_{\R^2}  \im (\bar{u} \nabla u) \re (\bar{u} \nabla u) |u|^{q-4} dx, 
\end{eqnarray}
Writing
$$
|u|^{q-2}|\nabla u|^2 =|u|^{q-4} [\re(\bar u\nabla u)^2+\im(\bar u\nabla u)^2],
$$
the above term may be bounded, for any $\delta \in (0,1)$, by
\begin{eqnarray*} \nonumber 
& & -\gamma_1 \delta \int_{\R^2} |\nabla u|^2 |u|^{q-2} dx 
-\gamma_1(q-1-\delta)\int_{\R^2} (\re (\bar{u} \nabla u))^2 |u|^{q-4} dx\\ 
& &- \gamma_1(1-\delta) \int_{\R^2} (\im (\bar{u} \nabla u))^2 |u|^{q-4} dx
+\gamma_2 (q-2) \int_{\R^2}  \im (\bar{u} \nabla u) \re (\bar{u} \nabla u) |u|^{q-4} dx. \label{estim2}
\end{eqnarray*}
Let, for $\delta \in (0,1)$,
$$ 
A_\delta:=
\begin{pmatrix}
\gamma_1 (q-1-\delta) & -\gamma_2 \left(\frac{q}{2}-1 \right) \\
-\gamma_2 \left(\frac{q}{2}-1 \right) & \gamma_1(1-\delta)
\end{pmatrix},
$$
then $\mathrm{tr} A_\delta \ge 0$ since $q \ge 2$. On the other hand, 
$\mathrm{det} A_\delta \ge 0$ if and only if : 
either $\gamma_2=0$, or 
$\gamma_2 \ne 0$ and $q-2 \le 2(1-\delta) \{\kappa^2 +\kappa \sqrt{1+\kappa^2}\}$ with $\kappa=\gamma_1/\gamma_2$.
Thus, choosing from now on $\delta$ such that the previous condition is satisfied (which is always possible under the assumptions 
of Theorem \ref{thm:Lpbound}), we obtain
\begin{eqnarray} \nonumber
&&\frac{1}{q} \frac{d}{dt} |u|_{L^q}^q + \gamma_1 \delta  \int_{\R^2} |u|^{q-2} |\nabla u|^2 dx 
+\gamma_1 \int_{\R^2} |x|^2 |u|^{q} dx + \gamma_1 \int_{\R^2} |u|^{q+2} dx \\ \label{est:Lp}
&& \hspace{50mm} \le -((\gamma_1+i\gamma_2)(F_1+F_2+F_3), |u|^{q-2}u)_{L^2}, \\ \nonumber 
%&& \hspace{70mm}= (a)+(b)+(c),
\end{eqnarray}
where we recall that
\begin{eqnarray*}
F_1&=& Z |u|^2 +2 Z_{R} u_R u+2 Z_{I} u_I u,\\ 
F_2&=& : Z_{R}^2: (3u_R+iu_I) \, + :Z_{I}^2: (u_R + 3iu_I)
+\, 2:Z_{R}Z_{I}:(u_I+iu_R), \\
F_3 &=& :Z_{R}^3: +\, i:Z_{I}^3: + :Z_{R} Z_{I}^2: + \,i:Z_{R}^2 Z_{I}:.
\end{eqnarray*}
In what follows, we will use the space 
$$\widetilde{\mathcal{W}}^{s,r}(\R^2)=\{f\in \mathcal{S}';~ \langle D \rangle^s f \in L^r(\R^2),~ \langle x \rangle^s f \in L^r(\R^2)\},$$ 
which is equivalent to ${\mathcal{W}}^{s,r}(\R^2)$ for any $r \in (1, \infty)$ by (1) of Proposition \ref{prop:dg}. 
We begin with an estimate for the term containing $F_1$ in the right hand side of \eqref{est:Lp}. We take $\alpha >0$ and $4<p_0<+\infty$ satisfying $\alpha p_0 >8$ and $\frac{1}{p_0^*} \ge 1-\frac{s-\alpha}{2}$. 
Then by duality, (1) of Proposition \ref{prop:dg}, and Sobolev embedding, we have  
\begin{eqnarray*}
|(F_1, |u|^{q-2}u)_{L^2}|&\lesssim& (||u|^q u_R|_{\mathcal{W}^{\alpha, p_0^*}}+||u|^q u_I|_{\mathcal{W}^{\alpha, p_0^*}}) 
|Z|_{\mathcal{W}^{-\alpha,p_0}}\\
%&\lesssim&  (||u|^{q} \bar{u}|_{\widetilde{\mathcal{W}}^{\alpha, p_0^*}}+||u|^q u_R|_{\widetilde{\mathcal{W}}^{\alpha, p_0^*}}+||u|^q u_I|_{\widetilde{\mathcal{W}}^{\alpha, p_0^*}}) 
%|:Z^1:|_{\mathcal{W}^{-\alpha,p_0}}(T) \\
&\lesssim&  (||u|^q u_R|_{\widetilde{\mathcal{W}}^{s, 1}}+||u|^q u_I|_{\widetilde{\mathcal{W}}^{s, 1}})
|Z|_{\mathcal{W}^{-\alpha,p_0}}.
\end{eqnarray*}
Using the interpolation 
\begin{equation} \label{ineq:interpol1}
|f|_{\widetilde{\mathcal{W}}^{s,r}} \le |f|_{\widetilde{\mathcal{W}}^{0,r}}^{1-s}|f|_{\widetilde{\mathcal{W}}^{1,r}}^s, \quad s \in [0,1], \quad r \in [1, \infty), 
\end{equation}
the right hand side of the above inequality may be bounded by
\begin{eqnarray*}
%||u|^q \bar{u}|_{\widetilde{\mathcal{W}}^{s, 1}} \|:Z^1:\|_{\mathcal{W}^{-\alpha,p_0}}(T)
%&\lesssim& 
%||u|^{q+1}|_{L^1}^{1-s} (|\langle D \rangle |u|^{q} \bar{u}|_{L^1} + |\langle x \rangle |u|^{q+1}|_{L^1})^s \|:Z^1:\|_{\mathcal{W}^{-\alpha,p_0}}(T) \\
%&\lesssim & 
||u|^{q+1}|_{L^1}^{1-s} \big(||u|^{q+1}|_{L^1}+ |\nabla(|u|^{q}u)|_{L^1} + |x |u|^{q+1}|_{L^1}\big)^s |Z|_{\mathcal{W}^{-\alpha,p_0}},
\end{eqnarray*} 
and then, by Young inequality, we obtain for any $\varepsilon>0$, 
\begin{eqnarray} \label{ineq:b}
|(F_1, |u|^{q-2}u)_{L^2}|& \le & \varepsilon\big(|u|^{q+1}_{L^{q+1}}+ |\nabla (|u|^{q} u)|_{L^1} + | x |u|^{q+1}|_{L^1}\big) + \varepsilon^{-\frac{s}{1-s}} |Z|_{\mathcal{W}^{-\alpha,p_0}}^{\frac{1}{1-s}} |u|^{q+1}_{L^{q+1}}.
\end{eqnarray}
In order to absorb the terms involving $|u|^{q+1}_{L^{q+1}}$ into the left hand side of \eqref{est:Lp},
we fix $r>0$, and split the integral to get, thanks to H\"older Inequality,
\begin{eqnarray} \nonumber 
|u|^{q+1}_{L^{q+1}} &\le& 
\int_{|x| \le r} |u|^{q+1} + \int_{|x|\ge r} |x||u|^{q/2} |x|^{-1} |u|^{(q+2)/2} dx \\ \label{ineq1:q+1}
&\le& (\pi r^2)^{\frac{1}{q+2}} \left[\int_{\R^2} |u|^{q+2}dx\right]^{\frac{q+1}{q+2}} + r^{-1} \Big[\int_{\R^2} |x|^2 |u|^{q} dx \Big]^{1/2} \Big[ \int_{\R^2} |u|^{q+2} dx \Big]^{1/2}.
\end{eqnarray}
Taking $r=1$, and using Young Inequality we obtain
%$$\left( \int_{\R^2}  |u|^{q+2}\right)^{\frac{q+1}{q+2}} \le  \int_{\R^2}|u|^{q+2} +1,$$
%we have 
\begin{equation} \label{ineq2:q+1}
|u|_{L^{q+1}}^{q+1} \le \frac{3}{2} \int_{\R^2}  |u|^{q+2}dx +\frac{1}{2} \int_{\R^2} |x|^2 |u|^q dx+\pi.
\end{equation}
Noting that $\nabla (|u|^{q} u)= \frac{q}{2}  u^2 |u|^{q-2} \nabla \bar u + \frac{q+2}{2} |u|^{q} \nabla u$, 
the second term in (\ref{ineq:b}) can be estimated thanks to Cauchy-Schwarz inequality:
\begin{eqnarray} \label{ineq1:nablaq+1}
|\nabla (|u|^{q} u)|_{L^1} %\lesssim ||u|^q |\nabla u||_{L^1}
\le (q+1) ||u|^{q-2} |\nabla u|^2|_{L^1}^{\frac{1}{2}} ||u|^{q+2}|_{L^1}^{\frac{1}{2}}
\le (q+1)[ ||u|^{q-2} |\nabla u|^2|_{L^1}+|u|^{q+2}_{L^{q+2}}].
\end{eqnarray}
As for the third term in (\ref{ineq:b}), again, we split the integral and use H\"older and Young Inequalities:
\begin{eqnarray}
\label{ineq:xqplus1} \nonumber  
|x|u|^{q+1}|_{L^1} &\le& 
%\left(\int_{|x| \le 1} |x| |u|^{q+1} + \int_{|x| \ge 1} |x| |u|^{q+1} \right)\\ \nonumber 
%&\le&
\pi^{\frac{1}{q+2}}  \left[\int_{|x| \le 1} |u|^{q+2}dx\right]^{\frac{q+1}{q+2}} + \int_{|x|\ge 1} |x||u|^{q/2} |u|^{(q+2)/2} dx \\ \label{ineq:xq}
%&\le& \left( \int_{\R^2}  |u|^{q+2}\right)^{\frac{q+1}{q+2}} +  \Big(\int_{\R^2} |x|^2 |u|^{q} dx \Big)^{1/2} \Big( \int_{\R^2} |u|^{q+2} dx \Big)^{1/2}\\ \nonumber
%&\le& 1+ \int_{\R^2}  |u|^{q+2} + \frac{1}{2} \int |x|^2 |u|^q + \frac{1}{2} \int |u|^{q+2} \\\label{ineq1:xq+1}
&\le& \pi+ \frac{3}{2}\int_{\R^2}  |u|^{q+2}dx + \frac{1}{2} \int_{\R^2} |x|^2 |u|^qdx.
\end{eqnarray}
%where we used the Young inequality in the last inequality. 
%Here, we fix $\varepsilon=\theta\gamma_1$ ($0<\theta<< 1$ to be determined), and we estimate the second term
%$\varepsilon^{-\frac{s}{1-s}} [\|:Z^1:\|_{\mathcal{W}^{-\alpha,p_0}}(T)]^{\frac{1}{1-s}} ||u|^{q+1}|_{L^1}
%=\left(\theta\gamma_1\right)^{-\frac{s}{1-s}} [\|:Z^1:\|_{\mathcal{W}^{-\alpha,p_0}}(T)]^{\frac{1}{1-s}} ||u|^{q+1}|_{L^1}$. 
Let us finally estimate the last term in \eqref{ineq:b}.
We use (\ref{ineq1:q+1}) with $ r=\frac 12 \varepsilon^{-\frac{1}{1-s}} |Z|_{\mathcal{W}^{-\alpha,p_0}}^{\frac{1}{1-s}}$, and Young Inequality 
to obtain
\begin{eqnarray*}
\varepsilon^{-\frac{s}{1-s}} |Z|_{\mathcal{W}^{-\alpha,p_0}}^{\frac{1}{1-s}} |u|^{q+1}_{L^{q+1}} 
%&\le & \left(\theta\gamma_1\right)^{-\frac{s}{1-s}} [\|:Z^1:\|_{\mathcal{W}^{-\alpha,p_0}}(T)]^{\frac{1}{1-s}} 
%\delta^{2/(q+2)} \left(\int_{\R^2}|u|^{q+2}\right)^{\frac{q+1}{q+2}} \\ \nonumber
%&& +\left(\theta\gamma_1\right)^{-\frac{s}{1-s}} [\|:Z^1:\|_{\mathcal{W}^{-\alpha,p_0}}(T)]^{\frac{1}{1-s}}
%\delta^{-1} \Big(\int_{\R^2} |x|^2 |u|^{q} dx \Big)^{1/2} \Big( \int_{\R^2} |u|^{q+2} dx \Big)^{1/2} \\ \nonumber
%& \le & \left(\theta\gamma_1\right)^{-\frac{s}{1-s}} [\|:Z^1:\|_{\mathcal{W}^{-\alpha,p_0}}(T)]^{\frac{1}{1-s}} 
%\delta^{2/(q+2)} \left( \int_{\R^2}|u|^{q+2}\right)^{\frac{q+1}{q+2}} \\ \label{eq:1}
%&& +\left(\theta\gamma_1\right)^{-\frac{s}{1-s}} [\|:Z^1:\|_{\mathcal{W}^{-\alpha,p_0}}(T)]^{\frac{1}{1-s}}
%\delta^{-1} \frac{1}{2}\Big(\int_{\R^2} |x|^2 |u|^{q} dx + \int_{\R^2} |u|^{q+2} dx \Big)
%\end{eqnarray}
%Choose $\delta>0$ so that $\left(\theta\gamma_1\right)^{-\frac{s}{1-s}} [\|:Z^1:\|_{\mathcal{W}^{-\alpha,p_0}}(T)]^{\frac{1}{1-s}} \frac{1}{2}\delta^{-1} = \theta\gamma_1$. 
%Then, we apply the Young inequality again for the first term. For any $\varepsilon_1>0$ we have 
%\begin{eqnarray*}
%(\ref{eq:1}) 
& \le &\Big( \frac{\pi}{4}\Big)^{\frac{1}{q+2}} \varepsilon^{-\frac{1}{1-s}(s+\frac{2}{q+2})} |Z|_{\mathcal{W}^{-\alpha,p_0}}^{\frac{1}{1-s}(1+\frac{2}{q+2})} 
%\left( \varepsilon_1 \int_{\R^2}|u|^{q+2} + \varepsilon_1^{-(q+1)} \right)  \\
\left[ \int_{\R^2}|u|^{q+2}dx\right]^{\frac{q+1}{q+2}}\\
&&
+ \varepsilon \Big[\int_{\R^2} |x|^2 |u|^{q} dx + \int_{\R^2} |u|^{q+2} dx \Big]\\
& \le & \Big(\frac{\pi}{4}\Big)^{\frac{1}{q+2}} \varepsilon^{-\frac{1}{1-s}(s+\frac{2}{q+2})} |Z|_{\mathcal{W}^{-\alpha,p_0}}^{\frac{1}{1-s}(1+\frac{2}{q+2})} 
\left[ \varepsilon_1 \int_{\R^2}|u|^{q+2} dx+ \varepsilon_1^{-(q+1)} \right]\\
&&
+ \varepsilon \Big[\int_{\R^2} |x|^2 |u|^{q} dx + \int_{\R^2} |u|^{q+2} dx \Big],
\end{eqnarray*}
for any $\varepsilon_1>0$.
Choosing then
$$
\varepsilon_1= \Big( \frac{4}{\pi} \Big)^{\frac{1}{q+2}} \varepsilon^{\frac{1}{1-s}(1+\frac{2}{q+2})} 
|Z|_{\mathcal{W}^{-\alpha,p_0}}^{-\frac{1}{1-s}(1+\frac{2}{q+2})},
$$
we obtain
\begin{equation}
\label{lastterm}
 \varepsilon^{-\frac{s}{1-s}} |Z|_{\mathcal{W}^{-\alpha,p_0}}^{\frac{1}{1-s}} |u|^{q+1}_{L^{q+1}} 
 \le  \varepsilon \Big[\int_{\R^2} |x|^2 |u|^{q} dx + 2\int_{\R^2} |u|^{q+2} dx \Big]
+ \frac{\pi}{4} \varepsilon^{-\frac{s(q+2)+q^2+5q+6}{(q+2)(1-s)}} |Z|_{\mathcal{W}^{-\alpha,p_0}}^{\frac{q+4}{1-s}}.
\end{equation}
%$$ \left(\theta\gamma_1\right)^{-\frac{s}{1-s}(s+\frac{2}{q+2})} [\|:Z^1:\|_{\mathcal{W}^{-\alpha,p_0}}(T)]^{\frac{1}{1-s}(1+\frac{2}{q+2})} \varepsilon_1 =\theta\gamma_1.$$
%In summary the second term of (\ref{ineq:b}) is estimated by, with some constant $C$ which depends only on $\gamma_1, s, q$,  
%$$\theta\gamma_1 \left(2\int |u|^{q+2} + \int |x|^2 |u|^q \right) +C_{\gamma_1, s, q} [\|:Z^1:\|_{\mathcal{W}^{-\alpha,p_0}}(T)]^{\frac{2}{1-s}(1+\frac{2}{q+2})},$$
%and 
%$$ (b) \le 3(\theta \gamma_1) \left(5\int |u|^{q+2} +\int |x||u|^q + \int |u|^{q-2}|\nabla u|^2 +2\right) 
%+C_{\gamma_1, s, q} [\|:Z^1:\|_{\mathcal{W}^{-\alpha,p_0}}(T)]^{\frac{2}{1-s}(1+\frac{2}{q+2})}.
%$$
Gathering \eqref{ineq2:q+1}, \eqref{ineq1:nablaq+1}, \eqref{ineq:xq} and the above inequality, the right hand side of \eqref{ineq:b} is bounded as follows:
\begin{eqnarray}
\label{estimF1} \nonumber
|(F_1, |u|^{q-2}u)_{L^2}| &\le&2\eps q \left[\int_{\R^2} |u|^{q-2}|\nabla u|^2 dx+\int_{\R^2} |x|^2 |u|^{q} dx
+ \int_{\R^2} |u|^{q+2} dx\right]\\
& & + \, C_{\eps,s,q} (1+ |Z|_{\mathcal{W}^{-\alpha,p_0}}^{\frac{q+4}{1-s}}),
\end{eqnarray}
for some constant $C_{\eps,s,q}$.

Next, we estimate $(F_3, |u|^{q-2}u)_{L^2}$.  For $s\in (0,1]$ with $sq>8$ and $2<q <+\infty$, 
$$ 
| (F_3, |u|^{q-2}u)_{L^2}| = |(:|Z|^2Z:,|u|^{q-2}u)| \le |F_3|_{\mathcal{W}^{-s,q}} ||u|^{q-2} u|_{\mathcal{W}^{s,q^*}} .
$$     
Similarly to the estimate for $(F_1, |u|^{q-2}u)_{L^2}$, by interpolation (\ref{ineq:interpol1}), (1) of Proposition \ref{prop:dg} and Young inequality, we obtain for any $\varepsilon>0$,
\begin{eqnarray}
\label{ineq:a}
\nonumber
| (F_3, |u|^{q-2}u)_{L^2}| &\le & \varepsilon (|u|^{q-1}_{L^{q}} +|\nabla(|u|^{q-2} u)|_{L^{q^*}}+ |x |u|^{q-1}|_{L^{q^*}})\\
& & +\, \varepsilon^{-\frac{s}{1-s}} |F_3|_{\mathcal{W}^{-s,q}}^{\frac{1}{1-s}}~ |u|^{q-1}_{L^{q}}. 
\end{eqnarray}
Since $\frac{1}{q^*}=\frac 12 + \frac{q-2}{2q}$, it follows by H\"older, Cauchy-Schwarz and Young inequalities that 
\begin{eqnarray}
\label{ineq:grad}
\nonumber
|\nabla (|u|^{q-2} u)|_{L^{q^*}} &\le &(q-1) ||u|^{q-2} \nabla u|_{L^{q^*}} 
\le (q-1) ||u|^{\frac{q-2}{2}} \nabla u|_{L^2} ||u|^{\frac{q-2}{2}}|_{L^{\frac{2q}{q-2}}} \\
%&\lesssim_q& ||u|^{q-2} |\nabla u|^2|_{L^1} + ||u|^q|_{L^1}^{\frac{q-2}{q}}\\
& & \le (q-1) \left[  ||u|^{\frac{q-2}{2}} \nabla u|_{L^2}^2 + |u|_{L^q}^q +1\right].
\end{eqnarray}
Moreover, splitting the integrals as above, we obtain on the one hand
%by the same idea as above, for a $\delta>0$, we obtain 
%\begin{eqnarray} \nonumber
%||u|^q|_{L^1} &=& \int_{|x|\le \delta} |u|^q + \int_{|x| \ge \delta} |u|^q \\ \label{ineq1:q}
%&\le& \delta^{\frac{4}{q+2}} \left( \int_{|x|\le \delta} |u|^{q+2} \right)^{\frac{q}{q+2}} +\delta^{-2} \int_{|x|\ge \delta} |x|^2 |u|^q. 
%\end{eqnarray}
%Taking $\delta=1$, we have thus  
%$$||u|^q|_{L^1} \le \int |u|^{q+2} + \int |x|^2 |u|^q +1,$$
%which also implies, since $(q-1)/q <1$, 
\begin{eqnarray} \label{ineq2:q}
|u|^{q-1}_{L^{q}}  \le |u|_{L^q}^q +1 \le 
\frac 32 \int_{\R^2} |u|^{q+2} + \int_{\R^2} |x|^2 |u|^q +2 \pi,
\end{eqnarray}
and on the other hand,
\begin{eqnarray}
\label{ineq:x} \nonumber
|x|u|^{q-1}|_{L^{q^*}} 
%&\le& \int_{\R^2} (|x||u|^{q-1})^{\frac{q}{q-1}} +1 \\ 
&\le& \int_{|x|\le 1} |x|^{\frac{q}{q-1}} |u|^q + \int_{|x| \ge 1} |x|^2 |u|^{q} |x|^{-\frac{q-2}{q-1}} +1\\ 
&\le & \int_{\R^2} |u|^{q+2} +  \int_{\R^2} |x|^2 |u|^q + \, 2\pi. 
\end{eqnarray}
%Here we choose $\varepsilon=\theta\gamma_1$, then 
Let us estimate the last term in (\ref{ineq:a}). We proceed as in \eqref{lastterm} : choosing $r>0$ such that
$
\eps^{-\frac{s}{1-s}} |F_3|_{\mathcal{W}^{-s,q}}^{\frac{1}{1-s}} r^{-2} =\eps,
$
then by the inequality
$$
|u|_{L^q}^q \le  
(\pi r^2)^{\frac{2}{q+2}} \left[ \int_{|x|\le r} |u|^{q+2} \right]^{\frac{q}{q+2}} +r^{-2} \int_{|x|\ge r} |x|^2 |u|^q,
 $$
we have
\begin{eqnarray*}
\eps^{-\frac{s}{1-s}} |F_3|_{\mathcal{W}^{-s,q}}^{\frac{1}{1-s}}~ |u|^{q-1}_{L^{q}} 
&\le& \eps^{-\frac{s}{1-s}} |F_3|_{\mathcal{W}^{-s,q}}^{\frac{1}{1-s}} (|u|_{L^q}^q +1) \\ 
&\le &\eps^{-\frac{s}{1-s}} |F_3|_{\mathcal{W}^{-s,q}}^{\frac{1}{1-s}}
\left[1+ (\pi r^2)^{\frac{2}{q+2}} \left[ \int_{\R^2} |u|^{q+2} \right]^{\frac{q}{q+2}}\right] + \eps \int_{\R^2} |x|^2 |u|^q.
\end{eqnarray*}
Applying then the Young Inequality
$$\left[ \int_{\R^2} |u|^{q+2} \right]^{\frac{q}{q+2}} \le \varepsilon_2 \int_{\R^2} |u|^{q+2} + \varepsilon_2^{-\frac{q}{2}}$$
with a well chosen $\eps_2$, results in the following bound, for any  $\varepsilon >0$ :
%$$ \left(\theta\gamma_1\right)^{-\frac{s}{1-s}} [\|:Z^3:\|_{\mathcal{W}^{-s,q}}(T)]^{\frac{1}{1-s}} \delta^{\frac{4}{q+2}} \varepsilon_0 =\theta\gamma_1,$$
%which results in 
\begin{eqnarray}
\label{ineq:last}
\eps^{-\frac{s}{1-s}} |F_3|_{\mathcal{W}^{-s,q}}^{\frac{1}{1-s}}~ |u|^{q-1}_{L^{q}}\nonumber
& \le & \eps^{-\frac{s}{1-s}} |F_3|_{\mathcal{W}^{-s,q}}^{\frac{1}{1-s}} 
+C_{\eps,s,q} |F_3|_{\mathcal{W}^{-s,q}}^{\frac{1}{1-s}(2+\frac{q}{2})} \\
& & +\, \eps \left[\int_{\R^2} |u|^{q+2} + \int_{\R^2} |x|^2 |u|^q \right]. 
\end{eqnarray}
Plugging \eqref{ineq:grad}, \eqref{ineq2:q}, \eqref{ineq:x} and \eqref{ineq:last} into \eqref{ineq:a}, we thus may bound the right hand side of this later inequality as follows:
\begin{eqnarray}
\label{estim:a}
\nonumber
| (F_3, |u|^{q-2}u)_{L^2}| & \le &2 \eps q \left[\big| |u|^{q-2} |\nabla u|^2 \big|_{L^1} + |u|_{L^{q+2}}^{q+2} +\int_{\R^2} |x|^2 |u|^q\right] \\
& & + \, \eps^{-\frac{s}{1-s}} |F_3|_{\mathcal{W}^{-s,q}}^{\frac{1}{1-s}} +C_{\eps,s,q} |F_3|_{\mathcal{W}^{-s,q}}^{\frac{q+4}{2(1-s)}}.
\end{eqnarray}
%\begin{eqnarray*}
%\mathrm{(a)} &\le&  \left(\theta\gamma_1\right)^{-\frac{s}{1-s}} [\|:Z^3:\|_{\mathcal{W}^{-s,q}}(T)]^{\frac{1}{1-s}} 
%+C_{\gamma_1, s,q} [\|:Z^3:\|_{\mathcal{W}^{-s,q}}(T)]^{\frac{1}{1-s}(2+\frac{q}{2})} \\
%&&+
%\theta\gamma_1 \left(4\int_{\R^2} |u|^{q+2} + 5\int_{\R^2} |x|^2 |u|^q + \frac{q}{2}\int |u|^2 |\nabla u|^2 +6+ \frac{q-1}{2}\right). 
%\end{eqnarray*}

We use again, in order to estimate $(F_2, |u|^{q-2}u)_{L^2}$, an auxiliary pair $\alpha>0$ and $p_0 \in [4, \infty)$ such that $\alpha p_0>8$ and $\frac{1}{p_0^*} \ge 1-\frac{s-\alpha}{2}$.
By the same arguments as above, for any $\varepsilon>0$, we obtain as in \eqref{ineq:b}-\eqref{ineq:xqplus1},
\begin{eqnarray*}
|(F_2, |u|^{q-2}u)_{L^2}| &\lesssim& ||u|^q|_{\widetilde{\mathcal{W}}^{s,1}} |:Z^2:|_{\mathcal{W}^{-\alpha, p_0}} \\
% &\le & \varepsilon \left(||u|^q|_{L^1} +|\nabla(|u|^q)|_{L^1} +|x |u|^q|_{L^1} \right) + \varepsilon^{-\frac{s}{1-s}} [\|:Z^2:\|_{\mathcal{W}^{-\alpha,p_0}}(T)]^{\frac{1}{1-s}} ||u|^q|_{L^1}\\
 &\le & \varepsilon \left[(q+2) \int_{\R^2} |u|^{q+2} +q \int_{\R^2} |u|^{q-2} |\nabla u|^2 
 +(q+3) \int |x|^2 |u|^q +(q+2) \pi \right]   \\
&& + \varepsilon^{-\frac{s}{1-s}} |:Z^2:|_{\mathcal{W}^{-\alpha,p_0}}^{\frac{1}{1-s}} |u|^q_{L^q}.
 \end{eqnarray*}
Estimating the last term in the right hand side of the above inequality as in \eqref{ineq:last}, we finally get, using Young's inequality once more,
%$$ \varepsilon^{-\frac{s}{1-s}} [\|:Z^2:\|_{\mathcal{W}^{-\alpha,p_0}}(T)]^{\frac{1}{1-s}} ||u|^q|_{L^1}
%\le C_{\gamma_1,s,q} [\|:Z^2:\|_{\mathcal{W}^{-\alpha,p_0}}(T)]^{\frac{1}{1-s}(2+\frac{q}{2})}.$$
%Use (\ref{ineq1:q}) for $||u|^q|_{L^1}$, and we get 
\begin{eqnarray}\label{estim:F2} \nonumber
|(F_2, |u|^{q-2}u)_{L^2}|&\le& 2\eps q \left[\int_{\R^2} |u|^2 |\nabla u|^2 +\int_{\R^2} |x|^2 |u|^q +\int_{\R^2} |u|^{q+2} \right]\\
& & + \,C_{\eps, s,q} (1+ |:Z^2:|_{\mathcal{W}^{-\alpha,p_0}}^{\frac{q+4}{2(1-s)}}).
\end{eqnarray}
%The same terms in the left side hand of (\ref{est:Lp}) are absorbed if $\theta<\delta$ is sufficiently small. 
%Note that we can add purposely $\varepsilon_0 \gamma_1 |u|_{L^q}^q$ in both sides of (\ref{est:Lp}) and use (\ref{ineq1:q}) to absorbe, 
%choosing $\varepsilon_0$ so small, in the other type of terms in the left side side. 
%These computations infer that 
%\begin{equation*}
%\frac{d}{dt} |u|_{L^q}^q + q\varepsilon_0 \gamma_1  |u|_{L^q}^q \le C_0,  
%\end{equation*}
%where the constant $C_0$ depends on 
%$\gamma_1, q, s,$ $\|:Z^n:\|_{\mathcal{W}^{-\alpha, p_0}}(T)$, $n=1,2$ and $\|:Z^3:\|_{\mathcal{W}^{-s,q}}(T)$. 
%Thus, we obtain for $t \in [0,T]$
%$$|u(t)|_{L^q}^q \le e^{-q\varepsilon_0 \gamma_1 t} |u_0|_{L^q}^q + \int_0^t e^{-q \varepsilon_0 \gamma_1 (t-s)} C_0 ds 
%\le e^{-q\varepsilon_0 \gamma_1 t} |u_0|_{L^q}^q +\frac{1}{q\varepsilon_0 \gamma_1} C_0 (1-e^{-q \varepsilon_0 \gamma_1 t}).$$
Gathering the estimates \eqref{estimF1}, \eqref{estim:a} and \eqref{estim:F2} and plugging them into \eqref{est:Lp}, choosing $\eps>0$
small enough so that $6\eps q(|\gamma_1|+|\gamma_2|) \le \frac{\gamma_1 \delta}{2}$, with $\delta=1$ if $\gamma_2=0$, and 
$\delta=1-\frac{q-2}{2(\kappa^2+\kappa\sqrt{1+\kappa^2})}$ if $\gamma_2 \not =0$, finally leads to
\begin{eqnarray*} 
&&\frac{1}{q} \frac{d}{dt} |u|_{L^q}^q + \frac 12 \gamma_1 \delta \int_{\R^2} |u|^{q-2} |\nabla u|^2 dx 
+\frac{\gamma_1}{2}  \int_{\R^2} |x|^2 |u|^{q} dx + \frac{\gamma_1}{2} \int_{\R^2} |u|^{q+2} dx \\
& & \le C_{\gamma_1,\gamma_2,s,q} \left[ 1+ |Z|_{\mathcal{W}^{-\alpha,p_0}}^{\frac{q+4}{1-s}} + |:Z^2:|_{\mathcal{W}^{-\alpha,p_0}}^{\frac{q+4}{2(1-s)}}
+|:|Z|^2Z:|_{\mathcal{W}^{-s,q}}^{\frac{q+4}{2(1-s)}} \right].
\end{eqnarray*}
At last, using \eqref{ineq2:q} again, the following  more precise inequality holds :
\begin{eqnarray*} 
&&\frac{1}{q} \frac{d}{dt} |u|_{L^q}^q  +\frac{\gamma_1}{8} |u|^q_{L^q} +\frac 12 \gamma_1 \delta \int_{\R^2} |u|^{q-2} |\nabla u|^2 dx 
+\frac{\gamma_1}{4}  \int_{\R^2} |x|^2 |u|^{q} dx + \frac{\gamma_1}{4} \int_{\R^2} |u|^{q+2} dx \\
& & \le C_{\gamma_1,\gamma_2,s,q} \left[ 1+ |Z|_{\mathcal{W}^{-\alpha,p_0}}^{\frac{q+4}{1-s}} + |:Z^2:|_{\mathcal{W}^{-\alpha,p_0}}^{\frac{q+4}{2(1-s)}}
+|:|Z|^2Z:|_{\mathcal{W}^{-s,q}}^{\frac{q+4}{2(1-s)}} \right]
\end{eqnarray*}
from which we deduce
\begin{eqnarray} 
\label{estim:final}
& & |u(t)|_{L^q}^q \le e^{-\frac{\gamma_1q t}{8}}|u_0|_{L^q}^q \\
&  &\hskip 0.05 in +\, C_{\gamma_1,\gamma_2,s,q} \int_0^t e^{-\frac{\gamma_1q}{8} (t-\sigma)}
\left[ 1+ |Z|_{\mathcal{W}^{-\alpha,p_0}}^{\frac{q+4}{1-s}} + |:Z^2:|_{\mathcal{W}^{-\alpha,p_0}}^{\frac{q+4}{2(1-s)}}
+|:|Z|^2Z:|_{\mathcal{W}^{-s,q}}^{\frac{q+4}{2(1-s)}} \right].\nonumber
\end{eqnarray}
The result follows.
\hfill\qed
\vspace{3mm}

\begin{rem} \label{rem:mw}
These computations may be justified as in \cite{mw}. Indeed, it is not difficult to prove, using similar estimates as for the proof of Proposition \ref{prop:LpLWP},
that the solution $u$ is in $C^{1/2+}(0,\tau;L^{q}(\R^2))$, for any $\tau<T^*$, a.s., provided the initial state $u_0$ is sufficiently regular. Now, the local solution 
of Theorem \ref{thm:localexistence} being obtained thanks to a fixed point argument, it is continuous with respect to the initial state, so that proving the estimate 
for regular initial data is
actually sufficient.
\end{rem}
\vspace{3mm}

Finally we show the global existence result.
\vspace{3mm}

\noindent
{\em Proof of Theorem \ref{thm:global}.} 
Here again, we assume that $\gamma_1$, $\gamma_2$ and $q$ satisfy the assumptions of Proposition \ref{thm:Lpbound}, that is the dissipation is sufficiently large.
%By Theorem \ref{thm:localexistence}, we know that, the solution $u$ of (\ref{eq:u_wick}) satisfies, for any $t \in [0,T_0^*)$, 
%where $T_0^*$ is the maximal existence time, 
%\begin{equation} \label{E:localbound}
%|u(t)|_{\mathcal{W}^{-s,q}} \le C_{\gamma_1, \gamma_2}( 1+  |u_0|_{\mathcal{W}^{-s,q}}+ \|:Z_{\infty}^3:\|_{\mathcal{W}^{-s,q}}(T))^4, \quad \mbox{a.s}..   
%\end{equation}
Let $u$ be the solution of \eqref{eq:umild} given by Theorem \ref{thm:localexistence}, and let
 $0<\tilde{T_0} <1\wedge T_0^* \wedge T$, then
\begin{eqnarray*}
u(\tilde{T_0}) &=& e^{\tilde{T_0} (\gamma_1+i\gamma_2) H} u_0 -(\gamma_1+i\gamma_2) \int_0^{\tilde{T_0}} e^{(\tilde{T_0}-\tau) (\gamma_1+i\gamma_2) H} :|u+Z_{\infty}^{\gamma_1, \gamma_2}|^2(u+Z_{\infty}^{\gamma_1, \gamma_2}):(\tau) d\tau. 
\end{eqnarray*}
Thus,  using Lemma \ref{lem:regularization}  and the same estimates as in the proof of Proposition \ref{prop:LpLWP}, it is not difficult to see that $u(\tilde T_0) \in L^q(\R^2)$;
we may then solve equation (\ref{eq:u_wick}) starting from $\tilde{T_0}$ using Proposition \ref{prop:LpLWP}, and the combination of Theorem \ref{thm:localexistence},
Proposition \ref{prop:LpLWP} and Proposition \ref{thm:Lpbound} shows that the solution is global in time, concluding the statement of Theorem \ref{thm:global}.
%Therefore there exist a $T_1=T_1(|u(\tilde{T_0})|_{L^q}, \|:Z_{\infty}^3:\|)>\tilde{T_0}$, and a unique solution $u$ of (\ref{eq:u}) such that 
%$u\in C([\tilde{T_0},T_1], W^{-s,q}(\R^2))\cap L^r(\tilde{T_0},T_1, W^{\beta,p}(\R^2))$. Moreover, by the definition of the maximal time $T_1^*$, 
%for any $\tilde{T_0} \le t <T_1^*$
%\begin{equation} \label{eq:WSPbound}
%|u(t)|_{\mathcal{W}^{-s,q}} \le C(|u(\tilde{T_0})|_{L^q}, \|:Z_{\infty}^3:\|_{W^{-s,q}(T)}) 
%\end{equation}
%and
%$\lim_{t \uparrow T_1^*} |u(t)|_{L^q} =+\infty$
%if $T_1^* < +\infty$. On the other hand, by Proposition \ref{thm:Lpbound}, we see that the blow-up does not occur, namely $T_1^*=+\infty$, 
%which implies that (\ref{eq:WSPbound}) holds for all $t \ge \tilde{T_0}$, thus the maximal time in Theorem \ref{thm:localexistence}, i.e. in the equality (\ref{E:localbound}),  
%$T_0^*$ must be $+\infty$. These arguments all together conclude the statement of Theorem \ref{thm:global}. 
\hfill\qed

\subsection{Galerkin approximation and tightness of the Gibbs measure}
In order to get an invariant measure for equation \ref{eq:SGPE_Math}, we will use Galerkin approximations, and show that their family of Gibbs measure 
is tight in a certain function space. 

Let us  consider the Galerkin approximation to (\ref{eq:SGL}):
\begin{equation} \label{eq:SGL_finite} 
dX=(\gamma_1 +i\gamma_2)\big(HX-S_N(:|S_N X|^2 S_N X:)\big)dt +\sqrt{2\gamma_1} \Pi_N dW, \quad X(0)=X_0 \in E_N^{\C}.
\end{equation}
This finite dimensional system has a unique invariant measure of the form:
$$d \tilde{\rho}_N (y)= \Gamma_N e^{-\tilde \cH_{N}(S_N y)} dy, \quad y \in E_N^{\C},$$
where $\Gamma_N^{-1}= \int e^{-\tilde \cH_{N}(S_N y)} dy $, and  
$$\tilde \cH_{N}(y)=\frac{1}{2}|\nabla y|_{L^2}^2 +\frac{1}{2}|x y|_{L^2}^2 + \int_{\R^2}\left[\frac{1}{4}|y(x)|^4 -2\rho_N^2(x)|y(x)|^2+2 \rho_N^4(x)\right]dx,$$
$\rho_N$ being the normalization function defined in \eqref{def:rhoN}. Note that
$$\tilde \cH_{N}(y)=\frac{1}{2}|\nabla y|_{L^2}^2 +\frac{1}{2}|x y|_{L^2}^2 + \frac 14 \int_{\R^2} :|y(x)|^4: dx, $$
and $\nabla_y \tilde \cH_N(y)=-Hy+:|y|^2y:$. 
%Note also that the measure $\tilde \rho_N$ does not depend on the constants $\gamma_1$ and $\gamma_2$.
The shifted equation associated with \eqref{eq:SGL_finite} is
\begin{equation} \label{eq:u_finite} 
\frac{du}{dt}=(\gamma_1+i\gamma_2)\left[ Hu-S_N\big(:|S_N( u+ Z_{\infty, N}^{\gamma_1, \gamma_2})|^2 S_N (u+ Z_{\infty,N}^{\gamma_1, \gamma_2}) :\big) \right],   
\end{equation}
where 
$$ {Z}_{\infty, N}^{\gamma_1, \gamma_2}(t)=\sqrt{2\gamma_1} \int_{-\infty}^t e^{(t-\tau)(\gamma_1+i\gamma_2)H} \Pi_N dW(\tau).$$
We may of course apply Theorem 1 to equation \eqref{eq:u_finite} and prove, using the same arguments as in \cite{dbdf}, Proposition 3, 
that the solution $u_N$ is globally defined.  We deduce the following result for equation \eqref{eq:SGL_finite} by Proposition \ref{prop:ReImCauchy}:
%Theorem \ref{thm:localexistence} applies to the finite dimensional equation, too, and  
%$u_N$ exists globally. 
\begin{prop} \label{prop:convGalerkin} Fix any $T>0$. Let $\gamma_1>0$ and $q>p>3r$, $r>6$. Assume 
$0<s<\beta<2/p$, $qs>8$, 
$ \big(\beta-s\big)>\big(\frac{2}{p}-\beta\big)$,  and $s+2 \big(\frac{2}{p} -\beta\big) < 2\big(1-\frac{1}{q}\big)$.
Then there exists a unique global solution in $C([0,T],E_N^{\C})$, denoted by $X_N$, of (\ref{eq:SGL_finite}). 
Moreover, if $X_N(0)$ converges to $X_0$ a.s. in $\mathcal{W}^{-s,q}(\R^2)$, then
$X_N$ converges to $X$ in $C([0,T];\mathcal{W}^{-s,q}(\R^2))$ a.s. as $N$ goes to infinity, for any $T<T_0^*$, where $X=u+Z_\infty^{\gamma_1,\gamma_2}$,
$u$ being the solution of equation \eqref{eq:u_wick} given by Theorem \ref{thm:localexistence} and $T_0^*$ being its maximal existence time.
\end{prop}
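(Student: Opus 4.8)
The plan is to treat the two assertions of the Proposition separately: first the global well-posedness of the finite-dimensional system \eqref{eq:SGL_finite}, and then the convergence $X_N\to X$. For the first part, I observe that on the finite-dimensional space $E_N^{\C}$ the renormalized cubic $S_N(:|S_N X|^2 S_N X:)$ is, by the explicit expressions of the Wick powers (the correction terms being genuine polynomials in $S_N X$ with coefficients built from $\rho_N$), a smooth, locally Lipschitz vector field; hence \eqref{eq:SGL_finite} is a finite-dimensional SDE with locally Lipschitz drift and constant diffusion, and standard theory gives a unique local strong solution $X_N\in C([0,\tau_N),E_N^{\C})$. To globalize, I pass to the shifted unknown $u_N=X_N-Z_{\infty,N}^{\gamma_1,\gamma_2}$, which solves \eqref{eq:u_finite}, and run the energy estimate exactly as in the proof of Proposition \ref{thm:Lpbound} (equivalently, as in \cite{dbdf}, Proposition 3): testing against $|u_N|^{q-2}\bar u_N$ and integrating yields an a priori bound on $|u_N(t)|_{L^q}$ on every bounded time interval, ruling out blow-up and forcing $\tau_N=+\infty$. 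Here all Wick products are honest functions, so no extra renormalization analysis is required.

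For the convergence I first record the convergence of the data. Since the semigroup commutes with $\Pi_N$, one has $Z_{\infty,N}^{\gamma_1,\gamma_2}=\Pi_N Z_\infty^{\gamma_1,\gamma_2}$, and the same computation as in Lemma \ref{lem:Kolmogorov} applied to the high-frequency tail gives $Z_{\infty,N}^{\gamma_1,\gamma_2}\to Z_\infty^{\gamma_1,\gamma_2}$ in $C([0,T];\mathcal{W}^{-s,q})$ a.s.; combined with the hypothesis $X_N(0)\to X_0$ this yields $u_{0,N}:=X_N(0)-Z_{\infty,N}^{\gamma_1,\gamma_2}(0)\to u_0$ in $\mathcal{W}^{-s,q}$ a.s. Moreover $S_N Z_{\infty,N}^{\gamma_1,\gamma_2}=S_N Z_\infty^{\gamma_1,\gamma_2}$, and by Proposition \ref{prop:ReImCauchy} together with the higher moments of Corollary \ref{cor:m-moment} and a Borel--Cantelli argument, the Wick products $:(S_N Z_{R,\infty})^k(S_N Z_{I,\infty})^l:$ converge to $:(Z_{R,\infty})^k(Z_{I,\infty})^l:$ in $L^r(0,T;\mathcal{W}^{-s,q})$ a.s.; in particular they are a.s. bounded in this space, uniformly in $N$.

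The core of the argument is a stability-of-the-fixed-point estimate based on Theorem \ref{thm:localexistence}. On a ball $B_R(T_0)\subset\mathcal{E}_{T_0}$ I write $u_N$ as the fixed point of
$$
\mathcal{T}_N(v)(t)=e^{t(\gamma_1+i\gamma_2)H}u_{0,N}
-(\gamma_1+i\gamma_2)\int_0^t e^{(t-\tau)(\gamma_1+i\gamma_2)H}\,
S_N\!\left[:|S_N v+S_N Z_{\infty,N}^{\gamma_1,\gamma_2}|^2\,(S_N v+S_N Z_{\infty,N}^{\gamma_1,\gamma_2}):\right]\!(\tau)\,d\tau,
$$
expanded via \eqref{def:F}, and let $\mathcal{T}$ be the corresponding map for the limit equation \eqref{eq:u_wick}. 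Using Lemma \ref{prop:specmap} (uniform boundedness of $S_N$ on the relevant $L^p$-, hence $\mathcal{W}$-spaces) in the computation of the proof of Theorem \ref{thm:localexistence}, each $\mathcal{T}_N$ is a contraction of ratio $\le\tfrac12$ on $B_R(T_0)$, with $R$ fixed by the limiting data and $T_0$ chosen as in \eqref{eq:extime}, both independent of $N$ for $N$ large thanks to the uniform bounds above. Consequently
$$
|u_N-u|_{\mathcal{E}_{T_0}}\le|\mathcal{T}_N u_N-\mathcal{T}_N u|_{\mathcal{E}_{T_0}}+|\mathcal{T}_N u-\mathcal{T} u|_{\mathcal{E}_{T_0}}
\le\tfrac12|u_N-u|_{\mathcal{E}_{T_0}}+|\mathcal{T}_N u-\mathcal{T} u|_{\mathcal{E}_{T_0}},
$$
so $|u_N-u|_{\mathcal{E}_{T_0}}\le 2|\mathcal{T}_N u-\mathcal{T} u|_{\mathcal{E}_{T_0}}$. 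The right-hand side involves only the \emph{fixed} function $u$ and is controlled by $|u_{0,N}-u_0|_{\mathcal{W}^{-s,q}}$, by $|(S_N-I)u|$ and $|(S_N-I)Z_\infty^{\gamma_1,\gamma_2}|$, and by the differences of the Wick products, all tending to $0$ a.s. Since $u$ is bounded in $\mathcal{W}^{-s,q}$ on each compact $[0,T]$ with $T<T_0^*$, the step $T_0$ can be taken uniform along $[0,T]$, and after finitely many steps — each using $u_N(kT_0)\to u(kT_0)$ from the preceding step as new datum — I get $u_N\to u$ in $C([0,T];\mathcal{W}^{-s,q})\cap L^r(0,T;\mathcal{W}^{\beta,p})$ a.s.; adding back $Z_{\infty,N}^{\gamma_1,\gamma_2}\to Z_\infty^{\gamma_1,\gamma_2}$ gives $X_N\to X$.

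I expect the main obstacle to be the uniform-in-$N$ control of the two layers of projection $S_N$ — the inner one acting on $u_N$ and on the noise, and the outer one on the whole nonlinearity — in the negative-regularity spaces. One must simultaneously exploit the \emph{uniform} operator bound of Lemma \ref{prop:specmap} to keep the contraction constants independent of $N$, and the \emph{strong} convergence $S_N\to I$ (which holds only on the limit objects $u$ and $Z_\infty^{\gamma_1,\gamma_2}$, not uniformly in the moving argument $v$) to make $\mathcal{T}_N u-\mathcal{T} u$ vanish. Reconciling these two requirements is precisely what forces the fixed-point-stability formulation rather than a direct Gronwall estimate on $u_N-u$.
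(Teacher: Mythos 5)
Your convergence argument (the fixed-point stability estimate $|u_N-u|_{\mathcal{E}_{T_0}}\le 2|\mathcal{T}_Nu-\mathcal{T}u|_{\mathcal{E}_{T_0}}$, with uniform-in-$N$ contraction via Lemma \ref{prop:specmap} and strong convergence applied only to the fixed limit objects, iterated over finitely many time steps) is sound and is essentially the argument the paper leaves implicit when it says the result follows from Theorem \ref{thm:localexistence} and Proposition \ref{prop:ReImCauchy}.

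The genuine gap is in your globalization of $u_N$. You propose to test \eqref{eq:u_finite} against $|u_N|^{q-2}\bar u_N$ and ``run the energy estimate exactly as in the proof of Proposition \ref{thm:Lpbound}''. This fails, for a reason the paper itself states explicitly just before Proposition \ref{prop:tightness}: because of the outer cut-off, the cubic term contributes
$-\mathrm{Re}\,(\gamma_1+i\gamma_2)\big(|S_Nu_N|^2S_Nu_N,\,S_N(|u_N|^{q-2}u_N)\big)_{L^2}$,
which for $q>2$ has no sign, so the coercive term $-\gamma_1\int|u_N|^{q+2}$ that drives the whole proof of Proposition \ref{thm:Lpbound} is simply not there. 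Moreover, even if it were, the $L^q$ estimate for $q>2$ requires the large-dissipation restriction $q<2+2(\kappa^2+\kappa\sqrt{1+\kappa^2})$, which is \emph{not} assumed in Proposition \ref{prop:convGalerkin} (no condition on $\gamma_2$ appears there), so this route cannot be the intended one. The correct globalization is the $L^2$ energy estimate, as in Proposition \ref{prop:tightness} and in \cite{dbdf}, Proposition 3: pairing with $u_N$ itself, self-adjointness of $S_N$ turns the cubic term into $+\gamma_1|S_Nu_N|_{L^4}^4\ge 0$, the remaining Wick terms are absorbed as in \eqref{bound:F3}--\eqref{bound:F1}, and since $E_N^{\C}$ is finite-dimensional the resulting $L^2$ bound controls every norm and rules out blow-up for all $\gamma_1>0$, $\gamma_2\in\R$. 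With that replacement the rest of your proof stands.
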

\vspace{3mm}

Define the Feller transition semigroup 
$P_{t}^N \phi (y) = \E(\phi(X_N(t,y)))$, for $y\in E_N^{\C}$, associated with equation \eqref{eq:SGL_finite}.
The finite-dimensional measure $\tilde{\rho}_N$ is an invariant measure for $(P_t^N)_{t\ge 0}$,     
according to the same argument as in Proposition 4 of \cite{dbdf}. We now wish to prove the tightness of
the family of measures $(\tilde \rho_N)_N$. Note that we cannot directly use the $L^q$-bound provided by 
Proposition \ref{thm:Lpbound} due to the presence of the cut-off $S_N$ in front of the nonlinear term in
\eqref{eq:u_finite}.

Alternatively, considering the coupled evolution on $E_N^\C$ given by
\begin{equation}
\label{eq:coupled}
\left\{ \begin{array}{rcl} \ds
\frac{du}{dt}& =&(\gamma_1+i\gamma_2)\left[ Hu-S_N\big(:|S_N(u+Z)|^2 S_N (u+Z):\big)\right] \\[0.25cm]
dZ &= & (\gamma_1+i\gamma_2) HZ dt +\sqrt{2\gamma_1}\Pi_N dW,
\end{array} \right.
\end{equation}
one may easily prove, using e.g. similar estimates as in the proof of Proposition \ref{prop:tightness} below, together with the Gaussianity
of $Z$, and a Krylov-Bogolyubov argument, that \eqref{eq:coupled} has an invariant measure $\nu_N$ on $E_N^\C \times \E_N^\C$.
Moreover, by uniqueness of the invariant measure of \eqref{eq:SGL_finite}, we necessarily have for any bounded continuous
function $\varphi$ on $\E_N^\C$ :
$$
\int_{E_N^\C} \varphi(x)\tilde \rho_N(dx)=\int \!\!\int_{E_N^\C\times E_N^\C} \varphi(u+z) \nu_N(du,dz).
$$
The next proposition will imply the tightness of the sequence $(\tilde \rho_N)_N$ in $\mathcal{W}^{-s,q}$. 

\begin{prop}
\label{prop:tightness}
Let $(u_N,Z_N) \in C(\R_+;E_N^\C \times E_N^\C)$ be a stationary solution of \eqref{eq:coupled}. Then, for any $m > 0$, there is a constant $C_m>0$ 
independent of $t$ and $N$, such that
\begin{equation}
\label{eq:h1bound}
\E(|(-H)^{\frac{1}{2m}}u_N|_{L^2}^{2m}) \le C_m.
\end{equation}
\end{prop}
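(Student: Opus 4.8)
The plan is to run an $H$-weighted energy estimate on the $u$-equation of the coupled system \eqref{eq:coupled} and to close it using the stationarity of $(u_N,Z_N)$. Set $E:=|(-H)^{\frac{1}{2m}}u_N|_{L^2}^2=((-H)^{1/m}u_N,u_N)_{L^2}$ and $\Phi:=E^m$, so the quantity to be bounded is $\E[\Phi]$. For fixed $N$ the process $u_N$ solves the (random) ordinary differential equation given by the first line of \eqref{eq:coupled}, with right-hand side continuous in $t$, so $t\mapsto\Phi(u_N(t))$ is differentiable; moreover $\E[\Phi(u_N(t))]$ is finite (finite-dimensional, Gaussian-type tails) and, by stationarity of $(u_N,Z_N)$, independent of $t$. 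First I would therefore compute $\frac{d}{dt}\Phi=mE^{m-1}\frac{dE}{dt}$ with $\frac{dE}{dt}=2((-H)^{1/m}u_N,\dot u_N)_{L^2}$, insert the equation, and pass to expectations at the end.

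Writing $Y:=S_Nu_N$ and using that $S_N$ is self-adjoint and commutes with $H$, the linear part of $\frac{dE}{dt}$ equals $2((-H)^{1/m}u_N,(\gamma_1+i\gamma_2)Hu_N)_{L^2}=-2\gamma_1|(-H)^{\frac12+\frac{1}{2m}}u_N|_{L^2}^2$, the $\gamma_2$-part dropping out since the bracket is real. Because $-H\ge2$, one has $D:=|(-H)^{\frac12+\frac{1}{2m}}u_N|_{L^2}^2\ge2E$, which both produces the coercive term $-4\gamma_1E$ and leaves a surplus of regularity to spend on the nonlinearity. Expanding the renormalized cubic as in \eqref{def:F} (with $u$ replaced by $Y$ and the $Z$-monomials replaced by the Wick powers of $S_NZ_N$), the top-order term contributes $-2\,\re\big[(\gamma_1-i\gamma_2)I\big]$ to $\frac{dE}{dt}$, where $I:=\int_{\R^2}(-H)^{1/m}Y\,|Y|^2\bar Y\,dx$. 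A short computation gives $\re[(\gamma_1-i\gamma_2)I]=\gamma_1\re I+\gamma_2\im I$ with $\im I=\int_{\R^2}|Y|^2\,\im\big(\bar Y\,(-H)^{1/m}Y\big)\,dx$. Here $\re I\ge0$; I would justify this from the positivity-preserving property of the harmonic-oscillator semigroup $e^{tH}$ (positive Mehler kernel) together with a Beurling--Deny/Kato-type inequality for $(-H)^{1/m}$, $0<1/m\le1$.

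The main obstacle is exactly the cross term $\gamma_2\,\im I$, whose sign is not controlled and which, for $m=1$, is dominated by $\re I$ only up to a universal (non-small) constant; that is what forces the largeness assumption $\gamma_1\gg|\gamma_2|$ in Proposition \ref{thm:Lpbound}. The key observation I would exploit is that the weight $(-H)^{1/m}$ was chosen precisely so that $\im(\bar Y(-H)^{1/m}Y)$ is a \emph{commutator}-type expression: in the eigenbasis its $(j,k)$ coefficient carries the antisymmetric factor $\lambda_k^{2/m}-\lambda_j^{2/m}$, which gains regularity and shrinks with $m$. Via a fractional Leibniz/commutator estimate, combined with the surplus regularity encoded in $D$ and interpolation, this yields a bound of the form $|\gamma_2|\,|\im I|\le\tfrac12\gamma_1\big(\re I+D\big)+C$(lower order in $u$, controlled by $D$), valid for \emph{every} $\gamma_2\in\R$ (at worst after choosing $m$ large, which is all that is needed since tightness requires only a single positive-regularity bound). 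The remaining pieces $F_1,F_2,F_3$ are products of Wick powers of $S_NZ_N$ with at most quadratic expressions in $Y$; these I would estimate by Proposition \ref{prop:dg}(2) and Lemma \ref{lem:bilinear}, and then use Young's inequality to send their $Y$-dependent factors into $\tfrac12(\gamma_1D+\gamma_1\re I)+cE^m$, leaving fixed powers of norms of the Wick products.

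Assembling everything produces a differential inequality $\frac{d}{dt}\Phi(u_N)\le-c\,\Phi(u_N)+C\,G_N(t)$, where $G_N$ is a polynomial in the $\mathcal{W}^{-s,q}$-norms of the Wick products of $S_NZ_N$, and $c,C>0$ are \emph{independent of $N$ and $t$} (every constant above being dimension-free). Taking expectations and using $\frac{d}{dt}\E[\Phi(u_N(t))]=0$ gives $\E[\Phi(u_N)]\le(C/c)\,\E[G_N]$, and $\E[G_N]$ is bounded uniformly in $N$ by Proposition \ref{prop:ReImCauchy} and Corollary \ref{cor:m-moment} (hypercontractivity, Nelson's estimate), since those moment bounds are uniform in the truncation. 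Hence $\E\big(|(-H)^{\frac{1}{2m}}u_N|_{L^2}^{2m}\big)=\E[\Phi(u_N)]\le C_m$, uniformly in $t$ and $N$, as claimed. I expect all the genuine difficulty to sit in the commutator control of $\im I$; the rest is bookkeeping with the product rules and the uniform Wick-moment bounds already established.
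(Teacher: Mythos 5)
There is a genuine gap, and it sits exactly where you predicted: the control of the cross term $\gamma_2\,\im I$. By testing the equation with $(-H)^{1/m}u_N$ you destroy the one structural feature that makes this proposition work for \emph{all} $\gamma_2\in\R$. The claimed bound $|\gamma_2|\,|\im I|\le\tfrac12\gamma_1(\re I+D)+C(\text{lower order})$ is asserted via an unspecified ``fractional Leibniz/commutator estimate'' and is not established; worse, your own fallback (``at worst after choosing $m$ large'') does not deliver the statement, because large $m$ gives the \emph{weakest} regularity $(-H)^{1/(2m)}$, whereas the applications require $m$ of order one — Corollary \ref{cor:tightness} invokes the proposition with $m=1$ (an $H^1$-type bound), precisely the case where you concede the imaginary part is dominated by $\re I$ only up to a non-small universal constant. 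So the argument, as proposed, fails for the values of $m$ that are actually used. (The positivity $\re I\ge 0$ via a Stroock--Varopoulos/Kato-type inequality for $0<1/m\le1$ is plausible but also left unproved, and would anyway not hold for $m<1$, which the statement formally allows.)

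The paper's proof sidesteps all of this by taking the plain $L^2$-inner product of the equation with $u_N$. Then the contribution of the cubic term $F_0$ is $\re\big[(\gamma_1+i\gamma_2)\int_{\R^2}|S_Nu_N|^2S_Nu_N\,\overline{S_Nu_N}\,dx\big]=\gamma_1|S_Nu_N|_{L^4}^4$, exactly real and nonnegative with no restriction on $\gamma_2$; the identity produces the dissipation $\gamma_1|(-H)^{1/2}u_N|_{L^2}^2$ and the good term $\gamma_1|S_Nu_N|_{L^4}^4$ on the left, and $F_1,F_2,F_3$ are absorbed by duality, interpolation and Young (much as you outline). The fractional bound \eqref{eq:h1bound} is then recovered \emph{a posteriori}: multiply the resulting differential inequality by $|u_N(t)|_{L^2}^{2m-2}$ and use
\begin{equation*}
|(-H)^{\frac{1}{2m}}u_N|_{L^2}\le C_m\,|(-H)^{\frac12}u_N|_{L^2}^{\frac1m}\,|u_N|_{L^2}^{\frac{m-1}{m}},
\end{equation*}
together with Poincar\'e, Young, stationarity and Corollary \ref{cor:m-moment}. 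If you want to salvage your route, you would have to actually prove the commutator estimate for $m=1$ uniformly in $N$ — there is no indication this is true, and the natural fix is simply to move the fractional weight out of the testing step as the paper does.
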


\begin{cor}
\label{cor:tightness}
The family of finite dimensional Gibbs measures $(\tilde{\rho_N})_N$ is tight in $\mathcal{W}^{-s,q}$ for any $q>8$ and $s> \frac8q$.
\end{cor}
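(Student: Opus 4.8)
The starting point is the identity recalled just before Proposition~\ref{prop:tightness}: if $(u_N,Z_N)$ denotes the stationary solution of the coupled system \eqref{eq:coupled} with law $\nu_N$, then $\tilde\rho_N$ is the law of $X_N := u_N + Z_N$. Consequently it suffices to prove that the families of laws of $\{u_N\}_N$ and $\{Z_N\}_N$ are \emph{each} tight in $\mathcal{W}^{-s,q}(\R^2)$. Indeed, if $K_u$ and $K_Z$ are compact subsets of $\mathcal{W}^{-s,q}$ carrying mass $\ge 1-\eps/2$ uniformly in $N$, then the continuous image $K_u+K_Z$ under addition is compact, and since $\prob(X_N\notin K_u+K_Z)\le \prob(u_N\notin K_u)+\prob(Z_N\notin K_Z)\le\eps$, it carries at least mass $1-\eps$ under $\tilde\rho_N$. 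No joint information on $\nu_N$ is needed at this step.

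The plan is then to produce, for each family, a uniform-in-$N$ moment bound in a space that embeds \emph{compactly} into $\mathcal{W}^{-s,q}$. The essential functional-analytic input is that, since $-H=-\Delta+|x|^2$ has compact resolvent, the embedding $\mathcal{W}^{\sigma_1,p_1}(\R^2)\hookrightarrow\mathcal{W}^{\sigma_2,p_2}(\R^2)$ is compact whenever the strict Sobolev relation $\sigma_1-\tfrac{2}{p_1}>\sigma_2-\tfrac{2}{p_2}$ holds with $p_1\le p_2$; using the description of $\mathcal{W}^{\sigma,p}$ in part (1) of Proposition~\ref{prop:dg}, this follows from a Rellich--Kondrachov argument combining the local compactness gained from the derivatives $\langle D_x\rangle^\sigma$ with the tightness at infinity gained from the weight $\langle x\rangle^\sigma$.

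For $u_N$ I would invoke Proposition~\ref{prop:tightness}: fixing $m=\tfrac12$, the bound $\E|(-H)^{1/(2m)}u_N|_{L^2}^{2m}\le C_m$ says exactly that $u_N$ is bounded in $L^{1}(\Omega;\mathcal{W}^{2,2})$, uniformly in $N$. Since $2-1>-s-\tfrac2q$ with ample room, one has $\mathcal{W}^{2,2}\hookrightarrow\hookrightarrow\mathcal{W}^{-s,q}$, so by Chebyshev the balls $\{|u|_{\mathcal{W}^{2,2}}\le R\}$ are relatively compact in $\mathcal{W}^{-s,q}$ and absorb all but $\eps/2$ of the mass for $R$ large, uniformly in $N$. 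For $Z_N$, which is a centred stationary Gaussian field equal in law to a Galerkin truncation of $Z_\infty^{\gamma_1,\gamma_2}$, I would fix $s'$ with $\tfrac2q<s'<s$ (possible since $s>\tfrac8q>\tfrac2q$) and rerun the Kolmogorov/hypercontractivity computation of Lemma~\ref{lem:Kolmogorov}, noting that the truncated kernel $K_N$ has $L^r_x\mathcal{W}^{\alpha,2}_y$-norms dominated by those of $K$ in Proposition~\ref{prop:Kernel}; this yields $\E|Z_N|_{\mathcal{W}^{-s',q}}^{2}\le C$ uniformly in $N$ with $s'q>2$. The gain $s>s'$ gives the compact embedding $\mathcal{W}^{-s',q}\hookrightarrow\hookrightarrow\mathcal{W}^{-s,q}$, and Chebyshev again furnishes the uniform compact set $K_Z$.

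Combining the two families as in the first paragraph yields the tightness of $(\tilde\rho_N)_N$ in $\mathcal{W}^{-s,q}$. I expect the main obstacle to be the careful justification of the uniform-in-$N$ Gaussian bound for $Z_N$ together with the compactness of the embeddings between the $-H$-based Sobolev spaces: the former requires checking that projecting by $\Pi_N$ (or $S_N$) does not enlarge the relevant kernel norms, and the latter requires supplementing the continuous Sobolev embedding with a genuine compactness statement, which is precisely where the confining potential, and hence the discreteness of the spectrum of $-H$, is crucial.
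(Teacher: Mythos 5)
Your overall strategy is the same as the paper's: combine the moment bound of Proposition \ref{prop:tightness} for $u_N$ with a uniform Gaussian bound for $Z_N$, use the representation of $\tilde\rho_N$ as the law of $u_N+Z_N$, and conclude by a compact embedding between $-H$-based Sobolev spaces. The organizational difference is harmless: you prove tightness of the laws of $u_N$ and $Z_N$ separately and invoke compactness of the Minkowski sum $K_u+K_Z$, whereas the paper bounds $\int |x|^2_{\mathcal{W}^{-s',q}}\,\tilde\rho_N(dx)$ directly by the triangle inequality under $\nu_N$ and then applies Markov's inequality together with the compact embedding $\mathcal{W}^{-s',q}\subset\mathcal{W}^{-s,q}$ for $s>s'$. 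Your treatment of $Z_N$ (rerunning the kernel estimates of Lemma \ref{lem:Kolmogorov} and Proposition \ref{prop:Kernel} for the truncated kernel) is if anything more explicit than the paper, which simply appeals to the convergence of $\mu_N$ to $\mu$ on $\mathcal{W}^{-s',q}$.

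The one step you should repair is the choice $m=\tfrac12$ in Proposition \ref{prop:tightness}. Although the statement reads ``for any $m>0$'', its proof rests on the interpolation inequality
\begin{equation*}
|(-H)^{\frac{1}{2m}}u_N|_{L^2}\le C_m\,|(-H)^{\frac12}u_N|_{L^2}^{\frac1m}\,|u_N|_{L^2}^{\frac{m-1}{m}},
\end{equation*}
which is a genuine interpolation only when $\tfrac1m\in[0,1]$, i.e.\ $m\ge 1$; for $m=\tfrac12$ it would be an extrapolation and fails. Moreover a uniform-in-$N$ bound in $\mathcal{W}^{2,2}$ is not to be expected, since the forcing $F_3(S_NZ_N)$ in the equation for $u_N$ only lives in $\mathcal{W}^{-s,q}$, so the stationary $u_N$ cannot gain two full derivatives uniformly. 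The fix is immediate and is what the paper does: take $m=1$, giving $\E\big(|(-H)^{\frac12}u_N|^2_{L^2}\big)\le C$, i.e.\ a uniform second-moment bound in $\mathcal{W}^{1,2}$; the chain $\mathcal{W}^{1,2}\subset L^q\subset\mathcal{W}^{-s',q}$ with $s'q>8$ and $s'<s$ then feeds into exactly the compact-embedding argument you describe, and the rest of your proof goes through unchanged.
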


\noindent
{\em Proof of Proposition \ref{prop:tightness}.}
Taking the $L^2$-inner product of the first equation in \eqref{eq:coupled} with $u_N$ and using \eqref{def:F} yields
\begin{eqnarray}
\label{bound:F} 
\ds \nonumber& & 
\frac12 \frac{d}{dt} |u_N(t)|_{L^2}^2 +\gamma_1 |(-H)^{\frac12} u_N(t)|_{L^2}^2 +\gamma_1 |S_Nu_N(t)|_{L^4}^4  =  
-\mathrm{Re}(\gamma_1+i\gamma_2) \int_{\R^2} \left[ F_1(S_Nu_N, S_NZ_N)\right.\\
& & \hskip 1 in \left. + \,F_2(S_Nu_N, S_NZ_N)+F_3(S_NZ_N)\right]
\overline{S_Nu_N}(t) dx.
\end{eqnarray}
We first estimate the term containing $F_3$ in the right hand side above. Thanks to Proposition~\ref{prop:ReImCauchy},
taking $0<s<1$ and $q>8$ such that $sq>8$, we may bound
$$
\left| \int_{\R^2} F_3(S_NZ_N)\overline{S_Nu_N} dx \right| \lesssim |F_3(S_NZ_N)|_{\mathcal{W}^{-s,q}} |S_Nu_N|_{\mathcal{W}^{s,q'}}
$$
with $\frac1q +\frac{1}{q'} =1$. Interpolating then $\mathcal{W}^{s,q'}$between $L^r$ and $\mathcal{W}^{1,2}$, with $\frac{1}{q'}=\frac s2 +\frac{1-s}{r}$,
we get
$$
|S_Nu_N|_{\mathcal{W}^{s,q'}} \lesssim |(-H)^{\frac12} S_Nu_N|_{L^2}^s |S_Nu_N|_{L^r}^{1-s}.
$$
On the other hand, noticing that $r\in(1,2)$, we have for any $v\in \mathcal{W}^{1,2}$:
\begin{eqnarray*}
\int_{|x|\ge 1} |v(x)|^r dx & \le & \left[\int_{|x|\ge 1}|x|^2 |v(x)|^2 dx\right]^{\frac r2} \left[ \int_{|x|\ge 1} |x|^{-\frac{2r}{2-r}} dx\right]^{\frac{2-r}{2r}}\\
& \lesssim &|(-H)^{\frac12 } v|_{L^2}^r,
\end{eqnarray*}
so that $|v|_{L^r}\lesssim |(-H)^{\frac12 } v|_{L^2}$. It follows that
\begin{eqnarray}
\label{bound:F3}
\nonumber \left| \int_{\R^2} F_3(S_NZ_N) \overline{S_Nu_N}dx\right| 
&  \lesssim &|F_3(S_NZ_N)|_{\mathcal{W}^{-s,q}} |(-H)^{\frac12} S_Nu_N|_{L^2}\\
&  \le & C |F_3(S_NZ_N)|_{\mathcal{W}^{-s,q}}^2 +\frac{\gamma_1}{4(\gamma_1+|\gamma_2|)} |(-H)^{\frac12} S_Nu_N|_{L^2}^2.
\end{eqnarray}

Next, we consider the term containing $F_2$ in the right hand side of \eqref{bound:F}. First, by \eqref{def:F} and Proposition \ref{prop:dg},
\begin{eqnarray*}
 \left| \int_{\R^2} F_2(S_Nu_N, S_NZ_N) \overline{S_Nu_N}dx\right|
&  \lesssim &\sum_{l=0}^2 |:S_NZ_{N,R}^l S_NZ_{N,I}^{2-l} :|_{\mathcal{W}^{-s,q}} |(S_N u_N)^2|_{\mathcal{W}^{s,q'}} \\
&  \lesssim &\sum_{l=0}^2 |:S_NZ_{N,R}^l Z_{N,I}^{2-l} :|_{\mathcal{W}^{-s,q}} |S_Nu_N|_{L^4} |S_Nu_N|_{\mathcal{W}^{s,p}}
\end{eqnarray*}
with $\frac{1}{q'}=\frac14 + \frac1p$. Note that $p\in(1,2)$ and we may use the same procedure as before to obtain
$$
|S_Nu_N|_{\mathcal{W}^{s,p}} \lesssim |(-H)^{\frac12} S_Nu_N|_{L^2},
$$
so that 
\begin{eqnarray}
\label{bound:F2}
& & \nonumber \left| \int_{\R^2} F_2(S_Nu_N, S_NZ_N) \overline{S_Nu_N}\right| \\
&  \le  &C \sum_{l=0}^2 |:S_NZ_{N,R}^l S_NZ_{N,I}^{2-l}:|_{\mathcal{W}^{-s,q}}^4
 +\, \frac{\gamma_1}{2(\gamma_1+|\gamma_2|)} |S_Nu_N|_{L^4}^4 \\
& & \nonumber + \frac{\gamma_1}{4(\gamma_1+|\gamma_2|)} |(-H)^{\frac12}S_Nu_N|_{L^2}^2.
\end{eqnarray}

We finally turn to the term containing $F_1$ in the right hand side of \eqref{bound:F}. 
We easily get, thanks again to Proposition \ref{prop:dg},
\begin{eqnarray*}
 \left| \int_{\R^2} F_1(S_Nu_N, S_NZ_N) \overline{S_Nu_N}dx\right|
&  \lesssim &|S_N Z_N|_{\mathcal{W}^{-s,q}} |S_Nu_N|_{L^4}^2 |S_Nu_N|_{\mathcal{W}^{s,r}}
\end{eqnarray*}
where $r>2$ is such that $\frac{1}{q'}=\frac12 + \frac1r$. 
Let $m>2$ with $\frac1r=\frac s2 +\frac{1-s}{m}$, so that
$$
|S_Nu_N|_{\mathcal{W}^{s,r}}\lesssim |(-H)^{\frac12}S_Nu_N|_{L^2}^s |S_Nu_N|_{L^m}^{1-s}.
$$
If $m> 4$, we interpolate $L^m$ between $L^4$ and $L^{2m}$, then use the Sobolev embedding $\mathcal{W}^{1,2} \subset L^{2m}$.
If $2<m\le4$, we interpolate $L^m$ between $L^2$ and $L^4$, then use $\mathcal{W}^{1,2} \subset L^2$.
In both cases we obtain, using in addition the Poincar\'e inequality for $(-H)$:
\begin{eqnarray*}
|S_Nu_N|_{\mathcal{W}^{s,r}} & \lesssim & |(-H)^{\frac12}S_Nu_N|_{L^2}^\alpha |S_Nu_N|_{L^4}^{1-\alpha}
\end{eqnarray*}
for some constant $\alpha\in (0,1)$. We deduce that 
\begin{eqnarray}
& & \nonumber \left| \int_{\R^2} F_1(S_Nu_N, S_NZ_N) \overline{S_Nu_N}\right| \\ \nonumber
&  \lesssim  & |S_NZ_N|_{\mathcal{W}^{-s,q}} |S_Nu_N|_{L^4}^{3-\alpha} |(-H)^{\frac12} S_Nu_N|_{L^2}^\alpha 
\\ \label{bound:F1}
& \le & C |S_NZ_N|_{\mathcal{W}^{-s,q}}^{\frac{4}{1-\alpha}} + \frac{\gamma_1}{2(\gamma_1+|\gamma_2|)}\left[ |S_Nu_N|_{L^4}^4 
+\frac 12 |(-H)^{\frac12}S_Nu_N|_{L^2}^2\right],
\end{eqnarray}
by Young inequality. 

Gathering \eqref{bound:F}--\eqref{bound:F1}, and noticing that $|(-H)^{\frac12}S_Nu_N|_{L^2}\le |(-H)^{\frac12}u_N|_{L^2}$,
leads to
\begin{eqnarray*}
 \frac{d}{dt} |u_N(t)|_{L^2}^2 +\frac{\gamma_1}{2} |(-H)^{\frac12} u_N(t)|_{L^2}^2
&  \lesssim & \sum_{k+l=1}^3 |:S_NZ_{N,R}^l S_NZ_{N,I}^{k}:|_{\mathcal{W}^{-s,q}}^{m_{k,l}}
\end{eqnarray*} 
for some integers $m_{k,l}$. 
Now, let $m>0.$ We multiply by $|u_N(t)|_{L^2}^{2m-2}$ both sides of the above inequality to get 
\begin{eqnarray*}
 \frac{1}{m} \frac{d}{dt} |u_N(t)|_{L^2}^{2m} +\frac{\gamma_1}{2} |(-H)^{\frac12} u_N(t)|_{L^2}^2 |u_N(t)|_{L^2}^{2m-2}
&  \lesssim & |u_N(t)|_{L^2}^{2m-2} \sum_{k+l=1}^3 |:S_NZ_{N,R}^l S_NZ_{N,I}^{k}:|_{\mathcal{W}^{-s,q}}^{m_{k,l}}.
\end{eqnarray*}
Applying the interpolation inequality
\begin{eqnarray*}
|(-H)^{\frac{1}{2m}} u_N|_{L^2} \le C_m |(-H)^{\frac{1}{2}} u_N|_{L^2}^{\frac{1}{m}} |u_N|_{L^2}^{\frac{m-1}{m}}
\end{eqnarray*}
to the second term in the left hand side, and using Young inequality in the right hand side, we obtain
\begin{eqnarray*}
 \frac{1}{m} \frac{d}{dt} |u_N(t)|_{L^2}^{2m} +\frac{\gamma_1}{2 C_m^{2m}} |(-H)^{\frac{1}{2m}} u_N(t)|_{L^2}^{2m}
&  \lesssim & \varepsilon |u_N(t)|_{L^2}^{2m} + C_{\varepsilon} \sum_{k+l=1}^3 |:S_NZ_{N,R}^l S_NZ_{N,I}^{k}:|_{\mathcal{W}^{-s,q}}^{m'_{k,l}},
\end{eqnarray*}
for any $\varepsilon>0$ and constants $C_{\varepsilon}>0$ and $m'_{k,l}>0$. We choose $\varepsilon=\frac{\gamma_1 \lambda_1^2}{4 C_m^{2m}}$ after  
using Poincar\'{e} inequality so that the first term of the right hand side is absorbed in the left hand side, 
\begin{eqnarray*}
\frac{1}{m} \frac{d}{dt} |u_N(t)|_{L^2}^{2m} +\frac{\gamma_1}{4 C_m^{2m}} |(-H)^{\frac{1}{2m}} u_N(t)|_{L^2}^{2m}
&  \lesssim & \sum_{k+l=1}^3 |:S_NZ_{N,R}^l S_NZ_{N,I}^{k}:|_{\mathcal{W}^{-s,q}}^{m'_{k,l}}.
\end{eqnarray*}
Integrating in time, taking expectations on both sides and using the stationarity of $u_N$ and of the Wick
products, together with Corollary \ref{cor:m-moment}, yields
$$
\E\big(|(-H)^{\frac{1}{2m}}u_N|_{L^2}^{2m}\big)\lesssim \sum_{k+l=1}^3 M_{s,q,k,l,m'_{k,l}},
$$ 
and the conclusion.
\hfill
\qed

\medskip

\noindent
{\em Proof of Corollary \ref{cor:tightness}.}
Let $s'$ with $0<s'<1$ and $q>8$ such that $s'q>8$. Let $(u_N,Z_N)$ be a stationary solution of \eqref{eq:coupled} 
in $E_N^\C\times E_N^\C$. Applying Proposition \ref{prop:tightness} with $m=1$, we deduce that for some positive constant $C$ not depending on $N$, and for any $t\ge 0$,
$$
\E \big( |u_N(t)|_{\mathcal{W}^{-s',q}}^2 \big) \lesssim \E \big(|u_N(t)|_{L^q}^2\big) \lesssim \E\big(|(-H)^{\frac12}u_N|_{L^2}^2\big) \lesssim C,
$$
where we have used the embedding $\mathcal{W}^{1,2} \subset L^q$, for any $q<+\infty$. Thus,
\begin{eqnarray*}
& & \int_{\mathcal{W}^{-s',q}} |x|^2_{\mathcal{W}^{-s',q}} \tilde \rho_N(dx) =\int\!\! \!\int_{(\mathcal{W}^{-s',q})^2} |u+z|_{\mathcal{W}^{-s',q}}^2 \nu_N(du,dz) \\
& & \le 2\E \big( |u_N(t)|_{\mathcal{W}^{-s',q}}^2 +|Z_N(t)|_{\mathcal{W}^{-s',q}}^2 \big),
\end{eqnarray*}
and the right hand side above is bounded indepently of $N$ and $t$, since $\mu_N$ converges to a Gaussian measure $\mu$ on ${\mathcal{W}^{-s',q}}$. 
The tightness of $(\tilde \rho_N)_N$ follows from Markov inequality
and the compact embedding ${\mathcal{W}^{-s',q}} \subset {\mathcal{W}^{-s,q}}$, for any $s>s'$.
\hfill
\qed

\medskip

 \noindent
 {\em Proof of Theorem \ref{thm:invmeasure}}. Assume now that $\gamma_1$, $\gamma_2$, $s$ and $q$ satisfy the hypothesis of Theorem \ref{thm:invmeasure},
 so that the solution $X$ of equation \eqref{eq:SGL} with $X_0 \in \mathcal{W}^{-s,q}(\R^2)$ is globally defined.
 The tightness of $\tilde{\rho}_N$ ensures in particular that there exists a subsequence weakly converging to 
a measure $\rho$ on $\mathcal{W}^{-s,q}$. Let $P_t$ be the transition semigroup associated with equation \eqref{eq:SGL}
defined by $P_t \phi(X_0)= \E(\phi(X(t,X_0)))$. Since $P_{t}^N \phi(S_N X_0)$ converges to  $P_t \phi(X_0)$ for any $X_0 \in \mathcal{W}^{-s,q}(\R^2)$,
by Proposition  \ref{prop:convGalerkin}, it is not difficult to prove that the limit measure $\rho$ is an invariant measure for $P_t$. 
\hfill\qed

\section{Existence of a stationary martingale solution for any dissipation} 

The aim of this section is to construct a stationary solution of (\ref{eq:SGL}) for any values of $\gamma_1>0$ and $\gamma_2 \in \R$. 
We have seen in the previous section that the system (\ref{eq:coupled}) has a stationary solution $(u_N, Z_N)$ where 
$Z_N=\Pi_N Z^{\gamma_1, \gamma_2}_{\infty}$. Moreover, it is clear that $(u_N, Z^{\gamma_1, \gamma_2}_{\infty})$ is then a stationary solution of   
\begin{equation}
\label{eq:stat}
\left\{ \begin{array}{rcl} \ds
\frac{du}{dt}& =&(\gamma_1+i\gamma_2)\left[ Hu-S_N\big(:|S_N(u+Z)|^2 S_N (u+Z):\big)\right] \\[0.25cm]
dZ &= & (\gamma_1+i\gamma_2) HZ dt +\sqrt{2\gamma_1} dW.
\end{array} \right.
\end{equation}
In this section we will denote $Z^{\gamma_1, \gamma_2}_{\infty}$ by $Z$ for the sake of simplicity. 
Using Proposition \ref{prop:tightness}, we first prove that the the law of this sequence $\{(u_N, Z)\}_{N \in \N}$ is tight 
in an appropriate space to construct a martingale solution.  
\vspace{3mm}

\begin{lem} \label{lem:uz_tendu}Let $\gamma_1>0$, $\gamma_2 \in \R$, $0<s<1$, and $q>8$ such that $qs>8$. Let also $p> \max \{q, 96\}$,  and 
$0<\delta<\frac16$. The sequence $(u_N, Z)_{N\in \N}$ is bounded in 
$$ L^{2m}(\Omega, L^{2m}(0,T, H^{\frac{1}{m}})) \cap L^{\frac{4}{3}}(\Omega, \mathcal{W}^{1,\frac{4}{3}}(0,T, \mathcal{W}^{-2,p})) \times C^{\alpha}([0,T], \mathcal{W}^{-s,q}\cap \mathcal{W}^{-\delta,p})$$
for any $m > 0$, and $\alpha>0$ satisfying $\alpha<\min(\frac{s}{2}-\frac{1}{q}, \frac{\delta}{2}-\frac1p)$. 
\end{lem}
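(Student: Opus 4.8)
The plan is to treat the two components of the pair separately, exploiting the stationarity of $(u_N,Z)$ and the uniform moment bound of Proposition~\ref{prop:tightness}. Since $Z=Z_\infty^{\gamma_1,\gamma_2}$ is fixed (independent of $N$), the bound on the second factor is immediate from Lemma~\ref{lem:Kolmogorov}, applied once with the pair $(s,q)$ and once with the pair $(\delta,p)$: the hypotheses $0<s<1$, $q>8$, $qs>8>2$ handle the first, while $p>\max\{q,96\}$ and $0<\delta<1/6$ give $\delta p>2$, so the second applies as well. This yields a modification of $Z$ lying in $C^\alpha([0,T],\mathcal W^{-s,q})\cap C^\alpha([0,T],\mathcal W^{-\delta,p})$, with $\E[\sup_{[0,T]}|Z|_{\mathcal W^{-s,q}\cap\mathcal W^{-\delta,p}}]\le C_T$, for every $\alpha<\min(\tfrac s2-\tfrac1q,\tfrac\delta2-\tfrac1p)$, which is exactly the claimed range.

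For the first factor I would first observe that $H^{1/m}=\mathcal W^{1/m,2}$, so $|u_N(t)|_{H^{1/m}}=|(-H)^{1/(2m)}u_N(t)|_{L^2}$. By stationarity the law of $u_N(t)$ does not depend on $t$, hence
\[
\E\!\int_0^T|u_N(t)|_{H^{1/m}}^{2m}\,dt=T\,\E\big(|(-H)^{1/(2m)}u_N(0)|_{L^2}^{2m}\big)\le C_mT
\]
by Proposition~\ref{prop:tightness}, giving the uniform $L^{2m}(\Omega,L^{2m}(0,T,H^{1/m}))$ bound for every $m>0$. Taking $m=2$ and using the two-dimensional Sobolev embedding $\mathcal W^{1/2,2}\subset L^4$ produces the single strong spatial estimate $\E\int_0^T|u_N(t)|_{L^4}^4\,dt\le CT$, uniformly in $N$, which will be the workhorse for the nonlinear term.

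The bound in $L^{4/3}(\Omega,\mathcal W^{1,4/3}(0,T,\mathcal W^{-2,p}))$ is the substantial part and reduces to estimating $\partial_t u_N$ via equation~\eqref{eq:stat} in the weak norm $\mathcal W^{-2,p}$. For the linear part I would use $(-H)^{-1}Hu_N=-u_N$, so $|Hu_N|_{\mathcal W^{-2,p}}=|u_N|_{L^p}$; choosing $m_0=p/(p-2)$ so that $\mathcal W^{1/m_0,2}\subset L^p$ is the endpoint embedding and noting $2m_0>4/3$, Proposition~\ref{prop:tightness} controls $\E\int_0^T|u_N|_{L^p}^{4/3}\,dt$ uniformly. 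For the nonlinear part I would invoke Lemma~\ref{prop:specmap} to discard the cut-off $S_N$ (it is bounded on $L^p$, commutes with $H$, hence is bounded on $\mathcal W^{-2,p}$ uniformly in $N$), and then expand $:|S_N(u_N+Z)|^2S_N(u_N+Z):=F_0+F_1+F_2+F_3$ as in \eqref{def:F}. The cubic term $F_0=|S_Nu_N|^2S_Nu_N$ lies in $L^{4/3}\subset\mathcal W^{-2,p}$ with $|F_0|_{\mathcal W^{-2,p}}\lesssim|S_Nu_N|_{L^4}^3$, whence $\E\int_0^T|F_0|_{\mathcal W^{-2,p}}^{4/3}\,dt\lesssim\E\int_0^T|S_Nu_N|_{L^4}^4\,dt\le CT$. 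The mixed terms $F_1,F_2,F_3$ pair Wick powers of $Z$ (bounded in $\mathcal W^{-s,q}\subset\mathcal W^{-2,p}$ with all moments, by Proposition~\ref{prop:ReImCauchy} and Corollary~\ref{cor:m-moment}) against at most two powers of $S_Nu_N$ bounded in $L^4$; using the product estimate Proposition~\ref{prop:dg}(2) (equivalently Lemma~\ref{lem:bilinear}) together with the weakness of $\mathcal W^{-2,p}$, and Hölder's inequality in $\Omega$ to separate the $Z$-moments from the $u_N$-moments, each of them is bounded in $L^{4/3}(\Omega,L^{4/3}(0,T,\mathcal W^{-2,p}))$ uniformly in $N$ using only the fourth $L^4$-moment of $u_N$ already secured. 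Combined with the trivial bound of $u_N$ itself in $L^{4/3}(0,T,\mathcal W^{-2,p})$ coming from $L^4\subset\mathcal W^{-2,p}$, this controls both $u_N$ and $\partial_t u_N$ and closes the estimate.

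The main obstacle is precisely this nonlinear estimate: one must reconcile the very weak target $\mathcal W^{-2,p}$ and the low exponent $4/3$ with the fact that the only uniform strong bound on $u_N$ is the single $L^4$ (in space) estimate furnished by Proposition~\ref{prop:tightness} at $m=2$. The exponents $4/3$ and $-2$ are tuned so that the genuinely cubic term $F_0$ lands exactly in $L^{4/3}$ via three factors of $L^4$, while the irregular factors coming from $Z$ are absorbed harmlessly because $\mathcal W^{-2,p}$ is so weak; the delicate point is to keep every constant independent of $N$, which is exactly where the uniform boundedness of $S_N$ from Lemma~\ref{prop:specmap} is essential.
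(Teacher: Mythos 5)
Your treatment of the $Z$-component, of the $L^{2m}(\Omega,L^{2m}(0,T,H^{1/m}))$ bound, and of the linear and purely cubic terms ($Hu_N$ and $F_0$) matches the paper's argument. The gap is in the mixed terms $F_1$ and $F_2$. You claim they can be absorbed ``using only the fourth $L^4$-moment of $u_N$ already secured,'' with the weakness of the target space $\mathcal{W}^{-2,p}$ doing the rest. This does not work: $F_1$ and $F_2$ are products of Wick powers of $Z$, which live only in spaces of \emph{negative} regularity $\mathcal{W}^{-s',p}$, with powers of $S_Nu_N$. To estimate such a product — whether by Proposition 1(2) or by Lemma \ref{lem:bilinear} — the factor multiplying the distribution must carry \emph{positive} Sobolev regularity exceeding $s'$; Lemma \ref{lem:bilinear} explicitly requires $f\in\mathcal{W}^{\beta,p}$ with $\beta>0$, and a bound on $|S_Nu_N|_{L^4}$ alone gives no control on $Z\cdot|S_Nu_N|^2$ no matter how weak the norm in which you measure the result. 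The weakness of $\mathcal{W}^{-2,p}$ helps you \emph{store} the product once it is defined, but it does not define it.

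The paper's proof handles exactly this point: for $F_1$ it fixes $s'<\tfrac{1}{12}$ with $s'p>8$, applies Lemma \ref{lem:bilinear} to get a bound by $|S_NZ|_{\mathcal{W}^{-s',p}}^{4/3}\,|S_Nu_N|_{\mathcal{W}^{3/8,3}}^{8/3}$, then uses the embedding $\mathcal{W}^{17/24,2}\subset\mathcal{W}^{3/8,3}$ and a H\"older inequality in $\Omega$ with exponents $(18,\tfrac{18}{17})$ to separate a high moment of $Z$ (Corollary \ref{cor:m-moment}) from the moment $\E\bigl(|S_Nu_N|^{48/17}_{\mathcal{W}^{17/24,2}}\bigr)$, which is precisely Proposition \ref{prop:tightness} at $m=\tfrac{24}{17}$; $F_2$ is treated analogously through $\mathcal{W}^{5/6,2}\subset\mathcal{W}^{1/3,4}$. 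So the ingredients you need are available — indeed your own first-factor bound supplies the positive-regularity moments for every $m>0$ — but the step as you wrote it would fail, and repairing it requires invoking those $H^{1/m}$ moments with carefully matched exponents rather than the $L^4$ bound.
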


\begin{proof} 
It suffices to check the bound in $L^{\frac{4}{3}}(\Omega, \mathcal{W}^{1,\frac{4}{3}}(0,T, \mathcal{W}^{-2,p}))$, since the
other bounds follow from Proposition \ref{prop:tightness}, the stationarity of $u_N$, and Lemma \ref{lem:Kolmogorov} for $Z$. 
We write the equation for $u_N$; 
\begin{eqnarray*}
{u}_N(t)  
&=& {u}_N(0) + (\gamma_1+i\gamma_2) \int_0^t H {u}_N (\sigma) d\sigma \\ 
&& -(\gamma_1+i\gamma_2) 
\int_0^t S_N(:|S_N({u}_N+ {Z}_N)|^2 S_N ({u}_N+{Z}_N):) (\sigma) d\sigma.   
\end{eqnarray*}
In the right hand side, the first term is constant and clearly bounded in $L^{\frac{4}{3}}(\Omega, \mathcal{W}^{1,\frac{4}{3}}(0,T, \mathcal{W}^{-2,p}))$ 
by Proposition \ref{prop:tightness} and H\"{o}lder inequality. For the second term, we have 
\begin{eqnarray*}
\E \left(\left|\int_0^{\cdot} H u_N(\sigma) d\sigma \right|^{\frac{4}{3}}_{\mathcal{W}^{1,\frac{4}{3}}(0,T, \mathcal{W}^{-2,p})}\right) 
&\le& C_T \E\left(|H u_N|^{\frac{4}{3}}_{L^{\frac{4}{3}}(0,T, \mathcal{W}^{-2,p})} \right) \\
&\le & C_T \E \left(|u_N|^2_{L^2(0,T, \mathcal{W}^{1,2})} \right),
\end{eqnarray*}
where we have used Sobolev embedding and H\"{o}lder inequality in the last inequality. 
To estimate the nonlinear terms, we decompose as in (\ref{def:F}), 
\begin{eqnarray*}
S_N(:|S_N (u_N+Z)|^2 S_N (u_N+Z):)&=&S_N (F_0(S_N u_N)+ F_1(S_N u_N, S_N Z) \\
&& \hspace{3mm} +F_2(S_N u_N, S_N Z)+F_3(S_N Z)).
\end{eqnarray*}
The terms in $F_3$ are simply estimated thanks to Proposition \ref{prop:ReImCauchy} as follows.
$$
\E \left( \left|\int_0^{\cdot} S_N F_3 (s) ds \right|^{\frac{4}{3}}_{\mathcal{W}^{1,\frac{4}{3}}(0,T, \mathcal{W}^{-2,p})}\right) 
\le C_T \E (|S_N F_3|_{L^{\frac{4}{3}}(0,T, \mathcal{W}^{-s,p})}^{\frac{4}{3}}) 
\le C_T M_{s,p}^{\frac{4}{3}}. 
$$
For the term $F_0$, using Sobolev embeddings $L^{\frac43} \subset \mathcal{W}^{-2,p}$ and $\mathcal{W}^{\frac12, 2} \subset L^4$, we obtain  
\begin{eqnarray*}
&& \E (|S_N F_0|_{L^{\frac{4}{3}}(0,T, \mathcal{W}^{-2,p})}^{\frac{4}{3}}) 
= \int_0^T \E(||S_N u_N|^2 S_N u_N|^{\frac{4}{3}}_{\mathcal{W}^{-2,p}}) ds \\
& & \le  C_T \E(||S_N u_N|^2 S_N u_N |^{\frac{4}{3}}_{L^{\frac{4}{3}}}) = C_T \E(| S_N u_N|_{L^4}^4) \le C_T \E(|u_N|^4_{\mathcal{W}^{\frac12, 2}}),
\end{eqnarray*}
which is bounded independently of $N$ by Proposition \ref{prop:tightness}.
To estimate the $F_1$-terms, we fix $s'>0$ such that $s'<\frac{1}{12}$ and $s'p>8$, and apply Lemma \ref{lem:bilinear} to get
%with $q =p$, $s =s'$, $p = 3$, and 
%$\beta = \frac{3}{8}$, so that $\frac{2}{p}-\beta=\frac{7}{24}$. 
\begin{eqnarray*}
\E\left(\left|S_N F_1\right|_{L^{\frac{4}{3}}(0,T, \mathcal{W}^{-2,p})}^{\frac{4}{3}}\right) 
\le C_T \E \left(|F_1|_{\mathcal{W}^{-(s'+\frac{14}{24}), p}}^{\frac{4}{3}} \right)
&\le& C_T \E \left(|S_N Z|_{\mathcal{W}^{-s',p}}^{\frac{4}{3}} |S_N u_N|_{\mathcal{W}^{\frac{3}{8},3}}^{\frac{8}{3}} \right).
\end{eqnarray*}
Using then the Sobolev embedding $\mathcal{W}^{\frac{17}{24},2} \subset \mathcal{W}^{\frac{3}{8},3}$ and H\"older inequality, 
the right hand side is majorized by 
\begin{eqnarray*}
C_T \E(|S_N Z|_{\mathcal{W}^{-s',p}}^{24})^{\frac{1}{18}}~ \E(|S_N u_N|^{\frac{48}{17}}_{\mathcal{W}^{\frac{17}{24},2}})^{\frac{17}{18}},  
\end{eqnarray*}
and is thus bounded independently of $N$ thanks to Propositions \ref{prop:ReImCauchy} and \ref{prop:tightness}. 
Finally, for the terms in $F_2$, we apply again Lemma \ref{lem:bilinear}: 
%with the same $s'>0$ and $q=p$, but this time 
%taking $p=4$, $\beta=\frac{1}{3}$, 
\begin{eqnarray*}
\E\left(\left|S_N F_2\right|_{L^{\frac{4}{3}}(0,T, \mathcal{W}^{-2,p})}^{\frac{4}{3}}\right) 
&\le& T\E \left(|F_2|_{\mathcal{W}^{-(s'+\frac{1}{6}), p}}^{\frac{4}{3}} \right)\\
&\le& C_T \E(\sum_{k+l=2}|:(S_N Z_R)^k (S_N Z_I)^l :|_{\mathcal{W}^{-s',p}}^{\frac{4}{3}} |S_N u_N|_{\mathcal{W}^{\frac13,4}}^{\frac43})\\
&\le & C_T \sum_{k+l=2} \E(|:(S_N Z_R)^k (S_N Z_I)^l :|_{\mathcal{W}^{-s',p}}^{3})^{\frac49}~ \E(|u_N|_{\mathcal{W}^{\frac56,2}}^{\frac{12}{5}})^{\frac59}, 
\end{eqnarray*}
which is bounded again by Propositions \ref{prop:ReImCauchy} and \ref{prop:tightness}. Note that  
in the last inequality we have used the Sobolev embedding $\mathcal{W}^{\frac56, 2} \subset \mathcal{W}^{\frac13,4}$ and H\"older inequality. 
\end{proof}

\begin{rem} \label{rem:compactness}
We note that by Lemma \ref{lem:uz_tendu}, $(u_N)_{N\in \N}$ is bounded in 
\begin{eqnarray*}
&& L^3(\Omega, L^3(0,T, \mathcal{W}^{\frac23,2})) \cap L^{\frac43}(\Omega, \mathcal{W}^{1,\frac43}(0,T, \mathcal{W}^{-2,p})) \\
&& \hspace{3cm} \subset 
L^{\frac43}(\Omega, L^3(0,T, \mathcal{W}^{\frac23,2})) \cap L^{\frac43}(\Omega, \mathcal{W}^{\frac{1}{12},3}(0,T, \mathcal{W}^{-2,p})), 
\end{eqnarray*}
and $L^3(0,T, \mathcal{W}^{\frac23,2}) \cap \mathcal{W}^{\frac{1}{12},3}(0,T, \mathcal{W}^{-2,p})$ is compactly embedded in $L^3(0,T, L^4_x)$.
On the other hand, $(u_N)_{N\in \N}$ is also bounded in 
\begin{eqnarray*}
&& L^2(\Omega, L^2(0,T, \mathcal{W}^{1,2})) \cap L^{\frac43}(\Omega, \mathcal{W}^{1,\frac43}(0,T, \mathcal{W}^{-2,p})) \\ 
&& \hspace{3cm} \subset L^{\frac43}(\Omega, L^2(0,T, \mathcal{W}^{1,2}) \cap \mathcal{W}^{\frac{1}{12},2}(0,T,\mathcal{W}^{-2,p})), 
\end{eqnarray*}
and $L^2(0,T, \mathcal{W}^{1,2}) \cap \mathcal{W}^{\frac{1}{12},2}(0,T,\mathcal{W}^{-2,p})$ is compactly embedded
in $L^2(0,T,\mathcal{W}^{s,2})$ for any $s$ with $0 \le s <1$. In particular, since  
$\mathcal{W}^{\frac56, 2} \subset \mathcal{W}^{\frac13,4}$, the embedding is compact in  
$L^2(0,T, \mathcal{W}^{\frac13,4}).$ 
Finally, we note that $\mathcal{W}^{1,\frac43}(0,T,\mathcal{W}^{-2,p})$ is compactly embedded in 
$C([0,T], \mathcal{W}^{-3,p})$. 
\end{rem}

\begin{proof}[Proof of Theorem \ref{thm:sol_martingale}] Let $\alpha>0$ satisfy the condition in Lemma \ref{lem:uz_tendu}.  
We deduce from Lemma \ref{lem:uz_tendu}, Remark \ref{rem:compactness} and Markov inequality that the sequence $\{(u_N, Z)_{N\in \N}\}$ is tight in 
\begin{equation} \label{space:compactness}
L^3(0,T, L^4) \cap L^2(0,T, \mathcal{W}^{\frac56,2}) \cap C([0,T], \mathcal{W}^{-3,p}) \times C^{\beta}([0,T], \mathcal{W}^{-s', q}\cap \mathcal{W}^{-\delta,p})
\end{equation}
%$$L^3(0,T, L^4) \cap L^2(0,T, \mathcal{W}^{\frac{5}{6},2}) \cap C([0,T], \mathcal{W}^{-2,p}) \times C^{\beta}([0,T], \mathcal{W}^{-s', q}),$$
for any $\beta <\alpha$ and $s'> s$. Fix such $\beta$ and $s'$. By Prokhorov Theorem, there exists a subsequence, still denoted $\{(u_N, Z)_{N\in \N}\}$ which converges 
in law to a measure $\nu$ on the space \eqref{space:compactness}.
By Skorokhod Theorem, there exist $(\tilde{\Omega}, \tilde{\mathcal{F}}, \tilde{\prob})$, $(\tilde{u}_N, \tilde{Z}_N)_{N \in \N}$ and $(\tilde{u}, \tilde{Z})$ 
taking values in the same space \eqref{space:compactness}, satisfying $\mathcal{L}((u_N, Z))=\mathcal{L}((\tilde{u}_N, \tilde{Z}_N))$ for any $N \in \N$, $\mathcal{L}((\tilde{u}, \tilde{Z}))=\nu$, 
and $\tilde{u}_N$ converges to  $\tilde{u}$, $\tilde{\prob}$-a.s. in $L^3(0,T, L^4) \cap L^2(0,T, \mathcal{W}^{\frac56,2}) \cap C([0,T], \mathcal{W}^{-3,p})$, 
$\tilde{Z}_N$  converges to $\tilde{Z}$, $\tilde{\prob}$-a.s. in $C^{\beta}([0,T], \mathcal{W}^{-s', q}\cap \mathcal{W}^{-\delta,p})$. 
Moreover, by diagonal extraction, it can be assumed that this holds for any $T>0$. It is easily seen that $(\tilde{u}, \tilde{Z})$ is a stationary process thanks to 
the convergence of $(\tilde{u}_N, \tilde{Z}_N)$ to $(\tilde{u}, \tilde{Z})$ in $C([0,T], \mathcal{W}^{-3,p}) \times C^{\beta}([0,T], \mathcal{W}^{-s', q})$. This convergence also implies
$\mathcal{L}(Z)=\mathcal{L}(\tilde{Z})$. Note also that if we extract from the subsequence used in the proof of Theorem \ref{thm:invmeasure}, we have that 
for each $t\in \R$, $\mathcal{L}(\tilde X(t) )=\mathcal{L}(\tilde u (t)+\tilde Z(t))=\rho$.

Write then, 
\begin{eqnarray} \nonumber
\tilde{u}_N(t)- \tilde{u}_N(0) 
&=& (\gamma_1+i\gamma_2) \int_0^t H \tilde{u}_N (\sigma) d\sigma \\ \label{eq:uz_N}
&& -(\gamma_1+i\gamma_2) 
\int_0^t S_N(:|S_N(\tilde{u}_N+ \tilde{Z}_N)|^2 S_N (\tilde{u}_N+\tilde{Z}_N):) (\sigma) d\sigma.   
\end{eqnarray}
It remains us to show that the right hand side of \eqref{eq:uz_N} converges, up to a subsequence, to 
\begin{eqnarray*} 
(\gamma_1+i\gamma_2) \int_0^t H \tilde{u} (\sigma) d\sigma  
-(\gamma_1+i\gamma_2) 
\int_0^t :|(\tilde{u}+ \tilde{Z})|^2 (\tilde{u}+\tilde{Z}): (\sigma) d\sigma,  
\end{eqnarray*}
$\tilde{\prob}$-a.s. in $C([0,T], \mathcal{W}^{-2,p})$. This can be checked as follows. First, the convergence of 
the linear term follows from the convergence of $\tilde{u}_N$ to $\tilde{u}$ 
in $L^2(0,T, \mathcal{W}^{\frac56,2}) \subset L^1(0,T, \mathcal{W}^{-2,p})$. In order to prove the convergence 
of nonlinear terms, we again decompose the nonlinear terms into the four terms $F_0, \dots,F_3$ as in (\ref{def:F}) and estimate them separately, i.e.,
\begin{eqnarray*}
&& \sup_{t\in [0,T]} \left|\int_0^t :|\tilde{u}+\tilde{Z}|^2 (\tilde{u}+\tilde{Z}): (\sigma)d\sigma 
-\int_0^t S_N(:|S_N(\tilde{u}_N+ \tilde{Z}_N)|^2 S_N (\tilde{u}_N+\tilde{Z}_N):) (\sigma) d\sigma  \right|_{\mathcal{W}^{-2,p}}\\ 
%&\le& \int_0^T \left|:|\tilde{u}+\tilde{Z}|^2 (\tilde{u}+\tilde{Z}):- S_N(:|S_N(\tilde{u}_N+ \tilde{Z}_N)|^2 S_N (\tilde{u}_N+\tilde{Z}_N):) \right|_{\mathcal{W}^{-2,p}} d\sigma \\ 
& \le & \sum_{k=0}^3 \int_0^T |F_k(\tilde{u}, \tilde{Z})-S_N F_k(S_N \tilde{u}_N, S_N \tilde{Z}_N)|_{\mathcal{W}^{-2,p}} d\sigma \\
&=& I_0^N+I_1^N+I_2^N+I_3^N.
\end{eqnarray*}
%where each term with $F_k$ corresponds to $I_k$, and we have omit the exact notation for $F_k$, for ex., $F_0$ consists only of $S_N \tilde{u}_N$, but it is written 
%$F_0(S_N \tilde{u}_N, S_N \tilde{Z}_N)$. 
We begin with the convergence of $I_0^N$. 
\begin{eqnarray*}
I_0^N &=& \int_0^T |S_N(|S_N \tilde{u}_N|^2 S_N \tilde{u}_N) -|\tilde{u}|^2 \tilde{u}|_{\mathcal{W}^{-2,p}} d\sigma\\
&\le& \int_0^T ||S_N \tilde{u}_N|^2 S_N \tilde{u}_N - |\tilde{u}|^2 \tilde{u}|_{L^{\frac43}} d\sigma 
+ \int_0^T |(S_N -I) |\tilde{u}|^2 \tilde{u}|_{L^{\frac43}} d\sigma. 
\end{eqnarray*}
Since $\tilde{u}_N $ converges to $\tilde{u}$ a.s. in $L^3(0,T, L^4_x)$, by dominated convergence, the same holds for $S_N \tilde{u}_N$,
thus $|S_N \tilde{u}_N|^2 S_N \tilde{u}_N$ converges to $|\tilde{u}|^2 \tilde{u}$ a.s. in $L^{\frac{4}{3}}(0,T, L^{\frac43})$. 
Similarly, the second term converges to zero by dominated convergence, 
therefore, $I_0^N$ converges to $0$. Next, we use Lemma \ref{lem:bilinear}
to obtain
\begin{eqnarray*}
I_1^N &\le& \int_0^T |F_1(\tilde{u}, \tilde{Z})-S_N F_1(S_N \tilde{u}_N, S_N \tilde{Z}_N)|_{\mathcal{W}^{-(\delta+\frac13),p}} d\sigma \\
&\lesssim& |\tilde{Z} - S_N \tilde{Z}_N|_{C([0,T],\mathcal{W}^{-\delta,p})} \left(|S_N \tilde{u}_N|^2_{L^2(0,T, \mathcal{W}^{\frac13,4})} + 
|\tilde{u}|^2_{L^2(0,T, \mathcal{W}^{\frac13,4})}\right) \\
&& \hspace{3mm}+  \left(|S_N \tilde{Z}_N|_{C([0,T], \mathcal{W}^{-\delta,p})} + 
|\tilde{Z}|_{C([0,T], \mathcal{W}^{-\delta,p})}\right)
\left(|S_N \tilde{u}_N|_{L^2(0,T, \mathcal{W}^{\frac13,4})} + 
|\tilde{u}|_{L^2(0,T, \mathcal{W}^{\frac13,4})}\right) \\
&& \hspace{5cm} \times |S_N \tilde{u}_N - \tilde{u}|_{L^2(0,T, \mathcal{W}^{\frac13,4})}.
\end{eqnarray*}
Hence, $I_1^N$ converges to $0$  since $S_N \tilde{Z}_N$ converges to $\tilde{Z}$ in $C([0,T], \mathcal{W}^{-\delta,p})$ and, by Remark \ref{rem:compactness},
$S_N \tilde{u}_N$ converges to $\tilde{u}$ in $L^2(0,T, \mathcal{W}^{\frac13,4})$. 
Concerning $F_2$, we proceed as for $F_1$, and we use Lemma \ref{lem:bilinear} in the same way;
\begin{eqnarray*}
I_2^N &\lesssim& \sum_{k+l=2} (|:\tilde{Z}_R^k \tilde{Z}_I^l :|_{L^2(0,T, \mathcal{W}^{-\delta,p})} +|S_N(:(S_N \tilde{Z}_{N,R})^k (S_N \tilde{Z}_{N,I})^l :)|_{L^2(0,T, \mathcal{W}^{-\delta,p})} ) \\
&& \times |S_N \tilde{u}_N -\tilde{u}|_{L^2(0,T, \mathcal{W}^{\frac13,4})} \\
&& + \sum_{k+l=2}  |:\tilde{Z}_R^k \tilde{Z}_I^l :- S_N(: (S_N \tilde{Z}_{N,R})^k (S_N \tilde{Z}_{N,I})^l :) |_{L^2(0,T, \mathcal{W}^{-\delta,p})} \\
&& \times \left(|S_N \tilde{u}_N|_{L^2(0,T, \mathcal{W}^{\frac13,4})} + |\tilde{u}|_{L^2(0,T, \mathcal{W}^{\frac13,4})} \right).
\end{eqnarray*}
Note that Proposition \ref{prop:ReImCauchy} implies the convergence to zero of the second term in $L^p(\Omega)$, thus extracting a subsequence, 
$\tilde{\prob}$-a.s.  The first term goes to $0$, too since, again, $S_N \tilde{u}_N$ converges to $\tilde{u}$ in $L^2(0,T, \mathcal{W}^{\frac13,4})$. 
The term $I_3^N$ can be treated similarly. 
We deduce from this convergence result that $\tilde u$ satisfies
$$
\tilde{u}(t)- \tilde{u}(0) 
= (\gamma_1+i\gamma_2) \int_0^t H \tilde{u} (\sigma) d\sigma  -(\gamma_1+i\gamma_2) 
\int_0^t :|(\tilde{u}+ \tilde{Z})|^2 (\tilde{u}+\tilde{Z}):) (\sigma) d\sigma.   
$$
Since it is clear that $\tilde Z$ satisfies the second equation in \eqref{eq:stat}, we easily deduce that $\tilde X=\tilde u +\tilde Z$ is a stationary solution of
\eqref{eq:SGL}  on $(\tilde \Omega, \tilde F, \tilde \prob)$. Moreover, it is not difficult to prove that $\tilde u$ is continuous with values in 
$\mathcal{W}^{-s,q}$, which ends the proof of Theorem \ref{thm:sol_martingale}.
\end{proof}

\section{Appendix 1} 

\noindent
{\em Proof for Lemma \ref{lem:regularization}}.
It suffices to show (\ref{ineq:regularization}) for the case $s=2$. Indeed, 
all other cases $s\in [0,2)$ follow by the interpolation 
\begin{equation*}
|f|_{\mathcal{W}^{s,p}} \le |f|_{\mathcal{W}^{s_0,p}}^{1-\theta} |f|_{\mathcal{W}^{s_1,p}}^{\theta} 
\end{equation*}
with $s=(1-\theta)s_0 + \theta s_1$, and $0<\theta<1$, 
since Lemma \ref{lem:heat_kernel} gives the inequality in the case $s=0$.
Due to the Mehler formula, the expression of the kernel of $e^{t(\gamma_1+i\gamma_2)H}$ for $d=2$ is given by
\begin{equation*}
e^{t(\gamma_1+i\gamma_2)H}f(x)= \int_{\R^2} {\Xi}_t(x,y) f(y) dy, \quad t>0, \quad x\in \R^2,
\end{equation*}
with  
\begin{eqnarray*}
{\Xi}_t (x,y)&:=&\frac{\exp \Big[ -\frac{\mathrm{cosh}(2t(\gamma_1+i\gamma_2))}{2\mathrm{sinh} (2t(\gamma_1+i\gamma_2))} (|x|^2+|y|^2)
+\frac{1}{\mathrm{sinh} (2t(\gamma_1+i\gamma_2))} x \cdot y \Big] }{2\pi \mathrm{sinh} (2t(\gamma_1+i\gamma_2))} 
\\
&=& \frac{\delta}{\pi} \exp({-(\beta-\delta)|x|^2}) \, \exp({-\delta |x-y|^2}) \, \exp({-(\beta-\delta)|y|^2}),
\end{eqnarray*}
where we have set
\begin{eqnarray*}
\delta = \frac{1}{2 \mathrm{sinh} (2t(\gamma_1+i\gamma_2))}, \quad 
\beta = \frac{\mathrm{cosh}(2t(\gamma_1+i\gamma_2))}{2\mathrm{sinh} (2t(\gamma_1+i\gamma_2))}.
\end{eqnarray*}
Note that $\re (\delta) > 0$ for $0< t < \frac{\pi}{4|\gamma_2|}$ if $\gamma_2 \ne 0$, and for all $t>0$ if $\gamma_2=0$. Also, 
$\mathrm{Re}(\beta-\delta) > 0$ for any $t>0$.
Here we rewrite the kernel of $e^{t(\gamma_1+i\gamma_2)H}$ as 
$$ (e^{(\gamma_1+i\gamma_2)tH}f) (x)=e^{-(\beta-\delta)|x|^2} (e^{\delta^{-1} \Delta} g)(x), \quad g(x)=e^{-(\beta-\delta)|x|^2} f(x),$$ 
where we denoted $e^{\delta^{-1} \Delta}$ the evolution operator associated to the kernel $ \frac{\delta}{\pi} e^{-\delta|x-y|^2}$; thus 
$$ (e^{\delta^{-1} \Delta} g)(x)= \int_{\R^2_y} \frac{\delta}{\pi} e^{-\delta|x-y|^2} g(y) dy= (U \ast g)(x)$$
with the notation 
$$ U(x)=\frac{\delta}{\pi} e^{-\delta|x|^2}.$$ 
Remark that $\mathcal{F}(U)(\xi)= e^{-\frac{|\xi|^2}{4\delta}}$ since $\mathrm{Re} (\delta) >0,$ where $\mathcal{F}$ denotes the Fourier transform.
\vspace{3mm}

Thanks to the norm equivalence in Proposition \ref{prop:dg} (1), we shall estimate for any $1< p<\infty$,
$|\langle D \rangle^{2} e^{(\gamma_1+i\gamma_2)tH} f|_{L^p}$ and $|\langle x \rangle^{2} e^{(\gamma_1+i\gamma_2)tH} f|_{L^p}$,
and show that these terms are bounded by $Ct^{-1}$.

First,
\begin{eqnarray} \nonumber 
\langle D \rangle^2 e^{(\gamma_1+i\gamma_2)tH} f(x) &=& \mathcal{F}^{-1} \mathcal{F}\{\langle D \rangle^2e^{-(\beta-\delta)|x|^2} (e^{\delta^{-1} \Delta} g)\} \\ \nonumber
                                   &=& \frac{\pi}{\beta-\delta} \mathcal{F}^{-1} [\langle \xi\rangle^2 (e^{-\frac{|\cdot|^2}{4(\beta-\delta)}} \ast \mathcal{F} (e^{\delta^{-1} \Delta} g))]\\ \nonumber 
                                   &=& \frac{\pi}{\beta-\delta} \int_{\R^2_{\xi}} e^{ix\cdot \xi} \langle\xi\rangle^2 \Big[ \int_{\R^2_{\eta}} 
                                   e^{-\frac{|\xi-\eta|^2}{4(\beta-\delta)}} \widehat{U \ast g}(\eta) d\eta\Big] d\xi\\ \label{ineq:conv}
                                   &=& (\mathrm{i})+(\mathrm{ii})+(\mathrm{iii})+(\mathrm{iv}), 
\end{eqnarray}
where, using the Fubini theorem, the change of variables $\xi'=\xi-\eta$, and developing the bracket 
$\langle \xi'+\eta \rangle^2 = \langle \xi' \rangle^2 + \langle \eta \rangle^2 -1 + 2 \xi' \cdot \eta$ in the last equality, 
\begin{eqnarray*}
(\mathrm{i})&=& -\frac{\pi}{\beta-\delta} \int_{\R^2_{\xi'}} \int_{\R^2_{\eta}} e^{ix \cdot (\xi'+\eta)} e^{-\frac{|\xi'|^2}{4(\beta-\delta)}}  \widehat{U \ast g}(\eta) d\eta d\xi' \\
(\mathrm{ii}) &=& \frac{\pi}{\beta-\delta} \int_{\R^2_{\xi'}} \int_{\R^2_{\eta}} e^{ix \cdot (\xi'+\eta)} \langle \xi' \rangle^2 e^{-\frac{|\xi'|^2}{4(\beta-\delta)}}  \widehat{U \ast g}(\eta) d\eta d\xi' \\
(\mathrm{iii}) &=& \frac{\pi}{\beta-\delta} \int_{\R^2_{\xi'}} \int_{\R^2_{\eta}} e^{ix \cdot (\xi'+\eta)} \langle \eta \rangle^2 e^{-\frac{|\xi'|^2}{4(\beta-\delta)}}  \widehat{U \ast g}(\eta) d\eta d\xi' \\
(\mathrm{iv}) &=& \frac{2\pi}{\beta-\delta} \int_{\R^2_{\xi'}} \int_{\R^2_{\eta}} e^{ix \cdot (\xi'+\eta)} \xi' \cdot \eta e^{-\frac{|\xi'|^2}{4(\beta-\delta)}}  \widehat{U \ast g}(\eta) d\eta d\xi'.  
\end{eqnarray*}
Note that (i) may be written as $-e^{-(\beta-\delta)|x|^2} (U \ast g)(x)$, thus 
$$|(\mathrm{i})|_{L^p} \le |U \ast g|_{L^p} \le |U|_{L^1}|g|_{L^p} \le |f|_{L^p},$$
by Young inequality. On the other hand, (ii) equals $(\langle D \rangle^2 e^{-(\beta-\delta)|x|^2}) (U \ast g)(x)$, therefore we have
$$|(\mathrm{ii})|_{L^p} \le |\langle D \rangle^2 e^{-(\beta-\delta)|x|^2}|_{L^{\infty}}|U \ast g|_{L^p} \le C_{\gamma_1,\gamma_2}(1+|\beta-\delta |)  |f|_{L^p},$$
for $t>0$ with $\gamma_1 t, |\gamma_2 t| <<1$, since 
\begin{eqnarray*}
|\langle D \rangle^2 e^{-(\beta-\delta)|x|^2}|_{L^{\infty}} &\le&  C_{\gamma_1, \gamma_2} (1+|\beta-\delta |)
\end{eqnarray*}
for $t>0$ with $\gamma_1 t, |\gamma_2 t| <<1$. Here we used the fact that 
$|\frac{\mathrm{Im}(\beta-\delta)}{\mathrm{Re}(\beta-\delta)}| \le  C_{\gamma_1, \gamma_2} $ for $t>0$ with $\gamma_1 t, |\gamma_2 t| <<1$.
%$\sup_{x\in \R^2} (1+|x|^2) e^{-\mathrm{Re}(\beta-\delta)|x|^2} \le \Big(1+\frac{1}{\mathrm{Re}(\beta-\delta)}\Big).$ 
Next, $(\mathrm{iii})=e^{-(\beta-\delta)|x|^2} \langle D \rangle^2 (U\ast g)(x)$, thus we obtain
$$|(\mathrm{iii})|_{L^p} \le |(\langle D \rangle^2 U) \ast g |_{L^p} \le C(1+|\delta|)  |f|_{L^p}.$$
Similarly as above, we estimate 
\begin{eqnarray*}
|(\mathrm{iv})|_{L^p} &\le& C|\langle D \rangle e^{-(\beta-\delta)|x|^2}|_{L^{\infty}} |\langle D \rangle (U \ast g)|_{L^p} \\
&\le &  C_{\gamma_1, \gamma_2}  (1+|\beta-\delta|)^{\frac 12} (1+|\delta |)^{\frac 12} |f|_{L^p}
\end{eqnarray*}
for $t>0$ with $\gamma_1 t, |\gamma_2 t| <<1$. 

The other term may be estimated as follows using the notation
$$|f|_{L^p_\sigma}^p= \int_{\R^2} |f(x)|^p \langle x \rangle^\sigma dx, \quad \sigma \ge 0:$$
\begin{eqnarray*}
|\langle x \rangle^{2} e^{(\gamma_1+i\gamma_2)tH} f|_{L^p} &=&  |e^{(\gamma_1+i\gamma_2)tH} f|_{L^p_{2p}} = |e^{-(\beta-\delta)|x|^2} (e^{\delta^{-1} \Delta} g)|_{L^p_{2p}} \\
           & \le & \sup_{x\in \R^2} (e^{-\mathrm{Re}(\beta-\delta)|x|^2} \langle x \rangle^2) |e^{\delta^{-1} \Delta} g|_{L^p} \\
                                                   %  &\le & C\left(1+ \frac{1}{\mathrm{Re}(\beta-\delta)} \right) \|g\|_{L^p} 
                                                     &\le&  \frac{C}{\mathrm{Re}(\beta-\delta)} |f|_{L^p}. 
\end{eqnarray*}
%Lastly we note that for $t>0$ with $\gamma_1 t, |\gamma_2 t| <<1$,  
%$$ t^{-1} \lesssim  |\delta| \lesssim t^{-1},$$
%and, 
%$$ \mathrm{Re}(\delta^{-1}) \lesssim t, \quad \left(\frac{1}{\mathrm{Re}(\beta-\delta)}\right) \lesssim t^{-1}$$ 
%which complete the proof.
The proof is completed by the fact that $\re(\beta-\delta) \sim \frac{\gamma_1 }{2}t$ and $|\delta | \sim C_{\gamma_1,\gamma_2} t^{-1}$, for 
$t>0$ with $\gamma_1 t, |\gamma_2 t| <<1$.
\hfill
\qed
\vspace{3mm}

\noindent
{\em Proof of Lemma \ref{lem:arnaud}}. 
Let $\delta>0$; by H\"{o}lder inequality, for any $g \in \mathcal{W}^{\delta, 2p}$ 
with $\delta p^* > 1$ and $\frac{1}{p}+\frac{1}{p^*}=1,$
$$|g|_{L^2}^2=\int_{\R^2}(1+|x|^2)^{-\delta} (1+|x|^2)^{\delta} |g(x)|^2 dx \le C \left[\int_{\R^2} (1+|x|^2)^{\delta p} |g(x)|^{2p} dx\right]^{\frac{1}{p}}.$$
By (1) of Proposition \ref{prop:dg}, 
$$ \left[\int_{\R^2} (1+|x|^2)^{\delta p} |g(x)|^{2p} dx\right]^{\frac{1}{2p}} \le C |(-{H})^{\frac{\delta}{2}} g|_{L^{2p}}.$$
Writing $f=(-{H})^{\frac{\delta}{2}} g$, we then deduce
$|(-{H})^{-\frac{\delta}{2}} f|_{L^2} \le C|f|_{L^{2p}},$ if $\delta p^* > 1$, i.e. if $\delta > 1-\frac{1}{p}.$ 
Then we get 
\begin{eqnarray*}
|e^{t(\gamma_1+i\gamma_2){H}}f|_{L^2}
&=&|(-{H})^{\frac{\delta}{2}} e^{t(\gamma_1+i\gamma_2){H}}(-{H})^{-\frac{\delta}{2}}f|_{L^2} 
\le C|e^{t(\gamma_1+i\gamma_2){H}}(-{H})^{-\frac{\delta}{2}}f|_{\mathcal{W}^{\delta,2}} \\
&\le & C  t^{-\frac{\delta}{2}}|(-{H})^{-\frac{\delta}{2}}f|_{L^2} 
\le C  t^{-\frac{\delta}{2}}|f|_{L^{2p}},
\end{eqnarray*}
where we have used Lemma \ref{lem:regularization} in the fourth inequality.
Replacing $2p$ by $p$ and writing $\beta=\frac{\delta}{2}$, we have for $p\ge 2$, $t>0$,
$$|e^{t(\gamma_1+i\gamma_2){H}} f|_{L^2} \le C  t^{-\beta}|f|_{L^p}$$ 
for $\beta>\frac{1}{2}-\frac{1}{p}$, which gives the expected result.
\hfill
\qed

\section{Appendix 2} 
In this section we give a proof of Lemma \ref{lem:bilinear}. Before proving Lemma \ref{lem:bilinear} we need 
two preliminary results.
\vspace{3mm}

\begin{lem} \label{lem:bilinear_positive}
Let $\beta, \gamma \ge \alpha >0$, $\gamma <\frac{2}{p_2}$, $\beta<\frac{2}{p_1}$, $1<q,p_1, p_2<+\infty$, 
and 
$$ \frac{1}{p_1} +\frac{1}{p_2} - \frac{\beta}{2}-\frac{\gamma}{2}+\frac{\alpha}{2}=\frac{1}{q}.$$
Then, for any $f \in \mathcal{W}^{\beta,p_1}(\R^2)$, $g\in \mathcal{W}^{\gamma, p_2}(\R^2)$, 
$$|fg|_{\mathcal{W}^{\alpha, q}} \lesssim |f|_{\mathcal{W}^{\beta,p_1}} |g|_{\mathcal{W}^{\gamma, p_2}}.$$
\end{lem}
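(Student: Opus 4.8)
The plan is to derive the estimate from the fractional Leibniz-type rule of Proposition \ref{prop:dg} (2) together with the (critical) Sobolev embeddings for the spaces $\mathcal{W}^{s,p}(\R^2)$. Since $\alpha>0$, Proposition \ref{prop:dg} (2) yields, for any pair of admissible splittings $\frac1q=\frac1{q_1}+\frac1{\bar q_1}=\frac1{q_2}+\frac1{\bar q_2}$ with $q_1,q_2\in(1,\infty]$ and $\bar q_1,\bar q_2\in[1,\infty)$,
\begin{equation*}
|fg|_{\mathcal{W}^{\alpha,q}}\lesssim |f|_{L^{q_1}}|g|_{\mathcal{W}^{\alpha,\bar q_1}}+|f|_{\mathcal{W}^{\alpha,q_2}}|g|_{L^{\bar q_2}}.
\end{equation*}
It then suffices to choose the auxiliary exponents so that in each term one factor is absorbed by $|f|_{\mathcal{W}^{\beta,p_1}}$ and the other by $|g|_{\mathcal{W}^{\gamma,p_2}}$ via a Sobolev embedding, and to check that the single scaling identity assumed in the statement makes the two index constraints consistent.

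For the first term I would take $\frac1{q_1}=\frac1{p_1}-\frac\beta2$ and $\frac1{\bar q_1}=\frac1{p_2}-\frac{\gamma-\alpha}2$. The assumption $0<\beta<\frac2{p_1}$ gives $q_1\in(1,\infty)$ and the embedding $\mathcal{W}^{\beta,p_1}\subset L^{q_1}$, whence $|f|_{L^{q_1}}\lesssim|f|_{\mathcal{W}^{\beta,p_1}}$; the assumptions $\alpha\le\gamma<\frac2{p_2}$ give $\bar q_1\in(1,\infty)$ and the embedding $\mathcal{W}^{\gamma,p_2}\subset\mathcal{W}^{\alpha,\bar q_1}$, whence $|g|_{\mathcal{W}^{\alpha,\bar q_1}}\lesssim|g|_{\mathcal{W}^{\gamma,p_2}}$. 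Summing the reciprocals gives $\frac1{q_1}+\frac1{\bar q_1}=\frac1{p_1}+\frac1{p_2}-\frac\beta2-\frac\gamma2+\frac\alpha2$, which is precisely $\frac1q$ by hypothesis, so this splitting is admissible.

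The second term is treated symmetrically with $\frac1{q_2}=\frac1{p_1}-\frac{\beta-\alpha}2$ and $\frac1{\bar q_2}=\frac1{p_2}-\frac\gamma2$: here $\alpha\le\beta<\frac2{p_1}$ yields $\mathcal{W}^{\beta,p_1}\subset\mathcal{W}^{\alpha,q_2}$ and $\gamma<\frac2{p_2}$ yields $\mathcal{W}^{\gamma,p_2}\subset L^{\bar q_2}$, while again $\frac1{q_2}+\frac1{\bar q_2}=\frac1q$. I expect the only genuinely delicate point to be the bookkeeping: verifying that all four exponents $q_1,\bar q_1,q_2,\bar q_2$ lie strictly in $(1,\infty)$, so that Proposition \ref{prop:dg} (2) and the Sobolev embeddings genuinely apply, and that both scaling constraints collapse to the single relation in the statement. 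All of this follows directly from $\alpha\le\beta<\frac2{p_1}$ and $\alpha\le\gamma<\frac2{p_2}$, which also force the relevant differences $\beta-\alpha$ and $\gamma-\alpha$ to remain below $\frac2{p_1}$ and $\frac2{p_2}$ respectively; once the exponents are fixed, the conclusion is immediate.
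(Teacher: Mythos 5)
Your proposal is correct and follows essentially the same route as the paper's own proof: the same application of Proposition 1 (2) with exactly the same four auxiliary exponents (your $\bar q_1,\bar q_2$ are the paper's $\tilde q_1,\tilde q_2$), the same Sobolev embeddings, and the same verification that the single scaling relation makes both splittings sum to $\frac1q$. Nothing to add.
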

\proof Under the condition on the parameters, we may find, by the Sobolev embedding, $q_1$ with $1 < p_1 \le q_1 <+\infty$ such that 
$\frac{1}{q_1}=\frac{1}{p_1}-\frac{\beta}{2}, $ so that $|f|_{L^{q_1}} \lesssim |f|_{\mathcal{W}^{\beta,p_1}}$. On the other hand, 
we have $\frac{2}{p_1} \ge \frac{2}{p_1} -(\beta-\alpha)>0$, thus there exists $q_2$ such that 
$1 <p_1 \le q_2 <+\infty$ and $\frac{1}{q_2}=\frac{1}{p_1}-\frac{\beta-\alpha}{2}$, i.e., 
$|f|_{\mathcal{W}^{\alpha,q_2}} \lesssim |f|_{\mathcal{W}^{\beta,p_1}}$. 
Similarly, we may find $\tilde{q_2}$ with $1<p_2 < \tilde{q_2} <+\infty$ such that 
$\frac{1}{\tilde{q_2}}=\frac{1}{p_2}-\frac{\gamma}{2}$, i.e. $|g|_{L^{\tilde{q_2}}} \lesssim |g|_{\mathcal{W}^{\gamma,p_2}}$, 
and we may find $\tilde{q_1}$ with $1<p_2 \le \tilde{q_1} <+\infty$ such that 
$\frac{1}{\tilde{q_1}}=\frac{1}{p_2}-\frac{\gamma-\alpha}{2}$, i.e. $|g|_{\mathcal{W}^{\alpha,\tilde{q_1}}} \lesssim |g|_{\mathcal{W}^{\gamma,p_2}}$. 
At last, the relation 
$ \frac{1}{p_1} +\frac{1}{p_2} - \frac{\beta}{2}-\frac{\gamma}{2}+\frac{\alpha}{2}=\frac{1}{q}$
implies 
$ \frac{1}{q_1} +\frac{1}{\tilde{q_1}}= \frac{1}{q_2}+\frac{1}{\tilde{q_2}}=\frac{1}{q}.$
Therefore, applying Proposition \ref{prop:dg} (2) and obtain 
$$|fg|_{\mathcal{W}^{\alpha,q}} \le C(|f|_{L^{q_1}} |g|_{\mathcal{W}^{\alpha, \tilde{q_1}}} + |f|_{\mathcal{W}^{\alpha, q_2}} |g|_{L^{\tilde{q_2}}}).$$
Combining with the above inequalities leads to the desired result. 
\hfill\qed
\vspace{3mm}

With the use of a duality argument, we obtain the following lemma. 
\vspace{3mm}

\begin{lem} \label{lem:bilinear_dual}Let $\beta, \gamma \ge \alpha >0$, $\gamma<2\left(1-\frac{1}{q}\right)$, $\beta<\frac{2}{p_2}$, $1<q,p_1, p_2<+\infty$,
and 
$$ \frac{1}{p_1} +\frac{1}{p_2} - \frac{\beta}{2}-\frac{\gamma}{2}+\frac{\alpha}{2}=\frac{1}{q}.$$
Then, for any $f \in \mathcal{W}^{\beta,p_2}(\R^2)$, $h\in \mathcal{W}^{-\alpha, p_1}(\R^2)$, 
$$|fh|_{\mathcal{W}^{-\gamma, q}} \lesssim |f|_{\mathcal{W}^{\beta,p_2}} |h|_{\mathcal{W}^{-\alpha, p_1}}.$$
\end{lem}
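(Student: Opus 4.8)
The plan is to reduce the claim to Lemma \ref{lem:bilinear_positive} by a standard duality argument, the key point being that $-H$ is self-adjoint, so that the dual of $\mathcal{W}^{-\gamma,q}(\R^2)$ is $\mathcal{W}^{\gamma,q^*}(\R^2)$ with $\frac1q+\frac1{q^*}=1$. Concretely, writing $|fh|_{\mathcal{W}^{-\gamma,q}}=|(-H)^{-\gamma/2}(fh)|_{L^q}$ and combining the $L^q$--$L^{q^*}$ duality with the self-adjointness of $(-H)^{-\gamma/2}$, I would first record the identity
$$ |fh|_{\mathcal{W}^{-\gamma,q}}=\sup\Big\{|\langle fh,\phi\rangle|:\ \phi\in\mathcal{W}^{\gamma,q^*}(\R^2),\ |\phi|_{\mathcal{W}^{\gamma,q^*}}\le1\Big\}. $$

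The second step is to move the factor $f$ onto the test function: by commutativity of pointwise multiplication, $\langle fh,\phi\rangle=\langle h,f\phi\rangle$, and then the $\mathcal{W}^{-\alpha,p_1}$--$\mathcal{W}^{\alpha,p_1^*}$ duality (with $\frac1{p_1}+\frac1{p_1^*}=1$) gives
$$ |\langle h,f\phi\rangle|\le |h|_{\mathcal{W}^{-\alpha,p_1}}\,|f\phi|_{\mathcal{W}^{\alpha,p_1^*}}. $$
Thus everything reduces to the positive-order product estimate $|f\phi|_{\mathcal{W}^{\alpha,p_1^*}}\lesssim |f|_{\mathcal{W}^{\beta,p_2}}|\phi|_{\mathcal{W}^{\gamma,q^*}}$, and taking the supremum over $\phi$ then yields the lemma.

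The final step is to verify that this last estimate is exactly an instance of Lemma \ref{lem:bilinear_positive}, applied with the pair $(f,\phi)$ in place of $(f,g)$, the regularities $(\beta,\gamma)$ now carried by the integrability exponents $(p_2,q^*)$ and the target space being $\mathcal{W}^{\alpha,p_1^*}$. The hypotheses translate cleanly: $\beta,\gamma\ge\alpha>0$ is assumed; the condition $\gamma<\frac{2}{q^*}$ required by Lemma \ref{lem:bilinear_positive} is precisely $\gamma<2(1-\frac1q)$; the condition $\beta<\frac2{p_2}$ is assumed; and the exponents $p_2,q^*,p_1^*$ all lie in $(1,\infty)$ since $p_1,p_2,q\in(1,\infty)$. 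I expect the only slightly delicate point, and hence the main bookkeeping obstacle, to be the scaling identity: Lemma \ref{lem:bilinear_positive} demands $\frac1{p_2}+\frac1{q^*}-\frac\beta2-\frac\gamma2+\frac\alpha2=\frac1{p_1^*}$, and substituting $\frac1{q^*}=1-\frac1q$ together with $\frac1{p_1^*}=1-\frac1{p_1}$ reduces this to the assumed relation $\frac1{p_1}+\frac1{p_2}-\frac\beta2-\frac\gamma2+\frac\alpha2=\frac1q$. Once these correspondences are confirmed, the conclusion follows at once.
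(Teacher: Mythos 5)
Your proposal is correct and follows essentially the same route as the paper: a duality argument reducing the claim to Lemma \ref{lem:bilinear_positive} applied to $f\phi$ with exponents $(p_2,q^*)$ and target $\mathcal{W}^{\alpha,p_1^*}$, together with the same verification that the scaling identity and the conditions $\gamma<\frac{2}{q^*}$, $\beta<\frac{2}{p_2}$ translate from the hypotheses. No gaps.
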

\proof By duality for $f\in \mathcal{W}^{\beta, p_2}$, $g \in \mathcal{W}^{\gamma, \tilde{q}}$ and $h\in \mathcal{W}^{-\alpha,p_1}$ 
with $\frac{1}{q}+\frac{1}{\tilde{q}}=1$, $\frac{1}{p_1}+\frac{1}{\tilde{p_1}}=1$, we have 
$$ (hf,g)=(h,fg) \le |h|_{\mathcal{W}^{-\alpha, p_1}} |fg|_{\mathcal{W}^{\alpha,\tilde{p_1}}}.$$
In order to use Lemma \ref{lem:bilinear_positive}, let us check the conditions on the parameters : 
by the assumption, we have $\beta<\frac{2}{p_2}$ and $ \gamma<\frac{2}{\tilde{q}}$. 
%therefore
%either $\gamma<\frac{2}{\tilde{q}}$ or there exists $\gamma' \le \alpha$ with $\gamma' <\frac{2}{\tilde{q}} \le \gamma.$
%Since $|hf|_{\mathcal{W}^{-r,q}} \lesssim |hf|_{\mathcal{W}^{-\gamma',q}}$, we can assume that $\gamma<\frac{2}{\tilde{q}}$. 
On the other hand, 
$\frac{1}{p_2} + \frac{1}{\tilde{q}}-\frac{\beta}{2}-\frac{\gamma}{2}+\frac{\alpha}{2}=\frac{1}{p_2}+1-\frac{1}{q}-\frac{\beta}{2}-\frac{\gamma}{2}+\frac{\alpha}{2}$,
which in turn is equal to $1-\frac{1}{p_1}=\frac{1}{\tilde{p_1}}.$ 
Thus it follows from Lemma \ref{lem:bilinear_positive} that 
$|fg|_{\mathcal{W}^{\alpha, \tilde{p_1}}} \lesssim |f|_{\mathcal{W}^{\beta,p_2}} |g|_{\mathcal{W}^{\gamma, \tilde{q}}},$
which implies
$ |hf|_{\mathcal{W}^{-\gamma, \tilde{q}}} \lesssim |h|_{\mathcal{W}^{-\alpha, p_1}} |f|_{\mathcal{W}^{\beta,p_2}}.$
\hfill\qed
\vspace{3mm} 

\noindent
{\em Proof of Lemma \ref{lem:bilinear}}. It suffices to apply Lemma \ref{lem:bilinear_dual} with 
$$ \gamma=s+l\left(\frac{2}{p}-\beta \right) \ge s+ (l-1)\left(\frac{2}{p}-\beta \right) =\alpha, \quad p_1=q, \quad p_2=p$$
to obtain 
\begin{equation} \label{eq:one_step}
|hf^l|_{\mathcal{W}^{-\left(s+l(\frac{2}{p}-\beta )\right),q}} 
\le C |hf^{l-1}|_{\mathcal{W}^{-\left(s+(l-1)(\frac{2}{p}-\beta )\right),q}} |f|_{\mathcal{W}^{\beta,p}}.
\end{equation}
Repeating recursively the argument gives the result. 
\hfill\qed
\vspace{3mm}

{\bf Acknowledgements.} 
This work was supported by JSPS KAKENHI Grant Numbers JP19KK0066, JP20K03669.
A. Debussche is partially supported by the French government thanks to the "Investissements d'Avenir" program ANR-11-LABX-0020-0, Labex Centre Henri Lebesgue.


\begin{thebibliography}{99}

\bibitem{bs} M. Barton-Smith, 
\newblock
{``Invariant measure for the stochastic Ginzburg-Landau equation,''} 
\newblock
{NoDEA.} {\bf 11} (2004) 29-52.  

\bibitem{bbdbg} P.B. Blakie, A.S. Bradley, M.J. Davis,
R.J. Ballagh and C.W. Gardiner, 
\newblock 
{``Dynamics and statistical mechanics of ultra-cold Bose gases using c-field techniques,''}
\newblock
{Advances in Physics.}
{\bf 57} (2008) no.5, 363-455.

\bibitem{dbdf}A. de Bouard, A. Debussche and R. Fukuizumi, 
\newblock
{``Long time behavior of Gross Pitaevskii equation at positive temperature,''}
\newblock
{SIAM. J. Math. Anal.} {\bf 50} (2018) 5887-5920.

\bibitem{btt} N. Burq, L. Thomann and N. Tzvetkov,
\newblock
{``Long time dynamics for the one dimensional nonlinear Schr\"odinger 
equation,''}
\newblock
{Ann. Inst. Fourier (Grenoble).}
{\bf 63} (2013) no.6, 2137-2198.

\bibitem{dpd} G. Da Prato and A. Debussche, 
\newblock
{``Two-dimensional Navier-Stokes equations driven by a space-time white noise,''}
\newblock
{J. Funct. Anal.}
{\bf 196} (2002) 180-210.

\bibitem{DPD} G. Da Prato and A. Debussche,
\newblock
{``Strong solutions to the stochastic quantization equations,''}
\newblock
{The Annals of Probability.}
{\bf 31} (2003) no.4, 1900-1916.

\bibitem{dpd2} G. Da Prato and A. Debussche,
\newblock
{``Gradient estimates and maximal dissipativity for the Kolmogorov operator in $\Phi^4_2$''}
\newblock
{Electronic Communications in Probability.}
{\bf 25} (2020).
%\bibitem{dpz} G. Da Prato and J. Zabczyk,
%\newblock
%{``Stochastic equations in infinite dimensions,''}
%\newblock
%{Encyclopedia of Mathematics and its Applications;
%Cambridge University Press: Cambridge, England, 1992.}

%\bibitem{dpz0} G. Da Prato and J. Zabczyk,
%\newblock
%{``Ergodicity for infinite dimensional systems,''}
%\newblock
%{London Mathematical Society Lecture Note Series 229; 
%Cambridge University Press, 1996.}

\bibitem{dpt} G. Da Prato and Tubaro,
\newblock
{``Introduction to stochastic quantization,''} Pubblicazione del Dipartimento di Mathematica del 
l'Universit\`{a} di Trento, UTM 505, (1996). 

%\bibitem{d} A. Debussche, 
%\newblock 
%{``Ergodicity results for the stochastic Navier-Stokes equations: an introduction,''
%in Topics in Mathematical Fluid Mechanics}
%\newblock
%{Lecture Notes in Mathematics, Springer} {\bf 2073} 23-108.

\bibitem{dgl} C-R. Doering, J-D. Gibbon and C-D. Levermore, 
\newblock
{``Weak and strong solutions of the complex Ginzburg-Landau equation,''}
\newblock
{Phys. D.} {\bf  71} (1994), no.3. 285-318.

\bibitem{ds} R. Duine and H. Stoof, 
\newblock
{``Stochastic dynamics of a trapped Bose-Einstein condensate,''}
\newblock
{Phys. Rev. A} {\bf 65} (2001) p. 013603.

\bibitem{dg} J.Dziuba\'nski and P. Glowacki, 
\newblock 
{``Sobolev spaces related to Schr\"odinger operators
with polynomial potentials,''}
\newblock
{Math. Z.} 
{\bf 262} (2009) no.4, 881-894. 

\bibitem{gd} C.W. Gardiner and M.J. Davis,
\newblock 
{``The stochastic Gross-Pitaevskii equation: II,''}
\newblock
{J. Phys. B} {\bf 36} (2003) 4731-4753.

%\bibitem{hm} M. Hairer and K. Matetski, 
%\newblock
%{``Discretisations of rough stochastic PDEs,''}
%\newblock 
%{Ann. Probab.}{\bf 46} No.3 (2018) 1651-1709.

\bibitem{h} M. Hoshino, 
\newblock
{``Global well-posedness of complex Ginzburg-Landau equation with a space-time white noise,''}
\newblock
{Ann. l'institut Henri Poincar\'{e} (B) Probability and Statistics.} {\bf  54} (2018) 1969-2001.

\bibitem{jn} A. Jensen and S. Nakamura, 
\newblock
{``$L^p$-mapping properties of functions of Schr\"odinger 
operators and their applications to scattering theory,''}
\newblock
{J. Math. Soc.Japan.} 
{\bf 47} (1995) 253-273.

%\bibitem{kc} M. Kobayashi and L. F. Cugliandolo,
%\newblock
%{``Quench dynamics of the three-dimensional U(1) complex field theory: 
%geometric and scaling characterisation of the vortex tangle,''}
%\newblock
%{Phys. Rev. E} {\bf 94} (2016) 062146.

\bibitem{kt} H. Koch and D. Tataru, 
\newblock
{``$L^p$ eigenfunction bounds for the Hermite operator,''}
\newblock
{Duke Math. J.} {\bf 128} (2005) 369-392.%

\bibitem{m} T. Matsuda, 
\newblock
{``Global well-posedness of the two-dimensional stochastic complex Ginzburg-Landau equation with cubic nonlinearity,''}
\newblock 
arXiv: 2003.01569.

\bibitem{mw} J.-C. Mourrat and H. Weber, 
\newblock
{``Global well-posedness of the dynamic $\Phi^4$ model in the plane,''}
\newblock
{The Annals of Probability.} {\bf 45} (2017) 2398-2476.

\bibitem{n} D. Nualart, 
\newblock
{The Malliavin Calculus and Related Topics, second edition} 
\newblock
Springer (2006)

%\bibitem{p} R. Poncet,
%\newblock
%{``Numerical methods for the simulation of nonlinear stochastic partial differential equations in Bose-Einstein condensation,''}
%\newblock
%{PhD thesis,} {Ecole Polytechnique, Universit\'e Paris-Saclay,} 2017.%

\bibitem{r} D. Robert, private communication.

%\bibitem{rt} G. O. Roberts and R. L. Tweedie, 
%\newblock
%{``Exponential convergence of langevin distributions and their discrete approximations,''}
%\newblock
%{Bernoulli.} (1996) 341-363.%

\bibitem{taylor} M. E. Taylor, 
\newblock
{``Tools for PDEs,''}
\newblock 
{Pseudodifferential Operators, Paradifferential Operators, and Layer Potentials, Math. Surveys Monogr., vol. 81,
American Mathematical Society, Providence, RI} (2000)

\bibitem{trenberth} W. J. Trenberth
\newblock {``Global well-posedness for the two-dimensional stochastic complex Ginzburg-Landau equation"}
\newblock arXiv:1911.09246v1.

\bibitem{tw} P. Tsatsoulis and H. Weber, 
\newblock
{``Spectral gap for the stochastic quantization equation on the
2-dimensional torus,''} 
\newblock
{Ann. l'IHP. Probabilit\'{e}s et statistiques} {\bf 54} (2018) 1204-1249.

\bibitem{w} C.N. Weiler et al. 
\newblock 
{``Spontaneous vortices in the formation of Bose-Einstein condensates,''}
\newblock
{Nature} 
{\bf 455} (2008) nature 07334. 

\end{thebibliography}
\end{document}